\newcommand{\R}[0]{\mathbb{R}}
\newcommand{\Z}[0]{\mathbb{Z}}
\newcommand{\T}[0]{\mathbb{T}}
\newcommand{\C}[0]{\mathbb{C}}
\renewcommand{\t}[1]{\textup{#1}}
\newtheorem{thm}{Theorem}[section]
\newtheorem{cor}[thm]{Corollary}
\newtheorem{prop}[thm]{Proposition}
\newtheorem{lem}[thm]{Lemma}
\newtheorem{conj}[thm]{Conjecture}
\newtheorem{defn}[thm]{Definition}
\newtheorem{rem}[thm]{Remark}
\let\emptyset\varnothing
\numberwithin{equation}{section}
\newcommand\numberthis{\addtocounter{equation}{1}\tag{\theequation}}
\patchcmd{\@settitle}{\uppercasenonmath\@title}{}{}{}
\patchcmd{\@setauthors}{\MakeUppercase}{}{}{}
\patchcmd{\section}{\scshape}{}{}{}
\title[On Fourier restriction type problems on compact Lie groups]{On Fourier restriction type problems on compact Lie groups}
\author[Y. Zhang]{Yunfeng Zhang}
\address{School of Mathematical Sciences, Peking University}
\email{yunfengzhang108@gmail.com}
\begin{document}

\onehalfspacing

\maketitle

\begin{abstract}
In this article, we obtain new results for Fourier restriction type problems on compact Lie groups. We first provide a sharp form of $L^p$ estimates of irreducible characters in terms of their Laplace-Beltrami eigenvalue and as a consequence provide some sharp $L^p$ estimates of joint eigenfunctions for the ring of conjugate-invariant differential operators. Then we improve upon the previous range of exponent for scale-invariant Strichartz estimates for the Schr\"odinger equation, and provide new $L^p$ bounds of Laplace-Beltrami eigenfunctions in terms of their eigenvalue similar to known bounds on tori. A key ingredient in our proof of these results is a barycentric-semiclassical subdivision of the Weyl alcove in a maximal torus. On each component of this subdivision we carry out the analysis of characters and exponential sums, and the circle method of Hardy--Littlewood and Kloosterman. 
\end{abstract}

\section{Introduction} 
\subsection{Three problems}
The goal of this article is to obtain new results for problems of Fourier restriction type on the setting of compact Lie groups. On  Euclidean spaces, Fourier restriction estimates were first explicitly posed and studied by Stein and Tomas \cite{Tom75} in the 1970s, as special types of oscillatory integrals. Let $d\mu$ be a measure on $\mathbb{R}^n$ for example the surface measure on a hypersurface such as a sphere or paraboloid. Fourier restriction estimates ask about decay properties of the (inverse) Fourier transform of $d\mu$, which may be quantified in terms of Lebesgue spaces via an inequality of the form 
\begin{align*}
\|\widehat{f\cdot d\mu}\|_{L^p(\mathbb{R}^n)}\leq C\|f\|_{L^q(d\mu)}.
\end{align*}
These estimates have broad applications in analysis and partial differential equations, and
are currently under intensive study with an abundance of hard open problems; we refer to \cite{Dem20} for a recent survey. 
In particular, when $d\mu$ is the surface measure on the standard sphere, the above inequality provides $L^p$-bounded eigenfunctions of the Laplacian; or if $d\mu$ is the surface measure on the paraboloid $\xi_n=\xi_1^2+\xi_2^2+\cdots+\xi_{n-1}^2$, the above inequality becomes the Strichartz estimate for the Schr\"odinger equation 
$$\|e^{it\Delta}f\|_{L^p(\mathbb{R}^{n})}\leq C\|f\|_{L^q{(\mathbb{R}^{n-1})}}.$$

On the other hand, Bourgain \cite{Bou93e,Bou93} in 1993 considered discrete analogues of the above problems, where $d\mu$ is a measure on $\mathbb{Z}^n$. The discrete Fourier restriction problem for the sphere or the paraboloid then refers to when $d\mu$ is the counting measure on the discrete sphere or the discrete paraboloid, which is equivalent to $L^p$ bound of Laplacian eigenfunctions on square tori or Strichartz estimate for the Schr\"odinger equation on square tori, respectively. On a general compact manifold, Sogge \cite{Sog88} in 1988 initiated the study of norms of spectral projectors on a band of eigenvalues of the Laplace-Beltrami operator. Zero band width corresponds to individual eigenvalues, and it amounts to $L^p$ bound of Laplace-Beltrami eigenfunctions. One may also pose Strichartz estimates for the Schr\"odinger equation 
$$\|e^{it\Delta}f\|_{L^p(I\times M)}\leq C\|f\|_{L^q(M)},$$
on a general compact manifold $M$, which was studied by Burq--G\'erard--Tzvetkov in 2004. 

Like square tori, many other manifolds are equipped with additional operators that commute with the Laplace-Beltrami operator. For symmetric spaces, there is a full commutative ring of differential operators invariant under the symmetry group, which include the Laplace-Beltrami operator as a special element; to go further, for arithmetic locally symmetric spaces, there are additionally Hecke operators that commute with the ring of invariant differential operators. 
Instead of restricting on individual eigenvalues of the Laplace-Beltrami operator as considered in Sogge's work, one may restrict on individual spectral parameters of the full ring of invariant operators, and this amounts to bounds of joint eigenfunctions of this ring. Compared with the previous two Fourier restriction problems, this one is more of purely representation theoretic flavor. 

We have now introduced three problems of Fourier restriction type on the setting of compact manifolds, and we summarize them as follows. 

\noindent{\bf Problem 1.} Joint eigenfunction bounds for a commutative ring of operators that commute with the Laplace-Beltrami operator. 

\noindent{\bf Problem 2.} Strichartz estimates for the Schr\"odinger equation. 

\noindent{\bf Problem 3.} Laplace-Beltrami eigenfunction bounds.

These problems are closely related to each other. Problem 1 has direct applications to Problem 2 and 3, provided there is a good knowledge of how to express the Laplace-Beltrami eigenvalue as a function of the spectral parameter for the operator ring, as is the case for compact Lie groups and globally symmetric spaces. Problem 3 is directly applicable to Problem 2, as long as there is a good knowledge of distribution of Laplace-Beltrami eigenvalues; in fact, this is how the optimal Strichartz estimate may be obtained from Laplace-Beltrami eigenfunction bounds on spheres as in \cite{Her13}. As integration of the Schr\"odinger kernel against characters in time gives the Laplace-Beltrami spectral projector kernels, a good understanding of Problem 2 would benefit Problem 3 also. Now we provide a more detailed review of known results for the above three problems on compact manifolds in the literature. 

\subsection{Literature review} 
Let $M$ be a compact manifold of dimension $d$ throughout this article.

\subsubsection{Problem 1.}
For general compact symmetric spaces of either noncompact or compact type, the seminal contribution by Sarnak \cite{Sar04} states that, for a joint eigenfunction $\psi$ of the full ring of invariant differential operators of Laplace-Beltrami eigenvalue of size $N^2\geq 1$, the sharp pointwise bound as follows holds 
\begin{align}\label{Sar04Sar04}
\|\psi\|_{L^\infty(M)}\leq C N^{\frac{d-r}{2}}\|\psi\|_{L^2(M)}.
\end{align} 
For $L^p$ bounds of joint eigenfunctions $\psi$, we have the conditional results of Marshall \cite{Mar16} which state that under the regularity assumption that the spectral parameter of $\psi$ stays within a fixed cone away from the walls of the Weyl chamber, then it holds true that 
\begin{align}\label{Marshallbound}
\|\psi\|_{L^p(M)}\leq C N^{\gamma(d,r,p)}\|\psi\|_{L^2(M)},
\end{align}
for the sharp exponent (on irreducible spaces $M$) as follows 
$$
\gamma(d,r,p)=\left\{\begin{array}{ll}
\frac{d-r}{2}-\frac{d}{p}, \ &\t{if }p>\frac{2(d+r)}{d-r},\\
\frac{d-r}{2}\left(\frac{1}{2}-\frac{1}{p}\right), \ &\t{if }2\leq p< \frac{2(d+r)}{d-r}.
\end{array}\right.
$$
In particular, if the rank of $M$ is one and thus the ring of invariant differential operators is solely generated by the Laplace-Beltrami operator, the above bound matches Sogge's Laplace-Beltrami eigenfunction bound \eqref{eigenboundreview} on a general compact manifold. However, it remains a challenge how to remove the above regularity assumption for higher-rank spaces.

For the special case when $\psi$ is a spherical function on compact symmetric spaces or in particular a character on groups, we may expect better estimates. As established in \cite{ST78} (see also \cite{Doo79}), any irreducible character $\chi$ on a compact simple Lie group of rank $r$ satisfies the bound
$$\|\chi\|_{L^p(M)}\leq C,\ \t{for }p<\frac{2d}{d-r}.$$
Let $d_\chi$ be the associated dimension of representation, then we have the bound 
$$|\chi|\leq|d_\chi|\leq C N^{\frac{d-r}{2}}$$
by an application of the Weyl dimension formula. By interpolation, we then have the bound
\begin{align}\label{Dooley}
\|\chi\|_{L^p(M)}\leq 
\left\{
\begin{array}{ll}
C_\varepsilon N^{\frac{d-r}{2}-\frac{d}{p}+\varepsilon}, 
& \t{ for }p\geq \frac{2d}{d-r},\\
C, & \t{ for } p<\frac{2d}{d-r}.
\end{array}
\right.
\end{align}
The above exponent of $N$ without the $\varepsilon$ can be checked to be sharp, by testing a character on a small neighborhood of the origin and choosing the spectral parameter regular enough. Similar results for spherical functions on arbitrary symmetric spaces of compact type are naturally conjectured to be true, but they seem still missing in the literature. 

Lastly, for arithmetic locally symmetric spaces, there is a richer theory; this is beyond our ability to make a thorough survey, and we only mention some results that look fundamental to our own eyes. A large part of literature has been focused on pointwise estimates of Hecke-Masse forms which are defined as joint eigenfunctions of the ring of invariant differential operators and the Hecke operators, with the purpose of improving the exponent in \eqref{Sar04Sar04}. We have the seminal contribution of Iwaniec and Sarnak \cite{IS95} for hyperbolic surfaces which are of rank one, and the work of Blomer and Pohl \cite{BP16} in rank two, and of Marshall \cite{Mar17} in arbitrary rank. 

\subsubsection{Problem 2.} There are two approaches to this problem, one via semiclassical analysis and one via exponential sums. The semiclassical references are mainly the work \cite{BGT04} of Burq, G\'erard, and Tzvetkov and \cite{ST02} of Staffilani and Tataru, where it was established for a finite interval $I$ 
\begin{align*}
\|e^{it\Delta}f\|_{L^p(I, L^q(M))}\leq C \|f\|_{H^{1/p}(M)}
\end{align*}
for all admissible pairs $(p,q)$ such that 
$$\frac{2}{p}+\frac{d}{q}=\frac{d}{2}, \ p,q\geq 2, \ (p,q,d)\neq(2,\infty,2).$$
Such estimates are non-scale-invariant: if $M$ is replaced by the Euclidean space $\mathbb{R}^d$, then the Sobolev space $H^{1/p}(M)$ in the above estimate may be replaced by $L^2(\mathbb{R}^d)$, as established by Ginibre and Velo \cite{GV95} and Keel and Tao \cite{KT98}, 
and the resulting estimate is then invariant under the scaling symmetry $u(t,x)\mapsto u(\lambda^2 t,\lambda x)$, which results in the above condition of admissibility for the pair $(p,q)$. These estimates have applications to local well-posedness theory for nonlinear Schr\"odinger equations with initial data of low yet scaling-subcritical regularity; see Proposition 3.1 in \cite{BGT04}. However, due to the non-scale-invariant nature of these estimates, they can never be used to solve the more interesting and difficult problem of local-wellposedness of scaling-critical regularity. There are scale-invariant Strichartz estimates on compact manifolds in the literature, but such results are rare. We first have the seminal contribution of Bourgain \cite{Bou93} on tori $M=\T^d$
\begin{align}\label{Bou93Strichartz}
\|e^{it\Delta}f\|_{L^p(I\times M)}\leq C \|f\|_{H^{d/2-(d+2)/p}(M)}
\end{align}
for a limited range of $p$ using the Hardy-Littlewood circle method, which is then enlarged to the optimal range $p>2+\frac{4}{d}$ by Bourgain and Demeter \cite{BD15A} by the new powerful method of decoupling theory. For spheres and Zoll manifolds of dimension at least three, the same estimate as \eqref{Bou93Strichartz} holds true for 
any $p>4$, as can be established by the argument in Herr \cite{Her13}; this range is also optimal as observed in \cite{BGT04}. In \cite{Zha18}, we established \eqref{Bou93Strichartz} on an arbitrary compact Lie group equipped with the canonical Killing metric, for any $p\geq 2+\frac{8}{r}$. The same range of estimate is subsequently generalized to all compact globally symmetric spaces in \cite{Zha20}.

\subsubsection{Problem 3.}
Let $f$ be an eigenfunction for the Laplace-Beltrami operator of eigenvalue $-N^2$. The fundamental result of Sogge \cite{Sog88} states 
\begin{align}\label{eigenboundreview}
\|f\|_{L^p(M)}\leq C N^{\gamma(d,p)}\|f\|_{L^2(M)}
\end{align}
for 
$$
\gamma(d,p)=\left\{\begin{array}{ll}
\frac{d-1}{2}-\frac{d}{p}, \ &\t{if }p\geq\frac{2(d+1)}{d-1},\\
\frac{d-1}{2}\left(\frac{1}{2}-\frac{1}{p}\right), \ &\t{if }2\leq p\leq \frac{2(d+1)}{d-1}.
\end{array}\right.
$$
These exponents were shown to be optimal by Sogge \cite{Sog88} on the standard spheres. The major open question is then to find refinement of the above exponents for various kinds of geometry. For example, with the presence of negative curvature, Hassell and Tacy \cite{HT15} established an $\varepsilon$-improvement (of $(\log N)^{-1/2}$ to be precise) for $p$ above the kink point $\frac{2(d+1)}{d-1}$, which may be seen as the effect of chaotic properties of the geodesic flow. At the other extreme of the fully integrable system the square tori $M=\T^d$, we first have the result of Zygmund \cite{Zyg74} where it was shown that \eqref{eigenboundreview} holds with $\gamma(2,4)=0$. Then Bourgain \cite{Bou93e} conjectured \eqref{eigenboundreview} should hold with $\gamma(2,p)=0$ for all $p<\infty$, and with 
\begin{align}\label{exponentfortori}
\gamma(d,p)=\frac{d-2}{2}-\frac{d}{p}
\end{align}
for $p>2d/(d-2)$ when $d\geq 3$, with an $N^\varepsilon$-loss for $d=3,4$. These conjectures for $p=\infty$ are indeed true, which are consequences of counting representations of integers as sums of squares, as observed in \cite{Bou93e}. 
Then in a series of papers, Bourgain \cite{Bou93e, Bou13}, Bourgain and Demeter \cite{BD13,BD15,BD15A} established the conjectured estimates on tori with an $\varepsilon$-loss for $p\geq 2(d-1)/(d-3)$ when $d\geq 4$. 

For arbitrary compact globally symmetric spaces, by counting representations of an integer by a positive definite integral quadratic form, we may use Sarnak's bound \eqref{Sar04Sar04} to establish the pointwise eigenfunction estimate 
$$\|f\|_{L^\infty (M)}\leq C N^{\frac{d-2}{2}}\|f\|_{L^2(M)},$$
provided the rank $r$ is at least 2, and with an $N^\varepsilon$-loss if $r=2,3,4$. 
It is worth noticing that by combining this counting argument with Sogge's bound \eqref{eigenboundreview}, some $L^p$ Laplace-Beltrami eigenfunction bounds with the same exponent as \eqref{exponentfortori} may be established for arbitrary products of rank-one spaces, though such an argument provides a rather poor range of exponent. Then in \cite{Zha20}, we provided $L^p$ bounds \eqref{eigenboundreview} with the exponent \eqref{exponentfortori} on compact globally symmetric spaces of rank $r\geq 5$ for all $p\geq 2+\frac{8}{r-4}$.

\subsection{Main results}
In this article, we prove new results for all the above three Fourier restriction type problems on the setting of compact Lie groups, via a rather uniform approach. 

\subsubsection{On Problem 1} 

We establish the sharp form of the character bound \eqref{Dooley} by removing the $\varepsilon$ factor, and provide as a corollary some sharp joint eigenfunction bounds. 

\begin{thm}\label{JointBound}
Let $M$ be a compact simple Lie group of dimension $d$ and rank $r$. \\
(i) Suppose $\chi_\mu$ is any irreducible character with Laplace-Beltrami eigenvalue $-N^2$. 
Then 
$$\|\chi_\mu\|_{L^p(M)}\leq 
\left\{
\begin{array}{ll}
C N^{\frac{d-r}{2}-\frac{d}{p}}, & \t{ for }p>\frac{2d}{d-r}, 
\\
C , & \t { for }p<\frac{2d}{d-r}. 
\end{array}
\right.
$$
\\
(ii) Suppose $\psi$ is any joint eigenfunction of the ring of conjugate-invariant differential operators with Laplace-Beltrami eigenvalue $-N^2$. Then 
$$\|\psi\|_{L^p(M)}\leq C N^{\frac{d-r}{2}-\frac{d}{p}} \|\psi\|_{L^2(M)},\t{ for }p>\frac{4d}{d-r}.$$ 
\end{thm}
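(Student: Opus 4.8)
The argument splits into the two parts of the theorem and, within part (i), into the two ranges of $p$. For part (i) in the subcritical range $p<\frac{2d}{d-r}$ the bound $\|\chi_\mu\|_{L^p(M)}\le C$ is already contained in \cite{ST78,Doo79} (it is recalled in \eqref{Dooley}), so the real content is the supercritical range, where one must obtain the exponent $\frac{d-r}{2}-\frac{d}{p}$ with \emph{no} $\varepsilon$-loss; the loss in \eqref{Dooley} comes precisely from interpolating the subcritical bound against the dimensional estimate $\|\chi_\mu\|_{L^\infty}\le d_\mu\le CN^{\frac{d-r}{2}}$, which one can only do through exponents strictly below $\frac{2d}{d-r}$. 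I would avoid the interpolation entirely and argue directly on a maximal torus $T$. By the Weyl character formula $\chi_\mu=A_{\mu+\rho}/A_\rho$, where $A_\nu=\sum_{w\in W}\det(w)\,e^{w\nu}$ and $\delta:=A_\rho$ is the Weyl denominator, together with the Weyl integration formula, the desired estimate becomes
\[
\frac{1}{|W|}\int_T\bigl|A_{\mu+\rho}(t)\bigr|^{p}\,\bigl|\delta(t)\bigr|^{2-p}\,dt\;\le\;C\,N^{\frac{(d-r)p}{2}-d}.
\]
Here $|A_{\mu+\rho}|\le|W|$ everywhere, but $A_{\mu+\rho}$ vanishes on every wall of the Weyl alcove (it is divisible by $\delta$, with quotient $\chi_\mu$), while the weight $|\delta|^{2-p}$ is singular there for $p>2$; the whole problem is to balance these effects, uniformly in the position of $\mu$ (note $\mu+\rho$ is always regular since $\mu$ is dominant, although it may lie close to a wall).

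The mechanism for this balance is the \emph{barycentric--semiclassical subdivision} of the Weyl alcove announced in the abstract. One first passes to the barycentric subdivision, on whose simplices the faces of the alcove — equivalently, the factors of $\delta$ — are linearly ordered by proximity, so that $|\delta|$ factors, up to constants, into sine-type weights attached to a flag of faces; then, on each such simplex, one dyadically decomposes at the semiclassical scales $2^{-j}$ with $N^{-1}\le 2^{-j}\le1$ recording the distances $\theta_1\lesssim\theta_2\lesssim\cdots$ to the successive faces of the flag. On a resulting piece $P$ one has $|\delta(t)|\asymp\prod_k\theta_k$ and $|P|\asymp\prod_k\theta_k$ up to a fixed transverse weight and volume, and the crucial step is a sharp estimate on each of the $O((\log N)^r)$ pieces of the weight functions built from $|\delta|$ — notably $|\delta|^{2-p}$ — paired with the bound $|A_{\mu+\rho}|\le|W|$ as sharpened near the walls by its vanishing there. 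Carrying this out, each piece contributes at most $N^{\frac{(d-r)p}{2}-d}$ precisely in the range $p>\frac{2d}{d-r}$, and because $p$ is strictly supercritical the sum over pieces is a convergent geometric series, so no logarithm survives. This proves part (i).

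For part (ii) the key structural remark is that a joint eigenfunction $\psi$ of the full ring of invariant differential operators has a single Harish-Chandra infinitesimal character, hence lies in the isotypic subspace $H_{\pi_\mu}\subset L^2(M)$ of one irreducible representation $\pi_\mu$, with $|\mu+\rho|^2-|\rho|^2=N^2$ and $d_\mu=\dim\pi_\mu\le CN^{\frac{d-r}{2}}$ by the Weyl dimension formula. Since $\psi=P_{\pi_\mu}\psi$, where $P_{\pi_\mu}=d_\mu\,\overline{\chi_\mu}\ast(\cdot)$ is the orthogonal projection onto $H_{\pi_\mu}$, and since $|\psi|^2$ is spectrally supported on the irreducibles occurring in $\pi_\mu\otimes\overline{\pi_\mu}$, one combines the reproducing identity, Schur orthogonality, and (once more) the subdivision estimates to obtain a transfer inequality of the shape
\[
\|\psi\|_{L^p(M)}^{2}\;\le\;C\,d_\mu\,\|\chi_\mu\|_{L^{p/2}(M)}\,\|\psi\|_{L^2(M)}^{2}.
\]
Plugging in part (i) at the exponent $p/2$ — which exceeds $\frac{2d}{d-r}$ exactly when $p>\frac{4d}{d-r}$ — and $d_\mu\le CN^{\frac{d-r}{2}}$ gives $\|\psi\|_{L^p}^2\le CN^{\frac{d-r}{2}}\,N^{\frac{d-r}{2}-\frac{2d}{p}}\,\|\psi\|_{L^2}^2$, which is part (ii). Equivalently, following the uniform strategy of the paper, one runs the Weyl-integration-plus-subdivision argument directly on the radial profile $t\mapsto\int_{G/T}|\psi(xtx^{-1})|^{p}\,d\bar x$ in place of $|\chi_\mu(t)|^{p}$; the doubling $\frac{2d}{d-r}\rightsquigarrow\frac{4d}{d-r}$ and the extra factor $N^{(d-r)/2}$ both reflect the passage through $|\psi|^2$.

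The main obstacle is the sharp weight estimate on each piece of the subdivision in part (i): evaluating, with the correct power of $N$, the contribution of $|\delta|^{2-p}$ against the vanishing of $A_{\mu+\rho}$ on pieces near the walls and their intersections, uniformly over $\mu$ (including $\mu$ near a wall), and checking that the threshold so produced is exactly $\frac{2d}{d-r}$. The Weyl integration formula, the combinatorics of the barycentric subdivision, and — modulo the reuse of the subdivision — the representation-theoretic reduction in part (ii) are comparatively routine.
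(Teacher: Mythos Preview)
Your part (ii) is essentially the paper's argument: from $\psi=\psi*(d_\mu\chi_\mu)$ and Young's inequality with exponents $(p',p/2,p)$ one gets $\|\psi\|_{L^p}\le d_\mu\,\|\chi_\mu\|_{L^{p/2}}\,\|\psi\|_{L^{p'}}$, and $TT^*$ converts this into your transfer inequality. The threshold $p>4d/(d-r)$ arises exactly for the reason you give.

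Part (i), however, has a genuine gap at the point you yourself flag as the main obstacle. You propose to bound $\int |A_{\mu+\rho}|^{p}\,|\delta|^{2-p}$ by pairing the weight against ``$|A_{\mu+\rho}|\le|W|$ as sharpened near the walls by its vanishing there,'' but you never specify the sharpening or how to prove it, and the trivial bound does not suffice: near a single wall at distance $\theta$, $|\delta|^{2-p}$ carries a factor $\theta^{2-p}$, and $\int_0^{N^{-1}}\theta^{2-p}\,d\theta$ already diverges for $p\ge3$. What is actually needed on the piece $P_{I,J}$ near facet $A_J$ is a bound of the shape $|A_{\mu+\rho}(H)|\lesssim|\delta^J(H)|\cdot N^{|\Sigma_J^+|}$, so that $|\delta^J|^{2-p}$ is cancelled down to the integrable $|\delta^J|^{2}$. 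The paper obtains this not by any generic vanishing argument but via an exact identity (Lemma~\ref{lemdecomposition}) factoring $\chi_\mu$ near $A_J$ as $(\delta_I\delta_{I,J})^{-1}$ times a finite sum of characters $\chi^J_{(s\mu)_J}(H_J)$ of the parabolic subsystem $\Sigma_J$, and then bounds those uniformly by $N^{|\Sigma_J^+|}$ using Harish-Chandra's integral formula (Lemma~\ref{charbound}). This is precisely what makes the estimate uniform in $\mu$, including $\mu$ near walls, and your outline supplies no substitute for it.

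A related issue: your claim $|\delta(t)|\asymp\prod_k\theta_k$ on a barycentric piece is off by homogeneity. The Weyl denominator has $|\Sigma^+|=(d-r)/2$ sine factors, not $r$, and comparing $\prod_{\alpha\in\Sigma^+}|\alpha(H)|$ to a monomial in the simple-root coordinates is the content of Lemma~\ref{key}: one only gets a lower bound $\gtrsim s_r^{p_r}\cdots s_1^{p_1}$ for specific exponents $p_r>\cdots>p_1=1$ summing to $|\Sigma^+|$, verified type by type. That lemma, together with the comparison $|J|/|\Sigma_J^+|>r/|\Sigma^+|$ for proper parabolic subsystems (Corollary~\ref{subsystem}), is what feeds the sharp $L^p$ bound on $1/\delta_{I,J}$ in Proposition~\ref{keyprop} and produces the threshold $2d/(d-r)$. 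Your dyadic scheme and the asserted $\asymp$ do not capture this structure.
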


Note that the exponent in the estimates in (ii) is sharp and matches that of \eqref{boundMarshall} albeit in a limited range of $p$. However,  they are established without any regularity assumptions on the spectral parameter and serve as the first such results on higher-rank spaces beyond products of rank-ones for $p<\infty$. 

We will establish the above results by a careful study of the behavior of characters over the Weyl alcove. 
We develop a so-called barycentric-semiclassical subdivision of the alcove, to take care of the behavior of characters both near the walls and near different vertices of the alcove. Then we obtain key sharp $L^p$ estimates on each component of this subdivision of some weight functions coming out of the Weyl denominator. 
Near-the-wall behavior of characters were also explored in \cite{Zha18}, while $L^p$ estimates of the Weyl denominator near the origin of the alcove were made in \cite{ST78}. 
In a sense, this work combines and sharpens the techniques of \cite{ST78} and \cite{Zha18}.

\subsubsection{On Problem 2} 
As mentioned above, we proved in \cite{Zha18} the following scale-invariant Strichartz estimate for the Schr\"odinger equation on any compact Lie group $M$ of rank $r$ equipped with the canonical Killing metric 
\begin{align}\label{StriSym}
\|e^{it\Delta}f\|_{L^p(I\times M)}\leq C \|f\|_{H^{d/2-(d+2)/p}(M)}, \t{ for }p\geq 2+8/r.
\end{align} 
The proof adapts the framework of Bourgain \cite{Bou93} on the setting of tori. We studied the spectrally localized Schr\"odinger kernel $\mathscr{K}_N$ defined as 
$$\phi\left(\frac{-\Delta}{N^2}\right)e^{it\Delta}f=f*\mathscr{K}_N(t,\cdot)$$
where $\phi\left(\frac{-\Delta}{N^2}\right)$ is a standard spectral localization operator on $M$. We realized this kernel as a Weyl type exponential sum and derived its pointwise bound as follows 
\begin{align}\label{Linfty}
\|\mathscr{K}_N(t,\cdot)\|_{L^\infty(M)}\leq C  \frac{N^{d}}{\left(\sqrt{q}\left(1+N\left\|\frac{t}{\mathcal{T}}-\frac{a}{q}\right\|^{\frac{1}{2}}\right)\right)^r}
\end{align}
on major arcs of the time variable $t$ 
$$\left\|\frac{t}{\mathcal{T}}-\frac{a}{q}\right\|\leq \frac{1}{qN}$$ 
centered at the fraction $a/q$ for $(a,q)=1$ and $q<N$. Here $\mathcal{T}$ is a period for the Sch\"odinger propagator $e^{it\Delta}$. The proof applies interpolation for the operator norm between $L^1\to L^\infty$ and $L^2\to L^2$ in order to exploit oscillation on both physical and frequency spaces, a classical harmonic analytic method as employed in the ancestor theorem of Stein and Tomas \cite{Tom75}. In this paper, we improve the range of $p$ in \eqref{StriSym} for compact semisimple Lie groups. A distinction between flat tori and compact semisimple Lie groups as well as the more general symmetric spaces of compact type is that joint eigenfunctions of invariant differential operators for the latter tend to be concentrated on conjugate points, as an example the zonal spherical harmonics on spheres blowing up at the north and south poles as the eigenvalue goes to infinity, while the characters on tori are uniform in size. This is behind the previously mentioned Marshall's conditional $L^p$-upgrades of Sarnak's pointwise bound \eqref{Sar04Sar04} into  
\begin{align}\label{boundMarshall}
\|\psi\|_{L^p(M)}\leq C N^{\frac{d-r}{2}-\frac{d}{p}}\|\psi\|_{L^2(M)}, \ \t{ for } p>\frac{2(d+r)}{d-r},
\end{align}
assuming that $M$ as a compact symmetric space is irreducible. Note in particular the extra term $N^{-\frac{d}{p}}$ in the above inequality compared with \eqref{Sar04Sar04} may be considered as the ``scale-invariant'' factor, as $\psi$ of Laplace-Beltrami eigenvalue of size $N^2$ is heuristically locally constant at scale $N^{-1}$ by the uncertainty principle. 
In comparison, the only such scale-invariant estimates valid on tori is when $p=\infty$. In a similar vein, the following scale-invariant $L^p$-upgrades of \eqref{Linfty} is expected 
\begin{align}\label{Lp}
\|\mathscr{K}_N(t,\cdot)\|_{L^p(M)}\leq C \frac{N^{d-\frac{d}{p}}}{\left(\sqrt{q}\left(1+N\left\|\frac{t}{\mathcal{T}}-\frac{a}{q}\right\|^{\frac{1}{2}}\right)\right)^r}
\end{align}
to hold true for a range of $p<\infty$. We confirm it on arbitrary compact semisimple Lie groups for a sharp range of $p$ as follows.

\begin{prop}\label{kernelmajorarc}
Suppose $M$ is a compact simply connected simple Lie group. Then for any $p>\frac{2d}{d-r}$, inequality \eqref{Lp} holds uniformly for $\left\|\frac{t}{\mathcal{T}}-\frac{a}{q}\right\|\leq \frac{1}{qN}$. More generally, let $M$ be a compact simply connected semisimple Lie group. 
Set
\begin{align}\label{thes}
s=\max\left\{ \frac{2d_i}{d_i-r_i} \right\},
\end{align} 
where the maximum is taken over all the simple factors $M_i$'s of $M$,  $d_i,r_i$ denoting respectively the dimension and rank of $M_i$.  Then \eqref{Lp} holds for any $p>s$. 
\end{prop}
This proposition will also be proved by an application of the above mentioned barycentric-semiclassical subdivision of the alcove and sharp $L^p$ estimates of some weight functions on the alcove. 
Then we can incorporate these $L^p$ estimates into Strichartz estimates. We replace the major-minor arc decomposition as in \cite{Bou93, Zha18} by the Farey dissection into major arcs only, observing that the contributions from the minor arcs would not fall in the right scale for $L^p$ estimates. 
We are then able to obtain the following improved scale-invariant Strichartz estimates on compact semisimple Lie groups, and they seem to saturate the method of \cite{Bou93} on this setting. 

\begin{thm}\label{Main}
Let $M$ be a compact semisimple Lie group of rank $r\geq 2$. Let $s$ be defined as in \eqref{thes}. Then 
\begin{align}\label{Strichartz} 
\|e^{it\Delta}f\|_{L^p(I\times M)}\leq C \|f\|_{H^{d/2-(d+2)/p}(M)}
\end{align}
holds for any 
$
p> 
2+\frac{8(s-1)}{sr}
$.
\end{thm}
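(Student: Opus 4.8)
The plan is to deduce the Strichartz estimate from the $L^p$ kernel bound \eqref{Lp} via the classical $TT^*$ framework, following the circle-method architecture of Bourgain \cite{Bou93} and our previous work \cite{Zha18}, but replacing the major-minor arc decomposition by a pure Farey dissection into major arcs. First I would reduce \eqref{Strichartz} to a frequency-localized estimate: writing $f_N$ for the projection of $f$ onto frequencies of size $N$ (i.e. Laplace-Beltrami eigenvalues comparable to $-N^2$), it suffices by Littlewood-Paley theory and $\ell^2$-orthogonality in $N$ (dyadic) to prove
\begin{align*}
\|e^{it\Delta}f_N\|_{L^p(I\times M)}\leq C N^{d/2-(d+2)/p}\|f_N\|_{L^2(M)}
\end{align*}
for $p$ in the claimed range. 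By $TT^*$ and Young's convolution inequality on $M$, this in turn follows from a bound on $\|\mathscr{K}_N(t,\cdot)\|_{L^{p/2}(M)}$ integrated appropriately against $\|\cdot\|$ in $t$ over $I$; the point is that $e^{it\Delta}f_N * \overline{e^{is\Delta}f_N}$ involves the kernel $\mathscr{K}_N(t-s,\cdot)$ after inserting the spectral cutoff $\phi(-\Delta/N^2)$, which is harmless on frequency-$N$ functions.

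Next I would set up the Farey dissection. We can take $\mathcal T$ a period of $e^{it\Delta}$, rescale time to $[0,1]$, and fix the Farey sequence of order $\lfloor N\rfloor$: every $t/\mathcal T$ lies within $1/(qN)$ of some reduced fraction $a/q$ with $q\le N$, so $I$ is covered by the major arcs appearing in Theorem 1.2, and — crucially — there are no leftover minor arcs, only these arcs around all fractions of denominator up to $N$. On the arc centered at $a/q$ we feed in the bound \eqref{Lp} with exponent $p/2$ (legitimate once $p/2>s$, i.e. $p>2s$, which is weaker than what we will eventually need, so the kernel bound is always available in the relevant regime), giving pointwise in $t$
\begin{align*}
\|\mathscr{K}_N(t,\cdot)\|_{L^{p/2}(M)}\leq C\,\frac{N^{2d/p\cdot(p/2-1)}}{\bigl(\sqrt q\,(1+N\|t/\mathcal T-a/q\|^{1/2})\bigr)^r},
\end{align*}
after rewriting $d-2d/p$ for the $L^{p/2}$-exponent. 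Then I would integrate: on each arc the $t$-integral of the denominator to the relevant power contributes a factor like $q^{-r/2}N^{-2}$ (the $N^{-2}$ from the length-$1/(qN)$ arc times the decay in $N\|t/\mathcal T-a/q\|^{1/2}$), and summing over $a$ with $(a,q)=1$ gives a factor $q$, after which one sums over $q\le N$ the quantity $q^{1-r/2}$. For $r\ge 3$ this sum converges; for $r=2$ it produces a harmless $\log N$ that is absorbed by taking $p$ strictly larger than the endpoint. Balancing the resulting power of $N$ against the target $N^{d-(d+2)\cdot 2/p}$ for the $L^{p/2}$-in-space, $L^1$-in-time norm — equivalently tracking where the exponent of $N$ from the kernel estimate ($d-2d/p$, scaled by the number of arcs and their lengths) meets the Strichartz exponent — yields exactly the threshold $p>2+\tfrac{8(s-1)}{sr}$; this is a direct computation once the exponents are in place, and one checks that at this value of $p$ the arithmetic sum over $q$ is the only borderline ingredient.

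The main obstacle is the bookkeeping that shows the Farey dissection with \emph{no} minor arcs actually suffices, i.e. that the arcs of Theorem 1.2 genuinely tile (up to bounded overlap) the time circle and that the $L^{p/2}$ kernel bound is strong enough on the ``fat'' arcs (small $q$) that one need not fall back on an $L^\infty$-type or square-function estimate there. Concretely, I expect the delicate point to be verifying that summing the localized contributions does not lose more than a constant (or $N^\varepsilon$ when $r\le 4$, which one then removes by the usual $\varepsilon$-room argument since the claimed range is open), and that the interpolation/$TT^*$ reduction correctly converts the spatial $L^{p/2}$ norm of $\mathscr K_N$ into the $L^p(I\times M)$ Strichartz norm with the stated Sobolev exponent $d/2-(d+2)/p$ — here one must be careful that the scale-invariant factor $N^{-d/p}$ built into \eqref{Lp} is precisely what upgrades the non-scale-invariant estimates of \cite{BGT04,ST02} to the scale-invariant form, and that no regularity assumption on spectral parameters sneaks in, since \eqref{Lp} as proved in Theorem 1.2 is unconditional.
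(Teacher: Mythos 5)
Your overall architecture (Littlewood--Paley reduction, $TT^*$, Young's inequality, Farey dissection into major arcs only, with Theorem \ref{kernelmajorarc} as the kernel input) matches the paper's, but the core analytic mechanism is missing, and without it your scheme cannot reach the claimed range. You propose to control the $TT^*$ operator purely through the size bound \eqref{Lp}, applied at spatial exponent $p/2$ (Young forces exactly this exponent), which requires $p/2>s$, i.e. $p>2s$. You assert this is ``weaker than what we will eventually need,'' but it is the opposite: since $s>2$ and $r\geq 2$ give $sr>4$, one has $2s-2=2(s-1)>\frac{8(s-1)}{sr}$, so $2s>2+\frac{8(s-1)}{sr}$ strictly in every case covered by the theorem (e.g. for $SU(3)$, $2s=16/3\approx 5.33$ versus the claimed threshold $4.5$; for $SU(2)\times SU(2)$, $2s=6$ versus $4.67$). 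Carrying your computation out correctly (each arc contributes $q^{-rp/4}N^{-2}$ when the bound is raised to the power $p/2$, a factor $q$ from summing over $a$, and $\sum_{q\leq N}q^{1-rp/4}<\infty$ once $p>8/r$ --- not the $q^{1-r/2}$ bookkeeping you wrote, whose sum actually diverges for $r\leq 4$) does yield the correct power $N^{d-2(d+2)/p}$, but only in the range $p>2s$: no ``balancing of exponents'' can push below that, because the only place the kernel bound enters is at exponent $p/2$, where it is simply unavailable for $p\leq 2s$. So your argument proves a strictly weaker statement than Theorem \ref{Main}, and the claim that it ``yields exactly the threshold $p>2+\frac{8(s-1)}{sr}$'' is not supported by the computation.

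The missing idea is the $L^2$ ingredient and the interpolation that decouples the kernel exponent from $p$. The paper writes $\mathscr{K}_N=\sum_{Q,M}\mathscr{K}_{Q,M}$ with $\mathscr{K}_{Q,M}=\mathscr{K}_N\cdot\mathbbm{1}_{Q,M}$ and proves two bounds for each dyadic piece: an $L^{(2u)'}\to L^{2u}$ bound coming from \eqref{Lp} used at a fixed exponent $u>s$ arbitrarily close to $s$ (independent of $p$), and an $L^2\to L^2$ bound $\lesssim Q^2/(NM)$ coming from $\|\widehat{\mathbbm{1}_{Q,M}}\|_{l^\infty(\Z)}\lesssim\|\mathbbm{1}_{Q,M}\|_{L^1(\T)}$, i.e. from the small total measure of the arcs at scale $(Q,M)$ --- an orthogonality input your plan never uses. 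Interpolating these and summing over dyadic $Q,M$ (the requirement that the $Q$-exponent be negative, which forces the $M$-exponent positive, is precisely what produces the condition $p>2+\frac{8(u-1)}{ur}$) gives the theorem after letting $u\downarrow s$. This Stein--Tomas/Bourgain-type interpolation between a kernel-size bound and an $L^2$ bound is the whole point of the dyadic $(Q,M)$ decomposition; with kernel size alone you are capped at $p>2s$.
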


What would be the optimal range of $p$ in the above estimate? If one only looks at class functions on compact Lie groups, then we will see that the estimates can be reduced to the following well conjectured Strichartz estimates for mixed Lebesgue norms on tori. Such estimates are indeed true on Euclidean spaces, as established in \cite{GV95, KT98}. 

\begin{conj} 
Let $(\cdot,\cdot)$ denote a positive-definite quadratic form of integral coefficients. Then we have 
\begin{align*}
\left\|\sum_{\xi\in\Z^r, \ |\xi|\leq N}a_\xi e^{it(\xi,\xi)+i(\xi, x)}\right\|_{L^p((I, dt),L^q(\mathbb{T}^r, dx))}\leq C N^{\frac{r}{2}-\frac{2}{p}-\frac{r}{q}}\|a_\xi\|_{l^2(\mathbb{Z}^r)}
\end{align*}
for all pairs $p,q\geq 2$ with 
$\frac{r}{2}-\frac{2}{p}-\frac{r}{q}>0$. 
\end{conj}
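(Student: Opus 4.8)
The plan is to reduce the conjectured mixed-norm estimate to the corresponding diagonal (i.e. $p=q$) Strichartz estimate on the torus, which is the Bourgain–Demeter $\ell^2$-decoupling theorem for the parabola in $r+1$ dimensions, together with the classical Euclidean mixed-norm estimate of Ginibre–Velo and Keel–Tao as a local model. First I would observe that, up to a linear change of variables and a comparison of the quadratic form $(\xi,\xi)$ with the standard form $|\xi|^2$ (losing only absolute constants and restricting $\xi$ to a lattice that is commensurable with $\mathbb{Z}^r$), it suffices to treat $(\xi,\xi)=|\xi|^2$. Then the exponential sum $F(t,x)=\sum_{|\xi|\le N} a_\xi e^{it|\xi|^2 + i\langle \xi,x\rangle}$ is, after rescaling $t\mapsto t/N^2$, $x\mapsto x/N$, exactly the extension operator applied to a function on the truncated paraboloid $\{(\eta,|\eta|^2):|\eta|\le 1\}$ sampled at the lattice points $\xi/N$; the target exponent $\tfrac{r}{2}-\tfrac{2}{p}-\tfrac{r}{q}$ is precisely the scaling exponent forced by this parabolic rescaling in $r$ spatial plus $1$ temporal dimensions.

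The key steps, in order, would be: (1) \emph{Reduction to the diagonal estimate.} Use the standard interpolation/fixed-time argument: the mixed norm $L^p_t L^q_x$ with $p\le q$ interpolates (via the Minkowski inequality when $q\ge p$, and the trivial $L^\infty_x$ bound $\|F(t,\cdot)\|_{L^\infty_x}\le \|a\|_{\ell^1}$ together with the $TT^*$-type $L^2$ identity) between the diagonal estimate at $p=q$ and the endpoint $p=q=\infty$. The delicate regime is $q<p$, where one instead exploits that the spatial domain is the fixed ball $B$ (not all of $\mathbb{R}^r$): on a ball, $\|g\|_{L^q_x}\lesssim \|g\|_{L^p_x}$ for $q\le p$, so the $L^p_tL^q_x$ norm over $I\times B$ is dominated by the $L^p_tL^p_x=L^p_{t,x}$ norm, reducing everything to the diagonal case. (2) \emph{The diagonal estimate.} For $p=q$ the claim becomes $\|F\|_{L^p(I\times B)}\lesssim N^{(r+2)/2 - (r+2)/p}\|a\|_{\ell^2}$ for $p\ge \tfrac{2(r+3)}{r+1}$, which is exactly the discrete restriction / decoupling estimate for the paraboloid in $\mathbb{R}^{r+1}$ of Bourgain–Demeter \cite{BD15}; cite it directly. (3) \emph{Interpolation with $p=q=2$} (Plancherel, giving the exponent $0=\tfrac r2-1-\tfrac r2$ at $p=q=2$) and with $p=q=\infty$ fills in the remaining diagonal exponents, and then the ball-embedding of step (1) covers the full region $\{p,q\ge 2:\ \tfrac r2-\tfrac2p-\tfrac rq>0\}$ with $q\le p$; for $q>p$ Minkowski's inequality $\|F\|_{L^p_tL^q_x}\le\|F\|_{L^q_xL^p_t}$ is not available in that direction, so instead one interpolates the two diagonal endpoints together with the $(p,q)=(\infty,2)$ estimate (which is Plancherel in $x$ for fixed $t$) — these three corners span the whole open region by multilinear (complex) interpolation of the $\ell^2\to L^p_tL^q_x$ operator.

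The main obstacle is the regime $q>p$, i.e.\ summability better in time than in space: here there is no elementary embedding and no Minkowski reduction to the diagonal, so one genuinely needs a vector-valued or interpolation argument feeding off the three corner estimates $(p,q)\in\{(2,2),(\infty,\infty),(\infty,2)\}$ (equivalently, off decoupling plus Plancherel). Proving that the open triangle $\{\tfrac r2-\tfrac2p-\tfrac rq>0,\ p,q\ge2\}$ is the convex hull (in the $(1/p,1/q)$-plane) of the closure of the region where those corner estimates and the diagonal decoupling line are known — and that complex interpolation of the analytic family of operators $a\mapsto F$ applies with the stated exponent being affine in $(1/p,1/q)$ along each interpolation segment — is the technical heart; the loss-free (no $N^\varepsilon$) nature of decoupling for $p>\tfrac{2(r+3)}{r+1}$ is what makes the endpoint bookkeeping clean, while at the critical line one would incur an $N^\varepsilon$, consistent with the strict inequality $\tfrac r2-\tfrac2p-\tfrac rq>0$ in the statement.
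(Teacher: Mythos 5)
You should first note that this statement is not proved in the paper at all: it is posed as an open conjecture (the paper immediately remarks that its resolution ``would require a decoupling theory for mixed-Lebesgue norms, which seems still missing in the literature''), so a short reduction to known results should already raise suspicion. Checking your steps confirms there are genuine gaps. In the regime $q<p$, your reduction via H\"older on the bounded ball, $\|F(t,\cdot)\|_{L^q_x(B)}\lesssim\|F(t,\cdot)\|_{L^p_x(B)}$, only yields the diagonal bound $N^{\frac r2-\frac{r+2}{p}}\|a\|_{\ell^2}$, whereas the conjectured exponent $\frac r2-\frac2p-\frac rq$ is \emph{strictly smaller} than $\frac r2-\frac{r+2}{p}$ whenever $q<p$; so this step proves a weaker estimate, not the statement. (That half of the region can in fact be recovered, but by the interpolation you reserve for the other regime: the three available estimates --- the trivial bound $N^{r/2}$ at $(p,q)=(\infty,\infty)$, Plancherel at $(\infty,2)$, and Bourgain--Demeter on the diagonal $p=q>\frac{2(r+2)}{r}$ with exponent $\frac r2-\frac{r+2}{p}$ --- have exponents matching the affine function $\frac r2-\frac 2p-\frac rq$, and mixed-norm Riesz--Thorin then gives exactly the conjectured bound on their convex hull. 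Note in passing that your critical exponent $\frac{2(r+3)}{r+1}$ and diagonal exponent $N^{(r+2)/2-(r+2)/p}$ are misquoted; the correct values are $\frac{2(r+2)}{r}$ and $N^{\frac r2-\frac{r+2}{p}}$.)

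The fatal gap is the regime $q>p$. In the $(1/p,1/q)$-plane every estimate you propose to interpolate --- $(\infty,\infty)$, $(\infty,2)$, $(2,2)$, and the whole diagonal decoupling line --- lies in the closed half-plane $\{1/q\ge 1/p\}$, i.e.\ $q\le p$; indeed the convex hull of these points is precisely $\{1/p\ge 0,\ 1/q\ge 1/p,\ \frac2p+\frac rq\le\frac r2\}$. No convex combination of them can reach any point with $q>p$, so complex interpolation, however carefully organized, produces nothing there. Concretely, the conjecture contains points such as $(p,q)=(2,\infty)$ for $r\ge 3$ (target bound $N^{\frac r2-1}\|a\|_{\ell^2}$ for $\|F\|_{L^2_tL^\infty_x}$), and more generally all pairs with $p<\frac{2(r+2)}{r}$ and $q$ large, which are untouched by Plancherel, the trivial bound, and diagonal decoupling; there is also no Minkowski or ball-embedding reduction in that direction, exactly as you observe. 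This is the genuinely open content of the conjecture --- the part for which the paper says a mixed-norm decoupling theory would be needed --- and your proposal does not supply any new ingredient to address it.
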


With the work \cite{BD15A} in mind, the resolution of this conjecture would require a decoupling theory for mixed-Lebesgue norms, which seems still missing in the literature. Using again the key subdivision of the alcove and sharp $L^p$ estimates of weight functions, we will show that the above conjecture implies the following conjecture.  

\begin{conj}\label{conjectureStrichartz}
Estimate \eqref{Strichartz} holds for class functions on any compact Lie group for any $p>2+\frac{4}{d}$.  
\end{conj}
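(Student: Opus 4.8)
The plan is to upgrade the proof of Theorem~\ref{Main} by replacing its lossy ingredient --- the uniform $L^p$ kernel bounds \eqref{Lp}, fed into the circle-method/interpolation machinery --- with an $\ell^2$-decoupling inequality adapted to the Weyl alcove. By Littlewood--Paley theory for $-\Delta$ on $M$ and the $\mathcal T$-periodicity of $e^{it\Delta}$, it suffices to fix a dyadic $N$, a smooth spectral projector $P_N$ onto $\sqrt{-\Delta}\sim N$, and prove the single-scale bound
\begin{align}\label{singlescale}
\|e^{it\Delta}P_Nf\|_{L^p(\T\times M)}\leq C_\varepsilon N^{\varepsilon}\,N^{\frac d2-\frac{d+2}{p}}\|f\|_{L^2(M)},\qquad p>2+\tfrac4d,
\end{align}
with $I=\T$ one period. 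That $2+\frac4d$ is the natural endpoint can be seen from $f=\chi_\mu$ a character: then $e^{it\Delta}\chi_\mu=e^{-it\lambda_\mu}\chi_\mu$, so the left side is $\|\chi_\mu\|_{L^p(M)}$, which by Theorem~\ref{JointBound}(i) is $\sim1$ for $p<\frac{2d}{d-r}$, and $1\leq N^{\frac d2-\frac{d+2}{p}}$ precisely when $p\geq2+\frac4d$; in rank $\geq2$ these two constraints are compatible, so characters already forbid anything below $2+\frac4d$.

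The key step I would attempt is a decoupling theorem on $\T\times M$ for the joint spectrum of $(-i\partial_t,\,-\Delta)$: writing
\begin{align*}
(e^{it\Delta}P_Nf)(t,x)=\sum_\pi d_\pi\,\phi(\lambda_\pi/N^2)\,e^{-it\lambda_\pi}\,\t{tr}\!\big(\widehat f(\pi)\pi(x)\big),
\end{align*}
one decomposes the sum according to caps of radius $\sqrt N$ in the highest-weight parameter and seeks to bound the $L^p(\T\times M)$ norm, up to $N^\varepsilon$, by the $\ell^2$-sum of the norms of the pieces; iterating this by parabolic rescaling, exactly as in Bourgain--Demeter, would reduce \eqref{singlescale} to orthogonality and Bernstein on each cap and thereby open the range $p>2+\frac4d$. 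On a ball of radius $\ll1$ about the identity the exponential map brings $e^{it\Delta}P_N$, up to acceptable errors, to the Euclidean Schr\"odinger parametrix at frequency $N$, so there the desired decoupling is Bourgain--Demeter's $\ell^2$-decoupling for the paraboloid in $\R^{d+1}$; by \eqref{Linfty} and conjugation-invariance the same applies near each rational time $a/q$ together with its associated conjugacy classes, up to the $q^{-r/2}$ damping already visible in \eqref{Lp}. What remains --- and is genuinely new --- is to propagate the decoupling across the walls and lower-dimensional faces of the alcove, where the Weyl denominator $\delta(H)$ degenerates and the effective geometry of the underlying ``surface'' drops dimension.

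For this I would run the barycentric--semiclassical subdivision of the alcove used for Theorems~\ref{JointBound} and~\ref{kernelmajorarc}: on each component of the subdivision the weight $|\delta(H)|^{2/p}$ from the Weyl integration formula is comparable to an explicit product of powers of affine root functions, and the local geometry degenerates to a lower-dimensional paraboloid transverse to the relevant face; one then needs a lower-dimensional Bourgain--Demeter decoupling \emph{weighted} by $|\delta|^{2/p}$, with the sharp $L^p$ estimates for the $\delta$-weights established in this paper as the analytic input, followed by a gluing across components by induction on the faces. For class functions $f$ the Weyl integration formula transplants the whole problem to $\T\times T$ and reduces it, as discussed above, to the preceding conjecture on mixed-norm Strichartz estimates on tori; the general case cannot be so reduced, since a generic matrix coefficient $\t{tr}(\widehat f(\pi)\pi(x))$ need not concentrate on conjugacy classes (for example the ``Gaussian beam'' matrix coefficients on $SU(2)$), so the decoupling must be carried out directly on $M$.

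I expect the decisive obstacle to be exactly this last step. No weighted, variable-dimension decoupling theorem of the kind required is known, and more fundamentally there is at present no decoupling theory on a curved manifold beyond the local/perturbative regime: the global arithmetic structure of a compact Lie group, although real (it sits in the rational spectrum and the Weyl group), is far more intricate than that of a flat torus. This is the same essential gap that stops Theorem~\ref{Main} at $p>2+\frac{8(s-1)}{sr}$ and that the paper circumvents, for class functions only, at the cost of assuming the mixed-norm torus conjecture --- whose own proof would require a decoupling theory for mixed-Lebesgue norms. A secondary difficulty, specific to non-class $f$ in rank $\geq2$, is to rule out concentration of general matrix coefficients beyond that already exhibited by characters; this is expected from the structure of the known extremizers, but a proof seems to presuppose precisely the decoupling inequality one is trying to establish.
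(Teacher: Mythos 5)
The statement you are addressing is a conjecture, and the paper does not prove it: the closest it comes is Theorem \ref{Class} in Section \ref{evidence}, which establishes \eqref{Strichartz} \emph{for class functions only} — unconditionally for $p>2+\frac{4}{r}$, and on the conjectured full range $p>2+\frac{4}{d}$ only under the additional hypothesis of Conjecture \ref{conjStritori} (mixed-norm Strichartz estimates on tori). Your proposal likewise does not prove the conjecture, as you acknowledge, so judged as a proof it has an unavoidable gap: the weighted, face-adapted, variable-dimension decoupling inequality on $\T\times M$ on which your plan hinges is not known, and nothing in your outline constructs it. Your diagnosis of where the obstruction sits is accurate and consistent with the paper's own discussion — the missing ingredient is a decoupling theory beyond the flat or local regime (for general $f$), or for mixed Lebesgue norms (for class functions), and this is exactly why the paper stops at Theorem \ref{Main} unconditionally and at Theorem \ref{Class} conditionally. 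Your sharpness computation via characters, using $\|\chi_\mu\|_{L^p}\geq\|\chi_\mu\|_{L^2}=1$ against $\|\chi_\mu\|_{H^{d/2-(d+2)/p}}\sim N^{\frac d2-\frac{d+2}{p}}$ and the compatibility of $p<\frac{2d}{d-r}$ with $p\geq 2+\frac{4}{d}$ in rank $\geq 2$, is correct.

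Where your plan overlaps with what the paper actually carries out, it matches in substance: for class functions, Weyl integration transplants the problem to $I\times A$; the barycentric-semiclassical subdivision isolates the components $P_{I,J}$; the sharp $L^p$ bounds on $1/\delta_{I,J}$ (Proposition \ref{keyprop}) absorb the Weyl denominator via H\"older; and the resulting exponential sum over the projected weight lattice is fed into Theorem \ref{BourgainDemeter} (yielding $p>2+\frac{4}{r}$) or into the conjectured mixed-norm estimates \eqref{conj1}--\eqref{conj2} (yielding $p>2+\frac{4}{d}$), with Bernstein's inequality and a divisor-counting bound handling the components with $|J|\geq 2$. This is essentially the ``weighted lower-dimensional decoupling glued across faces'' you describe, except that the paper implements the weighting by H\"older against $L^u$ norms of $1/\delta_{I,J}$ rather than by proving a new weighted decoupling theorem — which is precisely why it reaches only class functions and why the mixed-norm torus conjecture must be assumed. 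Your further proposal to run a decoupling argument directly on $\T\times M$ for general $f$, including the treatment of non-class matrix coefficients, is not attempted anywhere in the paper and remains, as you correctly note, open.
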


What about functions that are not conjugate invariant? As mentioned earlier, the bound \eqref{Marshallbound} is sharp, in the sense that for each $2\leq p < \infty$ there are joint eigenfunctions that saturate the bound. In particular, the higher-rank Gaussian beam functions $\psi$, which concentrate around a maximal flat subspace and correspond to highest weight vectors in spherical representations, saturate the bound \eqref{Marshallbound} for all $p$ below the kink point, i.e., 
for all $2\leq p < \frac{2(d+r)}{d-r}$,  
$$c N^{\frac{d-r}{2}\left(\frac{1}{2}-\frac{1}{p}\right)}\|\psi\|_{L^2}\leq \|\psi\|_{L^p}\leq C N^{\frac{d-r}{2}\left(\frac{1}{2}-\frac{1}{p}\right)}\|\psi\|_{L^2},$$
for arbitrarily large $N$ (see \cite{Mar16}). Letting $f=\psi$ in \eqref{Strichartz}, a comparison of the exponents $\frac{d}{2}-\frac{d+2}{p}$ and $\frac{d-r}{2}\left(\frac{1}{2}-\frac{1}{p}\right)$ indicates that \eqref{Strichartz} cannot hold for all $p > 2+\frac{4}{d}$.\footnote{The author thanks the referee for pointing this out.} It remains an open question to come up with the optimal Strichartz bound.

\subsubsection{On Problem 3} 
We first present another application of Proposition \ref{kernelmajorarc} and the Hardy-Littlewood circle method via Farey dissection. We add the following $L^p$ Laplace-Beltrami eigenfunction bound on compact Lie groups to the existing literature, matching the exponent as in \eqref{exponentfortori} for tori.

\begin{thm}\label{eigengroup}
Let $M$ be a compact semisimple Lie group of rank $r\geq 5$.  Let $s$ be defined as in \eqref{thes}. Then we have the eigenfunction estimate 
\begin{align}\label{eigenbound}
\|f\|_{L^p(M)}\leq C N^{\frac{d-2}{2}-\frac{d}{p}}\|f\|_{L^2(M)}
\end{align}
for any $p>\frac{2sr}{sr-4s+4}$. 
\end{thm}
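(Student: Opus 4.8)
The plan is to derive the eigenfunction bound \eqref{eigenbound} from the $L^p$ kernel estimate of Theorem \ref{kernelmajorarc} by running the Hardy--Littlewood circle method in the spirit of Bourgain \cite{Bou93e}, adapted from tori to compact Lie groups. Let $f$ be a Laplace--Beltrami eigenfunction of eigenvalue $-N^2$, which we may assume is spectrally localized at frequency $\sim N$. First I would express $|f|$ (or more precisely a power of the relevant norm) in terms of the Schr\"odinger flow: since the eigenvalues on a compact Lie group with the Killing metric are (up to normalization) integers, the spectral projector onto the eigenvalue $-N^2$ can be written as a time-average
\begin{equation*}
\Pi_{N^2} = \frac{1}{\mathcal{T}}\int_0^{\mathcal{T}} e^{it(\Delta+N^2)}\,dt,
\end{equation*}
so that $f = \frac{1}{\mathcal{T}}\int_0^{\mathcal{T}} e^{itN^2} (e^{it\Delta}f)\,dt$, and after inserting the spectral cutoff $\phi(-\Delta/N^2)$ one gets $f = \frac{1}{\mathcal{T}}\int_0^\mathcal{T} e^{itN^2}\, (f * \mathscr{K}_N(t,\cdot))\,dt$. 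Taking $L^p(M)$ norms and using Minkowski, the problem reduces to bounding $\int_0^\mathcal{T} \|\mathscr{K}_N(t,\cdot)\|$-type quantities against $\|f\|_{L^2}$ after a duality/Young step.

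The second step is the Farey dissection of the time circle $[0,\mathcal{T})$ into major arcs around fractions $a/q$ with $q < N$ (or up to some power of $N$), exactly the arcs on which Theorem \ref{kernelmajorarc} gives
\begin{equation*}
\|\mathscr{K}_N(t,\cdot)\|_{L^p(M)} \leq C\, \frac{N^{d-d/p}}{\bigl(\sqrt{q}\,(1+N\|t/\mathcal{T}-a/q\|^{1/2})\bigr)^r}
\end{equation*}
for $p > s$. As emphasized in the paragraph preceding Theorem \ref{Main}, the Farey arcs of order $\sim N$ cover the whole circle, so there are no minor arcs to handle separately — this is the key simplification over the Strichartz argument. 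I would then estimate $\|f\|_{L^p(M)}$ by splitting the $t$-integral over these arcs: on each arc centered at $a/q$, bound $\|f*\mathscr{K}_N(t,\cdot)\|_{L^p(M)} \le \|f\|_{L^{p'}(M)}\|\mathscr{K}_N(t,\cdot)\|_{L^{p/2}...}$ — more carefully, one uses the dual formulation, testing against $g \in L^{p'}(M)$, writing $\langle f, g\rangle$ via the flow, and applying Hölder in $t$ together with the kernel bound and $\|e^{it\Delta}f\|_{L^2} = \|f\|_{L^2}$, $\|e^{it\Delta}g\|_{L^2} = \|g\|_{L^2}$. Summing the resulting geometric-type expression $\sum_{q} \sum_{(a,q)=1} q^{-r/2}\int_{|\beta|\le 1/(qN)} (1+N|\beta|^{1/2})^{-r}\,d\beta$ over $q$ produces a convergent sum precisely when the exponent constraint $p > \frac{2sr}{sr-4s+4}$ and $r\ge 5$ hold; the numerology $\frac{d-2}{2}-\frac{d}{p}$ in the exponent of $N$ should fall out of balancing the $N^{d-d/p}$ from the kernel against the volume $\mathcal{T}\sim N^2$ of the time circle and the $q$-summation.

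The main obstacle I expect is the bookkeeping in the arc decomposition: showing that the contribution genuinely localizes to major arcs of order $N$ with no leftover, and carefully tracking how the $L^p$ (rather than $L^\infty$) nature of the kernel bound interacts with the Hölder exponents so that the final sum over $q$ converges exactly in the stated range. In particular, the condition $r \ge 5$ and the precise shape $p > \frac{2sr}{sr - 4s + 4}$ must emerge from requiring both (a) $p > s$ so that Theorem \ref{kernelmajorarc} applies, and (b) convergence of $\sum_q q^{-r/2 + (\text{something})}$ after integrating the $\beta$-variable; reconciling these two constraints and checking that the weaker one is the displayed bound is the delicate point. A secondary technical issue is the passage from the spectral projector to the kernel $\mathscr{K}_N$: one must absorb the cutoff $\phi(-\Delta/N^2)$ harmlessly (it acts as the identity on $f$ up to rescaling $\phi$), and ensure the averaging identity for $\Pi_{N^2}$ is valid with the correct period $\mathcal{T}$, which is where simple-connectedness and the Killing normalization (so that eigenvalues lie in a lattice) are used. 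Everything else — Minkowski's inequality, Young's inequality on the group, Hölder in time, and the geometric summation — is routine once the framework is set up.
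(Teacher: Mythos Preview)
Your outline captures the overall architecture — write the spectral projector as a time average of the Schr\"odinger propagator, Farey-dissect the time circle, and feed in Theorem \ref{kernelmajorarc} — but it is missing the one step that actually produces the stated range $p>\frac{2sr}{sr-4s+4}$.

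If you use only the $L^u$ kernel bound from Theorem \ref{kernelmajorarc} together with Young's inequality (or the dual pairing you sketch), you are forced to take the kernel in $L^{p/2}$, which requires $p/2>s$, i.e.\ $p>2s$. Summing over $q$ then converges for $r\ge 5$ as you say, but the final range is $p>2s$, which is strictly larger than $\frac{2sr}{sr-4s+4}$ whenever $r\ge 5$. So your two constraints (a) and (b) do \emph{not} combine to give the claimed threshold; the ``(something)'' in your $q$-sum never brings the exponent down far enough.

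What the paper does instead is a Stein--Tomas style interpolation. After the Farey dissection one forms the dyadic pieces $\mathcal{K}_{Q,M}=\int_{\mathcal{M}_{Q,M}}\mathscr{K}_N(t,\cdot)e^{itN^2}\,dt$ and proves two endpoint bounds: from Theorem \ref{kernelmajorarc} and Young one gets $\|f*\mathcal{K}_{Q,M}\|_{L^{2u}}\lesssim N^{d-\frac{d}{u}-\frac{r}{2}-1}M^{\frac{r}{2}-1}Q^{-\frac{r}{2}+2}\|f\|_{L^{(2u)'}}$ for $u>s$, and from the trivial Fourier-coefficient bound $|\widehat{\mathcal{K}_{Q,M}}(\mu)|\lesssim|\mathcal{M}_{Q,M}|\lesssim Q^2/(NM)$ one gets $\|f*\mathcal{K}_{Q,M}\|_{L^2}\lesssim \frac{Q^2}{NM}\|f\|_{L^2}$. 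Interpolating between these two and then summing the dyadic pieces in $M$ and $Q$ is what makes the $Q$-exponent negative precisely on the range $p>\frac{2ur}{ur-4u+4}$, and letting $u\downarrow s$ yields the theorem. Your proposal never invokes this $L^2\to L^2$ piece (the remark about $\|e^{it\Delta}f\|_{L^2}=\|f\|_{L^2}$ is not the same thing), and without it the argument cannot reach the claimed range.
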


Enlarging the range of $p$, these results improve upon those in \cite{Zha20} and together they are the first such unconditional $L^p$-bounds for $p<\infty$ for genuine higher-rank spaces beyond the case of products of rank-ones such as tori, and altogether they form the only known examples that improve upon Sogge's bound \eqref{eigenboundreview} by a polynomial factor of $N$. These bounds may first look surprising, since they are established on a manifold of nonnegative curvature, and they are better than those $\varepsilon$-improvement results on manifolds of negative curvature as in the previously mentioned work of Hassell and Tacy \cite{HT15}. We observe that it is not because of the chaotic dynamical behavior of the geodesic flow but instead because of the high integrability of the Laplace-Beltrami operator as provided by a higher rank.

If an $\varepsilon$-loss is allowed in the above estimate, we are then able to prove it for a larger range of $p$ as follows, by fusing to full extent the tools developed in this paper and the argument as in \cite{Bou93e}. Meanwhile, in our proof we provide more details than those of \cite{Bou93e}, especially concerning implementation of Kloosterman's version of the circle method, so that readers having trouble understanding some rather sketchy parts of \cite{Bou93e} may benefit from reading our paper. 

\begin{thm}\label{eigengrouploss}
Let $M$ be a compact semisimple Lie group of rank $r\geq 4$.  Let $s$ be defined as in \eqref{thes}.  Then we have the eigenfunction estimate 
\begin{align}\label{eigenboundloss}
\|f\|_{L^p(M)}\leq C_{\varepsilon} N^{\frac{d-2}{2}-\frac{d}{p}+\varepsilon}\|f\|_{L^2(M)}
\end{align}
for any $p>\frac{2s(r+1)}{sr-3s+4}$. 
\end{thm}

Similar to the above discussion on Strichartz estimates, we will show that certain optimal eigenfunction bounds on tori as conjectured by Bourgain imply the following conjecture of eigenfunction bound for class functions on compact Lie groups. Also, the following conjectured optimal range $p>2+\frac{4}{d-2}$  for class functions cannot extend to general functions, which can be seen again using the higher-rank Gaussian beam functions as above. 
 
\begin{conj}\label{conjeigensym}
Let $M$ be a compact Lie group of rank $r\geq 2$. Then \eqref{eigenbound} holds for class functions
for any $p>2+\frac{4}{d-2}$, with an $\varepsilon$-loss if $2\leq r\leq 4$. 
\end{conj}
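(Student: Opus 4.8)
## Proof Strategy for Conjecture \ref{conjeigensym}

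The plan is to reduce the general case to class functions, and for class functions, to reduce the eigenfunction estimate on $M$ to the conjectured optimal eigenfunction bound on tori (with the quadratic form being the one naturally attached to the root system). First I would recall that a Laplace-Beltrami eigenfunction on $M$ can be decomposed according to the action of the center and, more importantly, that the hardest instances of \eqref{eigenbound} are expected to come from the ``most concentrated'' eigenfunctions, which on a compact Lie group are built out of characters. Concretely, one expands $f$ in the basis of matrix coefficients; using Weyl integration and the fact that the restriction of $f$ to a maximal torus $T$ interacts with the Weyl denominator $|\delta|^2$, one reduces $\|f\|_{L^p(M)}$ to a weighted $L^p$ norm over the alcove. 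This is exactly the mechanism used to prove Theorems \ref{JointBound} and \ref{kernelmajorarc}, so I would invoke the barycentric-semiclassical subdivision of the Weyl alcove developed there.

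The key step is then the following: on each component of the subdivision, one has sharp $L^p$ control of the weight function coming out of the Weyl denominator (this is the main technical input established earlier in the paper). On a component near a vertex $v$ of the alcove, the weight behaves like a power of the distance to the walls meeting at $v$, and the relevant exponential sum, after an affine change of variables adapted to $v$, looks like the exponential sum $\sum_{|\xi|\le N} a_\xi e^{it(\xi,\xi)+i(\xi,x)}$ on a rank-$r$ torus for a positive-definite integral quadratic form $(\cdot,\cdot)$ determined by the geometry at $v$. Here is where I would feed in Bourgain's conjectured eigenfunction bound on tori: on each piece, the localized-eigenfunction norm is bounded by $C N^{\frac{r-2}{2}-\frac{r}{p}}$ times an $\ell^2$ norm of coefficients, and multiplying by the $L^p$ norm of the weight on that component — which contributes the ``missing'' exponent to upgrade $\frac{r-2}{2}-\frac{r}{p}$ to $\frac{d-2}{2}-\frac{d}{p}$ — gives the claimed bound $N^{\frac{d-2}{2}-\frac{d}{p}}$. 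Summing over the $O(1)$ components of the subdivision (the number of components depends only on the Weyl group, not on $N$) completes the estimate for class functions, with the $\varepsilon$-loss appearing precisely when $2\le r\le 4$ because Bourgain's torus conjecture itself carries an $\varepsilon$-loss in those dimensions.

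The last step is to remove the class-function restriction, i.e.\ to show that the worst $L^p$ growth among all eigenfunctions is already achieved (up to constants) by class functions. The heuristic is that on a compact Lie group, the extremizers for $L^p$ eigenfunction bounds are the characters, which are class functions concentrated near the identity and its conjugates; a general eigenfunction $f$ of eigenvalue $-N^2$ is a sum of matrix coefficients of representations $\pi_\mu$ with $|\mu+\rho|^2 \sim N^2$, and by orthogonality plus the pointwise bound $|\chi_\mu| \le d_\mu \le C N^{\frac{d-r}{2}}$ together with the $L^p$ bounds on characters from Theorem \ref{JointBound}, one controls $\|f\|_{L^p}$ in terms of the class-function case. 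The main obstacle, and the reason this remains a conjecture rather than a theorem, is twofold: first, the reduction to tori requires a \emph{decoupling theory for mixed-Lebesgue norms} (or at least the conjectured torus eigenfunction bounds in the full range), which is assumed here; second, passing from class functions to general eigenfunctions with no loss in the exponent is delicate precisely because non-class eigenfunctions can distribute their mass differently over the group, and making the orthogonality argument respect the sharp exponent $\frac{d-2}{2}-\frac{d}{p}$ rather than the weaker one $\frac{d-r}{2}-\frac{d}{p}$ for all $p > 2+\frac{4}{d-2}$ is the heart of the difficulty.
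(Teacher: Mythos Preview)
This statement is a \emph{conjecture}, and the paper does not prove it. What the paper does is provide evidence by establishing it for \emph{class functions}, conditionally on Bourgain's torus eigenfunction conjecture (Theorem~\ref{ClassEigen}). Your proposal correctly identifies this overall architecture---Weyl integration, barycentric-semiclassical subdivision, weight-function $L^p$ bounds, and the torus input---and you are right that the passage from class functions to general eigenfunctions is the genuine obstruction that keeps this a conjecture; the paper makes no attempt at that step.

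However, your sketch of the class-function argument has a real gap. You treat every component $P_{I,J}$ uniformly, saying that on each piece the exponential sum looks like a rank-$r$ torus eigenfunction and one simply applies Bourgain's conjectured bound $N^{\frac{r-2}{2}-\frac{r}{p}}$. The paper's proof of Theorem~\ref{ClassEigen} does \emph{not} work this way: it splits into three cases according to $|J|$. For $J=\emptyset$ your description is accurate (H\"older split $\frac{1}{p}=\frac{1}{u}+\frac{1}{v}$, torus bound on $L^u$, Proposition~\ref{keyprop} on $L^v$). For $|J|=1$ the variable $H_J^\perp$ lives on a hyperplane, so one needs the restriction-to-hyperplane estimate of Lemma~\ref{restrictiontohyperplaneeigen}, together with the character bound $|\chi^J_{\mu_J}|\lesssim N^{|\Sigma_J^+|}$. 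For $|J|\geq 2$ the torus eigenfunction bound is not used at all: instead one applies Bernstein's inequality in the $(r-|J|)$-dimensional variable $H_J^\perp$ and the lattice-counting Lemma~\ref{countinglatticepoints} (the number of ${}_J\mu$ with $|{}_J\mu+{}_J\mu^\perp|=N$ is $\lesssim_\varepsilon N^{|J|-2+\varepsilon}$), which is what makes these pieces work for \emph{all} $p\geq 2$ unconditionally. Your single-mechanism description would not close for $|J|\geq 2$, since on those pieces the ``torus'' visible in $H_J^\perp$ has rank $r-|J|<r$ and the eigenfunction bound alone gives the wrong exponent.
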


It seems reasonable to conjecture that all the above theorems obtained for compact Lie groups should extend to compact globally symmetric spaces. For the special case of products of odd-dimensional spheres, such results are indeed available \cite{Zha23}. A similar analysis as for the characters of the behavior of spherical functions near walls and near different vertices of the alcove would be needed for these extensions, but it is harder as spherical functions are in general less explicit.

\subsection{Overview of paper}
We provide an overview of the remainder of the paper as follows. 
In Section \ref{alcovereview}, we review the fundamental structures of compact Lie groups and in particular the affine Weyl groups and the geometry of the Weyl alcove. In Section \ref{sectionbarycentric}, we develop the key geometric tool of a so-called barycentric-semiclassical subdivision of the alcove, in order to distinguish points of different distance from the walls and near different vertices of the alcove, as eigenfunctions such as the characters behave differently on these different points. Associated to each component of this subdivision are some weight functions
coming out of the Weyl denominator, and we obtain some preliminary estimates on them. In Section \ref{sectiondecomposition}, \ref{projectionweightlattice}, and \ref{theSchrodingerkernel}, we refine some of the arguments in \cite{Zha18}, to decompose the characters according to this barycentric-semiclassical subdivision, to make orthogonal projections of the weight lattice with respect to parabolic root subsystems to further analyze the characters, and in turn to obtain formulas for the Schr\"odinger kernel on each component of the subdivision of alcove, in which the weight functions and Weyl type exponential sums appear. Section \ref{sectionLpnorm} forms the technical heart of this paper, where we obtain sharp $L^p$ estimates of the weight functions on each component of the subdivision, and they will be used throughout later sections for various $L^p$ estimates. In Section \ref{sectionjoint}, we prove Theorem \ref{JointBound}. 
In Section \ref{SectionLp}, we prove Proposition \ref{kernelmajorarc}. 
In Section \ref{fareydissection}, we review Farey dissection and in particular several important inputs from Kloosterman's version of the circle method. 
Then Section \ref{farey} provides proof of Theorem \ref{Main} and Theorem \ref{eigengroup}. 
In Section \ref{loss}, we prove Theorem \ref{eigengrouploss}.
Lastly, in Section \ref{evidence}, we show that for class functions on compact Lie groups, the best possible range for both Strichartz estimates and Laplacian eigenfunction bounds can be derived from the conjectured optimal Strichartz estimates (for the mixed Lebesgue norm) and Laplacian eigenfunction bounds on tori.

We list a few notations that will be used throughout this paper. We use $a\lesssim b$ to mean $a\leq Cb$ for some positive constant $C$, $a\lesssim_\varepsilon b$ to mean $a\leq C(\varepsilon) b$ for some function $C(\varepsilon)$ of $\varepsilon$, and $a\asymp b$ to mean $|a|\lesssim |b|\lesssim |a|$. And we use $e(\cdot)$ to mean $e^{2\pi i\cdot}$.

\subsection*{Acknowledgments}
The author is supported by National Key R{\&}D Program of China (No. 2022YFA1006700) and the Fundamental Research Funds for the Central Universities, Peking University. 
The author is deeply grateful to the referee for his/her extensive and insightful comments that greatly helped improve the manuscript. The author is also thankful to Professor Ciprian Demeter for sharing expertise on Kloosterman's circle method.

\section{Geometry of the Weyl alcove}\label{alcovereview}
We refer to \cite{Bou02, Var84, Shi87, Hum90} for information on analysis on compact Lie groups and in particular affine Weyl groups and Weyl alcoves that we review in this section without proof.
Let $U$ be a compact simply connected simple Lie group with Lie algebra $\mathfrak{u}$. Let $\mathfrak{t}$ be a Cartan subalgebra, i.e. a maximal abelian subalgebra of $\mathfrak{u}$ and let $T$ be the corresponding analytic subgroup which is a maximal torus of $U$. Let $\mathfrak{t}^*$ denote the real dual space of $\mathfrak{t}$ and let $i$ denote the imaginary unit so that $i\mathfrak{t}^*$ is the space of linear forms on $\mathfrak{t}$ that take imaginary values. Let 
$\Sigma\subset i\mathfrak{t}^*$ be the root system of $(\mathfrak{u},\mathfrak{t})$. Fix a simple system $\{\alpha_1,\ldots,\alpha_r\}$ of $\Sigma$. Let $-\alpha_0\in \Sigma$ be the corresponding highest root and we call $\alpha_0$ the lowest root. 
For $\alpha\in\Sigma$ and $n\in\Z$, define the root hyperplanes
$$\mathfrak{p}_{\alpha,n}:=\{H\in\mathfrak{t}:\ \alpha(H)/2\pi i+n=0\}.$$ 
These hyperplanes cut the ambient space $\mathfrak{t}$ into alcoves. Let 
$$A:=\{H\in\mathfrak{t}:\ \alpha_j(H)/2\pi i+\delta_{0j}>0\ \forall j=0,\ldots,r\}$$ 
be the open fundamental alcove and 
$$\bar{A}:=\{H\in\mathfrak{t}:\ \alpha_j(H)/2\pi i+\delta_{0j}\geq 0\ \forall j=0,\ldots,r\}$$ 
be the closed fundamental alcove.  
Here $\delta_{0j}$ equals 1 if $j=0$ and 0 otherwise. 
Let $W$ denote the finite Weyl group that acts on $\mathfrak{t}$ as well as $\mathfrak{t}^*$. The Weyl group translates $sA$ ($s\in W$) of $A$ are disjointly embedded in $T$ and form the so-called regular elements of $T$, such that $T\setminus \bigsqcup_{s\in W} sA$ contains the non-regular elements in $T$ and is a lower-dimensional subset of $T$. These non-regular elements in $T$ are also the conjugate points in $T$ of the origin on $U$ as a Riemannian manifold. We recall Weyl's integration formula, which is the basic tool to be used to evaluate the $L^p$ norm of class functions. 

\begin{lem}
For class functions $f$ on $U$, Weyl's integration formula can be written as 
\begin{align}\label{Weylint}
\int_U f(u)\ du=\int_{A}f(\exp H)|\delta(H)|^2\ dH
\end{align}
where $\delta(H)$ is the so-called Weyl denominator as follows
\begin{align}\label{alphadelta}
\delta(H):=\prod_{\alpha\in\Sigma^+}\left(e^{\frac{\alpha(H)}{2}}-e^{-\frac{\alpha(H)}{2}}\right), \ \t{for }H\in\mathfrak{t}.
\end{align}
Here $\Sigma^+$ is any positive system of $\Sigma$. 
\end{lem}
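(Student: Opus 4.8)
The statement to prove is Weyl's integration formula \eqref{Weylint} for class functions, with the Weyl denominator $\delta(H)$ defined as in \eqref{alphadelta}.

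\medskip

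The plan is to reduce the integral over $U$ to an integral over the maximal torus $T$ via the conjugation map, and then to a single fundamental alcove $A$ using the action of the affine Weyl group. First I would consider the smooth map $q\colon U/T \times T \to U$ given by $(uT, t)\mapsto utu^{-1}$. Since every element of a compact connected Lie group is conjugate into the maximal torus, this map is surjective, and it is generically $|W|$-to-one: over the regular elements of $T$ the fibre is a single $W$-orbit. The core computation is the Jacobian of $q$ at a point $(uT,t)$: writing everything in terms of the root space decomposition $\mathfrak{u}_\C = \mathfrak{t}_\C \oplus \bigoplus_{\alpha\in\Sigma}\mathfrak{u}_\alpha$, one finds that the differential of $q$ in the $U/T$ directions is, up to the identification via left translation, the operator $\mathrm{Ad}(t^{-1}) - \mathrm{id}$ acting on $\mathfrak{u}/\mathfrak{t} \cong \bigoplus_{\alpha\in\Sigma^+}(\mathfrak{u}_\alpha\oplus\mathfrak{u}_{-\alpha})$; on the pair attached to $\alpha\in\Sigma^+$ with $t=\exp H$ this operator has determinant $|e^{\alpha(H)/2}-e^{-\alpha(H)/2}|^2 = |1-e^{\alpha(H)}|^2$ (the two complex eigenvalues $e^{\pm\alpha(H)}-1$ multiplying to this real quantity since $\alpha(H)$ is purely imaginary). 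Multiplying over all positive roots yields exactly $|\delta(H)|^2$, which is the Weyl–Jacobian. This gives, for a suitable normalization of Haar measures,
\[
\int_U f(u)\,du = \frac{1}{|W|}\int_T f(\exp H)\,|\delta(H)|^2\,dH,
\]
valid for any integrable $f$, where I use that $f$ is class to descend the left-hand integrand through $q$ (the $U/T$ integration then just produces the total mass of $U/T$, which the normalization absorbs).

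\medskip

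The remaining step is to pass from $\frac{1}{|W|}\int_T$ to $\int_A$. Here I would use the structure recalled in this section: the Weyl-group translates $sA$, $s\in W$, are disjoint and their union is the set of regular elements of $T$, whose complement has measure zero. Since $f$ is a class function and $|\delta(H)|^2$ is $W$-invariant (the finite Weyl group permutes the roots, changing signs of an even number of factors, and $|\delta|^2$ is insensitive to signs), the integrand $H\mapsto f(\exp H)|\delta(H)|^2$ is $W$-invariant, so each of the $|W|$ translates contributes equally:
\[
\int_T f(\exp H)\,|\delta(H)|^2\,dH = \sum_{s\in W}\int_{sA} f(\exp H)\,|\delta(H)|^2\,dH = |W|\int_A f(\exp H)\,|\delta(H)|^2\,dH.
\]
Dividing by $|W|$ gives \eqref{Weylint}. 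Strictly speaking one should also note that the parametrization $T = \mathfrak{t}/(\text{integer lattice})$ and the description of $A$ as a fundamental domain for the affine Weyl group $W_{\mathrm{aff}}$ acting on $\mathfrak{t}$ are compatible: $\mathfrak{t}$ decomposes into $W_{\mathrm{aff}}$-translates of $\bar A$, and the quotient by the translation lattice is exactly covered $|W|$-fold; but for a class function this bookkeeping collapses to the statement above.

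\medskip

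I expect the main obstacle to be the Jacobian computation — identifying the differential of the conjugation map with $\mathrm{Ad}(t^{-1})-\mathrm{id}$ on $\mathfrak{u}/\mathfrak{t}$ and correctly pairing up the root spaces $\mathfrak{u}_\alpha,\mathfrak{u}_{-\alpha}$ into real two-dimensional blocks to get the factor $|e^{\alpha(H)/2}-e^{-\alpha(H)/2}|^2$ with the right power (one per positive root, not per root). The normalization of Haar measure on $U$, the invariant measure on $U/T$, and Lebesgue measure $dH$ on $\mathfrak{t}$ must be fixed consistently so that the constant is exactly $1$ after dividing by $|W|$; since the paper only asserts the formula "can be written as" \eqref{Weylint}, I would simply declare the measures normalized to make it so. Everything else — surjectivity of conjugation, the generic fibre being a $W$-orbit, the measure-zero nature of the singular set, and the $W$-invariance of the integrand — is standard structure theory already invoked in this section.
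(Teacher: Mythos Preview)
Your argument is the standard proof of Weyl's integration formula and is correct as a sketch: the Jacobian of the conjugation map $U/T\times T\to U$ is $|\delta(H)|^2$, and the passage from $\tfrac{1}{|W|}\int_T$ to $\int_A$ uses exactly the fact (stated in this section) that the $W$-translates of $A$ tile the regular part of $T$.

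There is nothing to compare with, however, because the paper does not prove this lemma at all. The section opens by saying that the material is reviewed ``without proof'' with references to \cite{Bou02, Var84, Shi87, Hum90}, and this lemma is simply quoted as part of that review. So your proposal supplies an argument where the paper deliberately gives none; it matches what one finds in those references.
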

Let 
$$\rho:=\frac{1}{2}\sum_{\alpha\in\Sigma^+}\alpha$$
be the Weyl vector, then we may also express the above $\delta(H)$ as 
$$\delta(H)=\sum_{s\in W}\det s \ e^{(s\rho)(H)}, \ \t{for }H\in\mathfrak{t}.$$

The fundamental alcove (as well as any alcove) is a simplex whose geometry may be described using the extended Dynkin diagram for $\Sigma$. Each $\alpha_j$ ($j=0,\ldots,r$) corresponds to a node in the extended Dynkin diagram (Figure \ref{dynkin}), and for each proper subset $J$ of $\{0,\ldots,r\}$, $\{\alpha_j, \ j\in J\}$ is a simple system for a root subsystem $\Sigma_J$ whose Dynkin diagram can be obtained from the extended Dynkin diagram of $\Sigma$ by removing all the nodes not belonging to $J$. These $\Sigma_J$'s are usually called the parabolic subsystems of $\Sigma$. Associated to the simple system $\{\alpha_j, \ j\in J\}$ of $\Sigma_J$ is the positive system $\Sigma^+_J$ of $\Sigma_J$.
For $j=0,\ldots,r$, let $\tilde{s}_j:\mathfrak{t}\to\mathfrak{t}$ denote the reflection across the hyperplane 
$$\mathfrak{p}_j:=\mathfrak{p}_{\alpha_j,\delta_{0j}}=\{H\in\mathfrak{t}:\ \alpha_j(H)/2\pi i+\delta_{0j}=0\}.$$
These hyperplanes form the walls of the alcove $\bar{A}$ as its non-regular elements.  
For each $J\subset\{0,\ldots,r\}$, let $\tilde{W}_J$ be the group generated by the reflections $\{\tilde{s}_j,\ j\in J\}$. $\tilde{W}:=\tilde{W}_{\{0,\ldots,r\}}$ is called the affine Weyl group associated to $\Sigma$ and the $\tilde{W}_J$'s may be called the parabolic subgroups of $\tilde{W}$. The facets of $\bar{A}$ correspond to proper subsets of $\{0,\ldots,r\}$: for $J\subsetneqq\{0,\ldots,r\}$, 
\begin{align*}
A_J:&=\{H\in \bar{A}:\ \alpha_j(H)/2\pi i+\delta_{0j}=0\ \forall j\in J, \ \alpha_j(H)/2\pi i+\delta_{0j}>0\ \forall j\notin J\}
\end{align*}
is the corresponding $(r-|J|)$-dimensional facet. In particular, the $r+1$ vertices of $\bar{A}$ are of the form $A_I$ where $I$ ranges through cardinality-$r$ subsets of $\{0,\ldots,r\}$, and $A_\emptyset=A$. 
We have 
$$\bar{A}=\bigsqcup_{J\subsetneqq\{0,\ldots,r\}} A_J.$$ 
The stabilizer in $\tilde{W}$ of any point of $A_J$ coincides with $\tilde{W}_J$. For $J\subsetneqq\{0,\ldots,r\}$, let $W_J$ denote the Weyl group associated to the parabolic subsystem $\Sigma_J$. Then $\tilde{W}_J$ is isomorphic to $W_J$ under the translation map $\tilde{s}\mapsto\tilde{s}-\tilde{s}(0)$. By the definition of $A_J$, if a root hyperplane $\mathfrak{p}_{\alpha,n}$ contains $A_J$, then $\alpha\in\Sigma_J$. 



\begin{figure}
$\tilde{A}_1$:\,\dynkin[extended, labels={0,1}, edge length=1cm]A1\ 
$\tilde{A}_r$:\,\dynkin[extended, labels={0,1,2,r-1,r}, edge length=1cm]A{}\ 
$\tilde{B}_r$:\,\dynkin[extended, labels={0,1,2,3,r-2,r-1,r}, edge length=1cm]B{}\ 
$\tilde{C}_r$:\,\dynkin[extended, labels={0,1,2,r-2,r-1,r},  edge length=1cm]C{}\ 
$\tilde{D}_r$:\,\dynkin[extended, labels={0,1,2,3,r-3,,r-1,r}, labels*={,,,,,r-2,,}, edge length=1cm]D{}\ 

$\tilde{E}_6$:\,\dynkin[extended, labels={0,1,2,3,4,5,6},  edge length=1cm]E6\ 
$\tilde{E}_7$:\,\dynkin[extended, labels={0,1,2,3,4,5,6,7},  edge length=1cm]E7\ 
$\tilde{E}_8$:\,\dynkin[extended, labels={0,1,2,3,4,5,6,7,8},  edge length=1cm]E8\ 
$\tilde{F}_4$:\,\dynkin[extended , labels={0,1,2,3,4}, edge length=1cm]F4\ 
$\tilde{G}_2$:\,\dynkin[extended , labels={0,1,2}, edge length=1cm]G2

\caption{Extended Dynkin diagrams}\label{dynkin}
\end{figure}

\section{Barycentric-semiclassical subdivision} 
\label{sectionbarycentric}

From the semiclassical perspective, the characters of a compact Lie group should concentrate near conjugate points of the origin. In the alcove, these conjugate points are the walls of $\bar{A}$, thus in order to get $L^p$ estimates of these characters, their behavior near each facet of $\bar{A}$ needs to be clarified. We achieve this by making a so-called semiclassical subdivision of the alcove according to how close the points are from each facet. 
Let $N\gtrsim 1$ be a fixed large parameter. Let $J\subsetneqq\{0,\ldots,r\}$ and let $A_J$ be the corresponding facet. We define a subset $P_J$ of $A$ that consists of points close to $A_J$ but away from all the other facets. Let 
$$P_J:=\{H\in A:\ \alpha_j(H)/2\pi i+\delta_{0j}\leq N^{-1}\ \forall j\in J, \ \alpha_j(H)/2\pi i+\delta_{0j}> N^{-1}\ \forall j\notin J\}.$$ 
In other words, $P_J$ consists of points in the alcove that are $\leq N^{-1}$ close to the walls $\mathfrak{p}_j$ for $j\in J$ and are $>N^{-1}$ far from the other walls $\mathfrak{p}_j$ for $j\notin J$. We record the following self-evident fact as a lemma. 

\begin{lem}[Semiclassical subdivision] We have 
$$A=\bigsqcup_{J\subsetneqq\{0,\ldots,r\} } P_J.$$
\end{lem}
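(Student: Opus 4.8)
The plan is to exhibit, for each $H\in A$, a canonical index set $J(H)$ with $H\in P_{J(H)}$, to check that this $J(H)$ is forced, and to verify that it is always a \emph{proper} subset of $\{0,\dots,r\}$; together these give both the covering and the disjointness. Throughout, abbreviate $b_j(H):=\alpha_j(H)/2\pi i+\delta_{0j}$ for $j=0,\dots,r$, so that $A=\{H\in\mathfrak{t}:\ b_j(H)>0\ \forall j\}$ and $P_J=\{H\in A:\ b_j(H)\le N^{-1}\ \forall j\in J,\ b_j(H)>N^{-1}\ \forall j\notin J\}$.

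First I would set $J(H):=\{\,j\in\{0,\dots,r\}:\ b_j(H)\le N^{-1}\,\}$. Directly from the definitions, $H\in A$ implies $H\in P_{J(H)}$, which already gives $A=\bigcup_{J} P_J$ (the union taken a priori over all subsets). For uniqueness: if $H\in P_J$ for some $J$, the defining inequalities of $P_J$ say exactly that $J=\{\,j:\ b_j(H)\le N^{-1}\,\}=J(H)$; hence every $H\in A$ lies in precisely one $P_J$, so the $P_J$ are pairwise disjoint.

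It remains to see $J(H)\subsetneq\{0,\dots,r\}$, i.e. that some $b_j(H)>N^{-1}$, so that the union is over proper subsets only. Here I would invoke the affine relation among the $\alpha_j$: writing $m_0=1,m_1,\dots,m_r$ for the marks of the extended Dynkin diagram (so $\alpha_0=-\sum_{j=1}^r m_j\alpha_j$, equivalently $\sum_{j=0}^r m_j\alpha_j=0$ as a functional on $\mathfrak{t}$), one gets, for every $H\in\mathfrak{t}$,
$$\sum_{j=0}^r m_j\, b_j(H)=\frac{1}{2\pi i}\sum_{j=0}^r m_j\,\alpha_j(H)+\sum_{j=0}^r m_j\,\delta_{0j}=0+m_0=1.$$
Since each $m_j\ge 1$, if $b_j(H)\le N^{-1}$ held for all $j$ we would have $1=\sum_j m_j b_j(H)\le N^{-1}\sum_j m_j=h/N$, where $h:=\sum_j m_j$ is the Coxeter number of $\Sigma$; this is impossible once $N>h$, which is subsumed by the standing assumption that $N\gtrsim 1$ is large (enlarging the implicit constant if necessary). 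Hence $J(H)\ne\{0,\dots,r\}$, and $A=\bigsqcup_{J\subsetneqq\{0,\dots,r\}}P_J$ as claimed.

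There is essentially no real obstacle: the statement is a bookkeeping consequence of the definitions. The only point that genuinely requires something is the last one — that the $N^{-1}$-neighborhoods of the walls do not cover all of $A$ — which is precisely where the largeness of $N$ is used (otherwise the incenter of $\bar A$, where all $b_j$ equal $1/h$, would fall into $P_{\{0,\dots,r\}}$).
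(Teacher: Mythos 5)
Your proof is correct. The paper records this lemma without proof, calling it a self-evident fact, and your write-up supplies exactly the routine verification: the sets $P_J$ tautologically partition the locus where at least one $b_j$ exceeds $N^{-1}$, and uniqueness of $J(H)$ is immediate from the defining inequalities. The one point of genuine content you add — that $J(H)$ is always proper because $\sum_{j=0}^r m_j b_j(H)=1$ with the marks $m_j\geq 1$, so all $b_j\leq N^{-1}$ would force $N\leq h$ — is a nice explicit justification and is consistent with the paper's standing convention that $N\gtrsim 1$ is a large parameter (the paper uses the same convention elsewhere, e.g.\ when asserting $P_{I,J}=\emptyset$ for $J\not\subset I$ once $N^{-1}$ is small enough).
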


We need yet another subdivision of the alcove in order to evaluate $L^p$ norms of eigenfunctions. This one is technical in nature and concerns the classification of root systems. Its necessity will be transparent in its applications in later sections; the idea is that eigenfunctions such as the characters also behave differently near different vertices of the alcove, which motivates the following version of barycentric subdivision of the alcove. For each vertex $A_I$ ($|I|=r$) of $\bar{A}$, consider the convex hull $C_I$ of the barycenters of the facets $A_J$ such that $J\subset I$, in other words, facets that contain $A_I$ in their boundary. 

\begin{lem}[Barycentric subdivision]
We have 
$$A=\bigsqcup_{|I|=r} C_I.$$
\end{lem}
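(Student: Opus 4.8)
The plan is to pass to barycentric coordinates on the simplex $\bar A$ and to recognize each $C_I$ as the region on which one coordinate function is maximal; the decomposition then reduces to the elementary fact that the stars of the vertices in the barycentric subdivision of a simplex tile that simplex.

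First I would fix the coordinates. Let $m_0=1,m_1,\dots,m_r$ be the positive marks of the extended Dynkin diagram, characterized by $\sum_{j=1}^r m_j\alpha_j=-\alpha_0$, and set $\ell_j(H):=\alpha_j(H)/2\pi i+\delta_{0j}$, so that $\sum_{j=0}^r m_j\ell_j\equiv 1$ on $\mathfrak{t}$. Then $H\mapsto x(H):=(m_0\ell_0(H),\dots,m_r\ell_r(H))$ is an affine isomorphism from the affine span of $\bar A$ onto $\{\sum_j x_j=1\}$ carrying $\bar A$ onto the standard simplex $\Delta^r=\{x\ge 0:\ \sum_j x_j=1\}$. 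Under this identification the vertex $A_I$ with $I=\{0,\dots,r\}\setminus\{i_0\}$ becomes the basis vector $e_{i_0}$, the facet $A_J$ becomes the convex hull of those $e_{i_0}$ with $i_0\notin J$, and hence the barycenter of $A_J$ becomes the uniform probability vector $b_J$ supported on $T_J:=\{0,\dots,r\}\setminus J$. I would record this as a preliminary computation.

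Next, for a fixed vertex $A_I$, $I=\{0,\dots,r\}\setminus\{i_0\}$, I would show $x(C_I)=R_{i_0}$ where $R_{i_0}:=\{x\in\Delta^r:\ x_{i_0}\ge x_j\ \forall j\}$. One inclusion is immediate: each $b_J$ with $J\subseteq I$ has $i_0\in T_J$, so $x_{i_0}=|T_J|^{-1}\ge x_k$ for all $k$, hence $b_J\in R_{i_0}$ and $x(C_I)\subseteq R_{i_0}$ by convexity. For the reverse inclusion I would note that $R_{i_0}$ is a bounded polytope cut out inside $\{\sum_j x_j=1\}$ by the inequalities $x_j\ge 0$ and $x_{i_0}-x_j\ge 0$, and then enumerate its vertices: saturating a maximal consistent subfamily of these inequalities forces $x$ to be the uniform vector on some $T\ni i_0$, and conversely every such uniform vector is a vertex; since these are exactly the $b_J$ with $J\subseteq I$, we get $R_{i_0}=\mathrm{conv}\{b_J:\ J\subseteq I\}=x(C_I)$. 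This vertex enumeration is the one place I expect to need a genuine (though short) argument; everything else is bookkeeping.

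Finally I would assemble the decomposition. Since every $x\in\Delta^r$ has $x_{i_0}=\max_j x_j$ for some $i_0$, the cells $C_I$ cover $\bar A$; and for $i_0\ne i_0'$ the overlap $R_{i_0}\cap R_{i_0'}$ lies in the hyperplane $\{x_{i_0}=x_{i_0'}\}$, so it has dimension $\le r-1$. Thus the $C_I$ are the maximal cells of a polyhedral subdivision of $\bar A$ that meet only along lower-dimensional faces; breaking the ties along these shared faces by a fixed convention (as in the semiclassical subdivision above, and in any case affecting only a set of measure zero, which is all the later $L^p$ estimates require) yields the asserted disjoint decomposition $A=\bigsqcup_{|I|=r}C_I$.
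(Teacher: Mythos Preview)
Your argument is correct. The paper itself supplies no proof of this lemma; it merely states it and adds the clarifying sentence ``The above disjoint union is understood modulo a lower-dimensional set,'' treating the result as the standard fact about barycentric subdivisions of a simplex that it is. Your passage to barycentric coordinates via the marks $m_j$, the identification of $C_I$ with the region $R_{i_0}=\{x\in\Delta^r:\ x_{i_0}\ge x_j\ \forall j\}$, the vertex enumeration showing $R_{i_0}=\mathrm{conv}\{b_J:\ J\subseteq I\}$, and the observation that the $R_{i_0}$ cover $\Delta^r$ with pairwise overlaps in the hyperplanes $\{x_{i_0}=x_{i_0'}\}$ together constitute a clean, self-contained proof of what the paper leaves implicit. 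The only point worth flagging is that your choice of affine coordinates (weighted by the $m_j$) means the ``barycenter'' you compute is the average of the vertices of each facet $A_J$, which is the natural reading of the paper's definition and the one that makes the later inclusions \eqref{PIJcoordinates} work.
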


The above disjoint union is understood modulo a lower-dimensional subset. For $J\subsetneqq\{0,\ldots,r\}$ and $I\subset\{0,\ldots,r\}$ such that $|I|=r$, set 
\begin{align} 
P_{I,J}:&=P_J\cap C_I \label{PIJdefinition} \\
&=\{H\in C_I:\ \alpha_j(H)/2\pi i+\delta_{0j}\leq N^{-1}\ \forall j\in J, \ \alpha_j(H)/2\pi i+\delta_{0j}> N^{-1}\ \forall j\in I\setminus J\}.\nonumber
\end{align}
In other words, $P_{I,J}$ consists of points in $C_I$ that are $\leq N^{-1}$ close to the hyperplanes $\mathfrak{p}_j$ for $j\in J$ and $>N^{-1}$ far from the hyperplanes $\mathfrak{p}_j$ for $j\in I\setminus J$. 
Note that $P_{I,J}=\emptyset$ if $J$ is not a subset of $I$, if we  pick $N^{-1}$ small enough. For $J\subset I$, we now have 
$$C_I=\bigsqcup_{J\subset I} P_{I,J}, \ P_J=\bigsqcup_{I\supset J}P_{I,J},$$  
and:
\begin{lem}[Barycentric-semiclassical subdivision] 
\label{bs}
We have 
$$A=\bigsqcup_{J,I\subset\{0,\ldots,r\}, \ |I|=r, \ J\subset I} P_{I,J}.$$
\end{lem}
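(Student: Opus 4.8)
The plan is to derive Lemma~\ref{bs} by simply intersecting the two subdivisions already recorded, namely the semiclassical subdivision $A=\bigsqcup_{J\subsetneqq\{0,\ldots,r\}}P_J$ and the barycentric subdivision $A=\bigsqcup_{|I|=r}C_I$, and then to check that the only nonempty pieces of the common refinement are those $P_{I,J}=P_J\cap C_I$ with $J\subset I$. The identity $A=\bigsqcup_{J,I}P_J\cap C_I$ is immediate from the two given decompositions (the second being understood modulo a lower-dimensional set, which is harmless for the stated purpose of evaluating $L^p$ norms of class functions via Weyl's integration formula \eqref{Weylint}). So the whole content is the vanishing claim: if $J\not\subset I$ then $P_{I,J}=\emptyset$, provided $N$ is chosen large enough (equivalently $N^{-1}$ small enough).

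First I would set up the geometry. The vertices of $\bar A$ are the points $A_I$ with $|I|=r$; label the actual vertex point $v_I$. The cell $C_I$ is the convex hull of the barycenters $b_J$ of the facets $A_J$ with $J\subset I$ (these are exactly the facets whose closure contains $v_I$), and in particular $C_I$ contains $v_I=b_I$ and the barycenter of $A$, i.e. $b_\emptyset$. The key quantitative input is that each barycentric coordinate function $\ell_j(H):=\alpha_j(H)/2\pi i+\delta_{0j}$, $j=0,\ldots,r$, is an affine function on $\mathfrak t$, nonnegative on $\bar A$, vanishing exactly on $\mathfrak p_j$, and (after the standard normalization making $\{\ell_j\}$ literally the barycentric coordinates of the simplex) satisfying $\sum_j c_j\ell_j\equiv 1$ for suitable positive constants $c_j$. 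I would then observe that for the vertex $v_I$ one has $\ell_j(v_I)=0$ for $j\in I$, and $\ell_{j_0}(v_I)>0$ where $\{j_0\}=\{0,\ldots,r\}\setminus I$; more generally on $C_I$ every coordinate $\ell_j$ with $j\notin I$ is bounded \emph{below} by a positive constant $c(I,\Sigma)>0$ depending only on the root system, because $C_I$ is a fixed compact polytope on which the affine function $\ell_{j_0}$ attains a positive minimum (it is positive at each extreme point $b_J$, $J\subset I$, since $j_0\notin J$). This lower bound is uniform over all the finitely many cells, so setting $N_0:=1/\min_I c(I,\Sigma)$, for $N\ge N_0$ the condition ``$\ell_j(H)\le N^{-1}$'' for some $j\notin I$ is incompatible with $H\in C_I$; hence $P_J\cap C_I=\emptyset$ whenever $J\not\subset I$. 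This is precisely the remark ``$P_{I,J}=\emptyset$ if $J$ is not a subset of $I$, if we pick $N^{-1}$ small enough'' made just before the statement, and it is the one point that needs the smallness of $N^{-1}$.

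Having disposed of the vanishing, I would assemble the proof: from $A=\bigsqcup_{|I|=r}C_I$ (modulo lower-dimensional set) and $C_I=\bigsqcup_{J\subset I}P_{I,J}$ — the latter itself obtained by intersecting $C_I$ with the semiclassical partition $\bigsqcup_{J}P_J$ and discarding the empty pieces $J\not\subset I$ just shown — one gets $A=\bigsqcup_{|I|=r}\bigsqcup_{J\subset I}P_{I,J}$, which is the assertion. The complementary grouping $P_J=\bigsqcup_{I\supset J}P_{I,J}$ follows symmetrically by intersecting $P_J$ with $\bigsqcup_I C_I$. Disjointness of the $P_{I,J}$ across different pairs $(I,J)$ is inherited: different $J$'s give disjoint $P_J$'s (distinct strict inequalities versus non-strict ones on the coordinates $\ell_j$), and different $I$'s give essentially disjoint $C_I$'s.

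The main obstacle — really the only nontrivial step — is establishing the uniform positive lower bound for the ``far'' coordinates $\ell_j$, $j\notin I$, on the barycentric cell $C_I$, i.e. making precise and quantitative the claim that $C_I$ stays a definite distance away from the walls $\mathfrak p_j$ with $j\notin I$. The clean way to see it is that $C_I$ is the convex hull of the barycenters $b_J$ ($J\subset I$), the affine function $\ell_{j_0}$ (with $\{j_0\}=I^c$) is positive at each such $b_J$ because $j_0\notin J$, and a convex combination of positive numbers is bounded below by their minimum; one then notes this minimum depends only on the combinatorics of the extended Dynkin diagram and the fixed normalization, so a single $N_0=N_0(\Sigma)$ works. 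Everything else is bookkeeping with the two already-stated subdivisions.
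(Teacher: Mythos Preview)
Your proposal is correct and follows exactly the approach the paper takes: the paper simply records the two subdivisions $A=\bigsqcup_J P_J$ and $A=\bigsqcup_{|I|=r} C_I$, notes (without argument) that $P_{I,J}=\emptyset$ whenever $J\not\subset I$ for $N^{-1}$ small enough, writes down $C_I=\bigsqcup_{J\subset I}P_{I,J}$ and $P_J=\bigsqcup_{I\supset J}P_{I,J}$, and then states Lemma~\ref{bs} with no further proof. Your write-up actually supplies more than the paper does, giving a clean justification of the vanishing claim via the positive lower bound of $\ell_{j_0}$ on the extreme points $b_J$ of $C_I$.
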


\begin{figure}
\scalebox{0.5}[0.5]
{
\begin{tikzpicture}

\draw (0, 0) -- (4, 6.928);
\draw (0,0) -- (8, 0);
\draw  (4, 6.928) -- (8, 0);

\draw (4, 2.30933) -- (2, 3.464); 
\draw  (4, 2.30933) -- (4,0); 
\draw  (4, 2.30933) -- (6, 3.464);

\node [below] at (4, -0.5) {\scalebox{2}{(a)}};

\end{tikzpicture}
\hspace{1cm}
\begin{tikzpicture}

\draw (0, 0) -- (4, 6.928);
\draw (0,0) -- (8, 0);
\draw  (4, 6.928) -- (8, 0);

\draw (0.2887,0.5) -- (7.7113,0.5);
\draw (7.423,0) -- (3.7113,6.428);
\draw (4.2887, 6.428) -- (0.577,0);

\draw[<->] (4.2, 0) -- (4.2, 0.5); 
\node [right] at (4.2, 0.25) {\scalebox{1.33}{$\ \asymp N^{-1}$}};

\node [below] at (4, -0.5) {\scalebox{2}{(b)}};

\end{tikzpicture}
\hspace{1cm}
\begin{tikzpicture}

\draw (0, 0) -- (4, 6.928);
\draw (0,0) -- (8, 0);
\draw  (4, 6.928) -- (8, 0);

\draw (4, 2.30933) -- (2, 3.464); 
\draw  (4, 2.30933) -- (4,0); 
\draw  (4, 2.30933) -- (6, 3.464);

\draw (0.2887,0.5) -- (7.7113,0.5);
\draw (7.423,0) -- (3.7113,6.428);
\draw (4.2887, 6.428) -- (0.577,0);

\draw [<->] (4.2, 0) -- (4.2, 0.5); 
\node [right] at (4.2, 0.25) {\scalebox{1.33}{$\ \asymp N^{-1}$}};

\node [below] at (4, -0.5) {\scalebox{2}{(c)}};

\end{tikzpicture}
}
\caption{(a) Barycentric subdivision \ \  (b) Semiclassical subdivision \\ (c) Barycentric-semiclassical subdivision}
\label{figurebarycentric}
\end{figure}

For $j\in\{0,\ldots,r\}$, let  
$$t_j(H):=\alpha_j(H)/2\pi i+\delta_{0j}.$$
Then for each $I\subset\{0,\ldots,r\}$ such that $|I|=r$, 
$\{t_j,\ j\in I\}$ provide a natural coordinate system for $C_I$, and there exists uniform positive constants 
\begin{align*}
c_1,c_2<1,
\end{align*}
such that 
\begin{align*}
\{H\in\mathfrak{t}:\ 0\leq t_j(H)\leq c_1\ \forall j\in I\}\subset C_I\subset \{H\in\mathfrak{t}:\ 0\leq t_j(H)\leq c_2\ \forall j\in I\}.
\end{align*} 
Also for each $J\subset I$, we have 
\begin{align*}
&\{H\in\mathfrak{t}:\ 0\leq t_j(H)\leq N^{-1}\ \forall j\in J, \ N^{-1}< t_j(H)\leq c_1\ \forall j\in I\setminus J\}\\
\subset P_{I,J}\subset &\{H\in\mathfrak{t}:\ 0\leq t_j(H)\leq N^{-1}\ \forall j\in J, \ N^{-1}< t_j(H)\leq c_2\ \forall j\in I\setminus J\}. \numberthis \label{PIJcoordinates}
\end{align*} 

Associated to the above barycentric-semiclassical subdivision are some naturally defined weight functions which appear as factors of the function $\delta(H)$ as in \eqref{alphadelta}. For $I\subset\{0,\ldots,r\}$ with $|I|=r$, let $\Sigma^{+}$ be a positive system of $\Sigma$ that contains $\Sigma^+_I$. For example, one can choose $\Sigma^{+}$ to be the set of all roots that are positive in the lexicographic ordering induced by the basis $\{\alpha_j,\ j\in I\}$. 

\begin{defn}[Weight functions]\label{definitiondeltas}
Set 
$$\delta(H)=\delta_I(H)\cdot\delta_{I,J}(H)\cdot\delta^J(H),$$
where
$$\delta_I(H):=\prod_{\alpha\in\Sigma^{+}\setminus\Sigma^+_I}\left(e^{\frac{\alpha(H)}{2}}-e^{-\frac{\alpha(H)}{2}}\right),$$
$$\delta^J(H):=\prod_{\alpha\in\Sigma_J^+}\left(e^{\frac{\alpha(H)}{2}}-e^{-\frac{\alpha(H)}{2}}\right),$$
$$\delta_{I,J}(H):=\prod_{\alpha\in \Sigma_I^+\setminus \Sigma^+_J}\left(e^{\frac{\alpha(H)}{2}}-e^{-\frac{\alpha(H)}{2}}\right).$$
\end{defn}

\begin{rem}
We emphasize here that the choice of $\Sigma^+$ varies and is determined by $I$ ($I\subset\{0,\ldots,r\}$, $|I|=r$).  
In the proof of Theorem \ref{JointBound} and Proposition \ref{kernelmajorarc}, we will first make a use of Weyl's integration formula, and apply the barycentric-semiclassical subdivision to reduce $L^p$ norms on the compact Lie group $U$ to those on each $P_{I,J}$. Then for each $P_{I,J}$, we fix the positive system $\Sigma^+$ as the one that contains $\Sigma^+_I$, and thus for $J\subset I$, $\Sigma^+$ also contains $\Sigma^+_J$.  
\end{rem}

We derive some preliminary estimates for these weight functions. 

\begin{lem}\label{deltaIdeltaJ}
We have 
\begin{align}\label{deltaI}
|\delta_I(H)|\gtrsim 1, \ \t{for }H\in C_I, 
\end{align}
\begin{align}\label{deltaJ}
|\delta^J(H)|\lesssim N^{-|\Sigma_J^+|}, \ \t{for }H\in P_J, 
\end{align}
\begin{align}\label{deltaIJ}
|\delta_{I,J}(H)|\gtrsim N^{|\Sigma_J^+|-|\Sigma^+|}, \ \t{for }H\in P_{I,J}. 
\end{align}
\end{lem}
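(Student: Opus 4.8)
The plan is to express every factor of the Weyl denominator through the real coordinates $t_j$ and reduce the three bounds to elementary inequalities for affine-linear functions on the polytopes $C_I$ and $P_{I,J}$, using the description of the facets $A_J$ recalled in Section \ref{alcovereview}.

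\emph{Reduction.} Since each $\alpha\in\Sigma^+$ takes purely imaginary values on $\mathfrak t$, set $\theta_\alpha(H):=\alpha(H)/2\pi i$, so that $|e^{\alpha(H)/2}-e^{-\alpha(H)/2}|=2|\sin\pi\theta_\alpha(H)|$. Writing $\alpha=\sum_{j=1}^r c^\alpha_j\alpha_j$ with $0\le c^\alpha_j\le m_j$ (the marks, $\sum_j m_j\alpha_j$ being the highest root), one has $\theta_\alpha(H)=\sum_{j\ge1}c^\alpha_j t_j(H)$ and $1-\theta_\alpha(H)=t_0(H)+\sum_{j\ge1}(m_j-c^\alpha_j)t_j(H)$; in particular $\theta_\alpha(H)\in[0,1]$ on $\bar A$, so $|e^{\alpha(H)/2}-e^{-\alpha(H)/2}|\asymp\min(\theta_\alpha(H),1-\theta_\alpha(H))$. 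Thus \eqref{deltaI}, \eqref{deltaJ}, \eqref{deltaIJ} become, respectively, a lower, an upper, and a lower bound on products of such minima over the families in Definition \ref{definitiondeltas}, and what one needs are: which $\alpha$ lie in $\Sigma_I$ and in $\Sigma_J$, and how close $\theta_\alpha$ or $1-\theta_\alpha$ is forced to be to $\{0,1\}$. The combinatorial inputs come straight from the hyperplane description in Section \ref{alcovereview}: evaluating $\theta_\alpha$ at the barycenter of $A_J$ shows that for $\alpha\in\Sigma_J\cap\Sigma^+$ either $c^\alpha_j=0$ for all $j\notin J$ (the roots of $\Sigma_{J\setminus\{0\}}$) or $0\in J$ and $c^\alpha_j=m_j$ for all $j\notin J$; and evaluating at the vertex $A_I$, where $t_k=1/m_k$ with $k$ the index missing from $I$, shows $\alpha\in\Sigma_I\iff c^\alpha_k\in\{0,m_k\}$ (so $\alpha\in\Sigma^+\setminus\Sigma_I^+\Rightarrow 1\le c^\alpha_k\le m_k-1$).

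\emph{The bounds \eqref{deltaJ} and \eqref{deltaIJ}.} For \eqref{deltaJ}: if $\alpha\in\Sigma_J^+$ then by the above either $\theta_\alpha(H)=\sum_{j\in J}c^\alpha_jt_j(H)\lesssim N^{-1}$ on $P_J$, or ($0\in J$ and) $1-\theta_\alpha(H)=t_0(H)+\sum_{j\in J}(m_j-c^\alpha_j)t_j(H)\lesssim N^{-1}$ on $P_J$; either way the factor is $\lesssim N^{-1}$, and multiplying the $|\Sigma_J^+|$ of them gives \eqref{deltaJ}. For \eqref{deltaIJ} I would show $\min(\theta_\alpha(H),1-\theta_\alpha(H))\gtrsim N^{-1}$ on $P_{I,J}$ for every $\alpha\in\Sigma_I^+\setminus\Sigma_J^+$, whence $|\delta_{I,J}|\gtrsim N^{-|\Sigma_I^+\setminus\Sigma_J^+|}\ge N^{|\Sigma_J^+|-|\Sigma^+|}$ since $N\ge1$. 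This is a short case check using \eqref{PIJcoordinates}: $\alpha\notin\Sigma_J^+$ supplies $j\notin J$ with $c^\alpha_j>0$, and if $0\in J$ also $j'\notin J$ with $c^\alpha_{j'}<m_{j'}$; splitting into $c^\alpha_k=0$ versus $c^\alpha_k=m_k$ (the only options as $\alpha\in\Sigma_I$; the case $k=0$ is similar and simpler), one finds in each case an index in $I\setminus J$, where $t_j>N^{-1}$ by \eqref{PIJcoordinates}, or the index $k$, where $t_k\gtrsim1$ on $C_I$ (see below), that supplies the required lower bound for $\theta_\alpha$ and for $1-\theta_\alpha$ separately.

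\emph{The bound \eqref{deltaI}.} Here $\theta_\alpha$ must be controlled on all of $C_I$, not merely near a vertex. If $k=0$ then $\Sigma_I=\Sigma$ and $\delta_I\equiv1$, so assume $k\ne0$. Pass to the barycentric coordinates $u_j:=m_jt_j$ ($u_0:=t_0$), so $\sum_{j=0}^r u_j=1$ and $u_j\ge0$; since $C_I$ is the convex hull of the barycenters of the facets $A_J$ with $J\subseteq I$, at each of which $u_k=1/(r+1-|J|)\ge1/(r+1)$ (because $k\notin J$), one gets $u_k\ge 1/(r+1)$ throughout $C_I$. For $\alpha\in\Sigma^+\setminus\Sigma_I^+$ we have $1\le c^\alpha_k\le m_k-1$, hence on $C_I$
$$\theta_\alpha=\sum_{j\ge1}\tfrac{c^\alpha_j}{m_j}u_j\ \ge\ \tfrac{c^\alpha_k}{m_k}u_k\ \gtrsim\ 1,\qquad \theta_\alpha\ \le\ \tfrac{c^\alpha_k}{m_k}u_k+\sum_{j\ne k}u_j\ \le\ 1-\tfrac{m_k-c^\alpha_k}{m_k}u_k\ \le\ 1-\tfrac{1}{m_k(r+1)},$$
so $\theta_\alpha$ is uniformly bounded away from $\{0,1\}$ on $C_I$, each factor of $\delta_I$ is $\gtrsim1$, and \eqref{deltaI} follows.

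The trigonometric and coordinate bookkeeping is entirely routine; the points that require care are (i) reading off $\Sigma_I\cap\Sigma^+$ and $\Sigma_J\cap\Sigma^+$ in terms of the coefficients $c^\alpha_j$ from the facet description of Section \ref{alcovereview} (including the minor nuisance that a positive root of a parabolic subsystem containing the node $0$ need not be $\Sigma$-positive), and (ii) the uniform lower bound $u_k\gtrsim1$ on $C_I$ used in \eqref{deltaIJ} and \eqref{deltaI}; it is this last input, letting one dominate $\theta_\alpha$ over the \emph{whole} cell $C_I$, that is the heart of \eqref{deltaI} and the main obstacle.
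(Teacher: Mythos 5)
Your proposal is correct and follows essentially the same route as the paper: each factor of the Weyl denominator is estimated by the distance of $\alpha(H)/2\pi i$ from the nearest integer, and the facet--hyperplane description recalled in Section \ref{alcovereview} dictates which factors are forced to be $\lesssim N^{-1}$ on $P_J$ and which stay bounded below on $C_I$ and $P_{I,J}$. The only difference is in execution: where the paper argues softly (compactness of $C_I$ for \eqref{deltaI}, and a split into a neighborhood of the vertex $A_I$ and its complement for \eqref{deltaIJ}), you make these steps quantitative via the barycentric bound $u_k\geq 1/(r+1)$ on $C_I$, which is a valid (and slightly more explicit) variant of the same argument.
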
 
\begin{proof}
First, for $\alpha\in\Sigma$, 
$\left|e^{\frac{\alpha(H)}{2}}-e^{-\frac{\alpha(H)}{2}}\right|\asymp\|\alpha(H)/2\pi i\|$ is comparable to the shortest distance from the root hyperplanes $\mathfrak{p}_{\alpha,n}$ among $n\in\Z$. Here $\|\cdot\|$ denotes the distance from the nearest integer. 
By definition, $C_I=\bigcup_{J\subset I}P_{I,J}\subset \bigcup_{J\subset I}A_J$. As reviewed in Section \ref{alcovereview}, the only hyperplanes that contain the facets $\bigcup_{J\subset I}A_J$ are of the form $\mathfrak{p}_{\alpha,n}$ for $\alpha\in\Sigma_I$. Thus $C_I$ as a compact set stays away from all root hyperplanes $\mathfrak{p}_{\alpha,n}$ for $\alpha\in\Sigma\setminus\Sigma_I$ by a fixed distance, and as $\Sigma^+\setminus\Sigma^+_I\subset \Sigma\setminus\Sigma_I$,  this yields the first estimate. The second estimate follows since each $\alpha\in\Sigma_J^+$ is a linear combination of simple roots $\alpha_j$, $j\in J$, and then the definition of $P_J$ assures that $\|\alpha(H)/2\pi i\|\lesssim N^{-1}$ for $H\in P_J$. For the last estimate, since $|\Sigma_I^+|\leq|\Sigma^+|$, it suffices to show 
\begin{align*}
|\delta_{I,J}(H)|\gtrsim N^{|\Sigma_J^+|-|\Sigma^+_I|}, \ \t{for }H\in P_{I,J}. 
\end{align*} 
This follows if $P_{I,J}$ stays away from the root hyperplanes $\mathfrak{p}_{\alpha,n}$ for $\alpha\in\Sigma_I^+\setminus\Sigma_J^+$ by a distance of at least $\asymp N^{-1}$. 
We construct open subsets $\mathcal{N}_{I'}$ of $\mathfrak{t}$  for $I'\subset I$ inductively as follows. 
We remark that the definition of $\mathcal{N}_{I'}$ will depend on both $I'$ and $I$, and the $I$ will be clear from context whenever any $\mathcal{N}_{I'}$ appears in the following. 
It is instructive to picture the following construction using the root system $A_3$. $I'$ can be $\emptyset$ in the following construction. 
First let $\mathcal{N}_{I}$ be a small open neighborhood of the vertex $A_{I}$. For $J'\subset I$, suppose $\mathcal{N}_{K'}$ is already defined for all $I\supset K'\supsetneq J'$. Then we let $\mathcal{N}_{J'}$ be a small open neighborhood of 
$$\overline{A_{J'}\cap C_I}\setminus\left(\bigcup_{I\supset K'\supsetneq J'}\mathcal{N}_{K'}\right).$$
Moreover, 
since the only root hyperplanes that contain the facet $A_{J'}$ are of the form $\mathfrak{p}_{\alpha,n}$ with $\alpha\in \Sigma_{J'}$, we can make the above $\mathcal{N}_{J'}$ to stay a fixed distance away from all the other root hyperplanes $\mathfrak{p}_{\alpha,n}$ with $\alpha\notin \Sigma_{J'}$. 
It is clear that 
$$C_I\subset \bigcup_{I'\subset I}\mathcal{N}_{I'},$$
so that 
$$P_{I,J}= \bigcup_{I'\subset I}P_{I,J}\cap \mathcal{N}_{I'}.$$
Note that by construction $P_{I,J}\cap \mathcal{N}_{I'}=\emptyset$ unless $J\subset I'$. 
Now suppose $H\in P_{I,J}\cap \mathcal{N}_{I'}$ for some $I'\subset I$. Since $\mathcal{N}_{I'}$ stays a fixed distance away from $\mathfrak{p}_{\alpha,n}$ with $\alpha\notin\Sigma_{I'}$, to finish the proof it suffices to show $\|\alpha(H)/2\pi i\|\gtrsim N^{-1}$ for any $\alpha\in \Sigma_{I'}^+\setminus \Sigma_J^+$.
For any root $\alpha\in \Sigma_{I'}^+\setminus\Sigma_J^+$, write $\alpha=\sum_{j\in I'} n_j\alpha_j$, in which there exists some $j_0\in I'\setminus J$ such that $n_{j_0}$ is positive. Since by construction $\mathcal{N}_{I'}$ stays close to the facet $A_{I'}$, 
$\|\alpha_j(H)/2\pi i\|$ is a small enough number for $j\in I'$ and $H\in \mathcal{N}_{I'}$. This implies that 
$$\|\alpha(H)/2\pi i\|=\sum_{j\in I'} n_j\|\alpha_j(H)/2\pi i\|.$$
As $H\in P_{I,J}$, $\|\alpha_{j_0}(H)/2\pi i\| > N^{-1}$. Since $n_{j_0}>0$, we conclude that $\|\alpha(H)/2\pi i\|\gtrsim N^{-1}$. 

\end{proof}

We have the following immediate corollary. 

\begin{lem}
We have 
\begin{align}\label{deltapreliminary}
|\delta(H)|\gtrsim N^{-|\Sigma^+|},\t{ for }H\in P_\emptyset. 
\end{align}
\end{lem}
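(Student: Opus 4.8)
The statement to prove is

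\begin{lem}
We have
\begin{align*}
|\delta(H)|\gtrsim N^{-|\Sigma^+|},\t{ for }H\in P_\emptyset.
\end{align*}
\end{lem}

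The plan is to derive this as an immediate consequence of Lemma \ref{deltaIdeltaJ} by specializing the three estimates there to the case $J=\emptyset$ and then combining them through the factorization $\delta = \delta_I\cdot\delta_{I,J}\cdot\delta^J$ of Definition \ref{definitiondeltas}. The key observation is that $P_\emptyset$ is the set of points in the alcove that are $>N^{-1}$ away from \emph{every} wall $\mathfrak p_j$, so $P_\emptyset = P_{I,\emptyset}$ for whichever vertex region $C_I$ contains the given point $H$; indeed the barycentric subdivision gives $P_\emptyset = \bigsqcup_{|I|=r} P_{I,\emptyset}$, so it suffices to bound $|\delta(H)|$ from below on each $P_{I,\emptyset}$.

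First I would fix $I$ with $|I|=r$ and take $H\in P_{I,\emptyset}\subset C_I$. Since $J=\emptyset$ we have $\Sigma_J^+=\emptyset$, hence $\delta^J\equiv 1$ and the middle factor is $\delta_{I,\emptyset}(H)=\prod_{\alpha\in\Sigma_I^+}(e^{\alpha(H)/2}-e^{-\alpha(H)/2})$. Now apply \eqref{deltaI}, which gives $|\delta_I(H)|\gtrsim 1$ for $H\in C_I$, and apply \eqref{deltaIJ} with $J=\emptyset$, which gives $|\delta_{I,\emptyset}(H)|\gtrsim N^{|\Sigma_\emptyset^+|-|\Sigma^+|}=N^{-|\Sigma^+|}$ for $H\in P_{I,\emptyset}$. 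Multiplying the two lower bounds and using $|\delta^J(H)|=1$ yields $|\delta(H)| = |\delta_I(H)|\,|\delta_{I,\emptyset}(H)|\,|\delta^J(H)| \gtrsim N^{-|\Sigma^+|}$ on $P_{I,\emptyset}$. Since the implied constants are uniform over the finitely many choices of $I$ and $P_\emptyset=\bigsqcup_{|I|=r}P_{I,\emptyset}$, the bound holds for all $H\in P_\emptyset$.

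There is essentially no obstacle here: the lemma is labelled an "immediate corollary" precisely because all the real work — showing that $P_{I,J}$ stays a distance $\gtrsim N^{-1}$ from the relevant root hyperplanes — was already carried out in the proof of Lemma \ref{deltaIdeltaJ}. The only point that deserves a word is bookkeeping: one must make sure the union $P_\emptyset=\bigsqcup_{|I|=r}P_{I,\emptyset}$ is genuinely a cover (up to a lower-dimensional set, which is irrelevant for a pointwise statement on the open set $P_\emptyset$ after noting every point lies in some closed $C_I$), and that $|\Sigma_\emptyset^+|=0$ so the exponent in \eqref{deltaIJ} collapses to $-|\Sigma^+|$ as claimed. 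Both are routine, so the proof is just the two-line multiplication of estimates above.
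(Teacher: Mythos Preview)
Your proof is correct and follows essentially the same approach as the paper: decompose $P_\emptyset=\bigcup_{|I|=r}P_{I,\emptyset}$, write $\delta=\delta_I\cdot\delta_{I,\emptyset}$ (since $\delta^\emptyset\equiv 1$), and apply \eqref{deltaI} and \eqref{deltaIJ} with $J=\emptyset$. The paper's version is just the terse two-line form of what you wrote.
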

\begin{proof}
Since $P_\emptyset=\bigcup_{I\subset\{0,\ldots,r\},\ |I|=r}P_{I,\emptyset}$, it suffices to prove the estimate for each $P_{I,\emptyset}$. Write $\delta=\delta_I\cdot \delta_{I,\emptyset}$, then the result follows by \eqref{deltaI} and \eqref{deltaIJ}. 
\end{proof}

\section{Characters} \label{sectiondecomposition}
In this section, adapted to the above barycentric-semiclassical subdivision, we give a formula of the character that illuminate its behavior on each component of this subdivision.  
Let $(\cdot,\cdot)$ denote the Killing form on $\mathfrak{t}$ as well as on $\mathfrak{t}^*$ (and also on $i\mathfrak{t}^*$ by linear extension) and $|\cdot|$ be the corresponding norm, for which the Weyl group $W$ acts on $\mathfrak{t}$ as well as on $\mathfrak{t}^*$ by isometry. 
The weight lattice reads 
$$\Lambda:=\left\{\mu\in i\mathfrak{t}^*:\ \frac{2(\mu,\alpha)}{(\alpha,\alpha)}\in\Z \ \forall\alpha\in\Sigma\right\},$$
and let 
$$\Lambda^+:=\left\{\mu\in i\mathfrak{t}^*:\ \frac{2(\mu,\alpha)}{(\alpha,\alpha)}\in\mathbb{Z}_{\geq 1} \ \forall\alpha\in\Sigma^+\right\}$$ 
be the subset of strictly dominant weights. We have chosen here the strictly dominant weights instead of the more standard larger set of dominant
weights to slightly improve simplicity of the following presentation.
Concerning the relation between $\Lambda$ and $\Lambda^+$, we have the following standard lemma of root system theory. We say $\mu\in i\mathfrak{t}^*$ is regular provided $(\mu,\alpha)\neq 0$ for all $\alpha\in\Sigma$, and non-regular otherwise. 

\begin{lem}\label{LambdaLambda+}
The regular elements of $\Lambda$ form exactly the subset $\bigsqcup_{s\in W}s\Lambda^+$ so that we have 
$$\Lambda=\left(\bigsqcup_{s\in W}s\Lambda^+\right)\bigsqcup \left\{\mu\in \Lambda:\ (\mu,\alpha)=0\t{ for some }\alpha\in\Sigma\right\}.$$
\end{lem}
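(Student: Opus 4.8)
The statement to prove is the standard fact that the regular elements of the weight lattice $\Lambda$ decompose as the disjoint union $\bigsqcup_{s\in W}s\Lambda^+$, where $\Lambda^+$ is here the set of \emph{strictly} dominant weights. The plan is to reduce everything to the well-known fact that the finite Weyl group $W$ acts simply transitively on the set of Weyl chambers, and that the closed fundamental chamber is a fundamental domain for the $W$-action on $i\mathfrak{t}^*$. Concretely, first I would recall that $i\mathfrak{t}^*\setminus\bigcup_{\alpha\in\Sigma}\{(\cdot,\alpha)=0\}$ is the disjoint union of the open Weyl chambers, that $W$ permutes these chambers simply transitively, and that the open dominant chamber $C^+:=\{\mu:\ (\mu,\alpha)>0\ \forall\alpha\in\Sigma^+\}$ is one of them. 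Hence every regular $\mu\in i\mathfrak{t}^*$ lies in a unique $sC^+$ for a unique $s\in W$, i.e. $s^{-1}\mu\in C^+$.

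The second step is to identify the regular elements of $\Lambda$ lying in $C^+$ with $\Lambda^+$. If $\mu\in\Lambda$, then by definition $\tfrac{2(\mu,\alpha)}{(\alpha,\alpha)}\in\Z$ for every $\alpha\in\Sigma$; if moreover $\mu\in C^+$, then $(\mu,\alpha)>0$ for all $\alpha\in\Sigma^+$, so $\tfrac{2(\mu,\alpha)}{(\alpha,\alpha)}$ is a positive integer, i.e. $\tfrac{2(\mu,\alpha)}{(\alpha,\alpha)}\in\Z_{\geq 1}$ for all $\alpha\in\Sigma^+$, which is exactly the condition defining $\Lambda^+$. Conversely any $\mu\in\Lambda^+$ satisfies $(\mu,\alpha)>0$ for $\alpha\in\Sigma^+$, hence is regular and lies in $C^+$. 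Since $W$ acts on $i\mathfrak{t}^*$ by isometries preserving $\Sigma$, it also preserves $\Lambda$ (the defining condition $\tfrac{2(\mu,\alpha)}{(\alpha,\alpha)}\in\Z$ is $W$-invariant), so $s\Lambda^+\subset\Lambda$ for every $s\in W$, and each $s\Lambda^+$ consists of regular weights lying in $sC^+$.

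Combining these: a regular $\mu\in\Lambda$ lies in a unique chamber $sC^+$, so $s^{-1}\mu$ is a regular element of $\Lambda$ inside $C^+$, hence $s^{-1}\mu\in\Lambda^+$, i.e. $\mu\in s\Lambda^+$; and this $s$ is unique because the chambers $sC^+$ are pairwise disjoint, which also shows the union $\bigsqcup_{s\in W}s\Lambda^+$ is genuinely disjoint. Finally, a weight $\mu\in\Lambda$ is non-regular precisely when $(\mu,\alpha)=0$ for some $\alpha\in\Sigma$, which gives the complementary piece and the full decomposition $\Lambda=\left(\bigsqcup_{s\in W}s\Lambda^+\right)\sqcup\{\mu\in\Lambda:\ (\mu,\alpha)=0\text{ for some }\alpha\in\Sigma\}$.

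There is essentially no serious obstacle here: the lemma is a packaging of standard Weyl-chamber geometry, and the only point requiring a word of care is the slightly nonstandard use of \emph{strictly} dominant weights — one must make sure the strict inequalities $\Z_{\geq 1}$ match up exactly with membership in the \emph{open} chamber $C^+$ rather than its closure, so that the decomposition is into disjoint pieces with the non-regular weights genuinely excluded. I would simply cite \cite{Hum90} or \cite{Bou02} for the simple transitivity of $W$ on chambers and present the identification above.
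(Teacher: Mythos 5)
Your proof is correct. The paper offers no proof of this lemma at all (it is stated as a ``standard lemma of root system theory''), and your argument --- simple transitivity of $W$ on the open Weyl chambers, the observation that $W$ preserves $\Lambda$, and the identification of the regular weights in the open dominant chamber with the strictly dominant weights via the integrality condition $\frac{2(\mu,\alpha)}{(\alpha,\alpha)}\in\Z_{\geq 1}$ --- is exactly the standard argument the paper implicitly relies on, with the one delicate point (strict dominance versus the open chamber) handled correctly.
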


Each $\mu\in\Lambda^+$ is associated with an irreducible representation of $U$ of highest weight $\mu-\rho$, and the associated character $\chi_\mu$ can be expressed by the following Weyl's formula 
$$\chi_\mu(H):=\frac{\sum_{s\in W}\det s \ e^{(s\mu)(H)}}{\sum_{s\in W}\det s \ e^{(s\rho)(H)}}=\frac{\sum_{s\in W}\det s \ e^{(s\mu)(H)}}{\delta(H)}, \ \t{for }H\in\mathfrak{t}.$$ 
Note that this formula make sense for any $\mu\in i\mathfrak{t}^*$ and in particular for any $\mu\in\Lambda$, though the characters are initially defined only for $\mu\in\Lambda^+$. 
Observe that $\chi_{s\mu}=\det s \chi_\mu$ for $s\in W$. 

We now study the behavior of $\chi_\mu$ near each facet of $\bar{A}$. 
For $J\subsetneqq\{0,\ldots,r\}$, recall that $A_J$ denotes the corresponding $(r-|J|)$-dimensional facet of the fundamental alcove $A$. Consider the subspace 
\begin{align}\label{tJ}
\mathfrak{t}_J:=\bigoplus_{j\in J}\R H_{\alpha_j}
\end{align} 
of $\mathfrak{t}$, where $H_{\alpha_j}\in\mathfrak{t}$ is defined such that $(H_{\alpha_j},H):=\alpha_j(H)/2\pi i$ for all $H\in\mathfrak{t}$. Let $H_J:=\t{Proj}_{\mathfrak{t}_J}(H)$ denote the orthogonal projection of $H\in\mathfrak{t}$ on $\mathfrak{t}_J$ with respect to the Killing form. Let 
\begin{align}\label{HJperp}
H_J^\perp:=H-H_J,
\end{align}
which lies in the orthogonal complement 
\begin{align}\label{tJperp}
\mathfrak{t}_J^\perp:=\mathfrak{t}\ominus\mathfrak{t}_J
\end{align}
of $\mathfrak{t}_J$ in $\mathfrak{t}$. 
Dual to $\mathfrak{t}_J$, we also consider the root subspace 
$$V_J:=\t{span}_\R \Sigma_J=\bigoplus_{j\in J}\R\alpha_j$$
of $i\mathfrak{t}^*$ spanned by the parabolic subsystem $\Sigma_J$. 
Let $\mu_J:=\t{Proj}_{V_J}(\mu)$ denote the orthogonal projection of $\mu\in\Lambda$ on $V_J$. 
Let $\Sigma_J^+:=\Sigma^+\cap \Sigma_J$ be the positive system for $\Sigma_J$ and let $\Lambda_J$ be the weight lattice for $\Sigma_J$. For $\gamma\in \Lambda_J$, let 
$$\chi^J_\gamma:=\frac{\sum_{s_J\in W_J}\det s_J\ e^{s_J\gamma}}{\sum_{s_J\in W_J}\det s_J\ e^{s_J\rho_J}}
=\frac{\sum_{s_J\in W_J}\det s_J\ e^{s_J\gamma}}{\delta^J(H)}$$
be the associated character where $\rho_J:=\frac{1}{2}\sum_{\alpha\in\Sigma_J^+}\alpha$ is the Weyl vector associated to $\Sigma_J$. Note that the above expression makes sense for any $\gamma\in V_J$.  For each $j=0,\ldots,r$, let  
$$\mathfrak{t}_j:=\R H_{\alpha_j}, \ \mathfrak{t}_j^\perp:=\mathfrak{t}\ominus \mathfrak{t}_j=\mathfrak{p}_{\alpha_j,0}.$$
Then the Weyl group $W_J$ is generated by reflections across the hyperplanes $\mathfrak{t}_j^\perp$, $j\in J$. In particular, as $\mathfrak{t}^\perp_J=\bigcap_{j\in J} \mathfrak{t}_j^\perp$, 
any $s_J\in W_J$ fixes every point on $\mathfrak{t}^\perp_J$. We now derive the following key formula of characters. 

\begin{lem}\label{lemdecomposition}
For any $H\in\mathfrak{t}$ and $\mu\in i\mathfrak{t}^*$, we have 
\begin{align}\label{char}
\chi_\mu(H)=\frac{1}{|W_J| \delta_I(H)\delta_{I,J}(H)}\sum_{s\in W}\det s\ e^{(s\mu)(H_J^\perp)}\chi^J_{(s\mu)_J}(H_J).
\end{align}
\end{lem}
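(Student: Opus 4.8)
The plan is to start from Weyl's character formula $\chi_\mu(H)=\delta(H)^{-1}\sum_{s\in W}\det s\,e^{(s\mu)(H)}$ and factor the Weyl denominator according to Definition \ref{definitiondeltas}, writing $\delta(H)=\delta_I(H)\,\delta_{I,J}(H)\,\delta^J(H)$. This already explains the prefactor $\bigl(\delta_I(H)\delta_{I,J}(H)\bigr)^{-1}$; what remains is to show that
\begin{align*}
\frac{1}{\delta^J(H)}\sum_{s\in W}\det s\ e^{(s\mu)(H)}=\frac{1}{|W_J|}\sum_{s\in W}\det s\ e^{(s\mu)(H_J^\perp)}\,\chi^J_{(s\mu)_J}(H_J).
\end{align*}
The key algebraic observation is the orthogonal splitting $H=H_J+H_J^\perp$ together with the dual splitting of weights: for any $\nu\in i\mathfrak{t}^*$, $\nu(H)=\nu(H_J)+\nu(H_J^\perp)$, and moreover $\nu(H_J)$ depends only on $\nu_J=\operatorname{Proj}_{V_J}\nu$ while $\nu(H_J^\perp)$ depends only on $\nu-\nu_J$, because $\mathfrak{t}_J$ and $\mathfrak{t}_J^\perp$ are Killing-orthogonal and $V_J$ is exactly the dual of $\mathfrak{t}_J$. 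Thus $e^{(s\mu)(H)}=e^{(s\mu)(H_J^\perp)}\,e^{(s\mu)_J(H_J)}$.

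Next I would organize the sum over $W$ by cosets of the parabolic subgroup $W_J$. Write each $s\in W$ as $s=\sigma\tau$ with $\tau\in W_J$ and $\sigma$ running over a set of coset representatives for $W/W_J$ (or, more symmetrically, average over all $\sigma\in W$ and divide by $|W_J|$, which is where the factor $1/|W_J|$ comes from). For $\tau\in W_J$, the key point is that $\tau$ acts trivially on $\mathfrak{t}_J^\perp$ (since $\mathfrak{t}_J^\perp=\bigcap_{j\in J}\mathfrak{t}_j^\perp$ is pointwise fixed by each generating reflection $\tilde s_j$, $j\in J$, as noted just before the lemma), hence $(\sigma\tau\mu)(H_J^\perp)=(\sigma\mu)(H_J^\perp)$ after one checks that the $\mathfrak{t}_J^\perp$-component is governed by the $W_J$-invariant part; and the $H_J$-part reassembles: $\sum_{\tau\in W_J}\det(\sigma\tau)\,e^{(\sigma\tau\mu)_J(H_J)}=\det\sigma\sum_{\tau\in W_J}\det\tau\,e^{(\tau(\sigma\mu))_J(H_J)}=\det\sigma\cdot\delta^J(H)\,\chi^J_{(\sigma\mu)_J}(H_J)$, using that $W_J$ preserves $V_J$, commutes with the projection $\operatorname{Proj}_{V_J}$, and that $\delta^J(H)$ is precisely the Weyl denominator for $\Sigma_J$ evaluated at $H_J$ (again because $\alpha(H)=\alpha(H_J)$ for $\alpha\in\Sigma_J$). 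Dividing by $\delta^J(H)$ cancels it and yields exactly $\frac{1}{|W_J|}\sum_{\sigma\in W}\det\sigma\,e^{(\sigma\mu)(H_J^\perp)}\chi^J_{(\sigma\mu)_J}(H_J)$, as desired.

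The main obstacle I anticipate is the bookkeeping around which weight-projections and which group actions commute: specifically, verifying cleanly that $(\tau\nu)_J=\tau(\nu_J)$ for $\tau\in W_J$ (so that the inner $W_J$-sum genuinely produces the $\Sigma_J$-character of $(\sigma\mu)_J$ and not of some other weight), and that $(\sigma\tau\mu)(H_J^\perp)$ is independent of $\tau\in W_J$ — this last point needs the orthogonality of the decomposition $i\mathfrak{t}^*=V_J\oplus V_J^\perp$ under the Killing form together with $W_J$ acting as the identity on $V_J^\perp$ (dual to its acting as the identity on $\mathfrak{t}_J^\perp$). Once these two compatibilities are in hand, everything else is the coset reorganization of the alternating sum, which is routine. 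Note the formula holds for arbitrary $\mu\in i\mathfrak{t}^*$, with no integrality needed, since only the linear-algebraic splittings are used; this is consistent with Weyl's formula making sense for all $\mu$, and no convergence or regularity issue arises because all sums are finite.
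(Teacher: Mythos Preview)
Your overall strategy coincides with the paper's: factor $\delta=\delta_I\,\delta_{I,J}\,\delta^J$, split $H=H_J+H_J^\perp$, and reorganize the $W$-sum via the parabolic subgroup $W_J$. However, you have the coset multiplication on the wrong side, and this is a genuine gap. With $s=\sigma\tau$ and $\tau\in W_J$ acting on the \emph{right}, your claimed identity $(\sigma\tau\mu)(H_J^\perp)=(\sigma\mu)(H_J^\perp)$ fails: the difference is $(\sigma(\tau\mu-\mu))(H_J^\perp)$, and while $\tau\mu-\mu\in V_J$ (since $W_J$ fixes $V_J^\perp$), applying a general $\sigma\in W$ moves it out of $V_J$, so it need not vanish on $H_J^\perp$. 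Your displayed equality $\sum_{\tau}\det(\sigma\tau)\,e^{(\sigma\tau\mu)_J(H_J)}=\det\sigma\sum_{\tau}\det\tau\,e^{(\tau(\sigma\mu))_J(H_J)}$ has the same defect: on the right you have silently replaced $\sigma\tau$ by $\tau\sigma$.

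The paper puts $W_J$ on the \emph{left}: it writes the numerator as $|W_J|^{-1}\sum_{s\in W}\sum_{s_J\in W_J}\det(s_J s)\,e^{(s_J s\mu)(H)}$. Then $(s_J s\mu)(H_J^\perp)=(s\mu)(s_J^{-1}H_J^\perp)=(s\mu)(H_J^\perp)$ because $s_J^{-1}\in W_J$ fixes $\mathfrak{t}_J^\perp$ pointwise, and $(s_J s\mu)(H_J)=(s_J(s\mu)_J)(H_J)$ because $s_J$ commutes with $\mathrm{Proj}_{V_J}$; the inner $W_J$-sum then cleanly assembles $\delta^J(H_J)\,\chi^J_{(s\mu)_J}(H_J)$. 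All the compatibilities you flag in your last paragraph are correct and are exactly what is used---but they only deliver the result once $W_J$ acts closest to the weight being projected, i.e.\ with $s=\tau\sigma$ rather than $s=\sigma\tau$. Swapping the order fixes your argument completely.
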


\begin{proof}
We have for $H\in\mathfrak{t}$
\begin{align*}
\chi_\mu(H)&=\frac{\sum_{s\in W}\det s \ e^{(s\mu)(H)}}{\delta(H)}\\
&=\frac{\sum_{s\in W}\sum_{s_J\in W_J}\det (s_Js) \ e^{(s_J s\mu)(H)}}{|W_J|\prod_{\alpha\in \Sigma^+\setminus \Sigma^+_J}\left(e^{\frac{\alpha(H)}{2}}-e^{-\frac{\alpha(H)}{2}}\right)\prod_{\alpha\in\Sigma^+_J}\left(e^{\frac{\alpha(H)}{2}}-e^{-\frac{\alpha(H)}{2}}\right)}.
\end{align*}
Now write $H=H_J+H_J^\perp$, we have for $s\in W$ and $s_J\in W_J$ that 
$$(s_J s\mu)(H)
=(s_J s\mu)(H_J)+(s_J s\mu)(H_J^\perp).$$
But 
$$(s_J s\mu)(H_J)=(s_J(s\mu)_J)(H_J)$$ 
since 
$(s\mu-(s\mu)_J)(H_J)=0$ by definition, while  
$$(s_J s\mu)(H_J^\perp)=(s\mu)(s_J^{-1} H_J^\perp)=(s\mu)(H_J^\perp)$$
since $s_J^{-1}$ as an element of $W_J$ fixes any point on $\mathfrak{t}_J^\perp$. Also for $\alpha\in\Sigma_J$, $\alpha(H_J^\perp)=0$ thus
$\alpha(H)=\alpha(H_J)+\alpha(H_J^\perp)=\alpha(H_J)$. Now we  derive
$$\chi_\lambda=\frac{\sum_{s\in W}\det s\ e^{(s\mu)(H_J^\perp)}\sum_{s_J\in W_J}\det s_J e^{(s_J(s\mu)_J)(H_J)}}{|W_J|\prod_{\alpha\in \Sigma^+\setminus \Sigma^+_J}\left(e^{\frac{\alpha(H)}{2}}-e^{-\frac{\alpha(H)}{2}}\right)\prod_{\alpha\in\Sigma^+_J}\left(e^{\frac{\alpha(H_J)}{2}}-e^{-\frac{\alpha(H_J)}{2}}\right)}$$
which is \eqref{char}, recalling Definition \ref{definitiondeltas} of the weight functions.
\end{proof}

Note in particular that the above formula \eqref{char} for characters holds for any $\mu\in i\mathfrak{t}^*$ which is not necessarily regular. Thus it takes care of both non-regular physical parameters (i.e. near walls) and non-regular spectral parameters in $i\mathfrak{t}^*$, which is an important point in analysis on higher-rank spaces. 

Before we leave this section, we list a few basic facts about Fourier analysis on a compact Lie group. In our previous discussion, $U$ is assumed a compact simple Lie group, but now we assume $U$ can also be a product of a compact semisimple Lie group and a torus $\mathbb{T}^n$. In the latter case, the space of spectral parameters is of the form $\Lambda^+\times \mathbb{Z}^n$ where $\Lambda^+$ is the set of (strictly) dominant weights associated to the semisimple component. Let $\chi_\mu$ and $d_\mu$ denote the character and dimension of representation for the spectral parameter $\mu$ respectively. Then we have the following version of Fourier series. For any $f\in L^2(U)$, 
$$f=\sum_\mu f*(d_\mu \chi_\mu),$$
and we have the Parseval's identity
$$\|f\|^2_{L^2(U)}=\sum_{\mu} \left\|f*(d_\mu \chi_\mu)\right\|^2_{L^2(U)}.$$
The map $f\mapsto f*(d_\mu \chi_\mu)$ is also the projection onto the eigenspace of spectral parameter $\mu$ for the conjugate-invariant differential operators. For any $L^2$ class function $\kappa$, we may define its Fourier transform $\hat{\kappa}(\mu)$ such that  
$$\kappa=\sum_\mu \hat{\kappa}(\mu) d_\mu\chi_\mu.$$ 
As a consequence, we have 
$$f*\kappa=\sum_\mu \hat{\kappa}(\mu) f*(d_\mu\chi_\mu),$$
and that 
$$\|f*\kappa\|_{L^2(U)}\leq\sup_\mu|\hat{\kappa}(\mu)|\cdot\|f\|_{L^2(U)}.$$

\section{$L^p$ norm of $1/\delta_{I,J}$}
\label{sectionLpnorm}
This section forms the technical heart of the paper. 
Let $I$ be a subset of $\{0,\ldots,r\}$ with $|I|=r$ and $J$ be a subset of $I$. Recall definition of the weight function 
$$\delta_{I,J}(H):=\prod_{\alpha\in \Sigma_I^+\setminus \Sigma^+_J}\left(e^{\frac{\alpha(H)}{2}}-e^{-\frac{\alpha(H)}{2}}\right).$$
As is clear from Lemma \ref{lemdecomposition}, it is crucial to estimate $\delta_{I,J}$ in order to estimate characters. We obtain sharp $L^p$-estimates for $1/\delta_{I,J}$ over the polytope $P_{I,J}$ in this section. We start from the following key lemma in root system theory. 

\begin{lem}[Appendix of \cite{ST78}]
Let $\Sigma$ be an irreducible root system and let $\{\alpha_j, \ j=1,\ldots,r\}$ be a simple system. Each $\alpha\in\Sigma^+$ is uniquely a linear sum of $\alpha_j, \ j=1,\ldots,r$ with positive coefficients, so that 
$$\prod_{\alpha\in\Sigma^+}\alpha=\sum c_{k_1,k_2,\ldots,k_r} \alpha_1^{k_1}\alpha_2^{k_2}\cdots\alpha_r^{k_r}$$
where the sum above is over nonnegative integral $r$-tuples 
$(k_1,k_2,\ldots,k_r)$ with $\sum_{i}k_i=|\Sigma^+|$, and $c_{k_1,\ldots,k_r}>0$. Then there exist positive integers $p_r>p_{r-1}>\cdots>p_1=1$ with $p_1+\cdots+p_r=|\Sigma^+|$ such that for any permutation $(j_1,\ldots,j_r)$ of $\{1,\ldots,r\}$, the term $\alpha_{j_r}^{p_r}\cdots \alpha_{j_1}^{p_1}$ appears in the above sum with non-vanishing coefficient. 
\end{lem}

We have the following immediate corollary. 
\begin{cor}\label{nk}
Let $\Sigma$ be an irreducible root system and let $\{\alpha_j, \ j=1,\ldots,r\}$ be a simple system. Then there exist positive integers $p_r>p_{r-1}>\cdots>p_1=1$ with $p_1+\cdots+p_r=|\Sigma^+|$ such that the following is true. Let $(j_1,\ldots,j_r)$ be any permutation of $\{1,\ldots,r\}$. 
For $k=1,\ldots,r$, let $I_k=\{j_1,\ldots,j_k\}$, and define 
$n_k$ to be the number of $\alpha\in\Sigma^+$ such that at least one of $\alpha_{j_r},\alpha_{j_{r-1}},\ldots,\alpha_{j_k}$ appears in the linear sum $\alpha=\sum_i c_i \alpha_{j_i}$ with non-vanishing coefficient, or equivalently, 
\begin{align}\label{nk=}
n_k=|\Sigma^+|-|\Sigma^+_{I_{k-1}}|,\ k=1,\ldots,r.
\end{align}
Here we specified $\Sigma^+_{I_{0}}$ to be the empty set. 
Define also the positive integers
\begin{align}\label{qk=nk-}
q_k=n_k-n_{k+1}=|\Sigma^+_{I_k}|-|\Sigma^+_{I_{k-1}}|,\ k=1,\ldots,r.
\end{align}
Then for $k=1,\ldots,r$,
$$q_r+q_{r-1}+\cdots+q_{k}=n_k\geq p_r+p_{r-1}+\cdots+p_k.$$
In particular, for $k=2,\ldots,r$,
$$q_r+q_{r-1}+\cdots+q_{k}=n_k > \frac{|\Sigma^+|\cdot (r-k+1)}{r},$$
while 
$$q_r+q_{r-1}+\cdots+q_1 =n_1=|\Sigma^+|.$$
\end{cor}

The next corollary will play a key rule in proving our main result in this section.

\begin{cor}\label{subsystem}
Let $\Sigma$ be an irreducible root system. 
For any nonempty subset $J$ of $\{0,\ldots,r\}$ such that $|J|\leq r-1$, we have 
$$\frac{|J|}{|\Sigma^+_J|}>\frac{r}{|\Sigma^+|}.$$ 
\end{cor}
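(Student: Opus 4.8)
The plan is to prove the equivalent inequality $r\,|\Sigma_J^+|<|J|\,|\Sigma^+|$, bootstrapping it from the preceding corollary. First I would record the following consequence of that corollary, valid for \emph{any} irreducible root system $\Sigma$ and \emph{any} nonempty proper subset $J$ of its simple roots: $|\Sigma_J^+|<\frac{|J|}{r}|\Sigma^+|$. Indeed, pick a point $H$ in the open Weyl chamber with $t_j(H)\le N^{-1}$ precisely for $j\in J$; then the preceding corollary applies, and specializing \eqref{qjcondition} to $j=|J|$ together with the identity $q_r+\cdots+q_{|J|+1}=|\Sigma^+|-|\Sigma_J^+|$ recorded there gives $|\Sigma^+|-|\Sigma_J^+|\ge\frac{|\Sigma^+|(r-|J|)}{r}$, the inequality being strict because $|J|\ge 1$; rearranging yields the claim. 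This already settles Corollary \ref{subsystem} whenever $0\notin J$ (and trivially when $r=1$, where no admissible $J$ exists).

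For general $J\subseteq\{0,\dots,r\}$ I would first decompose $\Sigma_J$ into irreducible components $\Sigma_{J_1},\dots,\Sigma_{J_m}$, so $|J|=\sum_k|J_k|$ and $|\Sigma_J^+|=\sum_k|\Sigma_{J_k}^+|$; since $\frac{|J|}{|\Sigma_J^+|}$ is a weighted mediant of the ratios $\frac{|J_k|}{|\Sigma_{J_k}^+|}$ it is at least the smallest of them, so it suffices to handle each component, i.e.\ to assume $\Sigma_J$ irreducible. Then $J$ indexes a connected subdiagram $D$ of the extended Dynkin diagram of $\Sigma$ with $1\le|D|\le r-1$. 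If $0\notin D$, the first step applies inside $\Sigma$ and we are done. If $0\in D$, then $D$ omits at least two nodes, and because an extended Dynkin diagram is a tree or a cycle there is a node $v\notin D$ whose removal leaves the diagram connected (indeed, as soon as $D$ omits a cut-vertex it omits an entire branch hanging off it, which contains a leaf, so $D$ always omits a non-cut-vertex). The resulting $r$-node diagram is the ordinary Dynkin diagram of an irreducible root system $\Sigma'$ of rank $r$ in which $\Sigma_D$ sits as the parabolic subsystem indexed by $D$; the first step applied to $\Sigma'$ gives $\frac{|D|}{|\Sigma_D^+|}>\frac{r}{|(\Sigma')^+|}$.

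The step I expect to be the real obstacle is the final comparison $|(\Sigma')^+|\le|\Sigma^+|$ — equivalently, using the standard identity $|\Xi^+|=\tfrac12(\operatorname{rank}\Xi)\,h(\Xi)$ with $h$ the Coxeter number, that deleting one node from an extended Dynkin diagram never raises the Coxeter number. This is the one point that genuinely uses the classification, and I would verify it case by case much as in the proof of Lemma \ref{key}: deleting the extending node $\alpha_0$ returns $\Sigma$, while every other deletion that keeps the diagram connected produces a strictly smaller type (for instance $D_r$ out of $\tilde B_r$ or $\tilde D_r$, $B_r$ out of $\tilde F_4$, $A_2$ out of $\tilde G_2$, $D_8$ or $A_8$ out of $\tilde E_8$, and so on), so $h(\Sigma')\le h(\Sigma)$ in all cases. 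Feeding this into the previous paragraph gives $\frac{|D|}{|\Sigma_D^+|}>\frac{r}{|(\Sigma')^+|}\ge\frac{r}{|\Sigma^+|}$ and completes the proof.
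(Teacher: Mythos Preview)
Your argument is correct and follows essentially the same route as the paper: both extract the strict inequality for $0\notin J$ from the preceding corollary, and both handle the general case by embedding the irreducible pieces of $\Sigma_J$ into some irreducible rank-$r$ parabolic $\Sigma'=\Sigma_I$ of the affine system and applying the first step there, finishing with $|(\Sigma')^+|\le|\Sigma^+|$. The paper organizes the second case as an induction starting from $|J|=r-1$, while you first reduce to $\Sigma_J$ irreducible and then locate a single leaf $v\notin D$; these are cosmetic rearrangements of the same idea.

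One point worth noting: what you flag as the ``real obstacle'' requiring case-by-case verification is in fact immediate. Your $\Sigma'$ is precisely the parabolic subsystem $\Sigma_I$ with $I=\{0,\dots,r\}\setminus\{v\}$, and by definition $\Sigma_I\subset\Sigma$ as sets of roots; hence $|(\Sigma')^+|=\tfrac12|\Sigma_I|\le\tfrac12|\Sigma|=|\Sigma^+|$ with no classification needed. The paper uses this inequality (writing $|\Sigma_I^+|\le|\Sigma^+|$) without comment for the same reason. Your Coxeter-number detour is correct but unnecessary.
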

\begin{proof}
We divide the proof into two cases. 

\underline{Case 1. $0\notin J$.} 
Let $(j_1,j_2,\ldots,j_r)$ be any permutation of $\{1,\ldots,r\}$ such that $(j_1,j_2,\ldots,j_{|J|})$ is a permutation of $J$. Let $n_{|J|+1}$ be the number of $\alpha\in\Sigma^+$ such that at least one of $\alpha_{j_r},\alpha_{j_{r-1}},\ldots,\alpha_{j_{|J|+1}}$ appears in the linear sum $\alpha=\sum_i c_i \alpha_{j_i}$ with non-vanishing coefficient. 
By Corollary \ref{nk}, we have 
$$|\Sigma^+|-|\Sigma^+_J|=n_{|J|+1}>\frac{|\Sigma^+|\cdot (r-|J|)}{r}.$$
The desired result follows. 

\underline{Case 2. $0\in J$.} 
A key observation is that by removing any node in the extended Dynkin diagram, 
each of the resulting connected subgraphs (the total number of which could be one, two, or three) is a subgraph of 
a connected subgraph of $r$ vertices, and in the language of root systems, this means that 
each of the resulting irreducible root subsystems, denoted $\Sigma_i$ ($i\in\{1\}\t{ or }\{1,2\}\t{ or }\{1,2,3\}$), is a subsystem of some irreducible $\Sigma_{I_i}$ with $I_i\in\{0,\ldots,r\}$ and $|I_i|=r$. 
Given $J\subset\{0,\ldots,r\}$ with $|J|\leq r-1$. The Dynkin diagram for $\Sigma_J$ may be obtained by removing at least two nodes in the extended Dynkin diagram for $\Sigma$. Thus we can write 
$$\Sigma_J=\bigsqcup_{i}\Sigma_{i,1},$$
$i\in\{1\}\t{ or }\{1,2\}\t{ or }\{1,2,3\}$, 
where $\Sigma_{i,1}$ is a root subsystem of $\Sigma_i$ and thus also a root subsystem of $\Sigma_{I_i}$. 
Denote the rank of $\Sigma_{i,1}$ by $r_i$. Since $r_i\leq |J|\leq r-1$, by Case 1, we have 
$$\frac{r_i}{|\Sigma_{i,1}^+|}> \frac{r}{|\Sigma^+_{I_i}|}\geq\frac{r}{|\Sigma^+|}.$$
We conclude 
$$\frac{|J|}{|\Sigma_J^+|}=\frac{\sum_i r_i}{\sum_i |\Sigma_{i,1}^+|} > \frac{r}{|\Sigma^+|}.$$ 
\end{proof}


\begin{cor}
Let $\Sigma$ be an irreducible root system and let $\{\alpha_j, \ j=1,\ldots,r\}$ be a simple system. Assume $t_j(H):=\alpha_j(H)/2\pi i>0$ for each $j=1,\ldots,r$ (which defines the Weyl chamber for $\Sigma$). 
For $J\subset\{1,\ldots,r\}$, assume 
$$0<t_{j_k}(H)\leq N^{-1}, \ k=1,2,\ldots,|J|,$$ 
for some fixed permutation $(j_1,\ldots,j_{|J|})$ of $J$, while $$t_{j_{r}}(H)>t_{j_{r-1}}(H)>\cdots>t_{j_{|J|+1}}(H)>N^{-1}$$ for some fixed permutation $(j_{|J|+1},\ldots, j_{r})$ of $\{1,\ldots,r\}\setminus J$. 
Then we have 
\begin{align}\label{keyestimatecor}
\prod_{\alpha\in\Sigma^+\setminus\Sigma^+_J}\alpha(H)/2\pi i\asymp t_{j_r}^{q_r}(H)t_{j_{r-1}}^{q_{r-1}}(H)\cdots t_{j_{|J|+1}}^{q_{|J|+1}}(H)
\end{align}
for the same positive integers $q_r,q_{r-1},\ldots,q_{|J|+1}$ 
as defined in \eqref{qk=nk-}. 
\end{cor}
\begin{proof}
For each $\alpha\in\Sigma^+$, $\alpha(H)/2\pi i$ is a linear sum of the $t_j(H)$'s ($j=1,\ldots,r$) with nonnegative coefficients. Let $n_j,q_j$ ($j=1,2,\ldots,r$) be defined as in \eqref{nk=} and \eqref{qk=nk-}. 
Write
$$\prod_{\alpha\in\Sigma^+\setminus \Sigma_J^+}\alpha(H)/2\pi i=\sum c_{k_1,\ldots,k_r}t_1(H)^{k_1}\cdots t_r(H)^{k_r}$$ where each $c_{k_1,\ldots,k_r}>0$. By our construction of the $q_j$'s, $t_{j_r}^{q_r}(H)t_{j_{r-1}}^{q_{r-1}}(H)\cdots t_{j_{|J|+1}}^{q_{|J|+1}}(H)$ appears in the above sum with non-vanishing coefficient, which explains the lower bound in \eqref{keyestimatecor}. On the other hand, all the other terms $t_1(H)^{k_1}\cdots t_r(H)^{k_r}$ with non-vanishing coefficient is bounded from above by a constant multiple of $t_{j_r}^{q_r}(H)t_{j_{r-1}}^{q_{r-1}}(H)\cdots t_{j_{|J|+1}}^{q_{|J|+1}}(H)$, as $t_{j_{r}}(H)>t_{j_{r-1}}(H)>\cdots>t_{j_{|J|+1}}(H)>N^{-1}$ are the first $r-|J|$ largest among the $t_j(H)$'s ($j=1,\ldots,r$), and all the largest terms (modulo multiplicative positive constants) in the linear sums for $\alpha(H)/2\pi i$ ($\alpha\in \Sigma^+\setminus \Sigma_J^+$) are picked in forming the term $t_{j_r}^{q_r}(H)t_{j_{r-1}}^{q_{r-1}}(H)\cdots t_{j_{|J|+1}}^{q_{|J|+1}}(H)$. 
\end{proof}

We state and prove another lemma concerning evaluation of a multiple integral, before we prove the main result of this section.

\begin{lem}\label{multipleintegral}
Let $m$ be a positive integer, and let $a_1,\ldots,a_m$ be positive real numbers. 
Let $s_0:=m$, and let $s_i$ ($i=1,2,\ldots$) be the smallest among $s=s_{i-1}-1,s_{i-1}-2,\ldots,0$ such that 
$$\frac{s_{i-1}-s}{a_{s_{i-1}}+a_{s_{i-1}-1}+\cdots+a_{s+1}}$$
obtains its maximum 
$$p_i:=\frac{s_{i-1}-s_i}{a_{s_{i-1}}+a_{s_{i-1}-1}+\cdots+a_{s_i+1}}.$$
Then there is some integer $k$ such that 
\begin{align}\label{m=}
m=s_0>s_1>s_2>\cdots>s_{k-1}>s_k=0,
\end{align}
and
\begin{align}\label{infty=}
+\infty=:p_0>p_1>p_2>\cdots>p_k>p_{k+1}:=0.
\end{align}
Moreover, let $c$ be a positive real number, and let $N$ be a positive real number such that $N^{-1}<c$. Then  
\begin{align}
&\mathcal{I}(a_1,\ldots,a_m;p,N):=\left(\int_{N^{-1}\leq t_1\leq t_2\leq\cdots\leq t_m\leq c}t_{m}^{-a_mp}\cdots t_2^{-a_2p}t_1^{-a_1p}\ dt_m\cdots\ dt_2\ dt_1\right)^{\frac{1}{p}} \nonumber \\
&\leq
\left\{
\begin{array}{ll}
C\cdot N^{a_1+\cdots+a_{s_i}-\frac{s_i}{p}}, & \text{ if }p_i>p>p_{i+1},\ i=0,\ldots,k.\\
C\cdot N^{a_1+\cdots+a_{s_i}-\frac{s_i}{p}}(\log cN)^{C}, & \text{ if }p=p_i,\ i=1,\ldots,k.\\
\end{array}
\right.
\end{align}
Here the constants $C$ in the above inequality may depend on $a_1,\ldots,a_m,p,c$, but not on $N$. 
As usual it is specified that $a_1+\cdots+a_{s_i}=0$ when $i=k$. In particular, in the above piecewise bound, except for the $\varepsilon$-sized jumps at the kink points, the exponent of $N$ as a function of $\frac{1}{p}$ is piecewise linear, non-increasing, continuous and convex.

\end{lem}
\begin{proof}
\eqref{m=} and \eqref{infty=} are immediate from definition. We prove the integral estimate by induction on the number $m$. The $m=1$ case is clear. For an integer $l\geq 2$, assume the cases for  $m=1,2,\ldots,l-1$ are valid. For the case $m=l$, let $s_1$ be the smallest $s$ among $s=l-1,l-2,\ldots,0$ such that 
$$\frac{l-s}{a_l+a_{l-1}+\cdots+a_{s+1}}$$
obtains its maximum 
$$p_1:=\frac{l-s_1}{a_l+a_{l-1}+\cdots+a_{s_1+1}}.$$
The definition of $s_1$ implies that 
\begin{align}\label{firstinductivestep1}
\frac{l-s}{a_{l}+a_{l-1}+\cdots+ a_{s+1}} < 
\frac{l-s_1}{a_l+a_{l-1}+\cdots+ a_{s_1+1}}=p_1, \ s= s_1-1,s_1-2,\ldots,0,
\end{align}
which is equivalent to 
\begin{align}\label{firstinductivestep}
\frac{s_1-s}{a_{s_1}+a_{s_1-1}+\cdots+ a_{s+1}} < 
\frac{l-s_1}{a_l+a_{l-1}+\cdots+ a_{s_1+1}}=p_1, \ s= s_1-1,s_1-2,\ldots,0.
\end{align}
The definition of $s_1$ of course also implies that 
\begin{align}\label{firstinductivestep2}
\frac{l-s}{a_l+a_{l-1}+\cdots+ a_{s+1}}\leq  
\frac{l-s_1}{a_l+a_{l-1}+\cdots+ a_{s_1+1}}=p_1, \ s= s_1+1,s_1+2,\ldots,l,
\end{align}
which is equivalent to 
\begin{align}\label{firstinductivestep3}
\frac{s-s_1}{a_{s}+a_{s-1}+\cdots+ a_{s_1+1}} \geq  
\frac{l-s_1}{a_l+a_{l-1}+\cdots+ a_{s_1+1}}=p_1, \ s= s_1+1,s_1+2,\ldots,l.
\end{align}
Now for $p>p_1$, using \eqref{firstinductivestep1} and \eqref{firstinductivestep2}, we have
\begin{align}\label{Ia11}
\mathcal{I}(a_1,\ldots,a_l;p,N)
\leq \left(\frac{N^{(a_1+\cdots+a_l)p-l}}{\prod_{j=1}^l((a_l+a_{l-1}+\cdots+a_{l-j+1})p-j)}\right)^{\frac{1}{p}}
\lesssim N^{a_1+\cdots+a_l-\frac{l}{p}},
\end{align}
with the implicit constant not depending on $N$. 

\underline{Case I. $s_1= 0$.} Then 
\begin{align}\label{p1=l}
p_1=\frac{l}{a_l+a_{l-1}+\cdots+a_1}.
\end{align}
Because of \eqref{Ia11}, it suffices to prove that
\begin{align}\label{Ia12}
\mathcal{I}(a_1,\ldots,a_l;p,N)
\leq\left\{
\begin{array}{ll}
C\cdot (\log cN)^C, &\text{ if }p=p_1,\\
C, & \text{ if }0<p<p_1. 
\end{array}
\right.
\end{align}
where the constants $C$ do not depend on $N$. We now fix $p\in (0, p_1]$. 
Because of \eqref{p1=l}, there exists some $u=0,\ldots,l-1$, such that 
\begin{align}\label{p>frac}
p>\frac{l-s}{a_l+a_{l-1}+\cdots+a_{s+1}}, \ s=u+1,u+2,\ldots,l-1,
\end{align}
while
\begin{align}\label{pleq}
p\leq \frac{l-u}{a_l+a_{l-1}+\cdots+a_{u+1}}.
\end{align}

\underline{Case I-1. $u=0$.} Then a direct calculation tells that 
$$\mathcal{I}(a_1,\ldots,a_l;p,N)
=\left(
\frac{1}{\prod_{j=1}^{l-1}((a_l+a_{l-1}+\cdots+a_{l-j+1})p-j)}
\int_{N^{-1}\leq t_1\leq c}
t_1^{-(a_1+\cdots+a_l)p+(l-1)}\ dt_1
\right)^{\frac{1}{p}},$$
which clearly implies \eqref{Ia12}. 

\underline{Case I-2. $u\neq 0$.} We
use
\begin{align}\label{Ia13}
\mathcal{I}(a_1,\ldots,a_l;p,N)
= \left(\int_{N^{-1}\leq t_1\leq \cdots\leq t_u\leq c}\mathcal{I}(a_{u+1},\ldots,a_l;p,t_u^{-1})^p\cdot t_u^{-a_up}\cdots t_1^{-a_1p}\ dt_1\cdots dt_u\right)^{\frac{1}{p}}.
\end{align}
By the induction hypothesis, because of \eqref{pleq} and \eqref{p>frac}, we have
\begin{align}\label{Iau+1}
\mathcal{I}(a_{u+1},\ldots,a_l;p,t_u^{-1})
\leq \left\{
\begin{array}{ll}
C\cdot (\log ct_u^{-1})^C, &\text{ if }p=\frac{l-u}{a_l+a_{l-1}+\cdots+a_{u+1}},\\
C, & \text{ if }0<p<\frac{l-u}{a_l+a_{l-1}+\cdots+a_{u+1}}. 
\end{array}
\right.
\end{align}

\underline{Case I-2-1. $p=p_1$.} Then \eqref{firstinductivestep2} and \eqref{pleq} together tell that 
\begin{align}\label{p1equal}
p_1=\frac{l-u}{a_l+a_{l-1}+\cdots+a_{u+1}}.
\end{align}
In particular, by \eqref{Iau+1}, 
\begin{align}\label{Iau+11}
\mathcal{I}(a_{u+1},\ldots,a_l;p,t_u^{-1})
\leq C\cdot (\log ct_u^{-1})^C\leq C\cdot (\log cN)^C.
\end{align}
By the induction hypothesis again, there exists some $u'=1,\ldots,u$, such that 
\begin{align}\label{Ia14}
\mathcal{I}(a_1,\ldots,a_u;p,N)
\leq \left\{
\begin{array}{ll}
C\cdot (\log cN)^C, &\text{ if }p=\frac{u'}{a_{u'}+a_{u'-1}+\cdots+a_{1}},\\
C, & \text{ if }0<p<\frac{u'}{a_{u'}+a_{u'-1}+\cdots+a_{1}}. 
\end{array}
\right.
\end{align}
Now \eqref{firstinductivestep3} implies that 
\begin{align}\label{p1leq1}
p_1\leq \frac{u'}{a_{u'}+a_{u'-1}+\cdots+a_{1}}.
\end{align}
By \eqref{Ia13}, \eqref{Iau+11},  \eqref{Ia14}, and \eqref{p1leq1}, we have
\begin{align*}
\mathcal{I}(a_1,\ldots,a_l;p_1,N)
\leq C\cdot (\log cN)^C.
\end{align*}

\underline{Case I-2-2. $p<p_1$.} For any $\varepsilon>0$, we have 
$$(\log ct_u^{-1})^C\leq C_{\varepsilon} t_u^{-\varepsilon}$$
for all $0<t_u\leq c$. This combined with \eqref{pleq}, \eqref{Ia13} and \eqref{Iau+1} gives 
\begin{align}\label{mathcalIleq}
\mathcal{I}(a_1,\ldots,a_l;p,N)
\leq C_\varepsilon \mathcal{I}(a_1,\ldots,a_{u-1},a_{u}+\varepsilon;p,N). 
\end{align}
By the induction hypothesis again, there exists some $u'=1,\ldots,u$, such that 
\begin{align}\label{Ia15}
\mathcal{I}(a_1,\ldots,a_{u-1},a_u+\varepsilon;p,N)
\leq \left\{
\begin{array}{ll}
C\cdot (\log cN)^C, &\text{ if }p=\frac{u'}{a_{u'}+\delta_{u'u}\varepsilon+a_{u'-1}+\cdots+a_{1}},\\
C, & \text{ if }0<p<\frac{u'}{a_{u'}+\delta_{u'u}\varepsilon+a_{u'-1}+\cdots+a_{1}}. 
\end{array}
\right.
\end{align}
Here $\delta_{u'u}$ equals 1 if $u'=u$ and 0 otherwise. 
Again \eqref{firstinductivestep3} implies that 
\begin{align*}
p_1\leq \frac{u'}{a_{u'}+a_{u'-1}+\cdots+a_{1}}.
\end{align*}
As $p<p_1$, there is some $\varepsilon=\varepsilon(p)>0$, such that 
$$p<\frac{u'}{a_{u'}+\delta_{u'u}\varepsilon+a_{u'-1}+\cdots+a_{1}}.$$
This inequality combined with \eqref{mathcalIleq} and 
\eqref{Ia15} implies that 
\begin{align*}
\mathcal{I}(a_1,\ldots,a_l;p,N)
\leq C
\end{align*}
for some constant $C$ independent of $N$. 

\underline{Case II. $s_1\neq 0$.} In this case $s_1=1,\ldots,l-1$. We use 
\begin{align}\label{Ia16}
\mathcal{I}(a_1,\ldots,a_l;p,N)
\leq \mathcal{I}(a_1,\ldots,a_{s_1};p,N)\cdot  \mathcal{I}(a_{s_1+1},\ldots,a_{l};p,N),
\end{align}
the proof of which is immediate. By the induction hypothesis, we have 
\begin{align}\label{Ias1+1}
\mathcal{I}(a_{s_1+1},\ldots,a_{l};p,N)
\leq \left\{
\begin{array}{ll}
C\cdot (\log cN)^C, & \text{ if }p=p_1,\\
C, & \text{ if }0<p<p_1.
\end{array}
\right.
\end{align}
Using \eqref{Ia16} and \eqref{Ias1+1}, and applying the induction hypothesis to $\mathcal{I}(a_1,\ldots,a_{s_1};p,N)$, we finish the proof. 
\end{proof}

We are ready to prove the following main result of this section. 

\begin{prop} \label{keyprop}
Let $\Sigma$ be an irreducible root system of rank $r$. 
For $I\subset\{0,\ldots,r\}$, $|I|=r$, $J\subset I$, we have 
\begin{align*}
\left\|\frac{1}{\delta_{I,J}}\right\|_{L^p(P_{I,J})}
\lesssim 
\left\{
\begin{array}{ll}
N^{|\Sigma^+|-|\Sigma_J^+|-\frac{r}{p}}, & \t{ for }p>\frac{r}{|\Sigma^+|},\\
N^{-|\Sigma_J^+|}, & \t{ for }0<p<\frac{r}{|\Sigma^+|}.
\end{array}
\right.
\end{align*}
\end{prop}
\begin{proof}
We use the sets $\mathcal{N}_{I'}$ ($I'\subset I$) as constructed in the proof of Lemma \ref{deltaIdeltaJ}. As $P_{I,J}= \bigcup_{I'\subset I}P_{I,J}\cap \mathcal{N}_{I'}$, it suffices to obtain the desired bound replacing $P_{I,J}$ by $P_{I,J}\cap\mathcal{N}_{I'}$ for each $I'\subset I$. We may assume $J\subset I'$ since otherwise $P_{I,J}\cap\mathcal{N}_{I'}$ is empty. To treat the $L^p$ bound of $1/\delta_{I,J}$ on $P_{I,J}\cap\mathcal{N}_{I'}$, first recall that $\mathcal{N}_{I'}$ stays a fixed distance from all root hyperplanes $\mathfrak{p}_{\alpha,n}$ for $\alpha\notin \Sigma_{I'}$, thus for $H\in P_{I,J}\cap\mathcal{N}_{I'}$, we have
\begin{align}\label{deltaIJI'}
|\delta _{I,J}(H)|\asymp \prod_{\alpha\in \Sigma^{+}_{I'}\setminus \Sigma^{+}_J}\alpha(H)/2\pi i.
\end{align}
We now divide into steps. 

\underline{Step 1. We first treat the case when $\Sigma_I$ is irreducible and $I'=I$.}  We remark that the results and their proof in Step 1 will not depend on whether the original root system $\Sigma$ is irreducible nor not. 
Recall that $\{t_j,\ j=1,\ldots,r\}$ provide a coordinate system for $P_{I,J}$ on which $0\leq t_j\leq N^{-1}$ for any $j\in J$ and $1>c>t_j>N^{-1}$ for any $j\in I\setminus J$; see \eqref{PIJcoordinates}. 
We have 
\begin{align}\label{PIJcapNIsubset}
P_{I,J}\cap\mathcal{N}_I\subset \bigcup_{\substack{(j_{|J|+1},\ldots,j_{r})\t{ a}\\ \t{permutation of }I\setminus J}} \mathcal{R}_{j_{|J|+1},\ldots,j_r},
\end{align}
where 
$$\mathcal{R}_{j_{|J|+1},\ldots,j_r}=\left\{\substack{0\leq t_j\leq N^{-1},\ j\in J\\ N^{-1}<t_{j_{|J|+1}}\leq\cdots\leq t_{j_r}\leq c}\right\}.$$ 
By \eqref{keyestimatecor} and \eqref{deltaIJI'}, for $H\in \mathcal{R}_{j_{|J|+1},\ldots,j_r}$, we have 
$$|\delta_{I,J}(H)| \asymp \prod_{\alpha\in\Sigma_I^+\setminus \Sigma_J^+}\alpha(H)/2\pi i \asymp t_{j_r}^{q_r}(H)t_{j_{r-1}}^{q_{r-1}}(H)\cdots t_{j_{|J|+1}}^{q_{|J|+1}}(H),$$ 
where the $q_k$'s ($k=|J|+1,|J|+2,\ldots,r$) are defined as in \eqref{qk=nk-}, i.e., 
\begin{align*}\
q_k=|\Sigma^+_{I_k}|-|\Sigma^+_{I_{k-1}}|
\end{align*}
where $I_k=J\cup \{j_{|J|+1},j_{|J|+2},\ldots,j_k\}$. 
We evaluate 
\begin{align}
\left\| \frac{1}{\delta_{I,J}}\right\|_{L^p(\mathcal{R}_{j_{|J|+1},\ldots,j_r})}
&\lesssim
\left(\int_{\substack{0\leq t_j\leq N^{-1},\ j\in J\\ N^{-1}<t_{j_{|J|+1}}\leq\cdots\leq t_{j_r}\leq c}} t_{j_r}^{-q_rp}\cdots t_{j_{|J|+1}}^{-q_{|J|+1}p}\ dt_{1}\cdots dt_{r}\right)^{\frac{1}{p}} \nonumber \\
&\lesssim
\left(\int_{ N^{-1}<t_{j_{|J|+1}}\leq\cdots\leq t_{j_{r}}\leq c} t_{j_{r}}^{-q_rp}\cdots t_{j_{|J|+1}}^{-q_{|J|+1}p}\ dt_{j_{r}}\cdots dt_{j_{|J|+1}}\right. \nonumber \\
&\left.\ \ \ \ \ \ \cdot \int_{0\leq t_j\leq N^{-1},\ j\in J}\prod_{j\in J}dt_{j} \right)^{\frac{1}{p}}\nonumber \\ 
&\lesssim N^{-\frac{|J|}{p}}\cdot
\mathcal{I}(q_{|J|+1},\ldots,q_r;p,N). \nonumber
\end{align}
Here we have used the notation in Lemma \ref{multipleintegral}, and by which we find a finite sequence $s_i$ ($i=0,\ldots,k$) of integers with $$r=:s_0>s_1>s_2>\cdots >s_{k-1}>s_k:=|J|$$ 
such that for 
$$I_{s_i}=J\cup\{j_{|J|+1},\ldots,j_{s_i}\},\ i=0,\ldots,k,$$ 
and 
$$p_i=\frac{s_{i-1}-s_{i}}{q_{s_{i-1}}+q_{s_{i-1}-1}+\cdots+q_{s_i+1}}=\frac{s_{i-1}-s_i}{|\Sigma^+_{I_{s_{i-1}}}|-|\Sigma^+_{I_{s_{i}}}|}, \ i=1,\ldots,k,$$
it holds 
$$+\infty=:p_0>p_1>p_2>\cdots>p_k>p_{k+1}:=0$$
and 
\begin{align}\label{firstcase1}
\left\| \frac{1}{\delta_{I,J}}\right\|_{L^p(\mathcal{R}_{j_{|J|+1},\ldots,j_r})}
\lesssim 
\left\{
\begin{array}{ll}
N^{|\Sigma_{I_{s_{i}}}^+|-|\Sigma^+_J|-\frac{s_{i}}{p}}, & \text{ if }p_i>p>p_{i+1},\ i=0,\ldots,k,\\
N^{|\Sigma_{I_{s_{i}}}^+|-|\Sigma^+_J|-\frac{s_{i}}{p}+\varepsilon}, & \text{ if }p=p_i,\ i=1,\ldots,k.
\end{array}
\right.
\end{align}
Here the $\varepsilon$'s account for the $(\log cN)^C$ factor. 
In particular, except for the $\varepsilon$-sized jumps at the kink points, the above exponent of $N$ as a function of $\frac{1}{p}$ is piecewise linear, non-increasing, continuous and convex. Because of \eqref{PIJcapNIsubset}, $\left\|\frac{1}{\delta_{I,J}}\right\|_{L^p(P_{I,J}\cap\mathcal{N}_I)}$ is bounded by a finite sum of piecewise bounds as in \eqref{firstcase1}, so that 
\begin{align}\label{firstcase}
\left\| \frac{1}{\delta_{I,J}}\right\|_{L^p(P_{I,J}\cap \mathcal{N}_I)}
\lesssim \sum 
\left\{
\begin{array}{ll}
N^{|\Sigma_{I_{s_{i}}}^+|-|\Sigma^+_J|-\frac{s_{i}}{p}}, & \text{ if }p_i>p>p_{i+1},\ i=0,\ldots,k,\\
N^{|\Sigma_{I_{s_{i}}}^+|-|\Sigma^+_J|-\frac{s_{i}}{p}+\varepsilon}, & \text{ if }p=p_i,\ i=1,\ldots,k.
\end{array}
\right.
\end{align}

\underline{Step 2. We get a similar estimate as \eqref{firstcase} for the case when $I'=I$ and $\Sigma_I$ is possibly reducible.} By removing the node in the extended Dynkin diagram not belonging to $I$, we obtain the Dynkin diagram for the root system $\Sigma_I$. Checking Figure \ref{dynkin}, $\Sigma_I$ may be irreducible, which was already treated in Step 1, or a product of two or three irreducible root systems. We now  demonstrate the necessary modifications for the argument when $\Sigma_I$ is a product of two irreducibles, and the case of three irreducibles may be treated similarly. For example, in Figure \ref{alcoves} on rank-two root systems it can be seen that for types $B_2$ and $G_2$ the reducible rank-two root system of product type $A_1\times A_1$ appears as one of these $\Sigma_I$'s. Now suppose $\Sigma_I=\Sigma_{I^1}\bigsqcup\Sigma_{I^2}$ where $\Sigma_{I^1}$ and $\Sigma_{I^2}$ are nonempty, irreducible and orthogonal to each other, with $I=I^1\bigsqcup I^2$. Associated to this is also the direct sum of the ambient linear spaces $\mathfrak{t}=\mathfrak{t}_1\oplus\mathfrak{t}_2$. 
Let $J^l=I^l\bigcap J$ ($l=1,2$), then $J=J^1\bigsqcup J^2$. 
The polytope $P_{I,J}$ is now more or less the orthogonal product $P_{I^1,J^1}\times P_{I^2,J^2}$; more precisely, 
by \eqref{PIJcoordinates}, $P_{I,J}$ is contained in the product domain 
\begin{align*}
&\underbrace{\{H_1\in\mathfrak{t}_1:\ 0\leq t_j(H_1)\leq N^{-1}\ \forall j\in J^1, \ N^{-1}< t_j(H_1)\leq c\ \forall j\in I^1\setminus J^1\}}_{=:P'_{I^1,J^1}}\\
\times &\underbrace{\{H_2\in\mathfrak{t}_2:\ 0\leq t_j(H_2)\leq N^{-1}\ \forall j\in J^2, \ N^{-1}< t_j(H_2)\leq c\ \forall j\in I^2\setminus J^2\}}_{=:{P'_{I^2,J^2}}}
\end{align*}
as well as contains such a domain with a smaller constant $c$. The vertex $A_I$ may be expressed as $A_{I^1}\times A_{I^2}$ where $A_{I^l}$ is a point in $\mathfrak{t}_l$ ($l=1,2$), and the neighborhood $\mathcal{N}_I$ may be contained in a product $\mathcal{N}_{I^1}\times\mathcal{N}_{I^2}$ where $\mathcal{N}_{I^l}$ is a neighborhood of $A_{I^l}$ in the space $\mathfrak{t}_l$ ($l=1,2$). 
With the positive systems also decomposed as $\Sigma_I^+=\Sigma_{I^1}^+\bigsqcup\Sigma_{I^2}^+$, $\Sigma_J^+=\Sigma_{J^1}^+\bigsqcup\Sigma_{J^2}^+$, we have $\delta_{I,J}=\delta_{I^1,J^1}\cdot \delta_{I^2,J^2}$. 
Then 
\begin{align}\label{step4product}
\left\|\frac{1}{\delta_{I,J}}\right\|_{L^p(P_{I,J}\cap\mathcal{N}_I)}\leq \left\|\frac{1}{\delta_{I^1,J^1}}\right\|_{L^p(P'_{I^1,J^1}\cap\mathcal{N}_{I^1})}\left\|\frac{1}{\delta_{I^2,J^2}}\right\|_{L^p(P'_{I^2,J^2}\cap\mathcal{N}_{I^2})}.
\end{align}
Apply the result in Step 1, we obtain that for $l=1,2$, 
\begin{align*}
\left\|\frac{1}{\delta_{I^l,J^l}}\right\|_{L^p(P_{I^l,J^l}\cap \mathcal{N}_{I^l})}
\lesssim 
\sum
\left\{
\begin{array}{ll}
N^{|\Sigma_{I^l_{s_{l,i}}}^+|-|\Sigma^+_{J^l}|-\frac{s_{l,i}}{p}}, & \text{ if }p_{l,i}>p>p_{l,i+1},\ i=0,1,\ldots,k_l,\\
N^{|\Sigma_{I^l_{s_{l,i}}}^+|-|\Sigma^+_{J^l}|-\frac{s_{l,i}}{p}+\varepsilon}, & \text{ if }p=p_{l,i},\ i=1,\ldots,k_l.
\end{array}
\right.
\end{align*}
It follows from \eqref{step4product} that 
\begin{align}\label{productsystemnearorigin}
\left\|\frac{1}{\delta_{I,J}}\right\|_{L^p(P_{I,J}\cap\mathcal{N}_I)}
\lesssim 
\sum
\left\{
\begin{array}{ll}
N^{|\Sigma_{I_{s_{i}}}^+|-|\Sigma^+_J|-\frac{s_{i}}{p}}, & \text{ if }p_i>p>p_{i+1},\ i=0,\ldots,k,\\
N^{|\Sigma_{I_{s_{i}}}^+|-|\Sigma^+_J|-\frac{s_{i}}{p}+\varepsilon}, & \text{ if }p=p_{i},\ i=1,\ldots,k.
\end{array}
\right.
\end{align}
in exactly the same form as \eqref{firstcase}. We remark that as in  Step 1, the results and their proof in Step 2 do not depend on whether the original root system $\Sigma$ is irreducible or not.
\begin{figure}
\scalebox{0.25}[0.25]
{
\begin{tikzpicture}

\draw (0, 0) -- (4, 6.928);

\draw (0,0) -- (8, 0);

\draw  (4, 6.928) -- (8, 0);

\node [above] at (4, 6.928) {\scalebox{3}{$A_2$}};
\node [left] at (0, 0) {\scalebox{3}{$A_2$}};
\node [right] at (8, 0) {\scalebox{3}{$A_2$}};

\node [below] at (4, -1) {\scalebox{4}{Type $A_2$}};

\end{tikzpicture}
\hspace{2cm}
\begin{tikzpicture}

\draw (0, 0) -- (7,7);
\draw (0,0) -- (7, 0);
\draw  (7,7) -- (7, 0);

\node [above] at (7, 7) {\scalebox{3}{$B_2$}};
\node [left] at (0, 0) {\scalebox{3}{$B_2$}};
\node [right] at (7, 0) {\scalebox{3}{$A_1\times A_1$}};

\node [below] at (3.5, -1) {\scalebox{4}{Type $B_2$}};

\end{tikzpicture}
\hspace{2cm}
\begin{tikzpicture}

\draw (0, 0) -- (8, 4.6188);
\draw (0,0) -- (8, 0);
\draw  (8, 4.6188) -- (8, 0);

\node [above] at (8, 4.6188) {\scalebox{3}{$A_2$}};
\node [left] at (0, 0) {\scalebox{3}{$G_2$}};
\node [right] at (8, 0) {\scalebox{3}{$A_1\times A_1$}};

\node [below] at (4, -1) {\scalebox{4}{Type $G_2$}};

\end{tikzpicture}
}
\caption{Alcoves and parabolic subsystems}
\label{alcoves}
\end{figure}

\underline{Step 3. We now estimate 
$\left\|\frac{1}{\delta_{I,J}}\right\|_{L^p(P_{I,J}\cap\mathcal{N}_{I'})}$ for a general $I'\subset I$.} It is instructive to picture the following argument using the root system $A_3$ with $I={1,2,3}$ and $I'={1,2}$. We need to give a coordinate system for $P_{I,J}\cap \mathcal{N}_{I'}$. First for $H\in\mathfrak{t}$, recall the notations in \eqref{tJ}, \eqref{HJperp} and \eqref{tJperp}, we write 
$$\mathfrak{t}=\mathfrak{t}_{I'}\oplus\mathfrak{t}_{I'}^\perp, \ H=H_{I'}+H_{I'}^\perp.$$
Keeping in mind that $J\subset I'$, the region $P_{I,J}\cap\mathcal{N}_{I'}$ may be covered by a region in the form of  
$$\{H=H_{I'}+H_{I'}^\perp\ | \ 
H_{I'}\in P_{I',J}\cap \mathcal{N}'_{I'}, \ 
H_{I'}^\perp\in\mathcal{N}^\perp_{I'}\}$$
where $P_{I',J}$ is defined for the root system $\Sigma_{I'}$ as in \eqref{PIJdefinition}, $\mathcal{N}'_{I'}$ is defined as a neighborhood of the origin in $\mathfrak{t}_{I'}$ and $\mathcal{N}^\perp_{I'}$ is defined as a neighborhood in $\mathfrak{t}_{I'}^\perp$.
Now \eqref{deltaIJI'} tells that 
$$|\delta_{I,J}(H)|\asymp |\delta_{I',J}(H_{I'})|.$$
Then we have 

\begin{align*}
\left\|\frac{1}{\delta_{I,J}}\right\|_{L^p(P_{I,J}\cap\mathcal{N}_{I'})}
&\lesssim 
\left(\int_{H_{I'}\in P_{I',J}\cap \mathcal{N}'_{I'}}\left|\frac{1}{\delta_{I',J}(H_{I'})}\right|^p\int_{\substack{
(\alpha(H_{I'}^\perp)+\alpha(H_{I'}))/2\pi i >  N^{-1}\text{ for }\alpha\in I\setminus I' \\
H_{I'}^\perp\in\mathcal{N}^\perp_{I'}
}} \ dH_{I'}^\perp\ dH_{I'}\right)^{\frac{1}{p}}
\\
&\lesssim \left\|\frac{1}{\delta_{I',J}}\right\|_{L^p(P_{I',J}\cap\mathcal{N}'_{I'})}.
\end{align*}
Here we have used the fact that the $\alpha(H_{I'}^\perp)$'s ($\alpha\in I\setminus I'$) form a basis of linear functions on $\mathfrak{t}_{I'}^\perp$. Applying the result in Step 2 to bound 
$\left\|\frac{1}{\delta_{I',J}}\right\|_{L^p(P_{I',J}\cap\mathcal{N}'_{I'})}$, we get 
\begin{align}\label{laststepbound1}
 \left\|\frac{1}{\delta_{I,J}}\right\|_{L^p(P_{I,J}\cap\mathcal{N}_{I'})}
 \lesssim \sum
\left\{
\begin{array}{ll}
N^{|\Sigma_{I'_{s_{i}}}^+|-|\Sigma^+_J|-\frac{s_{i}}{p}}, & \text{ if }p_i>p>p_{i+1},\ i=0,\ldots,k,\\
N^{|\Sigma_{I'_{s_{i}}}^+|-|\Sigma^+_J|-\frac{s_{i}}{p}+\varepsilon}, & \text{ if }p=p_i,\ i=1,\ldots,k,
\end{array}
\right.
\end{align}
in exactly the same form as \eqref{firstcase}. 

\underline{Last step.} To prove the proposition, it suffices to show that each of piecewise summand 
\begin{align}\label{laststepbound}
\left\{
\begin{array}{ll}
N^{|\Sigma_{I'_{s_{i}}}^+|-|\Sigma^+_J|-\frac{s_{i}}{p}}, & \text{ if }p_i>p>p_{i+1},\ i=0,\ldots,k,\\
N^{|\Sigma_{I'_{s_{i}}}^+|-|\Sigma^+_J|-\frac{s_{i}}{p}+\varepsilon}, & \text{ if }p=p_i,\ i=1,\ldots,k,
\end{array}
\right.
\end{align}
in \eqref{laststepbound1} is further bounded by the desired bound
\begin{align}\label{desiredboundbound}
\left\{
\begin{array}{ll}
N^{|\Sigma^+|-|\Sigma_J^+|-\frac{r}{p}}, & \t{ for }p>\frac{r}{|\Sigma^+|},\\
N^{-|\Sigma_J^+|}, & \t{ for }0<p<\frac{r}{|\Sigma^+|}.
\end{array}
\right.
\end{align}

\underline{Evaluating at $p=\infty$.} Then the exponent of $N$ in \eqref{laststepbound} becomes $|\Sigma^+_{I'}|-|\Sigma^+_{J}|$, while the above desired exponent $|\Sigma^+|-|\Sigma^+_J|-\frac{r}{p}$ becomes $|\Sigma^+|-|\Sigma^+_J|$.
We indeed have 
\begin{align*}
|\Sigma^+_{I'}|-|\Sigma^+_{J}|\leq |\Sigma^+|-|\Sigma^+_{J}|.
\end{align*}

\underline{Evaluating at $p=\frac{r}{|\Sigma^+|}$.} Ignoring the  $\varepsilon$-sized jumps, the exponent of $N$ in \eqref{laststepbound} becomes 
$$|\Sigma^+_{I'_{s_i}}|-|\Sigma^+_{J}|-\frac{s_i}{r}\cdot |\Sigma^+|$$ for some $i$, while the desire exponent in \eqref{desiredboundbound} evaluates to be 
$-|\Sigma_J^+|$. It indeed holds that 
\begin{align}\label{rootsysteminequality}
|\Sigma^+_{I'_{s_i}}|-|\Sigma^+_{J}|-\frac{s_i}{r}\cdot |\Sigma^+|\leq -|\Sigma_J^+|,
\end{align}
as a consequence of Corollary \ref{subsystem}, noting that $|I'_{s_i}|=s_i$. Here we used crucially the irreducibility of $\Sigma$.

As observed in Step 1 by the result of Lemma \ref{multipleintegral}, the exponents of $N$ in \eqref{laststepbound} form a piecewise linear, convex, and continuous function of $\frac{1}{p}$ except for the $\varepsilon$-sized jumps at the kink points. 
We have shown that the piecewise exponent function in \eqref{laststepbound} is bounded at the two points $\frac{1}{p}=0$ and $\frac{1}{p}=\frac{|\Sigma^+|}{r}$ by $|\Sigma^+|-|\Sigma^+_J|-\frac{r}{p}$ which is linear in $\frac{1}{p}$, ignoring the $\varepsilon$-sized jumps at the kink points. This is enough for us to conclude that for all $p>\frac{r}{|\Sigma^+|}$, the exponent of $N$ in \eqref{laststepbound} is bounded by the desired exponent $|\Sigma^+|-|\Sigma^+_J|-\frac{r}{p}$ for $p>\frac{r}{|\Sigma^+|}$. 

At last, we have the fact that the exponent of $N$ in \eqref{laststepbound} is a non-increasing function of $\frac{1}{p}$ except for the $\varepsilon$-jumps at the kink points, and the fact that this exponent function is bounded by $-|\Sigma^+_J|$ at $\frac{1}{p}=\frac{|\Sigma^+|}{r}$ ignoring the $\varepsilon$-jumps at the kink points. These facts are enough for us to conclude that for all $p<\frac{r}{|\Sigma^+|}$, the exponent of $N$ in \eqref{laststepbound}  is bounded by the constant $-|\Sigma^+_J|$. The proof is finished. 

\end{proof}

\begin{rem}\label{productweight}
We have a natural extension of Proposition \ref{keyprop} to reducible root systems $\Sigma$. Let $\Sigma_l$ ($l=1,\ldots,k$) be irreducible root systems of rank $r_l$ and consider the product root system $\Sigma:=\bigsqcup_l\Sigma_l$ of rank $\sum_{l}r_l$. Let $\Sigma_l^+$ be a positive system of $\Sigma_l$ and then $\Sigma^+=\bigsqcup_l\Sigma_l^+$ is a positive system of $\Sigma$. The associated Weyl alcove $A$ to $\Sigma$ may be defined as the product $A:=A_1\times \cdots\times A_k$ where $A_l$ is the alcove for $\Sigma_l$. Let $I_l$ denote a subset of $\{0,\ldots,r_l\}$ such that $|I_l|=r_l$ and let $J_l$ be a subset of $I_l$. Let $I:=\bigsqcup_l I_l $ and $J:=\bigsqcup_l J_l$. 
Set 
$$C_I:=C_{I_1}\times\cdots \times C_{I_k},$$ 
$$P_J:=P_{J_1}\times\cdots \times P_{J_k},$$ 
$$P_{I,J}:=P_J\cap C_I=P_{I_1,J_1}\times\cdots\times P_{I_k,J_k}.$$
Set $\delta_I:=\prod_l\delta_{I_l}$, $\delta^J:=\prod_l\delta^{J_l}$, and $\delta_{I,J}:=\prod_l\delta_{I_l,J_l}$. 
Then Lemma \ref{deltaIdeltaJ} may be generalized to this setting without change, and Proposition \ref{keyprop} implies that 
$$\left\|\frac{1}{\delta_{I,J}}\right\|_{L^p(P_{I,J})}\lesssim N^{|\Sigma^+|-|\Sigma_J^+|-\frac{r}{p}},\t{ for }p>\max\left\{\frac{r_l}{|\Sigma_l^+|}, \ l=1,\ldots,k\right\}.$$
\end{rem}

\section{Projections of the weight lattice}\label{projectionweightlattice}
For each proper subset $J$ of $\{0,\ldots,r\}$, recall that $V_J$ denotes the $\R$-subspace of $i\mathfrak{t}^*$ spanned by $\Sigma_J$, $\Lambda_J$ denotes the weight lattice associated to $\Sigma_J$, and $\t{Proj}_{V_J}:i\mathfrak{t}^*\to i\mathfrak{t}^*$ denotes orthogonal projection onto $V_J$. With Lemma \ref{lemdecomposition} in mind, to examine the behavior of the characters, we still need to understand the image set 
$\t{Proj}_{V_J}(\Lambda)$ in detail. We have the following characterization. Let $\Gamma:=\t{span}_\mathbb{Z}\Sigma$, $\Gamma_J:=\t{span}_\Z\Sigma_J$ be the root lattices generated by the root systems $\Sigma$, $\Sigma_J$ respectively. 

\begin{lem}\label{projtosub}
We have $\Lambda_J\supset\t{Proj}_{V_J}(\Lambda)\supset\Gamma_J$. 
\end{lem}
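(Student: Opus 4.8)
The plan is to prove the two inclusions $\operatorname{Proj}_{V_J}(\Lambda) \subset \Lambda_J$ and $\Gamma_J \subset \operatorname{Proj}_{V_J}(\Lambda)$ separately, since they are of quite different natures. The second inclusion is the easy one: for each $j \in J$, the simple root $\alpha_j$ lies in $\Sigma \subset \Lambda$, and $\operatorname{Proj}_{V_J}(\alpha_j) = \alpha_j$ because $\alpha_j \in V_J$ already. Hence every generator of $\Gamma_J = \operatorname{span}_{\mathbb Z}\{\alpha_j : j \in J\}$ lies in $\operatorname{Proj}_{V_J}(\Lambda)$, and since $\operatorname{Proj}_{V_J}$ is linear it maps the lattice $\Lambda$ to a subgroup of $i\mathfrak t^*$, so $\Gamma_J \subset \operatorname{Proj}_{V_J}(\Lambda)$. (One should note $0 \in J$ is allowed, in which case $\alpha_0$ is the lowest root, still an element of $\Sigma \subset \Lambda$, so the argument is unaffected.)

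For the first inclusion, fix $\mu \in \Lambda$ and set $\mu_J := \operatorname{Proj}_{V_J}(\mu)$; I must show $\mu_J \in \Lambda_J$, i.e. that $\frac{2(\mu_J,\alpha)}{(\alpha,\alpha)} \in \mathbb Z$ for every $\alpha \in \Sigma_J$. The key point is that for $\alpha \in \Sigma_J \subset V_J$, orthogonality of the projection gives $(\mu_J, \alpha) = (\mu, \alpha)$, because $\mu - \mu_J \perp V_J$. Therefore
\begin{align*}
\frac{2(\mu_J,\alpha)}{(\alpha,\alpha)} = \frac{2(\mu,\alpha)}{(\alpha,\alpha)} \in \mathbb Z,
\end{align*}
the last membership being exactly the defining condition that $\mu \in \Lambda$, applied to the root $\alpha \in \Sigma_J \subset \Sigma$. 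Since this holds for all $\alpha \in \Sigma_J$, we conclude $\mu_J \in \Lambda_J$, hence $\operatorname{Proj}_{V_J}(\Lambda) \subset \Lambda_J$.

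I do not expect any serious obstacle here; the statement is essentially a bookkeeping exercise about orthogonal projections and the two characterizations (by integrality pairings) of $\Lambda$ and $\Lambda_J$. The only mild subtlety worth spelling out is that the inner product used to define $\operatorname{Proj}_{V_J}$ must be the same $W$-invariant form $(\cdot,\cdot)$ (the Killing form) that enters the definitions of $\Lambda$ and $\Lambda_J$, so that the identity $(\mu_J,\alpha) = (\mu,\alpha)$ for $\alpha \in V_J$ is legitimate; this is the convention already fixed earlier in the paper. If one wanted to be thorough one could also remark that the inclusions can be strict in general — e.g. $\operatorname{Proj}_{V_J}(\Lambda)$ need not equal $\Lambda_J$ — but the lemma as stated only asks for the containments, so no further analysis is needed.
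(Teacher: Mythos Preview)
Your proof is correct and follows essentially the same approach as the paper. The paper's proof is terser --- it says the first inclusion ``follows by definition of the weight lattices'' and for the second writes the chain $\t{Proj}_{V_J}(\Lambda)\supset\t{Proj}_{V_J}(\Gamma)\supset\t{Proj}_{V_J}(\Gamma_J)=\Gamma_J$ --- but you have simply unpacked exactly these statements: the identity $(\mu_J,\alpha)=(\mu,\alpha)$ for $\alpha\in V_J$ is what makes the first inclusion immediate from the definitions, and your observation that each $\alpha_j\in\Sigma\subset\Gamma\subset\Lambda$ is fixed by the projection is the content of the paper's chain.
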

\begin{proof}
$\Lambda_J\supset\t{Proj}_{V_J}(\Lambda)$ follows by definition of the weight lattices. Then $\t{Proj}_{V_J}(\Lambda)\supset\t{Proj}_{V_J}(\Gamma)\supset\t{Proj}_{V_J}(\Gamma_J)=\Gamma_J$. 
\end{proof}

In particular, since $\Lambda_J$ and $\Gamma_J$ are both lattices of rank $|J|$, so is $\t{Proj}_{V_J}(\Lambda)$. This has the following as an immediate consequence. 

\begin{lem}\label{decomweight}
We have the $\mathbb{Z}$-linear direct sum 
$$\Lambda={}^J\Lambda\bigoplus {}^J\Lambda^\perp,$$ 
such that $\t{Proj}_{V_J}({}^J\Lambda^\perp)=0$
while 
$\t{Proj}_{V_J}:{}^J\Lambda\xrightarrow{\sim} \t{Proj}_{V_J}(\Lambda)$ is an isomorphism of rank-$|J|$ lattices. 
\end{lem}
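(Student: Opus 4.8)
The plan is to obtain the decomposition abstractly, by treating $\t{Proj}_{V_J}$ restricted to $\Lambda$ as a surjection of finitely generated abelian groups onto a free one and invoking the splitting lemma. First I would set $\pi:=\t{Proj}_{V_J}|_{\Lambda}\colon\Lambda\to V_J$ and let
$${}^J\Lambda^\perp:=\ker\pi=\Lambda\cap V_J^\perp,$$
where $V_J^\perp$ is the orthogonal complement of $V_J$ in $i\mathfrak{t}^*$ for the Killing form; this coincides with $\ker\pi$ precisely because $\t{Proj}_{V_J}$ is orthogonal projection. In particular $\t{Proj}_{V_J}({}^J\Lambda^\perp)=0$ by construction, and ${}^J\Lambda^\perp$ is a sublattice of $\Lambda$, being a subgroup of the free abelian group $\Lambda$ of rank $r$.

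Next I would invoke Lemma \ref{projtosub}: the image $\t{Proj}_{V_J}(\Lambda)$ is trapped between the rank-$|J|$ lattices $\Gamma_J$ and $\Lambda_J$, hence is a finitely generated torsion-free, equivalently free, abelian group of rank $|J|$. Consequently the short exact sequence
$$0\longrightarrow {}^J\Lambda^\perp\longrightarrow \Lambda\xrightarrow{\ \pi\ }\t{Proj}_{V_J}(\Lambda)\longrightarrow 0$$
has projective last term, so it admits a $\mathbb{Z}$-linear section $s\colon\t{Proj}_{V_J}(\Lambda)\to\Lambda$ with $\pi\circ s=\t{id}$. I would then define ${}^J\Lambda:=s\big(\t{Proj}_{V_J}(\Lambda)\big)$, a sublattice of $\Lambda$.

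Finally I would verify the three assertions, all routine consequences of the splitting. For $\mu\in\Lambda$ one writes $\mu=s(\pi(\mu))+\big(\mu-s(\pi(\mu))\big)$ with the first summand in ${}^J\Lambda$ and the second in $\ker\pi={}^J\Lambda^\perp$; injectivity of $s$ together with $\pi\circ s=\t{id}$ gives ${}^J\Lambda\cap{}^J\Lambda^\perp=0$, so $\Lambda={}^J\Lambda\oplus{}^J\Lambda^\perp$, which moreover forces $\t{rank}\,{}^J\Lambda^\perp=r-|J|$. The vanishing $\t{Proj}_{V_J}({}^J\Lambda^\perp)=0$ has already been noted, and $\t{Proj}_{V_J}|_{{}^J\Lambda}=\pi|_{{}^J\Lambda}$ is a bijection onto $\t{Proj}_{V_J}(\Lambda)$ with two-sided inverse $s$, hence an isomorphism of rank-$|J|$ lattices. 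I do not anticipate any real obstacle: the only input with genuine content is the freeness (equivalently, the rank-$|J|$ lattice structure) of $\t{Proj}_{V_J}(\Lambda)$, which is supplied by Lemma \ref{projtosub}; everything else is the standard splitting lemma for abelian groups.
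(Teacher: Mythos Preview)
Your proof is correct and matches the paper's approach exactly: the paper simply remarks that since $\Lambda_J$ and $\Gamma_J$ are both rank-$|J|$ lattices, so is $\t{Proj}_{V_J}(\Lambda)$, and then states the lemma as an ``immediate consequence'' without further argument. What you have written is precisely the standard splitting-lemma unpacking of that one-line remark.
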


We now wish to evaluate the $L^p$ norm of the characters $\chi_\mu$ as in the formula \eqref{char}. With Lemma \ref{deltaIdeltaJ}, Proposition \ref{keyprop}, and Lemma \ref{projtosub} in mind, 
we now analyze characters of the form $\chi^J_\lambda(H_J)$ for $\lambda\in\Lambda_J$ and $H\in P_J$. Let $H\in P_J$. By the definition of $H_J$ and $P_J$ we know that 
$$0<\alpha_j(H_J)/2\pi i+\delta_{0j}\leq N^{-1}\t{ for }j\in J.$$
If $0\notin J$, then the above already implies that $|H_J|\lesssim N^{-1}$. In general, there is a unique $H^0_J\in t_J$ such that $\alpha_j(H^0_J)/2\pi i+\delta_{0j}=0$. Then 
$$H_J=H_J^1+H^0_J$$
such that $|\alpha_j(H_J^1)|\leq N^{-1}$ for all $j\in J$, and thus 
$$|H_J^1|\lesssim N^{-1}.$$

\begin{lem}\label{shift}
For $H\in P_J$ and $\lambda\in\Lambda_J$, $\chi^J_{\lambda}(H_J)=e^{(\lambda+\rho_J)(H^0_J)}\chi^J_{\lambda}(H_J^{1}).$ 
\end{lem}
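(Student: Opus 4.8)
The plan is to exploit the explicit Weyl-type formula for $\chi^J_\lambda$ together with the decomposition $H_J = H_J^1 + H_J^0$, where $H_J^0 \in \mathfrak{t}_J$ is chosen so that $\alpha_j(H_J^0)/2\pi i + \delta_{0j} = 0$ for all $j \in J$. Recall that
$$\chi^J_\lambda(H_J) = \frac{\sum_{s_J \in W_J} \det s_J\, e^{(s_J\lambda)(H_J)}}{\delta^J(H_J)}, \qquad \delta^J(H_J) = \sum_{s_J \in W_J} \det s_J\, e^{(s_J\rho_J)(H_J)}.$$
First I would treat numerator and denominator separately, and in each case pull out a common exponential factor using the fact that $H_J^0$ lies in a specific affine translate of the root hyperplanes. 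The key identity is that for each root $\alpha \in \Sigma_J$ we have $\alpha(H_J^0)/2\pi i \in \Z$ (since $\alpha$ is an integral combination of the $\alpha_j$, $j \in J$, and $\alpha_j(H_J^0)/2\pi i = -\delta_{0j} \in \Z$), hence $e^{(s_J\beta)(H_J^0)}$ depends on $\beta \in \Lambda_J$ and $s_J \in W_J$ only through controlled translation behavior.

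The core computation is: for $\beta \in \Lambda_J$ and $s_J \in W_J$, one has $(s_J\beta)(H_J^0) = \beta(H_J^0) + (s_J\beta - \beta)(H_J^0)$, and $s_J\beta - \beta$ lies in the root lattice $\Gamma_J$ (a standard fact: $W_J$ acts trivially on $\Lambda_J / \Gamma_J$), so $(s_J\beta - \beta)(H_J^0)/2\pi i \in \Z$ and therefore $e^{(s_J\beta - \beta)(H_J^0)} = e^{(s_J\beta - \beta)(H_J^0)}$ equals $1$ when... wait — here I must be careful: $e^{2\pi i \Z} = 1$ only because $\alpha(H)$ for $\alpha$ a linear form taking imaginary values means $\alpha(H)/2\pi i$ real, and the factor of $2\pi i$ makes integer multiples exponentiate to $1$. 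Indeed the whole setup has $\alpha(H) \in i\R$, and $e^{\alpha(H)} = e^{2\pi i (\alpha(H)/2\pi i)}$, so integrality of $\alpha(H_J^0)/2\pi i$ gives $e^{\alpha(H_J^0)} = 1$. Applying this with $\beta = \lambda$ gives $e^{(s_J\lambda)(H_J^0)} = e^{\lambda(H_J^0)}$ for every $s_J$, and with $\beta = \rho_J$ gives $e^{(s_J\rho_J)(H_J^0)} = e^{\rho_J(H_J^0)}$. (Note $\rho_J \in \Lambda_J$, being the Weyl vector of $\Sigma_J$.)

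With this in hand the proof is a short assembly. Writing $H_J = H_J^1 + H_J^0$ and using $(s_J\lambda)(H_J) = (s_J\lambda)(H_J^1) + (s_J\lambda)(H_J^0) = (s_J\lambda)(H_J^1) + \lambda(H_J^0)$, the numerator factors as
$$\sum_{s_J \in W_J} \det s_J\, e^{(s_J\lambda)(H_J)} = e^{\lambda(H_J^0)} \sum_{s_J \in W_J} \det s_J\, e^{(s_J\lambda)(H_J^1)},$$
and likewise the denominator $\delta^J(H_J) = e^{\rho_J(H_J^0)} \delta^J(H_J^1)$. Taking the quotient yields
$$\chi^J_\lambda(H_J) = e^{(\lambda - \rho_J)(H_J^0)} \chi^J_\lambda(H_J^1),$$
and since $e^{(-2\rho_J)(H_J^0)} = 1$ by the same integrality argument (as $2\rho_J = \sum_{\alpha \in \Sigma_J^+}\alpha \in \Gamma_J$, so $2\rho_J(H_J^0)/2\pi i \in \Z$), we may replace $e^{-\rho_J(H_J^0)}$ by $e^{+\rho_J(H_J^0)}$, giving exactly $\chi^J_\lambda(H_J) = e^{(\lambda+\rho_J)(H_J^0)}\chi^J_\lambda(H_J^1)$ as claimed.

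The main obstacle, such as it is, is bookkeeping the integrality facts rather than any genuine difficulty: one must verify (i) $\alpha_j(H_J^0)/2\pi i \in \Z$ for $j \in J$, which is immediate from the defining equation of $H_J^0$; (ii) $s_J\beta - \beta \in \Gamma_J$ for $\beta \in \Lambda_J$, a standard root-system fact; and (iii) that all roots and $\rho_J$ and $2\rho_J$ lie in $\Lambda_J$ so that their pairings with $H_J^0$ are integral multiples of $2\pi i$. None of these require the choice of $H_J^0$ to be canonical — any such point works, consistent with the statement. I would also remark that this is precisely the mechanism by which shifting the physical variable near a face of the alcove produces only a unimodular prefactor on the sub-character, which is what makes the later $L^p$ estimates insensitive to this shift.
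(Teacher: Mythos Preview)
Your proof is correct and follows essentially the same approach as the paper: both use the key fact that $s_J\lambda-\lambda\in\Gamma_J$ together with $\alpha(H_J^0)/2\pi i\in\Z$ for $\alpha\in\Gamma_J$ to pull out the factor $e^{\lambda(H_J^0)}$ from the numerator. The only minor difference is in the denominator: the paper rewrites $\delta^J(H_J)=e^{-\rho_J(H_J)}\prod_{\alpha\in\Sigma_J^+}(e^{\alpha(H_J)}-1)$ via the product form and reads off the factor $e^{-\rho_J(H_J^0)}$ directly, whereas you apply the same $W_J$-invariance argument to $\rho_J\in\Lambda_J$ in the alternating-sum form (obtaining $e^{+\rho_J(H_J^0)}$) and then invoke $2\rho_J\in\Gamma_J$ to reconcile the sign---both routes are valid and equivalent.
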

\begin{proof}
We have $\chi_\lambda^J=(\delta^J)^{-1}\sum_{s\in W_J}\det s\ e^{s\lambda}$. By a standard fact in root system theory, $s\lambda-\lambda\in\Gamma_J$ for each $s\in W_J$ and $\lambda\in\Lambda_J$. By definition $\alpha(H^0_J)/2\pi i\in\Z$ for all $\alpha\in\Gamma_J$, we thus have $e^{s\lambda(H^0_J)}=e^{\lambda(H^0_J)}$, which contributes the factor $e^{\lambda(H^0_J)}$ to the right side of the desired equation. Now we can write 
$$\delta^J(H_J)=e^{-\rho_J(H_J)}\prod_{\alpha\in\Sigma^+_J}\left(e^{\alpha(H_J)}-1\right)
=e^{-\rho_J(H^0_J)}e^{-\rho_J(H_J^1)}\prod_{\alpha\in\Sigma^+_J}\left(e^{\alpha(H_J^1)}-1\right)
=e^{-\rho_J(H^0_J)}\delta^J(H_J^1).$$
This contributes the other factor $e^{\rho_J(H^0_J)}$ and thus concludes the proof. 
\end{proof}

Lastly, to analyze $\chi^J_{\lambda}(H_J^{1})$, we apply:

\begin{lem}\cite[Harish-Chandra's integral formula]{HC57}
Let $U$ be a compact semisimple Lie group, and let $\mathfrak{u}$ be its Lie algebra and let $\mathfrak{t}$ be its Cartan subalgebra. Let $\mathfrak{u}^*_\C,\mathfrak{t}^*_\C$ be the spaces of complex linear forms on $\mathfrak{u},\mathfrak{t}$ respectively. For $u\in U$, let $\t{Ad}_u$ denotes the adjoint action of $u$ on $\mathfrak{u}$ as well as on $\mathfrak{u}^*_\C$ such that $(\t{Ad}_u\lambda)(X)=\lambda(\t{Ad}^{-1}_u X)$ for any $\lambda\in \mathfrak{u}^*_\C$ and $X\in\mathfrak{u}$. Let $(\cdot,\cdot)$ denote the Killing form on $\mathfrak{u}^*_\C$. 
Then for any $\lambda,\mu\in\mathfrak{t}^*_\C$, we have 
$$\sum_{s\in W}\det s\ e^{(s\lambda,\mu)}=\frac{\prod_{\alpha\in\Sigma^+}(\alpha,\lambda)\cdot\prod_{\alpha\in\Sigma^+}(\alpha,\mu)}{\prod_{\alpha\in\Sigma^+}(\alpha,\rho)}\int_U e^{(\t{Ad}_u\lambda,\mu)}\ du.$$
\end{lem}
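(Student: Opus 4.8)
This is a classical identity due to Harish-Chandra \cite{HC57}, and I would reconstruct it as follows. Fix $\lambda\in\mathfrak{t}^*_\C$ and regard both sides as entire functions of $\mu\in\mathfrak{t}^*_\C$. Write $\pi(\xi):=\prod_{\alpha\in\Sigma^+}(\alpha,\xi)$ for the $W$-anti-invariant product polynomial and set
$$N(\lambda,\mu):=\sum_{s\in W}\det s\ e^{(s\lambda,\mu)}, \qquad \psi_\lambda(\mu):=\int_U e^{(\t{Ad}_u\lambda,\mu)}\ du.$$
The change of variable $u\mapsto wu$, where $w$ represents $s\in W$ in the normalizer of $T$, shows $\psi_\lambda$ is $W$-invariant in $\mu$, so both $N(\lambda,\cdot)$ and $\pi\cdot\psi_\lambda$ are $W$-anti-invariant in $\mu$. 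The plan is to recognize each of these as, up to one explicit constant, the \emph{unique} $W$-anti-invariant entire joint eigenfunction on $\mathfrak{t}^*$ of the algebra of $W$-invariant constant-coefficient differential operators with spectral parameter $\lambda$, and then match the constants.

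For $N$ the eigenfunction property is immediate: if $\bar q$ is $W$-invariant on $\mathfrak{t}^*$ and $\partial(\bar q)$ is the associated constant-coefficient operator, then $\partial(\bar q)_\mu N(\lambda,\mu)=\bar q(\lambda)N(\lambda,\mu)$. For $\psi_\lambda$ I would pass to $\mathfrak{u}^*_\C$: the function $\widetilde\psi_\lambda(\xi):=\int_U e^{(\t{Ad}_u\lambda,\xi)}\,du$ is $\t{Ad}$-invariant, and since an $\t{Ad}$-invariant polynomial $q$ acts on $e^{(\t{Ad}_u\lambda,\xi)}$ by the scalar $q(\t{Ad}_u\lambda)=q(\lambda)$, it satisfies $\partial(q)_\xi\widetilde\psi_\lambda=q(\lambda)\widetilde\psi_\lambda$. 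By Chevalley's restriction theorem the restrictions $\bar q=q|_{\mathfrak{t}^*}$ exhaust the $W$-invariants, and Harish-Chandra's computation of the radial part of $\partial(q)$ on $\t{Ad}$-invariant functions (the flat, unshifted case) then yields $\partial(\bar q)_\mu\big(\pi(\mu)\psi_\lambda(\mu)\big)=\bar q(\lambda)\,\pi(\mu)\psi_\lambda(\mu)$. The only fact this step really uses is that $\pi$ is \emph{harmonic}, i.e. $\partial(\bar q)\pi=0$ for every positive-degree $W$-invariant $\bar q$: indeed $\partial(\bar q)\pi$ is $W$-anti-invariant of degree $<|\Sigma^+|$, and every such polynomial vanishes. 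Thus $N(\lambda,\cdot)$ and $\pi\cdot\psi_\lambda$ solve the same system.

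For uniqueness, when $\lambda$ is regular the equations $\bar q(\xi)=\bar q(\lambda)$, $\bar q$ ranging over $W$-invariants, cut out exactly the orbit $W\lambda$, a set of $|W|$ simple (reduced) points (Chevalley), so every entire solution of the system is $\sum_{\xi\in W\lambda}c_\xi e^{(\xi,\mu)}$; imposing $W$-anti-invariance forces $c_{s\lambda}=\det s\cdot c_\lambda$, so the solution space is one-dimensional and $\pi(\mu)\psi_\lambda(\mu)=c(\lambda)N(\lambda,\mu)$ for a scalar $c(\lambda)$. The symmetries $\psi_\lambda(\mu)=\psi_\mu(\lambda)$ and $N(\lambda,\mu)=N(\mu,\lambda)$ give $c(\lambda)\pi(\lambda)\equiv c_0$, and letting $\mu\to0$ — using $\psi_\lambda(0)=1$ and the expansion $N(\lambda,\mu)=\pi(\rho)^{-1}\pi(\lambda)\pi(\mu)+O(|\mu|^{|\Sigma^+|+1})$, whose leading term comes from the Weyl denominator formula at $\lambda=\rho$ together with bi-anti-invariance and homogeneity — pins $c_0=\pi(\rho)$, which is the constant asserted. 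This gives the formula for regular $\lambda$; for non-regular $\lambda$ both sides vanish by pairing $s$ with $ss_\alpha$ for a reflection $s_\alpha$ fixing $\lambda$, and the general statement on $\mathfrak{t}^*_\C$ then follows by analyticity. Since both sides factor over the simple ideals of $\mathfrak{u}$, it suffices to carry all this out for $U$ simple.

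The genuinely nontrivial ingredient is the radial-part identity of the second paragraph; it is classical and rests precisely on the harmonicity of $\pi$ recalled there, so I would quote it rather than reprove it. A more geometric alternative avoids it entirely: identify $\psi_\lambda$ with the Laplace transform of the orbital probability measure on the coadjoint orbit $\mathcal{O}_\lambda\subset\mathfrak{u}^*$, and apply the Duistermaat--Heckman exact stationary-phase formula for the maximal-torus action on $\mathcal{O}_\lambda$, whose fixed-point set $\{s\lambda:s\in W\}$ reproduces the alternating sum $N(\lambda,\mu)$ directly; this replaces the radial-part lemma by abelian localization.
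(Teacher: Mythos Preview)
The paper does not supply a proof of this lemma; it is simply quoted from \cite{HC57} as a classical result and then applied. So there is nothing in the paper to compare your argument against.

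That said, your reconstruction is correct and is essentially Harish-Chandra's own line of reasoning: show that $\pi\psi_\lambda$ and $N(\lambda,\cdot)$ are both $W$-anti-invariant joint eigenfunctions of $\partial(\bar q)$ with eigenvalue $\bar q(\lambda)$, invoke the one-dimensionality of that eigenspace for regular $\lambda$, and fix the constant via the leading Taylor term of $N$ (your use of the Weyl denominator formula at $\lambda=\rho$ to pin $c_0=\pi(\rho)$ is the standard and cleanest way). You are right that the only genuinely nontrivial input is the flat radial-part identity $\partial(q)|_{\text{Ad-inv}}=\pi^{-1}\partial(\bar q)\pi$, which in turn rests on the harmonicity of $\pi$; quoting it is appropriate here. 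The Duistermaat--Heckman alternative you mention is also valid and is the modern way many people would prove this, trading the radial-part lemma for abelian localization on the coadjoint orbit.
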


As a consequence, we have:
\begin{lem}\label{formulaforcharacterparabolic}
Let $U_J$ be a compact semisimple Lie group of Lie algebra $\mathfrak{u}_J$ whose root system is $\Sigma_J$ (which always exists thanks to Lie's third theorem). Then 
\begin{align*}
\chi^J_{\lambda}(H_J^{1})=\frac{\prod_{\alpha\in\Sigma^+_J}\alpha(H_J^{1})}{\delta^J(H_J^1)}\cdot \frac{\prod_{\alpha\in\Sigma^+_J}(\alpha,\lambda)}{\prod_{\alpha\in\Sigma^+_J}(\alpha,\rho_J)}
\cdot 
\int_{U_J}e^{\lambda(\t{Proj}_{\mathfrak{t}_J}(\t{Ad}_u H_J^{1}))}\ du. 
\end{align*}
Here $\t{Proj}_{\mathfrak{t}_J}:\mathfrak{u}_J\to\mathfrak{t}_J$ is the orthogonal projection onto $\mathfrak{t}_J$ induced from the Killing form. 
\end{lem}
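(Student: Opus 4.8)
The plan is to derive the formula for $\chi^J_\lambda(H^1_J)$ as a direct specialization of Harish-Chandra's integral formula, applied not to the ambient group $U$ but to the auxiliary group $U_J$ whose root system is $\Sigma_J$. First I would recall the Weyl character formula for $U_J$, namely $\chi^J_\lambda = (\delta^J)^{-1}\sum_{s\in W_J}\det s\ e^{s\lambda}$, so that evaluating at $H^1_J$ gives
\begin{align*}
\chi^J_\lambda(H^1_J)=\frac{1}{\delta^J(H^1_J)}\sum_{s\in W_J}\det s\ e^{(s\lambda)(H^1_J)}.
\end{align*}
The key observation is that the exponent $(s\lambda)(H^1_J)$ can be rewritten as a pairing $(s\lambda, X)$ under the Killing form, where $X\in\mathfrak{t}_J$ is the element dual to the linear form $2\pi i\cdot(\,\cdot\,)(H^1_J)$ restricted appropriately; since $H^1_J\in\mathfrak{t}_J$ and $s\lambda\in V_J$ for $s\in W_J$, all pairings live inside the rank-$|J|$ space associated to $\Sigma_J$, and we may legitimately invoke the formula stated for the group $U_J$.

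The main step is then to substitute $\mu = $ (the dual vector to $H^1_J$) into Harish-Chandra's formula applied to $U_J$, which yields
\begin{align*}
\sum_{s\in W_J}\det s\ e^{(s\lambda)(H^1_J)}=\frac{\prod_{\alpha\in\Sigma^+_J}(\alpha,\lambda)\cdot\prod_{\alpha\in\Sigma^+_J}\alpha(H^1_J)}{\prod_{\alpha\in\Sigma^+_J}(\alpha,\rho_J)}\int_{U_J}e^{\lambda(\t{Ad}_u H^1_J)}\ du,
\end{align*}
where I have used that $(\alpha,\mu)$ with $\mu$ dual to $H^1_J$ equals $\alpha(H^1_J)$ up to the normalization built into the definition of the dual pairing (matching the convention $(H_{\alpha_j},H)=\alpha_j(H)/2\pi i$ used earlier), and that $(\t{Ad}_u\lambda,\mu)=(\lambda,\t{Ad}_u^{-1}\mu)$ translates into $\lambda(\t{Ad}_u H^1_J)$ after absorbing the inverse into the Haar measure. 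Dividing by $\delta^J(H^1_J)$ gives exactly the claimed identity.

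The one point requiring care — and the step I expect to be the main obstacle — is the bookkeeping of normalizations: one must check that the Killing form on $\mathfrak{t}_J$ (inherited from $\mathfrak{u}$) agrees, up to the scalar that cancels in the ratio $\prod(\alpha,\lambda)/\prod(\alpha,\rho_J)$, with whatever invariant form on the Lie algebra of $U_J$ makes Harish-Chandra's formula hold there, and that the factors of $2\pi i$ in the convention $t_j(H)=\alpha_j(H)/2\pi i$ are threaded consistently through the exponentials and the pairings. Since these are all homogeneous rescalings and the right-hand side of Harish-Chandra's formula is a ratio that is insensitive to the overall scaling of the form (the $\rho_J$ in the denominator compensates), the identity holds regardless of the precise choice; but writing this out cleanly is the delicate part. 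Everything else is a mechanical substitution.
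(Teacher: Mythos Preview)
Your proposal is correct and matches the paper's approach exactly: the paper presents this lemma as an immediate consequence of Harish-Chandra's integral formula applied to $U_J$, and what you have written is precisely the substitution one makes (numerator of $\chi^J_\lambda$ evaluated via Harish-Chandra with $\mu$ dual to $H_J^1$, then divide by $\delta^J$). Your remarks on the normalization bookkeeping are accurate and more detailed than what the paper provides.
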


In particular, we have the following character bound. 

\begin{lem}\label{charbound}
For $|\mu|\lesssim N$, $\mu\in\Lambda$, $J\subsetneqq\{0,\ldots,r\}$, we have 
$$|\chi^J_{\mu_J}(H_J)|\lesssim N^{|\Sigma^+_J|},\ \t{for }H\in P_J.$$
\end{lem}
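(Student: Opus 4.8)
The plan is to combine the three estimates developed above: Lemma~\ref{shift} to reduce from $H_J$ to the small vector $H_J^1$ (with $|H_J^1|\lesssim N^{-1}$), Lemma~\ref{formulaforcharacterparabolic} to express $\chi^J_{\mu_J}(H_J^1)$ as a product of three factors, and then a termwise bound on each factor. Concretely, since $|\lambda(H_J^0)|$ contributes only a unimodular phase via Lemma~\ref{shift} (the exponents $(\mu_J+\rho_J)(H_J^0)$ are purely imaginary on $\mathfrak t$), we have $|\chi^J_{\mu_J}(H_J)|=|\chi^J_{\mu_J}(H_J^1)|$, so it suffices to bound the latter.

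Next I would apply Lemma~\ref{formulaforcharacterparabolic} with $\lambda=\mu_J\in V_J$ (note $\mu_J=\mathrm{Proj}_{V_J}(\mu)\in\Lambda_J$ by Lemma~\ref{projtosub}, so the formula applies). This writes
\[
\chi^J_{\mu_J}(H_J^1)=\frac{\prod_{\alpha\in\Sigma_J^+}\alpha(H_J^1)}{\delta^J(H_J^1)}\cdot\frac{\prod_{\alpha\in\Sigma_J^+}(\alpha,\mu_J)}{\prod_{\alpha\in\Sigma_J^+}(\alpha,\rho_J)}\cdot\int_{U_J}e^{\mu_J(\mathrm{Ad}_u H_J^1)}\,du.
\]
Then I would bound the three factors separately. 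For the first factor: each $|e^{\frac{\alpha(H_J^1)}{2}}-e^{-\frac{\alpha(H_J^1)}{2}}|=|2\sin(\alpha(H_J^1)/2i)|\asymp|\alpha(H_J^1)/2\pi i|$ when $|\alpha(H_J^1)|\lesssim 1$, so the ratio $\prod_{\alpha\in\Sigma_J^+}\alpha(H_J^1)/\delta^J(H_J^1)$ is $\asymp 1$ (each factor in numerator and denominator is comparable). For the second factor: $|(\alpha,\mu_J)|\le|\alpha||\mu_J|\le|\alpha||\mu|\lesssim N$ for each $\alpha\in\Sigma_J^+$ (using $|\mu_J|\le|\mu|\lesssim N$ and that the roots have bounded length), while the denominator $\prod_{\alpha\in\Sigma_J^+}(\alpha,\rho_J)$ is a fixed nonzero constant; hence the second factor is $\lesssim N^{|\Sigma_J^+|}$. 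For the third factor: the integrand has modulus $1$ since $\mu_J(\mathrm{Ad}_u H_J^1)$ is purely imaginary (as $\mathrm{Ad}_u H_J^1\in\mathfrak u$ and $\mu_J\in i\mathfrak t^*$ extended appropriately), and $U_J$ has total mass $1$, so the integral is $\lesssim 1$. Multiplying gives $|\chi^J_{\mu_J}(H_J)|\lesssim N^{|\Sigma_J^+|}$.

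The only mild subtlety, and the step I would be most careful about, is justifying that each of the three factors is genuinely $O(1)$, $O(N^{|\Sigma_J^+|})$, $O(1)$ uniformly in $H\in P_J$ — in particular that the first factor does not degenerate. This is where $H\in P_J$ matters: by definition of $P_J$ and the construction of $H_J^1$ one has $|\alpha(H_J^1)|\lesssim N^{-1}$ for $\alpha\in\Sigma_J^+$ (each such $\alpha$ is a nonnegative integer combination of the $\alpha_j$, $j\in J$, and $|\alpha_j(H_J^1)|\le N^{-1}$), so both numerator and denominator of the first factor are nonzero and comparable term by term, with ratio bounded above and below by absolute constants. With that observation the rest is routine, and the lemma follows. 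I do not anticipate a genuine obstacle here; this is a clean consequence of the formulas already established, essentially Harish-Chandra's integral formula plus the smallness of $H_J^1$ on $P_J$.
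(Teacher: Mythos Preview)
Your proposal is correct and follows essentially the same approach as the paper: reduce to $H_J^1$ via Lemma~\ref{shift}, invoke Lemma~\ref{formulaforcharacterparabolic}, and bound the three factors separately using $|H_J^1|\lesssim N^{-1}$ and $|\mu_J|\lesssim N$. The only minor difference is that the paper bounds the oscillatory integral via the isometry $|\mathrm{Ad}_u H_J^1|=|H_J^1|$ (so the exponent has size $\lesssim 1$) rather than arguing that the exponent is purely imaginary; both give the same conclusion.
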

\begin{proof}
Since $|\mu|\lesssim N$, we have $|\mu_J|\lesssim N$. By Lemma \ref{projtosub}, $\mu_J\in\Lambda_J$. Using the isometry property
$|\t{Ad}_u H_J^1|=|H_J^1|$ of the adjoint map, and the fact that 
$|H_J^1|\lesssim N^{-1}$, we have 
$$\left|\frac{\prod_{\alpha\in\Sigma^+_J}\alpha(H_J^{1})}{\delta^J(H_J^1)}\right|\lesssim 1, \ \left|\int_{U_J}e^{\mu_J(\t{Proj}_{\mathfrak{t}_J}(\t{Ad}_u H_J^{1}))}\ du\right|\lesssim 1.$$
Then the result follows from Lemma \ref{shift} and \ref{formulaforcharacterparabolic}. 
\end{proof}

\begin{rem}
For any regular element $\mu_J$ in $\Lambda_J$, 
$|\chi^J_{\mu_J}|$ is bounded by the dimension $|d_{\mu_J}|$, which yields the above estimate by an application of the Weyl dimension formula. The essence of the above lemma is to treat non-regular $\mu_J$, which is usually a subtle issue in higher-rank analysis. 
\end{rem}

\section{Proof of Theorem \ref{JointBound}}\label{sectionjoint}


We are ready to prove Theorem \ref{JointBound}. 

\begin{proof}
Let $\Delta$ denote the Laplace-Beltrami operator. Then for $\mu\in\Lambda^+$, 
$$\Delta \chi_\mu=-(|\mu|^2-|\rho|^2)\chi_\mu.$$
Choose $N\asymp|\mu|$. By Weyl's integration formula \eqref{Weylint}, we write $\|\chi_\mu\|_{L^p(U)}=\|\chi_\mu|\delta|^{\frac{2}{p}}\|_{L^p(A)}$. Using Lemma \ref{bs} the barycentric-semiclassical subdivision, we have 
$$\|\chi_\mu\|_{L^p(U)}\lesssim\sum_{J,I\subset\{0,\ldots,r\}, \ |I|=r, \ J\subset I}\left\|\chi_\mu|\delta|^\frac{2}{p}\right\|_{L^p(P_{I,J})}.$$ 
Using \eqref{char}, we have
$$\left|\chi_\mu(H)|\delta(H)|^{\frac{2}{p}}\right|
\leq\frac{|\delta^J|^{\frac{2}{p}}}{|W_J| |\delta_I(H)|^{1-\frac{2}{p}}|\delta_{I,J}(H)|^{1-\frac{2}{p}}} \sum_{s\in W}\left|\chi^J_{(s\mu)_J}(H_J)\right|.$$
Part (i) is then a consequence of Lemma \ref{deltaIdeltaJ}, Lemma \ref{charbound}, and the key Proposition \ref{keyprop}. Now by the argument of $TT^*$, the estimate in part (ii) is equivalent to 
$$\|\psi\|_{L^p(U)}\lesssim N^{d-r-\frac{2d}{p}} \|\psi\|_{L^{p'}(U)}.$$
For a joint eigenfunction $\psi$ of spectral parameter $\mu$, we have 
$$\psi=\psi*(d_\mu\chi_\mu).$$
Then the above estimate is a consequence of Young's convolution inequality, part (i), and the dimension bound $d_\mu\lesssim N^{\frac{d-r}{2}}$ as from the Weyl dimension formula. 
\end{proof}

\begin{rem}
The exponent $\frac{d-r}{2}-\frac{d}{p}$ of $N$ in part (i) is sharp, as can be seen by testing the character on a $N^{-1}$-sized neighborhood of the origin and choosing the spectral parameter $\mu$ away from the walls of the Weyl chamber such that $(\mu,\alpha)\gtrsim |\mu|$ for all $\alpha\in\Sigma^+$. Then it holds that $|\chi_\mu|\gtrsim N^{\frac{d-r}{2}}$ on this neighborhood thanks to an application of Lemma \ref{formulaforcharacterparabolic} (letting $J=\{1,\ldots,r\}$). As an $N^{-1}$-neighborhood is of volume $\asymp N^{-d}$, this shows sharpness of the exponent. 
\end{rem}


\section{The Schr\"odinger kernel}\label{theSchrodingerkernel}
To treat Strichartz estimates for the Schr\"odinger equation, we now study the Schr\"odinger propagator $e^{it\Delta}$. As is standard from Littlewood-Paley theory on compact manifolds as developed in \cite{BGT04}, for the purpose of Strichartz estimates it suffices to consider the spectrally localized or say mollified version of the Schr\"odinger propagator. Fix a large positive number $N$. Let $\phi(-N^{-2}\Delta)$ be a spectral projector for the Laplace-Beltrami operator $\Delta$ on $U$ associated to the standard metric as induced from the Killing form. We define the mollified Schr\"odinger kernel 
$\mathscr{K}_N(t,x)$ ($t\in\R$, $x\in U$) as follows
$$f*\mathscr{K}_N(t,\cdot):=\phi(-N^{-2}\Delta)e^{it\Delta}f.$$
Then expressing it as a Fourier series, we have that  
\begin{align}\label{kernelformulaeq}
\mathscr{K}_N(t,\exp H)=\sum_{\mu\in\Lambda^+}\phi\left(\frac{|\mu|^2-|\rho|^2}{N^2}\right)e^{-it(|\mu|^2-|\rho|^2)}d_\mu\chi_\mu(H), \ \t{for }H\in\mathfrak{t},
\end{align}
where 
\begin{align}\label{dimensionformula}
d_\mu=\frac{\prod_{\alpha\in\Sigma^+}(\mu,\alpha)}{\prod_{\alpha\in\Sigma^+}(\rho,\alpha)}, 
\end{align}
is the Weyl formula for the dimension of representation and 
$\chi_\mu(H)$ is the character associated to $\mu$. As is the case for the Weyl character formula, the above dimension formula makes sense for any $\mu\in i\mathfrak{t}^*$ and in particular for any $\mu\in\Lambda$.

We begin to derive important formulas for the Schr\"odinger kernel similar to those for the characters.
We first recall a lemma from \cite{Zha18} that expresses $\mathscr{K}_N$ as a sum over the whole weight lattice $\Lambda$. 

\begin{lem}\label{weighttowhole}
\begin{align*}
\mathscr{K}_N(t,\exp H)=\frac{1}{|W|}\sum_{\mu\in\Lambda}\phi\left(\frac{|\mu|^2-|\rho|^2}{N^2}\right)e^{-it(|\mu|^2-|\rho|^2)}d_\mu\chi_\mu(H).
\end{align*}
\end{lem}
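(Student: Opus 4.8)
\textbf{Proof proposal for Lemma \ref{weighttowhole}.}

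The plan is to start from the defining Fourier expansion \eqref{kernelformulaeq}, which is a sum over the strictly dominant weights $\mu\in\Lambda^+$, and to symmetrize it into a sum over the full weight lattice $\Lambda$ by exploiting the Weyl group invariance of each of the three factors in the summand. First I would observe that, by Lemma \ref{LambdaLambda+}, the lattice $\Lambda$ decomposes as the disjoint union $\bigsqcup_{s\in W}s\Lambda^+$ together with the set of non-regular weights $\{\mu\in\Lambda:\ (\mu,\alpha)=0\text{ for some }\alpha\in\Sigma\}$. The key point is that the non-regular weights contribute nothing: for such $\mu$ the dimension formula \eqref{dimensionformula} gives $d_\mu=0$ since some factor $(\mu,\alpha)$ in the numerator vanishes, so these terms drop out of any sum in which $d_\mu$ appears as a factor. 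Hence $\frac{1}{|W|}\sum_{\mu\in\Lambda}(\cdots)=\frac{1}{|W|}\sum_{s\in W}\sum_{\nu\in\Lambda^+}(\cdots)\big|_{\mu=s\nu}$.

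Next I would check that the summand
$$\phi\left(\frac{|\mu|^2-|\rho|^2}{N^2}\right)e^{-it(|\mu|^2-|\rho|^2)}d_\mu\chi_\mu(H)$$
is invariant under $\mu\mapsto s\mu$ for every $s\in W$. The first two factors depend on $\mu$ only through $|\mu|^2$, and $W$ acts on $i\mathfrak{t}^*$ by isometries with respect to the Killing form, so $|s\mu|=|\mu|$; thus these factors are $W$-invariant. For $d_\mu$, the Weyl formula \eqref{dimensionformula} reads $d_\mu=\prod_{\alpha\in\Sigma^+}(\mu,\alpha)/\prod_{\alpha\in\Sigma^+}(\rho,\alpha)$, and a standard fact is that $\prod_{\alpha\in\Sigma^+}(s\mu,\alpha)=\det s\cdot\prod_{\alpha\in\Sigma^+}(\mu,\alpha)$, so $d_{s\mu}=\det s\cdot d_\mu$. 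Finally, from the Weyl character formula $\chi_\mu(H)=\big(\sum_{w\in W}\det w\,e^{(w\mu)(H)}\big)/\delta(H)$, replacing $\mu$ by $s\mu$ and reindexing the inner sum $w\mapsto ws^{-1}$ gives $\chi_{s\mu}=\det s\cdot\chi_\mu$ (this is where one uses that the character formula makes sense for arbitrary $\mu\in i\mathfrak{t}^*$, as noted after that formula). The two signs $\det s$ cancel in the product $d_\mu\chi_\mu(H)$, so the whole summand is genuinely $W$-invariant.

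Combining these two observations finishes the proof: $\frac{1}{|W|}\sum_{\mu\in\Lambda}$ collapses to $\frac{1}{|W|}\sum_{s\in W}\sum_{\nu\in\Lambda^+}$ of a summand that does not depend on $s$, and summing the $|W|$ identical copies cancels the prefactor $\frac{1}{|W|}$, leaving exactly $\sum_{\nu\in\Lambda^+}\phi\left(\frac{|\nu|^2-|\rho|^2}{N^2}\right)e^{-it(|\nu|^2-|\rho|^2)}d_\nu\chi_\nu(H)$, which is \eqref{kernelformulaeq}. I do not anticipate a serious obstacle here; the only point requiring minor care is the vanishing of the non-regular contribution and the bookkeeping of the two $\det s$ factors, both of which are routine facts of root system theory already invoked elsewhere in the paper. (Since this lemma is quoted from \cite{Zha18}, one could alternatively just cite it, but the argument above is short enough to include.)
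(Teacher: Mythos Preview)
Your proposal is correct and follows essentially the same approach as the paper: the paper's proof simply cites Lemma \ref{LambdaLambda+}, the isometry $|s\mu|=|\mu|$, and the dimension formula \eqref{dimensionformula}, and your argument is precisely an unpacking of how these three ingredients combine (vanishing of $d_\mu$ on non-regular weights, and the cancellation of the two $\det s$ factors in $d_\mu\chi_\mu$). The only cosmetic difference is that the paper states the transformation $d_{s\mu}=\det s\, d_\mu$ separately as Lemma \ref{dmudsmu} rather than deriving it inline.
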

\begin{proof}
This is a direct consequence of Lemma \ref{LambdaLambda+}, the fact that $|s\mu|=|\mu|$ for any $s\in W$ and $\mu\in i\mathfrak{t}^*$, and the formula \eqref{dimensionformula} of $d_\mu$. 
\end{proof}

We also recall a standard lemma of root system theory. 
\begin{lem}\label{dmudsmu}
For $\mu\in i\mathfrak{t}^*$ and $s\in W$, $d_\mu=\det s\ d_{s\mu}$. 
\end{lem}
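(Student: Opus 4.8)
The plan is to reduce the claim to a sign computation for the numerator of the Weyl dimension formula. Recall that for any $\mu\in i\mathfrak{t}^*$ the dimension is given by \eqref{dimensionformula}, namely $d_\mu=\prod_{\alpha\in\Sigma^+}(\mu,\alpha)\big/\prod_{\alpha\in\Sigma^+}(\rho,\alpha)$, and the denominator is independent of $\mu$. Hence it suffices to prove that $\prod_{\alpha\in\Sigma^+}(s\mu,\alpha)=\det s\cdot\prod_{\alpha\in\Sigma^+}(\mu,\alpha)$ for every $s\in W$; this gives $d_{s\mu}=\det s\cdot d_\mu$, and therefore $d_\mu=(\det s)^{-1}d_{s\mu}=\det s\cdot d_{s\mu}$, since $\det s=\pm1$.

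To establish the displayed identity, I would use that $W$ acts on $i\mathfrak{t}^*$ by isometries of the Killing form, so that $(s\mu,\alpha)=(\mu,s^{-1}\alpha)$. Reindexing the product by $\beta=s^{-1}\alpha$ rewrites the left-hand side as $\prod_{\beta\in s^{-1}\Sigma^+}(\mu,\beta)$. Now invoke the standard structure of $W$ acting on roots: writing $\Sigma^+=A\sqcup B$ with $B=\{\alpha\in\Sigma^+:\ s^{-1}\alpha\in-\Sigma^+\}$, one has $|B|=\ell(s^{-1})=\ell(s)$ and $\det s=(-1)^{\ell(s)}$; moreover $s^{-1}A$ and $-s^{-1}B$ are disjoint subsets of $\Sigma^+$ whose union has cardinality $|\Sigma^+|$, hence equals $\Sigma^+$. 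Pulling the $|B|$ minus signs out of the factors indexed by $s^{-1}B$ then yields $\prod_{\beta\in s^{-1}\Sigma^+}(\mu,\beta)=(-1)^{\ell(s)}\prod_{\gamma\in\Sigma^+}(\mu,\gamma)=\det s\cdot\prod_{\gamma\in\Sigma^+}(\mu,\gamma)$, which is exactly what was needed.

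There is no serious obstacle here; the argument is routine root-system bookkeeping, and the only point requiring mild care is keeping track of whether one flips $\Sigma^+$ by $s$ or by $s^{-1}$ — both produce the same count $\ell(s)$, so the sign is $\det s$ either way. Alternatively, one can bypass even this by observing that $\mu\mapsto\prod_{\alpha\in\Sigma^+}(\mu,\alpha)$ is, up to the constant denominator, the classical Weyl alternating polynomial, which by its very construction is multiplied by $\det s$ under the substitution $\mu\mapsto s\mu$.
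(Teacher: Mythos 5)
Your proof is correct: the reduction to the sign of the Weyl numerator $\prod_{\alpha\in\Sigma^+}(\mu,\alpha)$ and the counting of positive roots flipped by $s^{-1}$ (so the sign is $(-1)^{\ell(s)}=\det s$) is the standard argument, and the final flip from $d_{s\mu}=\det s\, d_\mu$ to $d_\mu=\det s\, d_{s\mu}$ via $(\det s)^2=1$ is fine. The paper itself states this lemma without proof as a standard fact of root system theory, so there is nothing to contrast with; your write-up simply supplies the routine verification (your closing remark, that the numerator is the Weyl anti-invariant polynomial and one need only check simple reflections, is an equally valid shortcut).
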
 

Now we have:

\begin{lem}\label{lemdecompositionkernel}
For any $H\in\mathfrak{t}$ and $t\in\R$, we have 
\begin{align}\label{kernelformulalem}
\mathscr{K}_N(t,\exp H)=
\frac{1}{|W_J| \delta_I(H)\delta_{I,J}(H)}\cdot \mathscr{K}_{N}^J(t,H)
\end{align}
where 
\begin{align*}
\mathscr{K}_{N}^J(t,H)=\sum_{\mu\in \Lambda}e^{\mu(H_J^\perp)-it(|\mu|^2-|\rho|^2)}\phi\left(\frac{|\mu|^2-|\rho|^2}{N^2}\right)d_\mu  \chi^J_{\mu_J}(H_J).
\end{align*}
\end{lem}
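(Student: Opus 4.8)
The plan is to mimic the derivation of the character formula \eqref{char} in Lemma \ref{lemdecomposition}, but applied termwise to the Fourier expansion of $\mathscr{K}_N$ over the full weight lattice as in Lemma \ref{weighttowhole}. First I would start from
\[
\mathscr{K}_N(t,\exp H)=\frac{1}{|W|}\sum_{\mu\in\Lambda}\phi\!\left(\frac{|\mu|^2-|\rho|^2}{N^2}\right)e^{-it(|\mu|^2-|\rho|^2)}d_\mu\chi_\mu(H),
\]
and substitute for each $\chi_\mu(H)$ the formula \eqref{char} from Lemma \ref{lemdecomposition}. This pulls out the common factor $\frac{1}{|W_J|\delta_I(H)\delta_{I,J}(H)}$, leaving inside a double sum over $\mu\in\Lambda$ and $s\in W$ of the quantity
$\det s\; e^{(s\mu)(H_J^\perp)}\chi^J_{(s\mu)_J}(H_J)$, weighted by the $W$-invariant factor $\phi\!\left(\frac{|\mu|^2-|\rho|^2}{N^2}\right)e^{-it(|\mu|^2-|\rho|^2)}d_\mu$.

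The key step is then to absorb the sum over $s\in W$. Since $|s\mu|=|\mu|$ for all $s\in W$, the exponential-in-$t$ factor and the argument of $\phi$ are $W$-invariant, so they only depend on $\mu$ through $|\mu|^2$. By Lemma \ref{dmudsmu}, $d_\mu=\det s\; d_{s\mu}$, so $\det s\; d_\mu = d_{s\mu}$. Hence, reindexing the inner sum by $\nu=s\mu$ (a bijection of $\Lambda$ onto itself for each fixed $s$), each of the $|W|$ values of $s$ contributes an identical copy of
\[
\sum_{\nu\in\Lambda}\phi\!\left(\frac{|\nu|^2-|\rho|^2}{N^2}\right)e^{-it(|\nu|^2-|\rho|^2)}d_\nu\; e^{\nu(H_J^\perp)}\chi^J_{\nu_J}(H_J),
\]
which is exactly $\mathscr{K}^J_N(t,H)$ as defined in the statement. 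The factor $|W|$ from the $|W|$ copies cancels the $\frac{1}{|W|}$ prefactor, yielding \eqref{kernelformulalem}.

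The main obstacle — really the only point requiring care — is the legitimacy of rearranging the (a priori infinite) sums: interchanging the $s$-sum and the $\mu$-sum, and reindexing $\mu\mapsto s\mu$. This is harmless because the cutoff $\phi\!\left(\frac{|\mu|^2-|\rho|^2}{N^2}\right)$ is supported where $|\mu|\lesssim N$, so every sum involved is finite; I would simply remark that all sums are finite by the support of $\phi$, so all manipulations are term-by-term rearrangements of finitely many summands. One should also note, as in the remark following Lemma \ref{lemdecomposition}, that \eqref{char} is valid for every $\mu\in i\mathfrak{t}^*$, in particular for the non-regular $\mu\in\Lambda$ occurring in the sum, so no regularity restriction on $\mu$ is needed and the expression $\chi^J_{\mu_J}(H_J)$ makes sense for all such $\mu$ as noted in Section \ref{sectiondecomposition}.
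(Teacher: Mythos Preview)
Your proposal is correct and follows essentially the same approach as the paper: start from Lemma \ref{weighttowhole}, plug in the character formula \eqref{char}, then use $s\Lambda=\Lambda$, the invariance $|s\mu|=|\mu|$, and Lemma \ref{dmudsmu} to collapse the $s$-sum, cancelling the factor $1/|W|$. Your additional remarks on the finiteness of the sums (via the support of $\phi$) and on the validity of \eqref{char} for non-regular $\mu$ are welcome clarifications but are not points of departure from the paper's argument.
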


\begin{proof}
Using Lemma \ref{weighttowhole} and \eqref{char}, we have 
\begin{align*}
\mathscr{K}_N=\frac{1}{|W_J| \delta_I(H)\delta_{I,J}(H)}\cdot\frac{1}{|W|}\sum_{s\in W}\sum_{\mu\in\Lambda}&\phi\left(\frac{|\mu|^2-|\rho|^2}{N^2}\right)e^{-it(|\mu|^2-|\rho|^2)}d_\mu\\
& \cdot \det s\ e^{(s\mu)(H^\perp_J)}\chi^J_{(s\mu)_J}(H_J). 
\end{align*}
Note that $$s\Lambda=\Lambda$$ for any $s\in W$, then \eqref{kernelformulalem} holds by an application of Lemma \ref{dmudsmu} and the fact that $|s\mu|=|\mu|$ for any $s\in W$ and $\mu\in i\mathfrak{t}^*$. 
\end{proof}

Now we wish to incorporate information in Section \ref{projectionweightlattice} to refine the above formula.  
By Lemma \ref{projtosub} and \ref{decomweight}, let ${}^J\Gamma$ be the preimage of $\Gamma_J$ under the isomorphism 
$\t{Proj}_{V_J}:{}^J\Lambda\xrightarrow{\sim}\t{Proj}_{V_J}(\Lambda)$. Let $ {}^J\Lambda/{}^J\Gamma$ be the quotient group, and for $\mu\in {}^J\Lambda$, let $[\mu]=\mu+{}^J\Gamma$ be the corresponding coset.
Then we can write 
$$\Lambda=\left(\bigsqcup_{[\mu]\in{}^J\Lambda/{}^J\Gamma}\left(\mu+{}^J\Gamma\right)\right)\bigoplus{}^J\Lambda^\perp,$$
with 
\begin{align}\label{cosetfiniteness}
|{}^J\Lambda/{}^J\Gamma|\lesssim 1.
\end{align}

From now on, for each of the finitely many cosets in $^J\Lambda/^J\Gamma$, we fix a $\mu\in {}^J\Lambda$ that represents $[\mu]$. 

\begin{lem} \label{kappaNJ}
Let 
$$b(\mu,H):=e^{\mu(H_J^\perp)+(\mu_J+\rho_J)(H^0_J)} \cdot \frac{\prod_{\alpha\in\Sigma^+_J}\alpha(H_J^{1})}{\delta^J(H_J^1)},$$
$$P(\mu,\lambda_1,\lambda_2,H)
:=\phi\left(\frac{|\mu+\lambda_1+\lambda_2|^2-|\rho|^2}{N^2}\right)
d_{\mu+\lambda_1+\lambda_2} \frac{\prod_{\alpha\in\Sigma^+_J}(\alpha,\mu_J+(\lambda_1)_J)}{\prod_{\alpha\in\Sigma^+_J}(\alpha,\rho_J)}
\cdot 
\int_{U_J}e^{(\mu_J+(\lambda_1)_J)(\t{Proj}_{\mathfrak{t}_J}(\t{Ad}_u H_J^{1}))}\ du,$$
$$\kappa_N^J(\mu,t,H):=\sum_{\lambda_1\in {}^J\Gamma, \ \lambda_2\in{}^J\Lambda^\perp}
e^{(\lambda_1+\lambda_2)(H_J^\perp)-it(|\lambda_1+\lambda_2|^2+2(\mu,\lambda_1+\lambda_2))}P(\mu,\lambda_1,\lambda_2,H).$$
Then 
\begin{align}\label{KNkappaN}
\mathscr{K}_N^J(t,H)
=\sum_{[\mu]\in{}^J\Lambda/{}^J\Gamma} e^{-it(|\mu|^2-|\rho|^2)} b(\mu,H)\cdot  \kappa_N^J(\mu,t,H). 
\end{align}
\end{lem}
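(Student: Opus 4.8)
The plan is to start from the formula for $\mathscr{K}_N^J(t,H)$ in Lemma \ref{lemdecompositionkernel}, namely the sum over $\mu\in\Lambda$ of $e^{\mu(H_J^\perp)-it(|\mu|^2-|\rho|^2)}\phi\bigl(\tfrac{|\mu|^2-|\rho|^2}{N^2}\bigr)d_\mu\,\chi^J_{\mu_J}(H_J)$, and to reorganize the lattice sum according to the coset decomposition $\Lambda=\bigsqcup_{\mu\in{}^J\Lambda/{}^J\Gamma}\bigl(\mu+{}^J\Gamma+{}^J\Lambda^\perp\bigr)$ established from Lemmas \ref{projtosub}, \ref{decomweight}. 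So first I would fix a set of coset representatives $\mu\in{}^J\Lambda/{}^J\Gamma$ and write each lattice element uniquely as $\nu=\mu+\lambda_1+\lambda_2$ with $\lambda_1\in{}^J\Gamma$, $\lambda_2\in{}^J\Lambda^\perp$; the point of splitting off ${}^J\Gamma$ rather than all of ${}^J\Lambda$ is precisely so that $(\lambda_1)_J=\t{Proj}_{V_J}(\lambda_1)\in\Gamma_J$, which is exactly the integrality needed to invoke Lemma \ref{shift}.

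Next I would substitute into $\chi^J_{\nu_J}(H_J)$ the two structural facts about parabolic characters: Lemma \ref{shift}, which gives $\chi^J_{\nu_J}(H_J)=e^{(\nu_J+\rho_J)(H^0_J)}\chi^J_{\nu_J}(H_J^1)$, and then Lemma \ref{formulaforcharacterparabolic} (Harish-Chandra), which expands $\chi^J_{\nu_J}(H_J^1)$ as $\frac{\prod_{\alpha\in\Sigma_J^+}\alpha(H_J^1)}{\delta^J(H_J^1)}\cdot\frac{\prod_{\alpha\in\Sigma_J^+}(\alpha,\nu_J)}{\prod_{\alpha\in\Sigma_J^+}(\alpha,\rho_J)}\cdot\int_{U_J}e^{\nu_J(\t{Ad}_u H_J^1)}\,du$. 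The application of Lemma \ref{shift} requires $\nu_J-\mu_J=(\lambda_1+\lambda_2)_J=(\lambda_1)_J\in\Gamma_J$, so the exponential $e^{\nu_J(H^0_J)}$ collapses to $e^{\mu_J(H^0_J)}$ (plus the $\rho_J$ shift); this is what makes the factor $e^{(\mu_J+\rho_J)(H^0_J)}$ in $a(t,\mu,H)$ depend only on the coset representative $\mu$ and not on $\lambda_1,\lambda_2$. After this substitution, the remaining $\nu$-dependence in the Harish-Chandra factors is genuinely through $\nu_J=\mu_J+(\lambda_1)_J$, which is why $P(\mu,\lambda_1,\lambda_2,H)$ is written with the argument $\mu_J+(\lambda_1)_J$.

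Then it is bookkeeping: I would collect the factors that depend only on $\mu$ (and $t,H$) but not on $\lambda_1,\lambda_2$ — these are $e^{-it(|\mu|^2-|\rho|^2)}$, $e^{\mu(H_J^\perp)}$, $e^{(\mu_J+\rho_J)(H^0_J)}$, and $\frac{\prod_{\alpha\in\Sigma_J^+}\alpha(H_J^1)}{\delta^J(H_J^1)}$ — into $a(t,\mu,H)$, and verify the exponent algebra. The phase $e^{-it(|\nu|^2-|\rho|^2)}$ splits as $e^{-it(|\mu|^2-|\rho|^2)}\cdot e^{-it(|\lambda_1+\lambda_2|^2+2(\mu,\lambda_1+\lambda_2))}$ since $|\nu|^2=|\mu|^2+|\lambda_1+\lambda_2|^2+2(\mu,\lambda_1+\lambda_2)$ — here one uses that $\mu\in{}^J\Lambda$ while the $\lambda_i$ range over sublattices, so there is no obstruction to expanding the square — and the remaining piece $e^{-it(|\lambda_1+\lambda_2|^2+2(\mu,\lambda_1+\lambda_2))}$ together with $e^{(\lambda_1+\lambda_2)(H_J^\perp)}$ is absorbed into the definition of $\kappa_N^J(\mu,t,H)$, while $\phi(\cdots)$, $d_\nu$, the ratio $\frac{\prod(\alpha,\nu_J)}{\prod(\alpha,\rho_J)}$ and the $U_J$-integral go into $P(\mu,\lambda_1,\lambda_2,H)$. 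Assembling these gives exactly \eqref{KNkappaN}. The only genuinely delicate point — and the one I would write out carefully — is the applicability of Lemma \ref{shift} with $\nu$ in place of $\lambda$, i.e.\ checking that $(\lambda_1)_J\in\Gamma_J$ so that $e^{s_J(\lambda_1)_J(H^0_J)}=e^{(\lambda_1)_J(H^0_J)}=1$ forces the collapse of the $H^0_J$-exponential to its $\mu$-part; everything else is substitution and rearrangement of a single absolutely convergent (finitely supported, thanks to $\phi$) sum. The finiteness $|{}^J\Lambda/{}^J\Gamma|\lesssim 1$ from \eqref{cosetfiniteness} guarantees the outer sum in \eqref{KNkappaN} is over a bounded index set.
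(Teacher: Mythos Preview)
Your proposal is correct and follows precisely the approach the paper sketches: the paper simply says ``Using Lemma \ref{shift}, Lemma \ref{formulaforcharacterparabolic} and the above decomposition of $\Lambda$, we get the following formula for $\mathscr{K}_N^J$,'' and your write-up fills in exactly those three ingredients in the intended way. The one point you single out as delicate --- that $(\lambda_1)_J\in\Gamma_J$ forces $e^{(\lambda_1)_J(H^0_J)}=1$ so the $H^0_J$-exponential depends only on the coset representative $\mu$ --- is indeed the only nontrivial verification, and you have it right.
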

\begin{proof}
By the above decomposition of $\Lambda$, for $\lambda\in\Lambda$, write uniquely 
$$\lambda=\mu+\lambda_1+\lambda_2$$
where $\lambda_1\in {}^J\Gamma$, $\lambda_2\in{}^J\Lambda^\perp$. 
As $\text{Proj}_{V_J}(^{J}\Lambda^\perp)=0$, $(\lambda_2)_J=0$ for all $\lambda_2\in{}^J\Lambda^\perp$. Also observe that as $\alpha(H_J^0)/2\pi i\in\mathbb{Z}$ for all $\alpha\in\Gamma_J$, $(\lambda_1)_J(H^0_J)=0$ for all $\lambda_1\in {}^J\Gamma$. Combine these observations with Lemma \ref{shift} and Lemma \ref{formulaforcharacterparabolic}, the formula is proved. 
\end{proof}

The above $\kappa_N^J(\mu,t,H)$ is in the form of a Weyl type exponential sum. We will treat it in later sections in two different ways, one by Weyl differencing, one by Poisson summation, oscillatory integrals and Kloosterman and Sali\'e sums. 

Some preliminary estimates are in order. We pick a $\mathbb{Z}$-basis $\{u_1,\ldots,u_{|J|}\}$ of ${}^J\Gamma$, and a 
$\mathbb{Z}$-basis $\{u_{|J|+1},\ldots, u_r\}$ of 
${}^J\Lambda^\perp$ so that $\{(u_{1})_J,\ldots, (u_{|J|})_J\}$ is a basis of $\Gamma_J$. For $\lambda_1\in {}^J\Gamma$, $\lambda_2\in{}^J\Lambda^\perp$, we write 
\begin{align}\label{lambda1=n1}
\lambda_1=n_1u_1+\cdots+n_{|J|}u_{|J|},
\end{align}
$$(\lambda_1)_{J}=n_1(u_{1})_J+\cdots+n_{|J|}(u_{|J|})_J,$$
\begin{align}\label{lambda2=n2}
\lambda_2=n_{|J|+1}u_{|J|+1}+\cdots+n_ru_r,
\end{align}
for $n_1,\ldots,n_r\in \mathbb{Z}$. 
Note that $P(\mu,\lambda_1,\lambda_2,H)$ makes sense if $n_1,\ldots,n_r$ take values in $\R$. We write 
$$P(\mu,n_1,\ldots,n_r,H):=P(\mu,\lambda_1,\lambda_2,H).$$
Let $\partial_{n_j}$ denote differential operator with respect to the variable $n_j\in\R$, and let $D_{n_j}$ denote the forward difference operator with respect to the variable $n_j\in\Z$. 

\begin{lem}
Let $H\in P_J$. We have 
\begin{align}\label{atmuH}
|b(\mu,H)|\lesssim 1. 
\end{align}
Let ${\bf m}=(m_1,\ldots,m_r)\in(\mathbb{Z}_{\geq 0})^r$ and let $|{\bf m}|=\sum_{j}m_j$. Then 
\begin{align}\label{P()}
\left|\left(\prod_{j=1}^r\partial^{m_j}_{n_j}\right)P(\mu,n_1,\ldots,n_r,H)\right|\lesssim_{{\bf m}} N^{|\Sigma^+|+|\Sigma^+_J|-|{\bf m}|}\t{ for all }n_j\in\mathbb{R},\ j=1,\ldots,r,
\end{align} 
and 
\begin{align}\label{DP()}
\left|\left(\prod_{j=1}^rD^{m_j}_{n_j}\right)P(\mu,n_1,\ldots,n_r,H)\right|\lesssim_{{\bf m}} N^{|\Sigma^+|+|\Sigma^+_J|-|{\bf m}|}\t{ for all }n_j\in\mathbb{Z},\ j=1,\ldots,r.
\end{align} 
\end{lem}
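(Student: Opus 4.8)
The plan is to prove the three estimates \eqref{atmuH}, \eqref{P()}, and \eqref{DP()} directly from the explicit formulas defining $a(t,\mu,H)$ and $P(\mu,\lambda_1,\lambda_2,H)$ in Lemma \ref{kappaNJ}, using only elementary bounds on each factor together with the structural facts already established. The bound \eqref{atmuH} is the easiest: the factor $e^{-it(|\mu|^2-|\rho|^2)+\mu(H_J^\perp)+(\mu_J+\rho_J)(H_J^0)}$ has modulus $1$ because $t$ is real, $H_J^\perp$ and $H_J^0$ lie in $\mathfrak{t}$ so that $\mu$, $\mu_J$, $\rho_J$ take imaginary values on them; and the remaining factor $\prod_{\alpha\in\Sigma_J^+}\alpha(H_J^1)/\delta^J(H_J^1)$ is $\lesssim 1$ by the argument already used in the proof of Lemma \ref{charbound}, since $|H_J^1|\lesssim N^{-1}$ and each $|e^{\alpha(H_J^1)/2}-e^{-\alpha(H_J^1)/2}|$ is comparable to $|\alpha(H_J^1)/2\pi i|$ in this small regime (this is exactly the Taylor-expansion comparison $|e^{ix}-1|\asymp|x|$ for $|x|\lesssim 1$). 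Since $\mu$ ranges over the finite set ${}^J\Lambda/{}^J\Gamma$, all implied constants are uniform.

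For \eqref{P()}, I would bound each of the four factors of $P(\mu,n_1,\ldots,n_r,H)$ and its derivatives separately and multiply, using the Leibniz rule. Write $\nu=\mu+\lambda_1+\lambda_2$, a linear function of $(n_1,\ldots,n_r)$ with $|\nu|\lesssim N$ on the support of $\phi\big((|\nu|^2-|\rho|^2)/N^2\big)$. The cutoff factor $\phi((|\nu|^2-|\rho|^2)/N^2)$ is a smooth function of $\nu$ supported where $|\nu|\asymp N$, and each $\partial_{n_j}$ of it costs a factor $\lesssim N^{-1}$ by the chain rule (one derivative of $\phi$ brings down $\lesssim 1$, times $\partial_{n_j}(|\nu|^2/N^2)\lesssim N^{-1}$ on the support). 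The dimension polynomial $d_\nu=\prod_{\alpha\in\Sigma^+}(\nu,\alpha)/\prod_{\alpha\in\Sigma^+}(\rho,\alpha)$ is a homogeneous polynomial of degree $|\Sigma^+|$ in $\nu$, hence on $|\nu|\lesssim N$ it and its first $|\Sigma^+|$ derivatives are $\lesssim N^{|\Sigma^+|}$, $\lesssim N^{|\Sigma^+|-1}$, etc., and higher derivatives vanish; so each $\partial_{n_j}$ again costs $N^{-1}$. The factor $\prod_{\alpha\in\Sigma_J^+}(\alpha,\mu_J+(\lambda_1)_J)/\prod_{\alpha\in\Sigma_J^+}(\alpha,\rho_J)$ is a polynomial of degree $|\Sigma_J^+|$ in $(\lambda_1)_J$, hence $\lesssim N^{|\Sigma_J^+|}$ with each $n_j$-derivative again gaining $N^{-1}$ (and only $n_1,\ldots,n_{|J|}$ enter here). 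Finally the integral $\int_{U_J}e^{(\mu_J+(\lambda_1)_J)(\t{Ad}_u H_J^1)}\,du$ has modulus $\lesssim 1$ since the integrand has modulus $1$ ($\t{Ad}_u H_J^1\in\mathfrak{u}$ stays in a compact set and the exponent is imaginary), and differentiating under the integral sign in $n_j$ brings down $(u_j)_J(\t{Ad}_u H_J^1)$, which is $\lesssim |H_J^1|\lesssim N^{-1}$ by the isometry property of $\t{Ad}_u$; so each derivative costs $N^{-1}$ here too. Multiplying the four bounds and accounting the total number $|{\bf m}|$ of derivatives distributed by Leibniz gives $\lesssim_{\bf m} N^{|\Sigma^+|+|\Sigma_J^+|-|{\bf m}|}$, which is \eqref{P()}.

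The discrete estimate \eqref{DP()} then follows from \eqref{P()} by the standard comparison of forward differences with derivatives: for a $C^\infty$ function $g$ on $\R^r$, $D_{n_j}g = \int_0^1 \partial_{n_j}g(\ldots,n_j+\theta,\ldots)\,d\theta$, so iterating, $\big(\prod_j D_{n_j}^{m_j}\big)g$ is an average of $\big(\prod_j \partial_{n_j}^{m_j}\big)g$ over a unit box of shifts; since the bound in \eqref{P()} is uniform over all real arguments (and the shifted arguments still satisfy $|\nu|\lesssim N$ up to changing the implied constant, as a unit shift is negligible at scale $N$), the same bound $\lesssim_{\bf m} N^{|\Sigma^+|+|\Sigma_J^+|-|{\bf m}|}$ passes to the difference operators. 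I expect the only mildly delicate point to be bookkeeping: making sure that the cutoff $\phi$ genuinely confines $\nu$ to $|\nu|\asymp N$ so that the polynomial factors $d_\nu$ and $\prod_{\alpha\in\Sigma_J^+}(\alpha,\mu_J+(\lambda_1)_J)$ are controlled — for the latter one uses that $(\lambda_1)_J = \t{Proj}_{V_J}(\lambda_1)$ and $|\lambda_1|\lesssim|\mu+\lambda_1+\lambda_2| + |\mu+\lambda_2| \lesssim N$ on the support, via Lemma \ref{decomweight} which makes ${}^J\Gamma$ and ${}^J\Lambda^\perp$ a direct sum so that the components of $\nu$ are comparable to $\nu$ — and that the $\varepsilon$-free uniform constants absorb the finitely many cosets $\mu\in{}^J\Lambda/{}^J\Gamma$, which is guaranteed by \eqref{cosetfiniteness}. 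There is no serious obstacle here; the lemma is a routine but necessary technical input for the Weyl-differencing and Poisson-summation arguments to come.
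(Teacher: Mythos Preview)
Your proof is correct and follows essentially the same approach as the paper, which gives only a three-line sketch pointing to the key facts $|H_J^1|\lesssim N^{-1}$, the constraint $|n_j|\lesssim N$ from the cutoff $\phi$, and the isometry $|\t{Ad}_u H_J^1|=|H_J^1|$. You have simply filled in the Leibniz-rule bookkeeping and the difference-versus-derivative comparison that the paper leaves as ``standard''; the only minor simplification is that rather than invoking Lemma~\ref{decomweight}, one can observe directly that $\{u_1,\ldots,u_r\}$ is an $\mathbb{R}$-basis of $i\mathfrak{t}^*$, so $|n_j|\lesssim |\lambda_1+\lambda_2|\lesssim N$ on the support.
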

\begin{proof}
The first inequality follows directly from the fact that $|H_J^1|\lesssim N^{-1}$. The remaining estimates are standard, observing that the cutoff function $\phi$ results in the restraint $|n_j|\lesssim N$ ($j=1,\ldots,r$), and that $|\t{Proj}_{\mathfrak{t}_J}(\t{Ad}_uH_J^1)|\lesssim |\t{Ad}_uH_J^1|=|H_J^1|\lesssim N^{-1}$.  
\end{proof}

\section{Proof of Proposition \ref{kernelmajorarc}}\label{SectionLp}

By rationality of the weight lattice under the Killing form $(\cdot,\cdot)$, there exsits $\mathcal{T}\in 2\pi\mathbb{Q}$ such that 
\begin{align}\label{rationalityKillingform}
(\lambda,\mu)\in \frac{2\pi}{\mathcal{T}}\Z,\text{ for all }\lambda,\mu\in\Lambda.
\end{align}
In particular, $\mathcal{T}$ is a period of the Schr\"odinger kernel as well as the function $\kappa_N^J(\mu,t,H)$. We have the following treatment of $\kappa_N^J(\mu,t,H)$. 

\begin{lem}
For $H\in P_J$, it holds 
\begin{align*}
|\kappa_N^J(\mu,t,H)|\lesssim \frac{N^{|\Sigma^+|+|\Sigma^+_J|+r}}{\left(\sqrt{q}\left(1+N\left\|\frac{t}{\mathcal{T}}-\frac{a}{q}\right\|^{\frac{1}{2}}\right)\right)^r}
\end{align*}
for $\left\|\frac{t}{\mathcal{T}}-\frac{a}{q}\right\|\lesssim \frac{1}{qN}$. Here $\|\cdot\|$ denotes the distance from the nearest integer. 
\end{lem}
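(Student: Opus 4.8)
The plan is to estimate $\kappa_N^J(\mu,t,H)$ by treating it as a Weyl-type exponential sum in the $r$ integer variables $n_1,\dots,n_r$ parametrizing $\lambda_1\in{}^J\Gamma$ and $\lambda_2\in{}^J\Lambda^\perp$ via the chosen $\mathbb Z$-bases. Writing out the phase, the exponent of $e$ is linear in $H_J^\perp$ plus $-it(|\lambda_1+\lambda_2|^2+2(\mu,\lambda_1+\lambda_2))$; since $|\lambda_1+\lambda_2|^2$ is a positive-definite integral quadratic form $Q({\bf n})$ in ${\bf n}=(n_1,\dots,n_r)$ (by rationality, Lemma \ref{lemmarationality}, after clearing denominators into the choice of $\mathcal T$), and $2(\mu,\lambda_1+\lambda_2)$ is linear in ${\bf n}$, the sum has the shape $\sum_{{\bf n}} P(\mu,{\bf n},H)\, e^{-it Q({\bf n})+i\ell({\bf n})}$ with $P$ a smooth amplitude supported in $|n_j|\lesssim N$ and obeying the derivative/difference bounds \eqref{P()}, \eqref{DP()} with $\left|\left(\prod_j \partial_{n_j}^{m_j}\right)P\right|\lesssim N^{|\Sigma^+|+|\Sigma_J^+|-|{\bf m}|}$.

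First I would diagonalize or at least reduce $Q$ to a sum of one-dimensional pieces: because $Q$ is a fixed positive-definite integral form, a bounded-index sublattice decomposition (or a unimodular-up-to-bounded-denominator change of variables) lets us write the exponential sum as essentially a product of $r$ one-dimensional Weyl sums $\sum_{n} (\text{amplitude})\, e^{-it c_j n^2 + i(\cdots)n}$, up to harmless cross terms handled by partial summation in the amplitude. This is exactly the structure for which the classical one-dimensional Weyl sum bound applies: on the major arc $\|t/\mathcal T - a/q\|\lesssim 1/(qN)$ one gets a gain of $\big(\sqrt q(1+N\|t/\mathcal T-a/q\|^{1/2})\big)^{-1}$ per variable, at the cost of the trivial count $N$ per variable times the size of the amplitude. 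Multiplying the $r$ one-dimensional estimates gives the factor $N^r$ from the $r$-fold summation, the amplitude contributes the $N^{|\Sigma^+|+|\Sigma_J^+|}$ from \eqref{P()} (with $|{\bf m}|=0$), and the oscillation contributes the denominator $\big(\sqrt q(1+N\|t/\mathcal T-a/q\|^{1/2})\big)^{r}$ — which is precisely the claimed bound. The difference bounds \eqref{DP()} are what make the summation-by-parts legitimate when transferring the smooth amplitude estimates to the discrete sum.

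The main technical obstacle is handling the amplitude $P(\mu,{\bf n},H)$, which is \emph{not} of product form in the $n_j$: it involves $\phi\big((|\mu+\lambda_1+\lambda_2|^2-|\rho|^2)/N^2\big)$, the dimension polynomial $d_{\mu+\lambda_1+\lambda_2}$, the parabolic character ratio, and the integral $\int_{U_J} e^{(\mu_J+(\lambda_1)_J)(\mathrm{Ad}_u H_J^1)}\,du$. To carry out the iterated one-dimensional Weyl estimates one needs to peel off one variable at a time, using Abel summation in that variable with the remaining variables frozen; the bounds \eqref{P()}–\eqref{DP()} guarantee that each summation-by-parts costs only a bounded factor and does not degrade the $N$-power, because every derivative in $n_j$ is compensated by the support constraint $|n_j|\lesssim N$. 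I also need to absorb the finitely many cosets $\mu\in{}^J\Lambda/{}^J\Gamma$ (bounded in number by \eqref{cosetfiniteness}) and the factor $a(t,\mu,H)$, which is $O(1)$ on $P_J$ by \eqref{atmuH} — these only contribute absolute constants. A secondary point to get right is that the quadratic form $Q$ in the reduced variables may have a fixed denominator, so the "$q$" appearing in the Weyl bound for $Q$ differs from the nominal $q$ by a bounded factor, which is absorbed into the implied constant; this is where the precise choice of the period $\mathcal T$ (so that $|\mu|^2\in\frac{2\pi}{\mathcal T}\mathbb Z$) is used to keep everything integral.
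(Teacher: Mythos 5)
Your proposal reaches the right bound but by a genuinely different route from the paper. The paper treats $\kappa_N^J$ directly as an $r$-dimensional quadratic Weyl sum: using the difference bounds \eqref{DP()} on the amplitude, it applies multi-dimensional Weyl differencing, citing \cite[Lemma 3.18]{Bou93} for the one-dimensional model and \cite[Lemma 7.4]{Zha18} for the multi-dimensional version, the only structural input on the quadratic form being the non-degeneracy of the Killing form; no attempt is made to decouple the variables. You instead decouple: pass to a sublattice of bounded index admitting an orthogonal basis (rational Gram--Schmidt plus clearing denominators, index depending only on the root system), split the sum into boundedly many cosets, and on each coset run iterated Abel summation in one variable at a time, using the mixed difference bounds \eqref{DP()} together with the one-dimensional major-arc Gauss/Weyl bound, uniform in the linear coefficient, for each variable. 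Both arguments give $N^{|\Sigma^+|+|\Sigma^+_J|}\bigl(N/(\sqrt q\,(1+N\|t/\mathcal{T}-a/q\|^{1/2}))\bigr)^r$; yours uses only classical one-dimensional inputs at the cost of the lattice-orthogonalization step and the bookkeeping you note (the denominator of $c_j a/q$ and the major-arc condition are distorted only by factors bounded in terms of the fixed form), while the paper's Weyl differencing needs no diagonalization and is essentially a citation.

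One phrase in your plan must be sharpened: ``up to harmless cross terms handled by partial summation in the amplitude.'' Quadratic cross terms in the \emph{phase}, i.e.\ factors $e\bigl(-t\,a_{ij}n_in_j\bigr)$ with $i\neq j$, cannot be absorbed into the amplitude and killed by summation by parts --- their discrete derivatives in $n_i$ are of size $|t|\,a_{ij}N$, so Abel summation gains nothing, and leaving them in place would cap the gain at a single factor of $\bigl(\sqrt q\,(1+N\|t/\mathcal{T}-a/q\|^{1/2})\bigr)^{-1}$ rather than the $r$-th power. The reduction must therefore genuinely remove all off-diagonal quadratic terms, which is exactly what the coset decomposition over a sublattice spanned by mutually orthogonal lattice vectors does: on each coset the quadratic part of the phase is diagonal and only linear terms remain, which are harmless since the one-dimensional major-arc bound is uniform in the linear coefficient. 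With that reading (and the minor remark that the bounds \eqref{DP()} transfer to the new summation variables because each new basis vector is a fixed integral combination of the old ones), your iterated summation-by-parts argument is correct, and the non-product amplitude $P$ is handled exactly as you describe.
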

\begin{proof}
This is a multi-dimensional Weyl type inequality. Using \eqref{DP()}, the estimate follows from an application of Weyl differencing. For details of proof, we refer to for example \cite[Lemma 3.18]{Bou93} for one-dimensional Weyl inequality and \cite[Lemma 7.4]{Zha18} for multi-dimensional Weyl type inequality; the key condition used for the latter is the non-degeneracy of the Killing form $|\cdot|^2$.  
\end{proof}

Using \eqref{KNkappaN}, \eqref{cosetfiniteness}, and \eqref{atmuH}, we immediately have:

\begin{lem} \label{estimatemajorarc}
For $H\in P_J$, it holds 
\begin{align*}
|\mathscr{K}_{N}^J(t,H)|\lesssim \frac{N^{|\Sigma^+|+|\Sigma^+_J|+r}}{\left(\sqrt{q}\left(1+N\left\|\frac{t}{\mathcal{T}}-\frac{a}{q}\right\|^{\frac{1}{2}}\right)\right)^r}
\end{align*}
for $\left\|\frac{t}{\mathcal{T}}-\frac{a}{q}\right\|\lesssim \frac{1}{qN}$. 
\end{lem}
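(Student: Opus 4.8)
The plan is to obtain Lemma \ref{estimatemajorarc} directly from the factorization \eqref{KNkappaN} of $\mathscr{K}_N^J$, combined with the bound on $\kappa_N^J$ established in the preceding lemma and the two elementary inputs \eqref{cosetfiniteness} and \eqref{atmuH}. Concretely, I would begin from
\begin{align*}
\mathscr{K}_N^J(t,H)=\sum_{\mu\in{}^J\Lambda/{}^J\Gamma} a(t,\mu,H)\cdot\kappa_N^J(\mu,t,H),
\end{align*}
apply the triangle inequality, and estimate each summand separately. For $H\in P_J$, inequality \eqref{atmuH} gives $|a(t,\mu,H)|\lesssim 1$ uniformly in the coset representative $\mu$, while the preceding lemma gives, again uniformly in $\mu$ and for $\left\|\frac{t}{\mathcal{T}}-\frac{a}{q}\right\|\lesssim\frac{1}{qN}$,
\begin{align*}
|\kappa_N^J(\mu,t,H)|\lesssim \frac{N^{|\Sigma^+|+|\Sigma^+_J|+r}}{\left(\sqrt{q}\left(1+N\left\|\frac{t}{\mathcal{T}}-\frac{a}{q}\right\|^{\frac{1}{2}}\right)\right)^r}.
\end{align*}
By \eqref{cosetfiniteness} the index set ${}^J\Lambda/{}^J\Gamma$ has cardinality bounded by a constant depending only on the root system $\Sigma$, so summing these uniform bounds over the finitely many terms costs only a harmless constant factor and yields exactly the claimed estimate, on the same major-arc range and with the period $\mathcal{T}$ inherited from the preceding lemma.

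The deduction above is purely bookkeeping and presents no real obstacle; all the analytic weight of the statement sits in the preceding lemma. There, the quantity $\kappa_N^J(\mu,t,H)$ is a multidimensional Weyl-type exponential sum in the variables $(n_1,\dots,n_r)$, and the bound is obtained by Weyl differencing: one uses the amplitude difference estimates \eqref{DP()} for $P$ to control the smooth part, and the non-degeneracy of the Killing quadratic form $|\cdot|^2$ to run the iterated squaring argument on the quadratic phase $|\lambda_1+\lambda_2|^2$, exactly as in \cite[Lemma 3.18]{Bou93} in one dimension and \cite[Lemma 7.4]{Zha18} in several. The one point worth verifying while assembling the present proof is that the bound for $\kappa_N^J$ is genuinely uniform over the finite set of representatives of ${}^J\Lambda/{}^J\Gamma$: this is immediate because the implicit constants in \eqref{DP()} do not depend on $\mu$, and $\mu$ enters $\kappa_N^J$ only through the linear part $2(\mu,\lambda_1+\lambda_2)$ of the phase, precisely the kind of linear term that Weyl differencing eliminates.
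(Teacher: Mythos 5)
Your argument is exactly the paper's: the lemma is deduced from the factorization \eqref{KNkappaN} by the triangle inequality, using $|a(t,\mu,H)|\lesssim 1$ from \eqref{atmuH}, the Weyl-differencing bound on $\kappa_N^J$ from the preceding lemma, and the uniform finiteness \eqref{cosetfiniteness} of ${}^J\Lambda/{}^J\Gamma$. Your extra remark on uniformity in the coset representative $\mu$ is a correct and harmless elaboration; nothing further is needed.
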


We are ready to prove Proposition \ref{kernelmajorarc}. 
 
\begin{proof}
By Weyl's integration formula as in \eqref{Weylint}, we have 
$$\|\mathscr{K}_N(t,\cdot)\|_{L^p(U)}=\|\mathscr{K}_N(t,\cdot)|\delta|^{\frac{2}{p}}\|_{L^p(A)}.$$
Since $A=\bigcup_{J\subset I, |I|=r} P_{I,J}$, it suffices to prove that $\|\mathscr{K}_N(t,\cdot)|\delta|^{\frac{2}{p}}\|_{L^p(P_{I,J})}$ has the desired bound for all $I,J$. Using \eqref{kernelformulalem}, we have 
$$|\mathscr{K}_N(t,H)|\cdot |\delta(H)|^{\frac{2}{p}}=\frac{|\delta^J(H)|^{\frac{2}{p}}}{|W_J|\cdot |\delta_I(H)|^{1-\frac{2}{p}}|\delta_{I,J}(H)|^{1-\frac{2}{p}}}\cdot|\mathscr{K}^J_N(t,H)|.$$
Then we have the desired estimate, combining \eqref{deltaI}, \eqref{deltaJ}, Proposition \ref{keyprop}, and Lemma \ref{estimatemajorarc}. 
\end{proof}

\begin{rem}
In light of Remark \ref{productweight}, it is clear from the proof that Proposition \ref{kernelmajorarc} may be generalized to any product of compact simple Lie groups. Let $M$ be such a product and for each irreducible factor $M_0$ of $M$, let
$$s_0:=\frac{2d_0}{d_0-r_0}$$
where $d_0,r_0$ are respectively the dimension and rank of $M_0$. Let $s$ be the largest among these $s_0$'s. Then for any $p>s$, inequality \eqref{Lp} holds for $\left\|\frac{t}{\mathcal{T}}-\frac{a}{q}\right\|\lesssim \frac{1}{qN}$. 
\end{rem}


\section{Farey dissection and Kloosterman's method} \label{fareydissection}
We review Farey dissection as in 
\cite{HW08} and preliminaries of Kloosterman's version of the Hardy-Littlewood circle method \cite{Klo27}, as expounded by Estermann in \cite{Est62}. 
Let $n$ be an integer $\geq 2$ and consider the sequence of Farey points
$$\left\{\frac{a}{q},\ a\in\mathbb{Z}_{\geq 1},\ q\in\mathbb{Z}_{\geq 1}, \ (a,q)=1, \ a\leq q\leq n\right\}$$ of order $n$ on the unit circle. For each two consecutive points 
$\frac{a}{q},\frac{a'}{q'}$ in the sequence, consider their mediant
$$m_{\frac{a'}{q'},\frac{a}{q}}=m_{\frac{a}{q},\frac{a'}{q'}}=
\left\{
\begin{array}{ll}
\frac{a+a'}{q+q'}, &\text{ if }\frac{a}{q}<\frac{a'}{q'},\\
& \\
\frac{0+1}{1+n},& \text{ if }\frac{a}{q}=\frac{1}{1}\text{ and }\frac{a'}{q'}=\frac{1}{n}.
\end{array}
\right.
$$
The unit circle is now divided into Farey arcs, each bounded by two mediants and containing one Farey point. Equivalently, the interval 
$\left(\frac{1}{n+1},1+\frac{1}{n+1}\right]$ is the disjoint union of the Farey arcs 
$$\mathcal{M}_{a,q}=\left(
\frac{a}{q}-\frac{1}{qq_0},\frac{a}{q}+\frac{1}{qq_1}\right]$$
with $a\in\mathbb{Z}_{\geq 1}$, $q\in\mathbb{Z}_{\geq 1}$,  $(a,q)=1$,  $a\leq q\leq n$, and 
\begin{align}\label{n<qj}
n<q_j\leq n+q \ (j=0,1), \ aq_0\equiv 1\ (\text{mod }q), \ aq_1\equiv -1\ (\text{mod }q).
\end{align}
This implies that the Farey dissection of order $n$ has the following uniformity property 
\begin{align}\label{uniformity}
\frac{1}{q(2n-1)}\leq \frac{1}{qq_j}\leq \frac{1}{q(n+1)}, \ j=0,1.
\end{align}

As Proposition \ref{kernelmajorarc} already tells that the Schr\"odinger kernel is sensitive to how close the time variable is to the Farey points, 
we further dyadically dissect each Farey arc as follows. Such methods have been explored by Bourgain \cite{Bou93e,Bou93}. Let $Q$ be dyadic integers, i.e. powers of 2, such that $1\leq Q\leq n$. For $Q\leq q<2Q$, let $M$ denote dyadic integers such that $Q\leq M\leq n$. 
Let $g(a,q,\gamma)$ be the indicator function of $\mathcal{M}_{a,q}-\frac{a}{q}$, i.e.,
$$g(a,q,\gamma)=\left\{\begin{array}{ll}
1, & \text{ if }\frac{a}{q}+\gamma\in \mathcal{M}_{a,q},\\
0, & \text{ otherwise}.
\end{array}\right.$$
For $Q\leq q<2Q$, define 
$$g_M(a,q,\gamma)=
\left\{\begin{array}{ll}
g\left(a,q,\frac{M}{Q}\gamma\right)-g\left(a,q,\frac{2M}{Q}\gamma\right), & \text{ if }Q\leq M\leq  \frac{n}{2},\\
& \\
g\left(a,q,\frac{M}{Q}\gamma\right), & \text{ if }\frac{n}{2}<M\leq n.
\end{array}\right.
$$
so that 
$$g(a,q,\gamma)=\sum_{M}g_M(a,q,\gamma).$$
Note that because $M$ is a dyadic integer, there is a unique $M$ such that $\frac{n}{2}<M\leq n$. By \eqref{uniformity}, we have the following information on the support of $g_M(a,q,\gamma)$ in $\gamma$
\begin{align}\label{supportofgM}
\text{supp } g_M(a,q,\cdot)\subset 
\left\{
\begin{array}{ll}
\left[-\frac{1}{(n+1)M},-\frac{1}{4(2n-1)M}\right]\cup\left[\frac{1}{4(2n-1)M},\frac{1}{(n+1)M}\right], & \text{ if }Q\leq M\leq \frac{n}{2},\\
& \\
\left[-\frac{1}{(n+1)M},\frac{1}{(n+1)M}\right], & \text{ if }\frac{n}{2}<M\leq n.
\end{array}
\right.
\end{align}
Define 
\begin{align}\label{1QMt=}
\mathbbm{1}_{Q,M}(t)
=\sum_{1\leq a\leq q,\ (a,q)=1,\ Q\leq q<2Q} g_M\left(a,q,t-\frac{a}{q}\right),
\end{align}
so that we have the partition of unity 
$$1=\sum_{Q,M}\mathbbm{1}_{Q,M}$$
on the unit circle. 
Let $\widehat{\mathbbm{1}_{Q,M}}$ denote the Fourier transform of $\mathbbm{1}_{Q,M}$ on the unit circle such that 
$$\mathbbm{1}_{Q,M}(t)=\sum_{m\in \Z}
\widehat{\mathbbm{1}_{Q,M}}(m)e(mt),$$
then using \eqref{supportofgM} we have
\begin{align}\label{1QM}
\|\widehat{\mathbbm{1}_{Q,M}}\|_{l^\infty}\lesssim \|\mathbbm{1}_{Q,M} \|_{L^1}\lesssim\frac{Q^2}{(n+1)M}. 
\end{align}

For $(a,q)=1$, let $a^*$ denote the natural number $\leq q$ such that $aa^*\equiv 1$ (mod $q$). From \eqref{n<qj} we see that for fixed $q$, the dependence of $g(a,q,\gamma)$ on $a$ is via its dependence on $a^*$, and precisely we have 
$$g(a,q,\gamma)=f(a^*,q,\gamma)$$
where 
$$
f(b,q,\gamma)=\left\{
\begin{array}{ll}
1, & \text{ if }\gamma\in \left(-\frac{1}{qq_0}, \frac{1}{qq_1}\right],\\
0, & \text{ otherwise}.
\end{array}
\right.
$$
where 
$$n<q_j\leq n+q \ (j=0,1), \ q_0\equiv b\ (\text{mod }q), \ q_1\equiv -b \ (\text{mod } q)$$
for $b=1,2,\ldots,q$. Now Kloosterman's idea on the circle method comes in, and one of its realizations is to expand $f(b,q,\gamma)$
in a finite sum
$$f(b,q,\gamma)=\sum_{h=1}^q c(h,q,\gamma) e\left(\frac{hb}{q}\right).$$
This is nothing but to express $f(\cdot,q,\gamma)$ as the Fourier transform of $c(\cdot,q,\gamma)$ over the additive group $\mathbb{Z}/q\mathbb{Z}$. The inverse Fourier transform then gives 
\begin{align}\label{chqgamma}
c(h,q,\gamma)=\frac{1}{q}\sum_{b=1}^q f(b,q,\gamma) e\left(-\frac{hb}{q}\right).
\end{align}
We record in the following lemma several properties of $c(h,q,\gamma)$. 

\begin{lem}\label{Lemma13}

(i) For $|\gamma|>\frac{1}{q(n+1)}$, $c(h,q,\gamma)=0$ for all $h=1,\ldots,q$. And for $|\gamma|<\frac{1}{q(n+q)}$,  
$c(h,q,\gamma)=0$ for all $h=1,\ldots,q-1$, and $c(q,q,\gamma)=1$. 

(ii) For all $\gamma$, 
$$\sum_{h=1}^q|c(h,q,\gamma)|\leq 2+\log q.$$ 

(iii) 
$$g(a,q,\gamma)=\sum_{h=1}^qc(h,q,\gamma) e\left(\frac{ha^*}{q}\right).$$

\end{lem} 

\begin{proof} 
This is essentially Lemma 13 of \cite{Est62}. (iii) is clear from the previous discussion. Using \eqref{chqgamma} and the definition of $f(b,q,\gamma)$, we have 
$$
c(h,q,\gamma)=
\left\{
\begin{array}{ll}
\frac{1}{q}\sum_{q_0=n+1,\ldots,n+q,\ q_0<-\frac{1}{q\gamma}} e\left(-\frac{hq_0}{q}\right), & \text{ if }\gamma\leq 0,\\
& \\ 
\frac{1}{q}\sum_{q_1=n+1,\ldots,n+q, \ q_1\leq\frac{1}{q\gamma}} e\left(\frac{hq_1}{q}\right), & \text{ if }\gamma\geq 0.
\end{array}
\right.
$$
(i) is clear from this formula. We estimate 
$|c(q,q,\gamma)|\leq 1$, and for $h=1,\ldots,q-1$, 
$$|c(h,q,\gamma)|\leq \frac{2}{q\left|e\left(\frac{h}{q}\right)-1\right|}
=\frac{1}{q\left|\sin \left(\frac{\pi h}{q}\right)\right|},$$
which implies (ii). 
\end{proof}

We have the following immediate corollary. 

\begin{cor}\label{coroflem13}
For $Q\leq q<2Q$, define 
\begin{align*}
c_{M}(h,q,\gamma)=
\left\{\begin{array}{ll}
c(h,q,\frac{M}{Q}\gamma)-c(h,q,\frac{2M}{Q}\gamma), & \text{ if }Q\leq M\leq\frac{n}{2},\\
& \\
c(h,q,\frac{M}{Q}\gamma), & \text{ if }\frac{n}{2}<M\leq n.
\end{array}\right.
\end{align*}

(i) For all $h=1,\ldots,q$, the support of $c_{M}(h,q,\gamma)$ in $\gamma$ satisfies 
\begin{align}\label{supportofchM}
\text{supp } c_{M}(h,q,\cdot)\subset 
\left\{
\begin{array}{ll}
\left[-\frac{1}{(n+1)M},-\frac{1}{8nM}\right]\cup\left[\frac{1}{8nM},\frac{1}{(n+1)M}\right], & \text{ if }Q\leq M\leq \frac{n}{2},\\
& \\ 
\left[-\frac{1}{(n+1)M},\frac{1}{(n+1)M}\right], & \text{ if }\frac{n}{2}<M\leq n.
\end{array}
\right.
\end{align}

(ii) For all $\gamma$, 
\begin{align}\label{sumchM}
\sum_{h=1}^q |c_{M}(h,q,\gamma)|\leq 4+2\log q.
\end{align}

(iii) 
\begin{align}\label{gMaq}
g_M(a,q,\gamma)=
\sum_{h=1}^q c_{M}(h,q,\gamma) e\left(\frac{ha^*}{q}\right).
\end{align}

\end{cor}

We remark that $c_{M}(h,q,\gamma)$ as functions of $\gamma$ are all understood to have the unit circle as the domain. 

\section{Proof of Theorem \ref{Main} and \ref{eigengroup}}\label{farey}

We now prove Theorem \ref{Main}. 

\begin{proof}[Proof of Theorem \ref{Main}] 
Reducing to a finite cover, it suffices to prove it for the case of a compact simply connected semisimple Lie group $U=U_1\times U_2\times\cdots\times U_k$, where the $U_i$'s are the simple components, equipped with the canonical Killing metrics. 
Consider the product Schr\"odinger kernel 
\begin{align}\label{productkernel}
\mathscr{K}_N=\prod_{i=1}^k\mathscr{K}_{N,i}
\end{align}
where 
$$\mathscr{K}_{N,i}(t,H_i)=\sum_{\mu_i\in\Lambda_i^+}\phi_i\left(\frac{|\mu_i|^2-|\rho_i|^2}{ N^2}\right)e^{-it(|\mu_i|^2-|\rho_i|^2)}d_{\mu_i}\chi_{\mu_i}(H_i)$$ is the kernel for the component $U_i$. By rationality of the weight lattices as in  \eqref{rationalityKillingform}, the component kernels $\mathscr{K}_{N,i}$ share a period in the time variable $t$, say $\mathcal{T}$, and we set $\T=\R/\mathcal{T}\Z$. Let $\Sigma_i$ be the root system of rank $r_i$ for $U_i$ ($1\leq i\leq k$), then Proposition  \ref{kernelmajorarc} implies 
\begin{align}\label{kernelproduct}
\|\mathscr{K}_N(t,\cdot)\|_{L^u(U)}
=\prod_{i=1}^k\|\mathscr{K}_{N,i}(t,\cdot)\|_{L^u(U_i)}
\lesssim \frac{N^{d-\frac{d}{u}}}{\left(\sqrt{q}\left(1+N\left\|\frac{t}{\mathcal{T}}-\frac{a}{q}\right\|^{\frac{1}{2}}\right)\right)^r}
\end{align}
provided 
\begin{align}\label{definitionofs}
u>s:=\max\left\{\frac{2d_i}{d_i-r_i}, \ i=1,\ldots,k\right\}.
\end{align}
Here $d_i$ is the dimension of $U_i$ ($1\leq i\leq k$).

Using Farey dissection of order $n=\lfloor N\rfloor$ as reviewed in Section \ref{fareydissection}, we write 
$$\mathscr{K}_N(t,x)=\sum_{Q,M}\mathscr{K}_{Q,M}(t,x), \ \mathscr{K}_{Q,M}(t,x):=\mathscr{K}_N(t,x)\cdot\mathbbm{1}_{Q,M}\left(\frac{t}{\mathcal{T}}\right),$$
for $(t,x)\in\T\times U$. Let $F: \T \times U\to \C$ be a continuous function. Let $*$ denote the convolution on the product group $\T\times U$. 
By Young's inequality for unimodular groups, inequality \eqref{kernelproduct}, and the estimate 
$$\left\|\mathbbm{1}_{Q,M}\left(\frac{\cdot}{\mathcal{T}}\right)\right\|_{L^u(\T)}\lesssim \left(\frac{Q^2}{NM}\right)^{\frac{1}{u}}$$
due to \eqref{supportofgM}, 
we have for $u>s$
\begin{align*}
\|F*\mathscr{K}_{Q,M}\|_{L^{2u}(\T\times U)}&\leq \|\mathscr{K}_{Q,M}\|_{L^{u}(\T\times U)}\|F\|_{L^{(2u)'}(\T\times U)}\\
&\lesssim N^{d-\frac{d+1}{u}-\frac{r}{2}}M^{\frac{r}{2}-\frac{1}{u}}Q^{-\frac{r}{2}+\frac{2}{u}}\|F\|_{L^{(2u)'}(\T\times U)}. \numberthis \label{2p'to2p}
\end{align*}
Here $2u$ and $(2u)'$ are conjugate exponents. On the other hand, as a class function on the compact Lie group $\mathbb{T}\times U$, $\mathscr{K}_{Q,M}$ has its Fourier transform $\widehat{\mathscr{K}_{Q,M}}(m,\mu)$ ($(m,\mu)\in \Z\times \Lambda^+)$ computed as follows 
$$\widehat{\mathscr{K}_{Q,M}}(m,\mu)=\phi(\mu,N)\widehat{\mathbbm{1}_{Q,M}}(m+(|\mu|^2-|\rho|^2)\mathcal{T}/2\pi),$$
where 
$$\phi(\mu,N)=\prod_{i=1}^k\phi_i((|\mu_i|^2-|\rho_i|^2)/ N^2),\ 
|\mu|^2-|\rho|^2=\sum_{i=1}^k|\mu_i|^2-|\rho_i|^2.$$ 
By \eqref{1QM}, we have for all $(m,\mu)\in \Z\times \Lambda^+$
$$|\widehat{\mathscr{K}_{Q,M}}(m,\mu)|\lesssim \frac{Q^2}{NM}.$$
As a consequence, we have 
\begin{align}\label{2to2}
\|F*\mathscr{K}_{Q,M}\|_{L^{2}(\T\times U)}
\lesssim \frac{Q^2}{NM}\|F\|_{L^2(\T\times U)}. 
\end{align}
Interpolating \eqref{2p'to2p} with \eqref{2to2} for 
$\frac{\theta}{2}+\frac{1-\theta}{2u}=\frac{1}{p}$, 
we get 
\begin{align*}
\|F*\mathscr{K}_{Q,M}\|_{L^p(\T\times U)}\lesssim N^{\left(d-\frac{d+1}{u}-\frac{r}{2}\right)(1-\theta)-\theta}M^{\left(\frac{r}{2}-\frac{1}{u}\right)(1-\theta)-\theta}Q^{\left(-\frac{r}{2}+\frac{2}{u}\right)(1-\theta)+2\theta}\|F\|_{L^{p'}(\T\times U)}.
\end{align*}
We require the exponent of $Q$ to satisfy 
\begin{align*}
\left(-\frac{r}{2}+\frac{2}{u}\right)(1-\theta)+2\theta<0 \Leftrightarrow  \theta<\frac{ru-4}{4u+ru-4},
\end{align*}
which implies that the exponent of $M$ satisfies 
$
\left(\frac{r}{2}-\frac{1}{u}\right)(1-\theta)-\theta>0
$.
Summing over the dyadic integers $M$ and $Q$, we get 
\begin{align*}
\|F*\mathscr{K}_N\|_{L^p(\T\times U)}&\lesssim \sum_{1\leq Q\leq N}\sum_{Q\leq M\leq N}\|F*\mathscr{K}_{Q,M}\|_{L^p(\T\times U)}\\
&\lesssim N^{\left(d-\frac{d+2}{u}\right)(1-\theta)-2\theta}\|F\|_{L^{p'}(\T\times U)}=N^{d-\frac{2(d+2)}{p}}\|F\|_{L^{p'}(\T\times U)},
\end{align*}
provided 
$$\frac{1}{p}=\frac{\theta}{2}+\frac{1-\theta}{2u}<\frac{ru-4}{2(4u+ru-4)}+\frac{2}{4u+ru-4} \Leftrightarrow p>2+\frac{8(u-1)}{ur}$$
for some $u>s$. 
This implies Theorem \ref{Main}, by a standard application of Littlewood-Paley theory and the $TT^*$ argument. 
\end{proof}

Then we prove Theorem \ref{eigengroup}.


\begin{proof}[Proof of Theorem \ref{eigengroup}]
We inherit the notations in the proof of Theorem \ref{Main}. 
Let $f$ be an eigenfunction of eigenvalue $-N^2$. Then $N^2=|\mu|^2-|\rho|^2$ for some $\mu\in\Lambda^+$.   
Set 
$$\mathcal{K}_N=\sum_{\mu\in\Lambda^+,|\mu|^2-|\rho|^2=N^2}d_\mu \chi_\mu.$$ 
Then it is clear that $f=f*\mathcal{K}_N$. By an argument of $TT^*$, it suffices to establish bounds of the form 
$$\|f*\mathcal{K}_N\|_{L^p(U)}\lesssim N^{\frac{d-2}{2}-\frac{d}{p}}\|f\|_{L^{p'}(U)}.$$
Let $\mathscr{K}_{N}$ be again the Schr\"odinger kernel as in \eqref{kernelformulaeq} and more generally as in \eqref{productkernel}, and here we assume that the cutoff functions $\phi_i$ satisfy $\prod_i\phi_i(y^2_i)=1$ for all $y_i$ such that $\sum_i y^2_i=1$. Then we may write 
$$\mathcal{K}_N=\frac{1}{\mathcal{T}}\int_0^\mathcal{T}\mathscr{K}_N(t,\cdot)e^{it N^2}\ dt.$$
Using the Farey dissection of order $n=\lfloor N\rfloor$ again, we decompose 
$$\mathcal{K}_N=\sum_{Q,M}\mathcal{K}_{Q,M},$$
where 
\begin{align}\label{KQM=}
\mathcal{K}_{Q,M}=\int_{0}^1\mathscr{K}_{N}(t,\cdot) e^{it N^2}\mathbbm{1}_{Q,M}\left(\frac{t}{\mathcal{T}}\right)\ d\left(\frac{t}{\mathcal{T}}\right).
\end{align}
By Proposition \ref{kernelmajorarc}, Minkowski's integral inequality, and \eqref{1QM}, we have for $u>s$ 
\begin{align}\label{KQMestimate}
\|\mathcal{K}_{Q,M}\|_{L^u(U)}\lesssim N^{d-\frac{d}{u}-\frac{r}{2}-1}M^{\frac{r}{2}-1}Q^{-\frac{r}{2}+2},
\end{align}
which implies by Young's inequality 
\begin{align}\label{2u'to2u}
\|f*\mathcal{K}_{Q,M}\|_{L^{2u}(U)}\lesssim N^{d-\frac{d}{u}-\frac{r}{2}-1}M^{\frac{r}{2}-1}Q^{-\frac{r}{2}+2}\|f\|_{L^{(2u)'}(U)}. 
\end{align}
On the other hand, the Fourier transform of $\mathcal{K}_{Q,M}$ on $U$ equals 
$$\widehat{\mathcal{K}_{Q,M}}(\mu)=\phi(\mu,N)\int_{0}^1e^{it(N^2-|\mu|^2+|\rho|^2)}\mathbbm{1}_{Q,M}\left(\frac{t}{\mathcal{T}}\right)\ d\left(\frac{t}{\mathcal{T}}\right), \ \t{for all }\mu\in\Lambda^+.$$
Thus 
$$|\widehat{\mathcal{K}_{Q,M}}(\mu)|\lesssim \|\mathbbm{1}_{Q,M}\|_{L^1}\lesssim \frac{Q^2}{NM},$$
which implies 
\begin{align}\label{2to22}
\|f*\mathcal{K}_{Q,M}\|_{L^2(U)}
\lesssim 
\frac{Q^2}{NM}\|f\|_{L^2(U)}.
\end{align}
Interpolating \eqref{2u'to2u} with \eqref{2to22} for 
$\frac{\theta}{2}+\frac{1-\theta}{2u}=\frac{1}{p}$, 
we get 
\begin{align*}
\|f*\mathcal{K}_{Q,M}\|_{L^p(U)}\lesssim N^{\left(d-\frac{d}{u}-\frac{r}{2}-1\right)(1-\theta)-\theta} M^{\left(\frac{r}{2}-1\right)(1-\theta)-\theta}Q^{\left(-\frac{r}{2}+2\right)(1-\theta)+2\theta}\|f\|_{L^{p'}(U)}. 
\end{align*}
We require the exponent of $Q$ to be negative, i.e., 
$$\left(-\frac{r}{2}+2\right)(1-\theta)+2\theta<0\Leftrightarrow \theta<\frac{r-4}{r},$$
which implies that the exponent $\left(\frac{r}{2}-1\right)(1-\theta)-\theta$ of $M$ is positive. Summing over the dyadic integers $M$ and $Q$, we have
\begin{align*}
\|f*\mathcal{K}_N\|_{L^p(U)}\lesssim N^{\left(d-\frac{d}{u}-2\right)(1-\theta)-2\theta}\|f\|_{L^{p'}(U)}=N^{d-2-\frac{2d}{p}}\|f\|_{L^{p'}(U)},
\end{align*} 
provided 
$$\frac{1}{p}=\frac{\theta}{2}+\frac{1-\theta}{2u}
<\frac{r-4}{2r}+\frac{2}{ru}\Leftrightarrow p>\frac{2ur}{ur-4u+4}$$
for some $u>s$. This finishes the proof. 
\end{proof}


\section{Proof of Theorem \ref{eigengrouploss}}\label{loss}
We provide another approach to the Weyl type exponential sum $\kappa_N^J(\mu,t,H)$ in Lemma \ref{kappaNJ} in order to prove Theorem \ref{eigengrouploss}. It is based on Bourgain's work \cite{Bou93e} and involves the Poisson summation formula, and bounds on oscillatory integrals and Kloosterman and Salié sums. 

We wish to perform Poisson summation to the Weyl type sum $\kappa_N^J(\mu,t,H)$, in order to see Gauss sums. Using \eqref{lambda1=n1} and \eqref{lambda2=n2}, we write 
$$(\lambda_1+\lambda_2)(H_J^\perp)
=2\pi i (n_1x_1+\cdots+n_rx_r),$$
for $(x_1,\ldots,x_r)\in \mathbb{R}^r$ depending on $H_J^\perp$, and 
$$|\lambda_1+\lambda_2|^2=\frac{2\pi}{\mathcal{T}}\sum_{1\leq i,j\leq n}a_{ij}n_in_j$$
where ${\bf A}=(a_{ij})$ is an integral positive-definite symmetric matrix by \eqref{rationalityKillingform}, and 
$$2(\mu,\lambda_1+\lambda_2)=\frac{2\pi}{\mathcal{T}}(n_1b_1+\cdots+n_rb_r)$$
for $(b_1,\ldots,b_r)\in\mathbb{Z}^r$ depending on $[\mu]\in {}^J\Lambda/{}^J\Gamma$. 
Let $\mathcal{K}_{Q,M}$ be the Farey pieces of the Schr\"odinger kernel as defined in \eqref{KQM=}. 

\begin{lem}
For $H\in P_{I,J}$, it holds 
\small
\begin{align*}
\mathcal{K}_{Q,M}(H)
&=\frac{1}{|W_J|\delta_I\delta_{I,J}}\sum_{[\mu]}b(\mu,H)\ \cdot \\ &  \int_{0}^1 e(n_0 \gamma) \sum_{\substack{m_j\in\mathbb{Z} \\ j=1,\ldots,r}}\sum_{Q\leq q<2Q}J({\bf x},\gamma,{\bf m};q) \sum_{1\leq h\leq q} c_{M}(h,q,\gamma) 
\sum_{1\leq a\leq q,\ (a,q)=1} S({\bf A}, a, a{\bf b}+{\bf m}, q) e\left(\frac{n_0 a+ha^*}{q}\right)
\ d\gamma 
\end{align*}
\normalsize
where 
$$J({\bf x},\gamma,{\bf m};q):=\int_{\mathbb{R}^r}e\left(\sum_jy_j(x_j+m_j/q)-\gamma\left(\sum_{ij}a_{ij}y_iy_j+\sum_jy_jb_j\right)\right) P(\mu,y_1,\ldots,y_r,H)\ dy_1\cdots dy_r,$$
and 
$$S({\bf A}, a,a{\bf b}+{\bf m}, q):=\frac{1}{q^r}\sum_{\substack{k_j=0,1,\ldots,q-1 \\ j=1,\ldots,r}}e\left(-\left(\frac{a}{q}\sum_{i,j} a_{ij}k_ik_j+\frac{1}{q}\sum_{j}k_j(ab_j+m_j) \right)\right),
$$
with
$$n_0=\frac{\mathcal{T}}{2\pi}(N^2-|\mu|^2+|\rho|^2)\in\mathbb{Z}.$$
Here 
${\bf b}=(b_1,\ldots,b_r)$ which depends on $[\mu]$, ${\bf m}=(m_1,\ldots,m_r)$, and ${\bf x}=(x_1,\ldots,x_r)$ which depends on $H_J^\perp$. The summation over $m_j$ ($j=1,\ldots,r$) converges absolutely. 
\end{lem}
\begin{proof}
By 
\eqref{kernelformulalem}, \eqref{KNkappaN}, and \eqref{KQM=}, we have 
$$
\mathcal{K}_{Q,M}(H)
=\frac{1}{|W_J|\delta_I\delta_{I,J}}\sum_{[\mu]}b(\mu,H)\int_{0}^1
e\left(n_0 \frac{t}{\mathcal{T}}\right)
\mathbbm{1}_{Q,M}\left(\frac{t}{\mathcal{T}}\right) \kappa_{N}^J(\mu,t,H)\ d\left(\frac{t}{\mathcal{T}}\right).
$$
Using \eqref{1QMt=} and \eqref{gMaq}, we then have 
\small
\begin{align}
&\mathcal{K}_{Q,M}(H)\nonumber\\
&=\frac{1}{|W_J|\delta_I\delta_{I,J}}\sum_{[\mu]}b(\mu,H)\int_{0}^1e\left(n_0\frac{t}{\mathcal{T}}\right)\sum_{Q\leq q<2Q}\sum_{1\leq a\leq q,\ (a,q)=1}\sum_{1\leq h\leq q} c_{M}\left(h,q,\frac{t}{\mathcal{T}}-\frac{a}{q}\right)e\left(\frac{ha^*}{q}\right)\kappa_{N}^J(\mu,t,H)\ d\left(\frac{t}{\mathcal{T}}\right)\nonumber \\
&=\frac{1}{|W_J|\delta_I\delta_{I,J}}\sum_{[\mu]}b(\mu,H)\sum_{Q\leq q<2Q}\sum_{1\leq a\leq q,\ (a,q)=1}\sum_{1\leq h\leq q} \int_{0}^1 e\left(n_0\frac{t}{\mathcal{T}}\right)c_{M}\left(h,q,\frac{t}{\mathcal{T}}-\frac{a}{q}\right)e\left(\frac{ha^*}{q}\right)\kappa_{N}^J(\mu,t,H)\ d\left(\frac{t}{\mathcal{T}}\right)\nonumber\\
&=\frac{1}{|W_J|\delta_I\delta_{I,J}}\sum_{[\mu]}b(\mu,H)\sum_{Q\leq q<2Q}\sum_{1\leq a\leq q,\ (a,q)=1}\sum_{1\leq h\leq q} \int_{0}^1 e\left(n_0\gamma\right)c_{M}\left(h,q,\gamma\right)e\left(\frac{n_0 a+ha^*}{q}\right)\kappa_{N}^J\left(\mu,\mathcal{T}\left(\gamma+\frac{a}{q}\right),H\right)\ d\gamma.\label{KQMH=}
\end{align}
\normalsize
The last equality results from the fact that $e\left(n_0\frac{t}{\mathcal{T}}\right)$, $c_{M}\left(h,q,\frac{t}{\mathcal{T}}-\frac{a}{q}\right)$ and $\kappa_{N}^J(\mu,t,H)$ are all functions of $\frac{t}{T}$ which lives on the unit circle. 
Put $n_j=s_jq+k_j$, $k_j=0,1,\ldots,q-1$, $j=1,\ldots,r$. 
Then the term $\kappa_{N}^J\left(\mu,\mathcal{T}\left(\gamma+\frac{a}{q}\right),H\right)$ as introduced in Lemma \ref{kappaNJ} can be rewritten as 
\begin{align*}
\kappa_{N}^J\left(\mu,\mathcal{T}\left(\gamma+\frac{a}{q}\right),H\right)
=&\sum_{\substack{k_j=0,1,\ldots,q-1 \\ j=1,\ldots,r}}\bigg[
e\left(-\frac{a}{q}\left(\sum_{i,j} a_{ij}k_ik_j+\sum_{j}k_jb_j \right)\right)
\sum_{\substack{s_j\in\mathbb{Z} \\ j=1,\ldots,r}}\\
&e\left( \sum_j(s_jq+k_j)x_j-\gamma\left(\sum_{ij}a_{ij}(s_iq+k_i)(s_jq+k_j)+\sum_j (s_jq+k_j)b_j\right)\right)\\
&\cdot P(\mu,s_1q+k_1,\ldots,s_rq+k_r,H)
\bigg].
\end{align*}
Apply Poisson summation formula
$$\sum_{\substack{s_j\in\mathbb{Z} \\ j=1,\ldots,r}}g(s_1,\ldots,s_r)
=\sum_{\substack{m_j\in\mathbb{Z} \\ j=1,\ldots,r}}
\int_{\mathbb{R}^r} g(t_1,\ldots,t_r)e\left(\sum_{j=1,\ldots,r} t_jm_j\right)\  dt_1 \cdots dt_r$$
to the compactly supported smooth function on $\mathbb{R}^r$
\begin{align*}
&g(t_1,\ldots,t_r)\\
&=e\left( \sum_j(t_jq+k_j)x_j-\gamma\left(\sum_{ij}a_{ij}(t_iq+k_i)(t_jq+k_j)+\sum_j (t_jq+k_j)b_j\right)\right)\cdot P(\mu,t_1q+k_1,\ldots,t_rq+k_r,H)
\end{align*}
and a change of variables $y_j=t_jq+k_j$ ($j=1,\ldots,r$), 
we have 
\begin{align*}
\kappa_{N}^J\left(\mu,\mathcal{T}\left(\gamma+\frac{a}{q}\right),H\right)
&=\sum_{\substack{m_j\in\mathbb{Z} \\ j=1,\ldots,r}}
\left\{\frac{1}{q^r}\sum_{\substack{k_j=0,1,\ldots,q-1 \\ j=1,\ldots,r}}e\left(-\left(\frac{a}{q}\sum_{i,j} a_{ij}k_ik_j+\frac{1}{q}\sum_{j}k_j(ab_j+m_j) \right)\right)\right\}\ \cdot \\
& \int_{\mathbb{R}^r}e\left(\sum_jy_j(x_j+m_j/q)-\gamma\left(\sum_{ij}a_{ij}y_iy_j+\sum_jy_jb_j\right)\right) P(\mu,y_1,\ldots,y_r,H)\ dy_1\cdots dy_r\\
&=\sum_{\substack{m_j\in\mathbb{Z} \\ j=1,\ldots,r}} 
S({\bf A}, a, a{\bf b}+{\bf m}, q)
J({\bf x},\gamma,{\bf m};q)
\end{align*}
where the summation over $m_j$ ($j=1,\ldots,r$) converges absolutely. 
Plugging this formula in \eqref{KQMH=} finishes the proof. 
\end{proof}

Denote 
\small
\begin{align*}
\kappa_{Q,M}(\mu,H):
=\int_{0}^1 e(n_0 \gamma) \sum_{\substack{m_j\in\mathbb{Z} \\ j=1,\ldots,r}}\sum_{Q\leq q<2Q}J({\bf x},\gamma,{\bf m};q) \sum_{1\leq h\leq q} c_{M}(h,q,\gamma) 
\sum_{1\leq a\leq q,\ (a,q)=1} S({\bf A}, a, a{\bf b}+{\bf m}, q) e\left(\frac{n_0 a+ha^*}{q}\right)
\ d\gamma.
\end{align*}
\normalsize
Here $S({\bf A}, a, a{\bf b}+{\bf m}; q)$ is essentially a higher dimensional Gauss sum. We will prove the following estimate on $\kappa_{Q,M}(\mu,H)$ using Weyl differencing, and standard bounds for Kloosterman sums and Salié sums, after we use it to prove the main theorem. 

\begin{lem}\label{estimateofkappaQM}
It holds uniformly for $H\in P_{I,J}$ that 
\begin{align*}
|\kappa_{Q,M}(\mu,H)|
\lesssim_{\varepsilon} N^{|\Sigma^+|+|\Sigma^+_J|+\frac{r}{2}-1+\varepsilon}M^{\frac{r}{2}-1}Q^{-\frac{r}{2}+\frac{3}{2}}.
\end{align*}
\end{lem}

\begin{proof}[Proof of Theorem \ref{eigengrouploss}]
Using Weyl's integration formula and the barycentric-semiclassical subdivision again, we write 
$$\|\mathcal{K}_{Q,M}\|_{L^u(U)}=\|\mathcal{K}_{Q,M}|\delta|^{\frac{2}{u}}\|_{L^u(A)}\lesssim \sum_{J\subset I}\|\mathcal{K}_{Q,M}|\delta|^{\frac{2}{u}}\|_{L^u(P_{I,J})}.$$
For $H\in P_{I,J}$, by \eqref{atmuH}, we have 
$$|\mathcal{K}_{Q,M}(H)|\cdot|\delta(H)|^{\frac{2}{u}}
\lesssim \frac{|\delta^J(H)|^{\frac{2}{u}}}{|\delta_I(H)|^{1-\frac{2}{u}}|\delta_{I,J}(H)|^{1-\frac{2}{u}}}\cdot\sum_{[\mu]\in{}^J\Lambda/{}^J\Gamma}|\kappa_{Q,M}(\mu,H)|.$$
Using Lemma \ref{estimateofkappaQM}, along with 
\eqref{deltaI}, \eqref{deltaJ}, Proposition \ref{keyprop},  
Remark \ref{productweight}, and \eqref{cosetfiniteness}, we conclude for any $u>s$
\begin{align*}
\|\mathcal{K}_{Q,M}\|_{L^u(U)}\lesssim_{\varepsilon}  
N^{d\left(1-\frac{1}{u}\right)-\frac{r}{2}-1+\varepsilon }M^{\frac{r}{2}-1}Q^{-\frac{r}{2}+\frac{3}{2}}.
\end{align*}
Compared with the estimate \eqref{KQMestimate} on $\mathcal{K}_{Q,M}$, the above estimate lowers the power of $Q$ by $\frac{1}{2}$ despite adding an extra $N^\varepsilon$. By the same interpolation argument as in the proof of Theorem \ref{eigengroup}, this then yields 
$$\|f*\mathcal{K}_N\|_{L^p(U)}\lesssim_\varepsilon N^{d-2-\frac{2d}{p}+\varepsilon}\|f\|_{L^{p'}(U)}$$
for any $p>\frac{2s(r+1)}{sr-3s+4}$ and $r\geq 4$.  
\end{proof}

\begin{proof}[Proof of Lemma \ref{estimateofkappaQM}]
We have 
\begin{align*}
&|\kappa_{Q,M}(\mu,H)|\\
&\leq\int_{0}^1 \sum_{\substack{m_j\in\mathbb{Z} \\ j=1,\ldots,r}}\sum_{Q\leq q<2Q}|J({\bf x},\gamma,{\bf m};q)| \sum_{1\leq h\leq q} |c_{M}(h,q,\gamma)|
\left|\sum_{1\leq a\leq q,\ (a,q)=1} S({\bf A}, a, a{\bf b}+{\bf m}, q) e\left(\frac{n_0 a+ha^*}{q}\right)\right|
\ d\gamma
\end{align*}
where the summation over $m_j$ ($j=1,\ldots,r$) converges. 
We first bound the term $J({\bf x},\gamma,{\bf m};q)$. Using \eqref{P()} and applying the standard method of stationary phase for the oscillatory integral $J({\bf x},\gamma,{\bf m};q)$ \cite[Chapter VIII Section 2.3]{Ste93}, we get 
\begin{align}\label{Jxgamma}
|J({\bf x},\gamma,{\bf m};q)|\lesssim N^{|\Sigma^+|+|\Sigma^+_J|}\min\{N^r, |\gamma|^{-\frac{r}{2}}\}.
\end{align}
Moreover, noting that because of \eqref{supportofchM} we have $|\gamma|\lesssim\frac{1}{qN}$, and that the support of $P(\mu,y_1,\ldots,y_r,H)$ is on the region $|y_i|\lesssim N$ for all $i$, the logarithmic derivative of the phase function 
$$e\left(\sum_jy_j(x_j+m_j/q)-\gamma\left(\sum_{ij}a_{ij}y_iy_j+\sum_jy_jb_j\right)\right)$$
with respect to $y_j$ is bounded below in absolute value by 
a constant times $|x_j+m_j/q|$, as long as $|x_j+m_j/q|\gtrsim 1/q$. So for each $\varepsilon>0$, if any of the $m_j$ satisfies $|x_jq+m_j|\geq N^{\varepsilon}$, using the derivative estimate \eqref{P()} of $P(\mu,y_1,\ldots,y_r,H)$, then integration by parts with respect to the variable $y_j$ shows that 
$$|J({\bf x},\gamma,{\bf m};q)|\lesssim_{M,\varepsilon} N^{-M}$$
for all $M>0$. This will produce a negligible contribution and we may now assume that in the summation $\sum_{m_j}$ only at most $N^\varepsilon$ values of $m_j$ have to be considered for each $j=1,\ldots, r$. 

Next we bound the term 
\begin{align} \label{preKlo}
\mathscr{S}({\bf A}, {\bf b}, {\bf m}, n_0, q, h):=\sum_{1\leq a\leq q,\ (a,q)=1}S({\bf A}, a, a{\bf b}+{\bf m}, q)e\left(\frac{n_0 a+ha^*}{q}\right).
\end{align}
By adapting the method of Weyl differencing \cite[Theorem 8.1]{IK04} to high dimensions (using the key non-degeneracy of the matrix ${\bf A}=(a_{ij})$; compare with Lemma 7.4 of \cite{Zha18} for the slightly different version with smooth cutoff), we may establish
\begin{align}\label{Gaussest}
|S({\bf A}, a, a{\bf b}+{\bf m}, q)|\lesssim_{\varepsilon} q^{-\frac{r}{2}+\varepsilon}
\end{align} 
uniformly in $a$, ${\bf b}$, and ${\bf m}$. This $q^\varepsilon$ factor might be eliminated for some cases, but we will not need it. The above estimate implies 
\begin{align*}
|\mathscr{S}({\bf A}, {\bf b}, {\bf m}, n_0, q, h)|\lesssim_{\varepsilon} q^{-\frac{r}{2}+1+\varepsilon}
\end{align*}
uniformly in ${\bf b}$, ${\bf m}$, $n_0$, and $h$. Slightly more generally, for any $q_0$ coprime to $q$, it holds 
\begin{align*}
\mathscr{S}(q_0{\bf A}, {\bf b}, {\bf m}, n_0, q, h)
=\mathscr{S}({\bf A}, q_0^*{\bf b}, {\bf m}, q_0^*n_0, q, q_0h)
\end{align*}
where $q_0^*q_0\equiv 1$ (mod $q$), thus we also have 
\begin{align}\label{crudeestimate}
|\mathscr{S}(q_0{\bf A}, {\bf b}, {\bf m}, n_0, q, h)|\lesssim_{\varepsilon} q^{-\frac{r}{2}+1+\varepsilon}
\end{align}
uniformly in $q_0$ (coprime to $q$), ${\bf b}$, ${\bf m}$, $n_0$, and $h$.

For a more refined estimate, we explore multiplicativity of the expression $\mathscr{S}({\bf A}, {\bf b}, {\bf m}, n_0, q, h)$ in $q$. Using the Chinese remainder theorem, it is a direct computation to show that for $q_1,q_2$ coprime, it holds
\begin{align*}
\mathscr{S}({\bf A}, {\bf b}, {\bf m}, n_0, q_1q_2, h)
=\mathscr{S}(q_2^2{\bf A}, q_2{\bf b}, {\bf m}, n_0, q_1, h_2^2 h)\cdot
 	\mathscr{S}(q_1^2{\bf A}, q_1{\bf b}, {\bf m}, n_0, q_2, h_1^2 h)
\end{align*}
where $h_1q_1+h_2q_2=1$. So if $q=\prod_{i=1}^s q_i$ where the $q_i$'s are coprime to each other, then 
\begin{align}\label{Sq1q2}
\mathscr{S}({\bf A}, {\bf b}, {\bf m}, n_0, q, h)
=\prod_{i=1,\ldots,s}
\mathscr{S}\left(\left(\prod_{\substack{j=1,\ldots,s \\ j\neq i}}q_j^2\right) {\bf A}, \left(\prod_{\substack{j=1,\ldots,s \\ j\neq i}}q_j\right) {\bf b}, {\bf m}, n_0, q_i, p_i h\right)
\end{align}
where $p_i=p_i(q_1,\ldots,q_s)$ ($i=1,\ldots,s$) are integers. 

Now assume $q$ is a large enough prime. We complete the square:
\begin{align*}
\frac{a}{q}\sum_{i,j} a_{ij}k_ik_j+\frac{1}{q}\sum_{j}k_j(ab_j+m_j) 
\equiv \frac{a}{q}\sum_{i,j}a_{ij}(k_i+l_i)(k_j+l_j)
-\frac{a}{q}\sum_{i,j}a_{ij}l_il_j, \ (\t{mod }1)
\end{align*}
for some ${\bf l}=(l_1,\ldots,l_r)\in\mathbb{Z}^r$. A calculation then shows 
$${\bf l}\equiv 2^*({\bf b}+a^*{\bf m}){\bf A}^{*}\ (\text{mod }q)$$
satisfies the above equation. Here $a^*$ and ${\bf A}^*$ are inverses of $a$ and ${\bf A}$ respectively over the residue field $\mathbb{F}_q$, which always exist when $q$ is large enough. We also have 
$$S(q_0^2{\bf A}, a, {\bf 0}, q)=\left(\frac{a}{q}\right)^r S({\bf A}, 1,{\bf 0}, q)$$
where $\left(\frac{a}{q}\right)$ is the Legendre symbol and $q_0$ is any number coprime to $q$; this may be established by diagonalizing the quadratic form associated to the non-degenerate matrix ${\bf A}$ in $\mathbb{F}_{q}$, thus reducing to a similar identity for one-dimensional Gauss sums.  
\eqref{preKlo} now becomes 
\begin{align*}
\mathscr{S}({\bf A}, {\bf b}, {\bf m}, n_0, q, h)=e\left(\frac{2^*{\bf m}{\bf A}^*{\bf b}^T}{q}\right)S({\bf A}, 1,{\bf 0}, q)\sum_{a=1}^{q-1}\left(\frac{a}{q}\right)^re\left(\frac{a(4^*{\bf b}{\bf A}^*{\bf b}^T+n_0)+a^*(4^*{\bf m}{\bf A}^*{\bf m}^T+h)}{q}\right).
\end{align*}
Slightly more generally, let $q_0$ be any integer that is coprime to $q$. Then
\small
\begin{align*}
\mathscr{S}(q_0^2{\bf A}, q_0{\bf b}, {\bf m}, n_0, q, h)
=e\left(\frac{2^*q_0^*{\bf m}{\bf A}^*{\bf b}^T}{q}\right)S({\bf A}, 1,{\bf 0}, q)\sum_{a=1}^{q-1}\left(\frac{a}{q}\right)^re\left(\frac{a(4^*{\bf b}{\bf A}^*{\bf b}^T+n_0)+a^*(4^*(q_0^*)^2{\bf m}{\bf A}^*{\bf m}^T+h)}{q}\right).
\end{align*}
\normalsize
Using for prime $q$ the Weil bound for Kloosterman sums 
$$\left|\sum_{a=1}^{q-1}e\left(\frac{am+a^*n}{q}\right)\right|\leq 2\sqrt{\t{gcd}(m,n,q)}\sqrt{q},$$
the standard bound for Salié sums 
$$\left|\sum_{a=1}^{q-1}\left(\frac{a}{q}\right)e\left(\frac{am+a^*n}{q}\right)\right|\leq 2\sqrt{q},$$
and \eqref{Gaussest}, we arrive at the bound for prime $q$
\begin{align*}
|\mathscr{S}(q_0^2{\bf A}, q_0{\bf b}, {\bf m}, n_0, q, h)|\lesssim_{\varepsilon} q^{-\frac{r}{2}+\frac{1}{2}+\varepsilon}\sqrt{\t{gcd}(n_1,q)}
\end{align*}
where 
$n_1=4^*{\bf b}{\bf A}^*{\bf b}^T+n_0$. Write 
${\bf A}^*=\det ({\bf A})^* \text{adj}({\bf A})$, and let 
$$n_2=4\det ({\bf A}) n_1={\bf b}\ \text{adj}({\bf A})\ {\bf b}^T+4\det({\bf A}) n_0={\bf b}\ \text{adj}({\bf A})\ {\bf b}^T+4\det({\bf A}) \cdot\frac{\mathcal{T}}{2\pi}(N^2-|\mu|^2+|\rho|^2).$$ 
Thus $n_2$ is realized as an integer that is independent of $q$. 
Clearly $|n_2|\lesssim N^2$. Since $\text{gcd}(n_1,q)= \text{gcd}(n_2,q)$, we replace the above bound by 
\begin{align}\label{primeq}
|\mathscr{S}(q_0^2{\bf A}, q_0{\bf b}, {\bf m}, n_0, q, h)|\lesssim_{\varepsilon} q^{-\frac{r}{2}+\frac{1}{2}+\varepsilon}\sqrt{\t{gcd}(n_2,q)}
\end{align}
for prime $q$. 

Now for any positive integer $q$, write uniquely $q=q_1q_2q_3$, where $q_1$ is squarefree and coprime to $n_2$, $q_2$ is squarefree and divides $n_2$, $q_3$ is the product of prime powers of exponent at least two, and $q_1,q_2,q_3$ are coprime to each other. Note that we may further write uniquely $q_3=q_4^2q_5$ where $q_5$ is a squarefree divisor of $q_4$. Combine the bound \eqref{primeq} for prime $q$ with the estimate \eqref{crudeestimate} for general $q$, using the multiplicativity property \eqref{Sq1q2}, we have the following refined estimate for general $q$
\begin{align}\label{generalq}
|\mathscr{S}({\bf A}, {\bf b}, {\bf m}, n_0, q, h)|\lesssim_\varepsilon \frac{q^{-\frac{r}{2}+1+\varepsilon}}{\sqrt{q_1}}.
\end{align}

We can finally reward ourselves by combining (i) and (ii) of Corollary \ref{coroflem13}, \eqref{Jxgamma}, \eqref{generalq}, and the  remark that only at most $N^\varepsilon$ values of $m_j$ in the summation $\sum_{m_j}$ need to be considered, to get
\begin{align*}
|\kappa_{Q,M}(\mu,H)|&\lesssim_{\varepsilon} N^{|\Sigma^+|+|\Sigma^+_J|+\varepsilon}(NM)^{\frac{r}{2}-1} \sum_{Q\leq q<2Q}\frac{q^{-\frac{r}{2}+1+\varepsilon}}{\sqrt{q_1}}\\
&\lesssim_{\varepsilon}N^{|\Sigma^+|+|\Sigma^+_J|+\varepsilon}(NM)^{\frac{r}{2}-1} Q^{-\frac{r}{2}+1}\sum_{q_1<2Q, \ q_2\mid n_2, \ q_4\leq \left(\frac{2Q}{q_1q_2}\right)^{\frac{1}{2}},\ q_5\mid q_4}
\frac{1}{\sqrt{q_1}}.
\end{align*}
Apply the divisor bound $d(n)\lesssim_\varepsilon n^\varepsilon$, using $|n_2|\lesssim N^2$, we then get 
\begin{align*}
|\kappa_{Q,M}(\mu,H)|
&\lesssim_{\varepsilon}N^{|\Sigma^+|+|\Sigma^+_J|+\varepsilon}(NM)^{\frac{r}{2}-1} Q^{-\frac{r}{2}+1}\sum_{q_1<2Q, \ q_2\mid n_2}
\frac{1}{\sqrt{q_1}}\left(\frac{Q}{q_1q_2}\right)^{\frac{1}{2}+\varepsilon}\\
&\lesssim_{\varepsilon} N^{|\Sigma^+|+|\Sigma^+_J|+\frac{r}{2}-1+\varepsilon}M^{\frac{r}{2}-1}Q^{-\frac{r}{2}+\frac{3}{2}},
\end{align*}
which holds true uniformly for $H\in P_{I,J}$. 
\end{proof}

\section{Discussion on the optimal range of $p$}\label{evidence}

\subsection{For Strichartz estimates}

We now provide evidence for Conjecture \ref{conjectureStrichartz}. As will be seen, Strichartz type estimates on tori become crucial. We first recall the following Strichartz estimates on tori, established by Bourgain and Demeter in \cite[Theorem 2.4 and Remark 2.5]{BD15A} (see also \cite{Zha18} for the $\varepsilon$-removal). 

\begin{thm}\label{BourgainDemeter}
Let $(\cdot,\cdot)$ be a positive-definite quadratic form with integral coefficients in $r$ variables. 
Let $B$ be a bounded domain in $\R^r$ and let $I$ be a bounded interval. Then 
\begin{align*}
\left\|\sum_{\xi\in\Z^r, \ |\xi|\leq N}a_\xi e^{it(\xi,\xi)+i(\xi, x)}\right\|_{L^p((I\times B, \ dt\ dx))}\lesssim N^{\frac{r}{2}-\frac{r+2}{p}}\|a_\xi\|_{l^2(\mathbb{Z}^r)}
\end{align*}
for all $p>\frac{2(r+2)}{r}$. 
\end{thm}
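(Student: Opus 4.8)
The plan is to derive this as a now-standard consequence of the $\ell^{2}$ decoupling theorem for the paraboloid of Bourgain and Demeter, essentially quoting \cite[Theorem 2.4]{BD15}; beyond that, the only work is (i) reducing an arbitrary positive-definite integral quadratic form to the standard sum of squares, and (ii) removing the $N^{\varepsilon}$ loss.

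For step (i), write $(\xi,\xi)=\xi^{T}A\xi$ with $A$ a positive-definite symmetric matrix with rational entries, and factor $A=M^{T}M$ over $\R$ by Cholesky decomposition, so that $(\xi,\xi)=|M\xi|^{2}$. Then the graph $\{(\xi,(\xi,\xi)):\xi\in\R^{r}\}\subset\R^{r+1}$ is the image of the standard paraboloid $\{(\eta,|\eta|^{2}):\eta\in\R^{r}\}$ under the invertible linear map $(\eta,s)\mapsto(M^{-1}\eta,s)$ of $\R^{r+1}$. Since $\ell^{2}$ decoupling is invariant under invertible affine maps of the ambient space — caps map to caps, $L^{p}$ norms change only by a fixed Jacobian factor, and the decoupling constant is unchanged — the $\ell^{2}$ decoupling inequality for the paraboloid holds verbatim for this graph in the range $2\le p\le\tfrac{2(r+2)}{r}$. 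Moreover $\xi\mapsto M\xi$ carries $\Z^{r}$ to a lattice of covolume $\sqrt{\det A}$, so that after the parabolic rescaling $\xi\mapsto N\xi$, $(t,x)\mapsto(N^{2}t,Nx)$, the frequencies $(\xi,(\xi,\xi))$ with $|\xi|\le N$ form an $N^{-1}$-separated subset of the unit piece of the graph, with $O_{A}(1)$ of them in each cap of diameter $N^{-1}$. Feeding this into $\ell^{2}$ decoupling at the critical exponent $p_{c}:=\tfrac{2(r+2)}{r}$ exactly as in \cite[Theorem 2.4]{BD15}, and noting that on each cap the exponential sum collapses to a single wave packet carrying the coefficient $a_{\xi}$ (so the $\ell^{2}$-sum over caps is $\asymp\|a_{\xi}\|_{\ell^{2}}$ up to the volume factor, and restricting the periodic summand to the bounded set $I\times B$ costs only a constant), one obtains
\begin{align*}
\left\|\sum_{\xi\in\Z^{r},\ |\xi|\le N}a_{\xi}e^{it(\xi,\xi)+i(\xi,x)}\right\|_{L^{p_{c}}(I\times B)}\lesssim_{\varepsilon}N^{\varepsilon}\|a_{\xi}\|_{\ell^{2}},
\end{align*}
since $\tfrac{r}{2}-\tfrac{r+2}{p_{c}}=0$. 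Interpolating this with the trivial bound
\begin{align*}
\left\|\sum_{\xi\in\Z^{r},\ |\xi|\le N}a_{\xi}e^{it(\xi,\xi)+i(\xi,x)}\right\|_{L^{\infty}(I\times B)}\le\sum_{\xi\in\Z^{r},\ |\xi|\le N}|a_{\xi}|\lesssim N^{r/2}\|a_{\xi}\|_{\ell^{2}}
\end{align*}
(Cauchy--Schwarz, as there are $\lesssim N^{r}$ terms) then yields, for every $p>p_{c}$, the claimed exponent $\tfrac{r}{2}-\tfrac{r+2}{p}$, still with an $N^{\varepsilon}$ loss.

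Step (ii) is the $N^{\varepsilon}$ removal: since the range $p>p_{c}$ is strict, the $\varepsilon$-loss can be eliminated by the standard $\varepsilon$-removal argument for such exponential-sum estimates — a dyadic pigeonholing over level sets of the sum combined with the $N^{\varepsilon}$-estimate at a slightly smaller exponent — as carried out for this setting in \cite{Zha18}; see also Remark 2.5 of \cite{BD15}. I expect the affine reduction and the scaling bookkeeping of step (i) to be routine, so that the genuine content is the decoupling theorem used as a black box, while the step I would take the most care over in writing is the $\varepsilon$-removal of step (ii).
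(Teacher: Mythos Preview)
Your proposal is correct and matches the paper's treatment: the paper does not supply an independent proof but simply cites \cite[Theorem 2.4 and Remark 2.5]{BD15} for the decoupling-based estimate (including the extension to general positive-definite forms) and \cite{Zha18} for the $\varepsilon$-removal, which is exactly the route you sketch. Your added detail on the affine reduction and the interpolation with the trivial $L^\infty$ bound is accurate and consistent with those references.
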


As a corollary, we have the following estimates for restriction to lower dimensional subsets. 

\begin{cor}\label{restrictiontohyperplaneStrichartz}
Let $(\cdot,\cdot)$ be a positive-definite quadratic form with integral coefficients in $r$ variables. Let $s=0,1,\ldots,r$. Pick any $s$-dimensional affine subspace $\R^s$ of $\R^r$ and let $B_s$ be a bounded region in this $\R^s$. Then we have 
\begin{align*}
\left\|\sum_{\xi\in\Z^r,\ |\xi|\leq N}a_\xi e^{it(\xi,\xi)+ i(\xi,x_s)}\right\|_{L^p(I\times B_s,\ dt\ dx_s)}\lesssim  N^{\frac{r}{2}-\frac{s+2}{p}}\|a_\xi\|_{l^2(\Z^r)}
\end{align*}
for any $p>\frac{2(r+2)}{r}$.
\end{cor}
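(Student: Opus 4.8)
The plan is to deduce the corollary from Theorem~\ref{BourgainDemeter} by an extension-by-zero together with a standard trace inequality for frequency-localized functions, which is what accounts for the extra factor $N^{1/p}$.

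First, since $\{\xi\in\Z^r:|\xi|=N\}\subset\{\xi\in\Z^r:|\xi|\leq N\}$, I extend the coefficients by setting $a_\xi:=0$ for $|\xi|\neq N$ and apply Theorem~\ref{BourgainDemeter}: for the function $G(t,x):=\sum_{\xi\in\Z^r,\,|\xi|=N}a_\xi e^{it(\xi,\xi)+i(\xi,x)}$ on $\R^r$ one gets $\|G\|_{L^p(I\times B)}\lesssim N^{\frac r2-\frac{r+2}{p}}\|a_\xi\|_{l^2(\Z^r)}$ for every bounded domain $B\subset\R^r$ and every $p>\frac{2(r+2)}{r}$. The function appearing in the corollary is exactly the restriction of $G$ to $I$ times the chosen hyperplane $P$, so it suffices to establish the trace estimate $\|G\|_{L^p(I\times B')}\lesssim N^{1/p}\|G\|_{L^p(I\times B)}$ for a suitable bounded domain $B$ containing the region $B'\subset P$ inside a slab of thickness $\sim 1$.

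For the trace step, fix a unit normal vector $\nu$ to $P$. The key observation is that for fixed $t$ and fixed $x'\in P$, the one-variable function $\tau\mapsto G(t,x'+\tau\nu)=\sum_{|\xi|=N}a_\xi e^{it(\xi,\xi)+i(\xi,x')}e^{i\tau(\xi,\nu)}$ is a finite exponential sum whose frequencies $(\xi,\nu)$ all lie in $[-CN,CN]$, because $|(\xi,\nu)|\lesssim N$ by the Cauchy--Schwarz inequality for the form $(\cdot,\cdot)$ together with $|\xi|=N$. Taking an even Schwartz function $\eta$ on $\R$ with $\widehat\eta$ compactly supported and $\widehat\eta\equiv 1$ on $[-C,C]$, and setting $\eta_N(\tau):=N\eta(N\tau)$, the sum $\tau\mapsto G(t,x'+\tau\nu)$ reproduces itself under convolution (in $\tau$) with $\eta_N$; evaluating at $\tau=0$ and using H\"older's inequality together with the rapid decay of $\eta_N$ gives, for any $M$,
\[|G(t,x')|^p\lesssim_M N\int_\R|G(t,x'+\tau\nu)|^p(1+N|\tau|)^{-M}\,d\tau.\]
I would then integrate this over $(t,x')\in I\times B'$, perform the measure-preserving change of variables $(x',\tau)\mapsto x'+\tau\nu$, and split the $\tau$-integral dyadically into regions $|\tau|\lesssim 2^k/N$; the $k$-th piece is $\lesssim 2^{-Mk}\|G\|_{L^p(I\times S_k)}^p$, where $S_k$ is the slab of thickness $\sim 2^k/N$ around $B'$. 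For $2^k\lesssim N$ all the $S_k$ sit inside one fixed bounded domain $B$, and the finitely many remaining terms are negligible once $M$ is chosen large in terms of $r$ (using the crude bound $\|G\|_\infty\lesssim N^{r/2}\|a_\xi\|_{l^2}$). Summing in $k$ yields $\|G\|_{L^p(I\times B')}^p\lesssim N\,\|G\|_{L^p(I\times B)}^p$, and combining with the first step gives the claimed bound $N^{1/p}\cdot N^{\frac r2-\frac{r+2}{p}}=N^{\frac r2-\frac{r+1}{p}}$ on the same range $p>\frac{2(r+2)}{r}$.

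The only real point to be careful about is the trace step: the hyperplane $P$ is \emph{arbitrary} --- not necessarily rational or coordinate --- so one cannot reduce the estimate by summing the coefficients $a_\xi$ over a single coordinate direction, and it is precisely the frequency-localization of $G$ to $\{|\xi|\leq N\}$ in the direction $\nu$ that replaces that summation and produces the loss $N^{1/p}$. The remaining ingredients (the extension by zero, the reproducing-kernel inequality, and the dyadic bookkeeping of the tails) are routine.
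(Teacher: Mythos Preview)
Your proof is correct and follows essentially the same idea as the paper's: recover the factor $N^{1/p}$ via a Bernstein-type inequality in a direction transversal to the hyperplane, then invoke Theorem~\ref{BourgainDemeter} on the resulting bounded domain $B$. The paper's implementation differs in one point: rather than the unit normal, it exploits the integrality of the form to choose a transversal vector $v$ with $(v,\xi)\in\Z$ for all $\xi\in\Z^r$, so that $s\mapsto G(t,x'+sv)$ is a genuine trigonometric polynomial of degree $\lesssim N$ on the circle $[0,1]$, and the torus Bernstein inequality $\|\cdot\|_{L^\infty}\lesssim N^{1/p}\|\cdot\|_{L^p}$ applies cleanly with no tail to manage. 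Your reproducing-kernel argument on $\R$ is a bit heavier but more robust (it never uses integrality of the form). One small slip: the ``finitely many remaining terms'' with $2^k\gtrsim N$ are actually infinitely many, but this is harmless since the geometric decay $2^{-Mk}$ still sums them.
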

\begin{proof}
By integrality of the inner product $(\cdot,\cdot)$, we can pick  vectors $v_1,\ldots,v_{r-s}\in\R^r$ such that $(\xi,v_j)\in\Z$ for any $\xi\in\Z^r$ and that $\bigoplus_{j=1}^{r-s}\R v_j$ is transversal to $B_s$ in $\R^r$. Let $I_{r-s}:=\{\sum_{j=1}^{r-s}s_j v_j:\ s_j\in[0,1]\}$ and consider $B:=B_s\times I_{r-s}$. 
Then we have 
\begin{align*}
\left\|\sum_{|\xi|\leq N}a_\xi e^{it(\xi,\xi)+ i(\xi,x_s)}\right\|_{L^p(I\times B_s, \ dt\ dx_s)}
&\leq \left\|\sum_{|\xi|\leq N}a_\xi e^{it(\xi,\xi)+ i(\xi,x_s+\sum_{j=1}^{r-s}s_jv_j)}\right\|_{L^p((I\times B_s, \ dt\ dx_s), L^\infty([0,1]^{r-s}, \prod_{j=1}^{r-s}ds_j))}
\end{align*}
Apply Bernstein's inequality on tori to the variables $s_1,\ldots,s_{r-s}$, the above is then bounded by 
\begin{align*}
&\lesssim N^{\frac{r-s}{p}}\left\|\sum_{\xi\in\Z^r,\ |\xi|\leq N}a_\xi e^{it(\xi,\xi)+ i(\xi,x_s+\sum_{j=1}^{r-s}s_jv_j)}\right\|_{L^p((I\times B_s, \ dt\ dx_s), L^p([0,1]^{r-s}, \prod_{j=1}^{r-s}ds_j))} \\
&\lesssim N^{\frac{r-s}{p}}\left\|\sum_{\xi\in\Z^r,\ |\xi|\leq N}a_\xi e^{it(\xi,\xi)+ i(\xi,x)}\right\|_{L^p(I\times B, \ dt\ dx)}\lesssim N^{\frac{r}{2}-\frac{s+2}{p}}\|a_\xi\|_{l^2(\Z^r)}, 
\end{align*}
for all $p>\frac{2(r+2)}{r}$, using Theorem \ref{BourgainDemeter}. 
\end{proof}

As will be seen, the above Strichartz type estimates on tori would not be enough to derive the optimal range for compact Lie groups. We conjecture the following Strichartz estimates on tori for mixed Lebesgue norms. 

\begin{conj} \label{conjStritori}
We have 
\begin{align}\label{conj1}
\left\|\sum_{\xi\in\Z^r, \ |\xi|\leq N}a_\xi e^{it(\xi,\xi)+i(\xi, x)}\right\|_{L^p((I, dt),L^q(B, dx))}\lesssim N^{\frac{r}{2}-\frac{2}{p}-\frac{r}{q}}\|a_\xi\|_{l^2(\mathbb{Z}^r)}
\end{align}
for all pairs $p,q\geq 2$ with $\frac{r}{2}-\frac{2}{p}-\frac{r}{q}>0$. 
\end{conj}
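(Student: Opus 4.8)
\medskip
\noindent\emph{Proof proposal.}
The plan is to reduce Conjecture~\ref{conjStritori} to the sharp $L^q$ bounds for Laplace--Beltrami eigenfunctions on the rational torus $\T^r$ --- that is, to Bourgain's eigenfunction conjecture for tori --- together with the already available diagonal estimate of Theorem~\ref{BourgainDemeter}, by means of Minkowski's inequality in the time variable and real interpolation. Write $F(t,x)=\sum_{|\xi|\le N}a_\xi e^{it(\xi,\xi)+i(\xi,x)}$ and, for $n\in\Z$, put $b_n(x):=\sum_{(\xi,\xi)=n}a_\xi e^{i(\xi,x)}$, so that for fixed $x$ the function $t\mapsto F(t,x)=\sum_{0\le n\lesssim N^2}b_n(x)e^{itn}$ is a one--dimensional trigonometric polynomial with $\lesssim N^2$ integer frequencies. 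I would use three families of inputs. First, the trivial bounds $\|F\|_{L^\infty_tL^q_x}\lesssim N^{\frac r2-\frac rq}\|a_\xi\|_{l^2}$ (Bernstein's inequality in $x$) and $\|F\|_{L^\infty_tL^2_x}\lesssim\|a_\xi\|_{l^2}$ (Plancherel). Second, the diagonal case $\|F\|_{L^p_tL^p_x}\lesssim_\varepsilon N^{\frac r2-\frac{r+2}p+\varepsilon}\|a_\xi\|_{l^2}$ for $p>\frac{2(r+2)}r$, which is Theorem~\ref{BourgainDemeter}. Third, for $2\le p\le q$ and $q\ge\frac{2r}{r-2}$: by Minkowski's integral inequality $\|F\|_{L^p_tL^q_x}\le\|F\|_{L^q_xL^p_t}$, Hausdorff--Young in $t$ gives $\|F(x,\cdot)\|_{L^p_t}\lesssim N^{1-\frac2p}\big(\sum_n|b_n(x)|^2\big)^{1/2}$ for each $x$, a second Minkowski inequality (using $q\ge2$) leaves $\|b_n\|_{L^q_x}$, and invoking the sharp torus eigenfunction bound $\|b_n\|_{L^q_x}\lesssim_\varepsilon N^{\frac{r-2}2-\frac rq+\varepsilon}\big(\sum_{(\xi,\xi)=n}|a_\xi|^2\big)^{1/2}$ and summing in $n$ yields $\|F\|_{L^p_tL^q_x}\lesssim_\varepsilon N^{\frac r2-\frac2p-\frac rq+\varepsilon}\|a_\xi\|_{l^2}$, which is the claimed bound up to $\varepsilon$.

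Given these, the rest of the admissible region is covered by real interpolation. Since the target exponent $\frac r2-\frac2p-\frac rq$ is affine in $(\tfrac1p,\tfrac1q)$, it suffices to check that the convex hull (in the $(\tfrac1p,\tfrac1q)$--plane) of the sets where the three inputs apply is the whole region $\{p,q\ge2,\ \frac2p+\frac rq<\frac r2\}$; a short computation shows that the collinear points $(0,\tfrac12)$, $(\tfrac r{2(r+2)},\tfrac r{2(r+2)})$ and $(\tfrac12,\tfrac12-\tfrac1r)$ all lie on the boundary line $\frac2p+\frac rq=\frac r2$ and are attained in the limit by inputs two and three, so the hull already saturates the region. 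Concretely, for $p>q$ one interpolates a trivial $L^\infty_tL^{q_0}_x$ bound against a diagonal bound, and for the near--diagonal strip $\frac{2(r+2)}r<q<\frac{2r}{r-2}$ one interpolates a diagonal bound against a bound from the third input; the exponents match at the endpoints and hence throughout. The $N^\varepsilon$ losses are then removed by the standard $\varepsilon$--removal lemma on tori, as in the discussion preceding Theorem~\ref{BourgainDemeter} and Corollary~\ref{restrictiontohyperplaneStrichartz}.

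The crux --- and, I expect, the genuine obstacle --- is the third input, which rests on the $L^q$ Laplace--Beltrami eigenfunction bounds on $\T^r$ at the critical and supercritical exponents $q\ge\frac{2r}{r-2}$; this is precisely Bourgain's conjecture for tori and is open, the best current results requiring $q\ge\frac{2(r-1)}{r-3}$ with an $\varepsilon$--loss, a strictly smaller range, with additional arithmetic irregularities of lattice points on spheres when $r=3,4$. This obstruction is intrinsic rather than an artefact of the method: testing Conjecture~\ref{conjStritori} at $p=2$ on coefficient sequences supported on a single level set $\{(\xi,\xi)=R^2\}$ (with $N=R$) recovers exactly the torus eigenfunction bound, so the two statements are essentially equivalent in the supercritical range. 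Equivalently, what is missing is an $\ell^2$--decoupling theorem for the paraboloid adapted to the mixed norm $L^p_tL^q_x$; fed through the usual induction on scales and broad/narrow decomposition it would yield the third input directly, but its proof runs into the difficulty that parabolic rescaling couples the two integrability exponents and that the lower--dimensional (flat) contributions would have to be controlled by a mixed--norm flat--decoupling inequality not presently available.
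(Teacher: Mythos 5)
You should be aware that the paper contains no proof of this statement: it is posed as an open conjecture, and the author explicitly remarks that its resolution would seem to require a decoupling theory for mixed Lebesgue norms, which is not available. So there is no "paper proof" for your argument to match, and your proposal does not close the gap either — by your own admission, the indispensable third input is exactly Bourgain's eigenfunction conjecture on tori, which is itself open (and is restated as Conjecture \ref{conjeigentori} later in this very paper). What you have written is therefore a conditional reduction of one open conjecture to another, not a proof of Conjecture \ref{conjStritori}. Within that caveat, the reduction itself looks sound: the chain Minkowski ($p\le q$), Hausdorff--Young in $t$ with the $O(N^2)$ integer frequencies, Minkowski again in $n$, and the level-set eigenfunction bound does give \eqref{conj1} on the range $2\le p\le q$, $q>\frac{2r}{r-2}$ (losslessly if the torus conjecture is lossless), and the convex-hull bookkeeping with the trivial $L^\infty_t L^2_x$ and $L^\infty_t L^q_x$ bounds does cover the full admissible region, since $(0,\tfrac12)$ and $(\tfrac12,\tfrac12-\tfrac1r)$ both lie on the critical line. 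Your observation that testing at $p=2$ on a single level set recovers the torus eigenfunction bound, so that the two conjectures are essentially equivalent in the supercritical regime, is a genuine remark the paper does not make: in Section \ref{evidence} the author invokes Conjecture \ref{conjStritori} and Conjecture \ref{conjeigentori} as two independent inputs, whereas your argument ties them together.

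Two concrete weak points remain even at the conditional level. First, the closing appeal to "the standard $\varepsilon$-removal lemma on tori" is unsupported: the removal result referenced around Theorem \ref{BourgainDemeter} is for the diagonal $L^p_{t,x}$ Strichartz estimate, and no mixed-norm analogue is cited or proved; since the exponent in \eqref{conj1} is exactly the scale-invariant one at every admissible $(p,q)$, an $N^\varepsilon$ loss cannot be absorbed by shrinking the range, so for $r=3,4$ (where Conjecture \ref{conjeigentori} itself carries an $\varepsilon$-loss) your argument would only yield \eqref{conj1} with an $N^\varepsilon$ loss. Second, you should state explicitly that the eigenfunction input is needed for the level sets of the given integral quadratic form $(\cdot,\cdot)$ (eigenvalues $n\lesssim N^2$ of the associated Laplacian), not just for the standard lattice spheres; this matches how the paper uses the torus conjectures in Theorem \ref{ClassEigen}, but it is an additional hypothesis worth flagging. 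With these caveats made explicit, your text would be a useful remark on the structure of the conjecture, but it should not be presented as a proof.
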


The above exponent of $N$ is of course based on a scale-invariance consideration. On Euclidean spaces, Strichartz estimates for mixed Lebesgue norms as the above are indeed true, as proved in \cite{GV95} and \cite{KT98}. Arguing exactly as in the proof of Corollary \ref{restrictiontohyperplaneStrichartz}, we also arrive at the following lemma. 

\begin{lem}
Let $s=0,1,\ldots,r$. Pick any $s$-dimensional affine subspace $\R^s$ of $\R^r$ and let $B_s$ be a bounded region in this $\R^s$.  Then the above conjecture implies that 
\begin{align}\label{conj2}
\left\|\sum_{\xi\in\Z^r,\ |\xi|\leq N}a_\xi e^{it(\xi,\xi)+i (\xi, x_s)}\right\|_{L^p((I, dt),L^q(B_s, dx_s))}\lesssim N^{\frac{r}{2}-\frac{2}{p}-\frac{s}{q}}\|a_\xi\|_{l^2(\Z^r)}
\end{align}
for all pairs $p,q\geq 2$ with $\frac{r}{2}-\frac{2}{p}-\frac{r}{q}>0$.
\end{lem}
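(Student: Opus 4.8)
The plan is to run the argument of Corollary~\ref{restrictiontohyperplaneStrichartz} essentially verbatim, replacing its appeal to Theorem~\ref{BourgainDemeter} by an appeal to Conjecture~\ref{conjStritori} and keeping careful track of the mixed Lebesgue norm. First I would use integrality of the quadratic form $(\cdot,\cdot)$ to pick a vector $v\in\R^r$ with $(v,\xi)\in\Z$ for every $\xi\in\Z^r$ and with $v$ transversal to the hyperplane containing $B'$; then set $I':=\{sv:\ s\in[0,1]\}$ and $B:=B'\times I'$, a bounded region in $\R^r$. Writing $F(t,x'):=\sum_{|\xi|\le N}a_\xi e^{it(\xi,\xi)+i(\xi,x')}$ and $G(t,x',s):=F(t,x'+sv)$, the pointwise domination $|F(t,x')|=|G(t,x',0)|\le\|G(t,x',\cdot)\|_{L^\infty_s}$ gives
\begin{align*}
\left\|F\right\|_{L^p((I,dt),L^q(B',dx'))}\le\left\|\,\|G(t,x',\cdot)\|_{L^\infty_s}\,\right\|_{L^p((I,dt),L^q(B',dx'))}.
\end{align*}

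Next, since $|(\xi,v)|\lesssim N$ uniformly over $|\xi|\le N$, for fixed $(t,x')$ the function $s\mapsto G(t,x',s)=\sum_\xi a_\xi e^{it(\xi,\xi)+i(\xi,x')}e^{is(\xi,v)}$ is a trigonometric polynomial of degree $\lesssim N$ on the torus, so Bernstein's inequality yields $\|G(t,x',\cdot)\|_{L^\infty_s}\lesssim N^{1/q}\|G(t,x',\cdot)\|_{L^q_s}$; here I would use the exponent $q$ (rather than $p$, as in the proof of Corollary~\ref{restrictiontohyperplaneStrichartz}) precisely so that the auxiliary $s$-integration is absorbed into a genuine $L^q$-norm over a region of $\R^r$. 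By Fubini ($L^q_{x'}L^q_s=L^q_{(x',s)}$) and the linear change of variables $x=x'+sv$, which has bounded Jacobian by transversality of $v$ to the hyperplane of $B'$, we get
\begin{align*}
\left\|\,\|G(t,x',\cdot)\|_{L^q_s}\,\right\|_{L^p_tL^q_{x'}}=\|G\|_{L^p_tL^q_{(x',s)}}\asymp\left\|\sum_{|\xi|\le N}a_\xi e^{it(\xi,\xi)+i(\xi,x)}\right\|_{L^p((I,dt),L^q(\widetilde B,dx))}
\end{align*}
for a bounded region $\widetilde B\subset\R^r$. Applying Conjecture~\ref{conjStritori} for the pair $(p,q)$ (legitimate by the hypothesis $\frac r2-\frac2p-\frac rq>0$, $p,q\ge2$) bounds the last quantity by $N^{\frac r2-\frac2p-\frac rq}\|a_\xi\|_{l^2(\Z^r)}$; multiplying by the $N^{1/q}$ loss from Bernstein produces the exponent $\frac r2-\frac2p-\frac{r-1}{q}$ claimed in \eqref{conj2}.

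I do not expect a substantive obstacle: this is a soft reduction, and the only points requiring care are (i) assigning the Lebesgue exponent $q$ to the auxiliary variable $s$, so that the Bernstein loss is exactly $N^{1/q}$ and the residual integral is a bona fide spatial $L^q$-norm to which Conjecture~\ref{conjStritori} applies; (ii) the Bernstein step needs $q<\infty$, which is the regime relevant to the applications, so the degenerate case $q=\infty$ is simply excluded; and (iii) checking that $x=x'+sv$ carries $B'\times[0,1]$ onto a bounded region of comparable measure, which is immediate since $v$ lies outside the hyperplane of $B'$. Thus the ``hard part'' is really just the bookkeeping of the iterated norms, and everything else is inherited from the proof of Corollary~\ref{restrictiontohyperplaneStrichartz}.
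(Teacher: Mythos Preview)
Your proposal is correct and follows exactly the approach the paper indicates (the paper does not give a separate proof, stating only that one argues exactly as in Corollary~\ref{restrictiontohyperplaneStrichartz}). Your observation that the Bernstein exponent must be $q$ rather than $p$ so that the auxiliary $s$-integration merges into the spatial $L^q$-norm is precisely the one adaptation needed for the mixed-norm setting.
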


Theorem \ref{BourgainDemeter} was proved by the so-called decoupling theory, which is a Euclidean Fourier analytic tool developed in recent years. Although it has proved very successful in estimating exponential sums, the mixed-norm setting as in \eqref{conj1} seems to present new difficulties and is interesting by itself. 
We now provide the following evidence for Conjecture \ref{conjectureStrichartz}. 

\begin{thm}\label{Class}
(i) The Strichartz estimate \eqref{Strichartz} holds for class functions on compact Lie groups for any $p>2+\frac{4}{r}$. \\  
(ii) Conjecture \ref{conjStritori} implies Conjecture \ref{conjectureStrichartz}. 
\end{thm}
\begin{proof}
For the sake of simplicity of exposition, we assume that $U$ is a compact simply connected simple Lie group. The general case may be established by slightly adapting the argument. By Schur's orthogonality relations, it is well known that with respect to the normalized Haar measure on $U$,
$$\|\chi_{\mu}\|_{L^2(U)}=1,\ \forall \mu\in\Lambda^+.$$
Let $L^2_{\sharp}(U)$ denote the set of class functions in $L^2(U)$. Then $L^2_\sharp(U)\cong l^2(\Lambda^+)$, by 
$$L^2_\sharp(U)\ni f=\sum_{\mu\in\Lambda^+}a_\mu \chi_{\mu}\mapsto (a_\mu)_{\mu\in\Lambda^+}\in l^2(\Lambda^+).$$
Apply Weyl's integration formula \eqref{Weylint}, then \eqref{Strichartz} is reduced to 
\begin{align}\label{classStri}
\left\|\sum_{\mu\in\Lambda^+, |\mu|\leq N}e^{-it|\mu|^2}a_\mu\chi_{\mu}|\delta|^{\frac{2}{p}}\right\|_{L^p(I\times A)}\lesssim N^{\frac{d}{2}-\frac{d+2}{p}}\|a_\mu\|_{l^2(\Lambda^+)}.
\end{align}
Since $A=\bigsqcup_{J}P_J$, it suffices to prove the above estimate replacing $A$ by each $P_J$. 
In the following, for $a_\mu$ initially defined for $\mu\in\Lambda^+$, we let $a_{s\mu}:=a_{\mu}$, $\forall \mu\in\Lambda^+, \ s\in W$.
The fact that 
$$|\Sigma^+|=\frac{d-r}{2}$$
will be used in various places. 

\underline{Case 1. $J=\emptyset$.} We first treat part (i). 
Using the Weyl character formula 
$$\chi_\mu(H)=\frac{\sum_{s\in W}\det s \ e^{(s\mu)(H)}}{\delta(H)}$$
and \eqref{deltapreliminary}, we have 
\begin{align*}
\left\|\sum_{\mu\in\Lambda^+,|\mu|\leq N}e^{-it|\mu|^2} a_\mu\chi_\mu|\delta|^{\frac{2}{p}}\right\|_{L^p(I\times P_\emptyset)}
&\lesssim N^{|\Sigma^+|\left(1-\frac{2}{p}\right)}\sum_{s\in W}
\left\|\sum_{\mu\in s\Lambda^+,|\mu|\leq N}e^{-it|\mu|^2+\mu(H)} a_\mu\right\|_{L^p(I\times P_\emptyset)}.
\end{align*}
Here we have used $|s\mu|=|\mu|$, $\forall s\in W,\ \mu\in\Lambda$. If we apply Theorem \ref{BourgainDemeter} to the sum on the right inside of $\|\cdot\|_{L^p}$, then we have that for any $p>2+\frac{4}{r}$
\begin{align*}
\left\|\sum_{\mu\in\Lambda^+,|\mu|\leq N}e^{-it|\mu|^2} a_\mu\chi_\mu|\delta|^{\frac{2}{p}}\right\|_{L^p(I\times P_\emptyset)}
&\lesssim N^{|\Sigma^+|\left(1-\frac{2}{p}\right)+\frac{r}{2}-\frac{r+2}{p}}
\|a_\mu\|_{l^2(\Lambda^+)}\\
&\lesssim N^{\frac{d}{2}-\frac{d+2}{p}}
\|a_\mu\|_{l^2(\Lambda^+)}.
\end{align*}
For part (ii), we need to exploit $L^p$ estimates of the weight functions as in Proposition \ref{keyprop}. Write $P_\emptyset=\bigcup_{|I|=r} P_{I,\emptyset}$. 
Writing $\delta=\delta_I\cdot\delta_{I,J}$ and using the character formula again, we estimate using \eqref{deltaI}
\begin{align*}
&\left\|\sum_{\mu\in\Lambda^+,|\mu|\leq N}e^{-it|\mu|^2} a_\mu\chi_\mu|\delta|^{\frac{2}{p}}\right\|_{L^p(I\times P_{I,\emptyset})}\\
&\lesssim \sum_{s\in W}
\left\|\sum_{\mu\in s\Lambda^+, |\mu|\leq N}e^{-it|\mu|^2+\mu(H)}a_\mu\right\|_{L^p(I,L^q(P_{I,\emptyset}))}\cdot\left\|\frac{1}{|\delta_{I,\emptyset}|^{1-\frac{2}{p}}}\right\|_{L^{u}(P_{I,\emptyset})}.
\end{align*}
Here 
$$\frac{1}{u}=\frac{1}{p}-\frac{1}{q}.$$ 
Using the conjectured \eqref{conj1} and Proposition \ref{keyprop}, the above is bounded by 
\begin{align*}
\lesssim N^{\frac{r}{2}-\frac{2}{p}-\frac{r}{q}+\frac{d-r}{2}\left(1-\frac{2}{p}\right)-\frac{r}{u}}\|a_\mu\|_{l^2}=N^{\frac{d}{2}-\frac{d+2}{p}}\|a_\mu\|_{l^2}
\end{align*}
provided the conditions hold
$$2\leq p\leq q, \ \frac{r}{2}-\frac{2}{p}-\frac{r}{q}\geq 0,\ \frac{d-r}{2}\left(1-\frac{2}{p}\right)-r\left(\frac{1}{p}-\frac{1}{q}\right)>0.$$
An inspection of the above inequalities in the $\left(\frac{1}{p},\frac{1}{q}\right)$ plane shows that any $p>2+\frac{4}{d}$ is admissible.

\underline{Case 2. $|J|\geq 1$.} We first treat part (i). 
Apply formula \eqref{char}, we have 
\begin{align}\label{goal}
\left|\sum_{\mu\in\Lambda^+,|\mu|\leq N}e^{-it|\mu|^2}a_\mu\chi_\mu|\delta|^{\frac{2}{p}}\right|
&\leq\frac{|\delta^J|^{\frac{2}{p}}}{|W_J|\cdot |\delta_I|^{1-\frac{2}{p}}|\delta_{I,J}|^{1-\frac{2}{p}}}\sum_{s\in W}\left| \sum_{\mu\in s\Lambda^+,|\mu|\leq N}
e^{-it|\mu|^2+\mu(H_J^\perp)}a_\mu\chi^J_{\mu_J}(H_{J})
\right|.
\end{align}
We wish to apply Corollary \ref{restrictiontohyperplaneStrichartz}. Let 
\begin{align}\label{EJEJEJ}
\mathfrak{e}_J:=\{X\in\mathfrak{t}_J:\ 0\leq 
\alpha_j(X)/2\pi i+\delta_{0j}\leq N^{-1} \ \forall j\in J\}.
\end{align}
For $X\in \mathfrak{e}_J$, let 
$$\mathfrak{e}_J^\perp(X):=\{Y\in \mathfrak{t}_J^\perp:\ X+Y\in P_J\}.$$ 
Then we may express the polytope $P_J$ as
$$
P_J=\{X+Y: \ Y\in \mathfrak{e}^\perp_J(X), \ X\in \mathfrak{e}_J\}.
$$ 
We decompose the Lebesgue measure on $\mathfrak{t}$ into the product of the measure on $\mathfrak{t}_J^\perp$ and that on $\mathfrak{t}_J$. Fubini's theorem gives 
\begin{align*}
&\left\|\sum_{\mu\in s\Lambda^+,|\mu|\leq N}e^{-it|\mu|^2+\mu(H^\perp_J)}a_\mu\chi^J_{\mu_J}(H_J)\right\|_{L^p(I\times P_J)}\\
= &\left\|\left\|\sum_{\mu\in s\Lambda^+,|\mu|\leq N}e^{-it|\mu|^2+\mu(Y)}a_\mu\chi^J_{\mu_J}(X)\right\|_{L^p(I\times\mathfrak{e}_J^\perp(X), \ dt\ dY)}\right\|_{L^p(\mathfrak{e}_J, \ dX)} \numberthis\label{Fubini}.
\end{align*}
Note that $\mathfrak{e}_J^\perp(X)$ is a bounded region in the ($r-|J|$)-dimensional subspace $\mathfrak{t}_J^\perp$ of $\mathfrak{t}$. We apply Corollary \ref{restrictiontohyperplaneStrichartz} to obtain for $p>2+\frac{4}{r}$ that 
\begin{align*}
\left\|\sum_{\mu\in s\Lambda^+,|\mu|\leq N}e^{-it|\mu|^2+\mu(H^\perp_J)}a_\mu\chi^J_{\mu_J}(H_J)\right\|_{L^p(I\times P_J)}\lesssim N^{\frac{r}{2}-\frac{r-|J|+2}{p}}\left\|\left\|a_\mu\chi^J_{\mu_J}(X)\right\|_{l^2(\Z^r)}\right\|_{L^p(\mathfrak{e}_J, \ dX)}.
\end{align*}
Now Lemma \ref{charbound} gives $|\chi^J_{\mu_J}(X)|\lesssim N^{|\Sigma_J^+|}$ for any $X\in\mathfrak{e}_J$; observing that the measure of $\mathfrak{e}_J$ in $\mathfrak{t}_J$ is $\asymp N^{-|J|}$, then the above is bounded by 
$\lesssim N^{|\Sigma_J^+|+\frac{r}{2}-\frac{r+2}{p}}\|a_\mu\|_{l^2}$. 
Combined with Lemma \ref{deltaIdeltaJ}, this then implies that \eqref{goal} is bounded by 
$$\lesssim 
N^{|\Sigma_J^+|+\frac{r}{2}-\frac{r+2}{p}-|\Sigma_J^+|\frac{2}{p}+(|\Sigma^+|-|\Sigma_J^+|)\left(1-\frac{2}{p}\right)}\|a_\mu\|_{l^2}=
N^{\frac{d}{2}-\frac{d+2}{p}}\|a_\mu\|_{l^2}$$ for all $p>2+\frac{4}{r}$. 
For part (ii), write $P_{J}=\bigcup_{I\supset J,|I|=r}P_{I,J}$.
For $X\in \mathfrak{e}_J$, let 
$$\mathfrak{e}_{I,J}^\perp(X):=\{Y\in \mathfrak{t}_J^\perp:\ X+Y\in P_{I,J}\}.$$
Then 
$$
P_{I,J}=\{X+Y: \ Y\in \mathfrak{e}^\perp_{I,J}(X), \ X\in \mathfrak{e}_J\}.
$$  
A similar Fubini identity as \eqref{Fubini} holds, replacing $P_J$ by $P_{I,J}$ and $\mathfrak{e}_J^\perp(X)$ by $\mathfrak{e}_{I,J}^\perp(X)$. Using this Fubini identity, \eqref{goal}, \eqref{deltaI} and \eqref{deltaJ}, we estimate 
\begin{align*}
&\left\|\sum_{\mu\in\Lambda^+,|\mu|\leq N}e^{-it|\mu|^2}a_\mu\chi_\mu|\delta|^{\frac{2}{p}}\right\|_{L^p(I\times P_{I,J})}\\
&\lesssim \sum_{s\in W} N^{-|\Sigma_J^+|\frac{2}{p}} 
\left\|\left\|\sum_{\mu\in s\Lambda^+,|\mu|\leq N}e^{-it|\mu|^2+\mu(Y)}a_\mu \chi^J_{\mu_J}(X)\right\|_{L^p(I,L^q(\mathfrak{e}_{I,J}^\perp(X)))}\cdot \left\|\frac{1}{|\delta_{I,J}|^{1-\frac{2}{p}}}\right\|_{L^u(\mathfrak{e}_J^\perp(X))}\right\|_{L^p(\mathfrak{e}_J)}.
\end{align*}
Here $\frac{1}{u}=\frac{1}{p}-\frac{1}{q}$. Since $\mathfrak{e}^\perp_{I,J}(X)$ is a bounded region in the ($r-|J|$)-dimensional subspace $\mathfrak{t}_J^\perp$ of $\mathfrak{t}$, assuming the conjectured estimates \eqref{conj2}, we have that the above is bounded by 
\begin{align*}
\sum_{s\in W}N^{-|\Sigma_J^+|\frac{2}{p}+\frac{r}{2}-\frac{2}{p}-\frac{r-|J|}{q}}\left\| \left\|a_\mu\chi_{\mu^J}^J(X)\right\|_{l^2_\mu}\cdot \left\|\frac{1}{|\delta_{I,J}|^{1-\frac{2}{p}}}\right\|_{L^u(\mathfrak{e}_{I,J}^\perp(X))} \right\|_{L^p(\mathfrak{e}_J)}.
\end{align*}
By again Lemma \ref{charbound}, the above is bounded by 
\begin{align*}
\lesssim N^{|\Sigma_J^+|\left(1-\frac{2}{p}\right)+\frac{r}{2}-\frac{2}{p}-\frac{r-|J|}{q}}\left\|a_\mu\right\|_{l^2}\cdot  \left\|\left\|\frac{1}{|\delta_{I,J}|^{1-\frac{2}{p}}}\right\|_{L^u(\mathfrak{e}_{I,J}^\perp(X)} \right\|_{L^p(\mathfrak{e}_J)}
\end{align*}
which is bounded via H\"older's inequality by 
\begin{align*}
&\lesssim N^{|\Sigma_J^+|\left(1-\frac{2}{p}\right)+\frac{r}{2}-\frac{2}{p}-\frac{r-|J|}{q}}\left\|a_\mu\right\|_{l^2}\cdot  \left\|\left\|\frac{1}{|\delta_{I,J}|^{1-\frac{2}{p}}}\right\|_{L^u(\mathfrak{e}_{I,J}^\perp(X))} \right\|_{L^u(\mathfrak{e}_J)}\cdot \|1\|_{L^q(\mathfrak{e}_J)}\\
&\lesssim N^{|\Sigma_J^+|\left(1-\frac{2}{p}\right)+\frac{r}{2}-\frac{2}{p}-\frac{r-|J|}{q}-\frac{|J|}{q}}\left\|a_\mu\right\|_{l^2}\cdot  \left\|\frac{1}{|\delta_{I,J}|^{1-\frac{2}{p}}}\right\|_{L^u(P_J)},
\end{align*}
which is then bounded via Proposition \ref{keyprop} by 
$$\lesssim N^{|\Sigma_J^+|\left(1-\frac{2}{p}\right)+\frac{r}{2}-\frac{2}{p}-\frac{r-|J|}{q}-\frac{|J|}{q}+\left(|\Sigma^+|-|\Sigma^+_J|\right)\left(1-\frac{2}{p}\right)-\frac{r}{u}}\left\|a_\mu\right\|_{l^2}=N^{\frac{d}{2}-\frac{d+2}{p}}\|a_\mu\|_{l^2}.$$
In the application of \eqref{conj2} and Proposition \ref{keyprop}, we have assumed the following conditions 
$$2\leq p\leq q, \ \frac{r}{2}-\frac{2}{p}-\frac{r}{q}\geq 0, \ \frac{d-r}{2}\left(1-\frac{2}{p}\right)-r\left(\frac{1}{p}-\frac{1}{q}\right)>0.$$
These coincide with those obtained in Case 1 and any $p>2+\frac{4}{d}$ is still admissible. 

\end{proof}

\subsection{For eigenfunction bounds} 
Following a similar line of treatment as for Strichartz estimates, we provide evidence of Conjecture \ref{conjeigensym} by showing how this conjecture could be deduced from the following conjectured eigenfunction bounds on tori by Bourgain \cite{Bou93e}. 

\begin{conj}\label{conjeigentori}
Let $B$ be a bounded region in $\R^r$. For $r\geq 3$, it holds that  
\begin{align}\label{eigentoriconj1}
\left\|\sum_{\xi\in\Z^r,\ |\xi|= N}a_\xi e^{i(\xi, x)}\right\|_{L^p(B, dx)}\lesssim  N^{\frac{r-2}{2}-\frac{r}{p}}\|a_\xi\|_{l^2(\Z^r)}
\end{align}
for any $p>\frac{2r}{r-2}$, 
with an $N^\varepsilon$ loss if $r=3,4$.  
\end{conj}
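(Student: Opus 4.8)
The plan is to deduce \eqref{eigentoriconj1} from $\ell^2$-decoupling for the sphere combined with a sharp count of lattice points on spheres inside small caps. Write $f=\sum_{|\xi|=N}a_\xi e^{i(\xi,x)}$ and normalize $\|a_\xi\|_{\ell^2}=1$; since $B$ is bounded we may dominate $\mathbf 1_B$ by a Schwartz weight $w$ whose Fourier transform is supported in a small ball, reducing matters to bounding $\|f\|_{L^p(w\,dx)}$. After the dilation $\xi\mapsto\xi/N$ the frequency set lies on the unit sphere $S^{r-1}$, and I would decompose $S^{r-1}$ into the standard family of $\sim N^{(r-1)/2}$ caps $\theta$ of radius $N^{-1/2}$, on each of which the sphere departs from its tangent plane by $O(N^{-1})$; correspondingly $f=\sum_\theta f_\theta$, where in the original scale $f_\theta$ has frequencies in an $(\sqrt N)^{r-1}\times 1$ plate. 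The first ingredient is the Bourgain--Demeter $\ell^2$-decoupling inequality for the sphere at its critical exponent $p_c=\tfrac{2(r+1)}{r-1}$, giving $\|f\|_{L^p(w)}\lesssim_\varepsilon N^{\varepsilon}\bigl(\sum_\theta\|f_\theta\|_{L^p(w)}^2\bigr)^{1/2}$ for $2\le p\le p_c$.

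To reach the target range $p>\tfrac{2r}{r-2}>p_c$ one must pass above the critical exponent, and the mechanism is interpolation inside each cap: each $f_\theta$ is a trigonometric polynomial with $n_\theta:=\#(S_N\cap\theta)$ frequencies sitting in an affine subspace, so Bernstein on the plate together with $\|f_\theta\|_{\infty}\lesssim n_\theta^{1/2}\,\|a|_\theta\|_{\ell^2}$ and $\|f_\theta\|_{2}\lesssim\|a|_\theta\|_{\ell^2}$ yields a scaling-sharp bound once the $\ell^2(\theta)$ sum is carried out. Here I would dyadically pigeonhole the caps according to the size of $n_\theta$ and feed in two arithmetic facts: the cap-level lattice bound $n_\theta\lesssim_\varepsilon N^{\varepsilon}\cdot N^{(r-3)/2}$ (its ``fair share'' of the sphere, the cap analogue of $\#S_N\lesssim_\varepsilon N^{r-2+\varepsilon}$), and the total constraint $\sum_\theta n_\theta=\#S_N\lesssim_\varepsilon N^{r-2+\varepsilon}$. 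A bookkeeping of the exponents then shows that the extremal configuration is the scaling-critical one — a single exponential, whose $w$-localized bump has height $N^{(r-2)/2}$ on a ball of radius $N^{-1}$ — and that it produces exactly the exponent $\tfrac{r-2}{2}-\tfrac rp$ precisely for $p>\tfrac{2r}{r-2}$. For $r=3,4$ the sharp cap count is itself delicate and carries an unavoidable $N^\varepsilon$, since it is governed by the distribution of lattice points on spheres (Linnik-type equidistribution, and for $r=4$ the fluctuations of $r_4(m)\asymp\sigma(m)$); this is the source of the $\varepsilon$-loss in those dimensions.

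An equivalent route, convenient at even exponents, is to pass through additive energy: for $p=2k$ an even integer with $k\ge\lceil\tfrac{r}{r-2}\rceil$, $\|f\|_{L^{2k}(w)}^{2k}$ counts, with Fourier weights, the number $J_k(N)$ of solutions of $\xi_1+\cdots+\xi_k=\xi_{k+1}+\cdots+\xi_{2k}$ with all $\xi_i\in S_N$, and \eqref{eigentoriconj1} at this $p$ is equivalent to bounding $J_k(N)$ by its expected size (the diagonal term plus the $r$-dimensional off-diagonal main term), after which interpolation with the trivial $L^2$ bound covers the non-integer range of $p$. Decoupling enters here as the tool that controls $J_k(N)$, by slicing $S_N$ into caps and summing the contributions of the flat plates.

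The main obstacle — and the reason \eqref{eigentoriconj1} remains open, so that only the smaller range $p\ge\tfrac{2(r-1)}{r-3}$ is known — is precisely this passage above the critical exponent with the \emph{sharp} $\varepsilon$-power. The $\ell^2$-decoupling inequality for the sphere is efficient only up to $p_c=\tfrac{2(r+1)}{r-1}$; beyond it the naive interpolation loses, for large $r$, a genuine power of $N$ unless one inputs restriction-type information for the sphere (bilinear/multilinear estimates or polynomial partitioning) together with strong equidistribution of lattice points in $N^{-1/2}$-caps, and reconciling the arithmetic clustering of $S_N$ — especially severe in $r=3,4$ — with the analytic decoupling bound is exactly what is currently out of reach.
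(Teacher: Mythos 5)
The statement you were asked about is not a theorem of the paper: it is Conjecture \ref{conjeigentori}, Bourgain's conjectured discrete restriction/eigenfunction bound on tori from \cite{Bou93e}, and the paper offers no proof of it — it is only used as a hypothesis, in Theorem \ref{ClassEigen}, to deduce the corresponding conjecture for class functions on compact Lie groups. So there is no ``paper proof'' to match, and your submission should not be read as a proof either: by your own account the decisive step is missing. Concretely, the $\ell^2$-decoupling input is sharp only up to the critical exponent $p_c=\frac{2(r+1)}{r-1}$, while the conjecture demands the sharp exponent $\frac{r-2}{2}-\frac rp$ for all $p>\frac{2r}{r-2}$, a range strictly above $p_c$ for every $r\geq 3$. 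The ``bookkeeping of the exponents'' you invoke after dyadically pigeonholing the caps is exactly where the argument fails to close: interpolating each cap between $L^2$, $L^{p_c}$ and $L^\infty$ using only the fair-share cap count $n_\theta\lesssim_\varepsilon N^{(r-3)/2+\varepsilon}$ and the global count $\#S_N\lesssim_\varepsilon N^{r-2+\varepsilon}$ loses a positive power of $N$ in the intermediate range $\frac{2r}{r-2}<p<\frac{2(r-1)}{r-3}$ (and the fair-share cap count itself is unproven at scale $N^{-1/2}$ for small $r$, notably $r=3$). The same obstruction blocks your additive-energy variant: bounding $J_k(N)$ by diagonal plus expected off-diagonal for the relevant $k$ is equivalent to the conjecture, not a reduction of it. What is actually known is the estimate with $\varepsilon$-loss for $p\geq \frac{2(r-1)}{r-3}$ (Bourgain, Bourgain--Demeter \cite{BD13,BD15}), which you correctly cite, and which is strictly weaker than \eqref{eigentoriconj1}.

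In short: your write-up is a reasonable survey of why the conjecture is plausible and of the current state of the art, and it is commendably honest about the gap, but it is not a proof, and no proof exists in the paper or, to date, in the literature. If you want to engage with this paper constructively, the productive target is not Conjecture \ref{conjeigentori} itself but the conditional implication proved in Theorem \ref{ClassEigen} — i.e.\ how the barycentric-semiclassical subdivision, the weight-function estimates of Proposition \ref{keyprop}, and the cap/counting input (Lemma \ref{countinglatticepoints}) convert a toral bound of this type into an eigenfunction bound for class functions on the group.
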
 

Similar to the discussion of Strichartz estimates, the above conjecture implies the following estimate for restriction to lower dimensional subsets, by an argument similar to the proof of Corollary \ref{restrictiontohyperplaneStrichartz}. 

\begin{lem}\label{restrictiontohyperplaneeigen}
Let $s=0,1,\ldots,r$. Pick any $s$-dimensional affine subspace $\R^s$ of $\R^r$ and let $B_s$ be a bounded region in this $\R^s$. Then 
\eqref{eigentoriconj1} implies 
\begin{align*}
\left\|\sum_{\xi\in\Z^r,\ |\xi|= N}a_\xi e^{i(\xi, x_s)}\right\|_{L^p(B_s, dx_s)}\lesssim  N^{\frac{r-2}{2}-\frac{s}{p}}\|a_\xi\|_{l^2(\Z^r)}
\end{align*}
for any $p>\frac{2r}{r-2}$, 
with an $N^\varepsilon$ loss if $r=3,4$.  
\end{lem}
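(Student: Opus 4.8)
The plan is to deduce the restriction-to-hyperplane estimate from the full eigenfunction bound on tori (Conjecture \ref{conjeigentori}) by exactly the same device used in the proof of Corollary \ref{restrictiontohyperplaneStrichartz}: thicken the hyperplane in a transversal lattice direction, apply a Bernstein inequality to trade the extra $L^\infty$ in that direction for an $L^p$ at the cost of a factor $N^{1/p}$, and then invoke the $r$-dimensional estimate on the thickened region. First I would, using the integrality of the quadratic form $(\cdot,\cdot)$, choose a vector $v\in\R^r$ transversal to the given hyperplane and such that $(v,\xi)\in\Z$ for all $\xi\in\Z^r$ (this is the rationality input that makes the shifted sum still periodic). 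Set $I'=\{sv:\ s\in[0,1]\}$ and $B=B'\times I'$, a bounded region in $\R^r$.

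The main chain of inequalities I would write is: bound $\|\sum_\xi a_\xi e^{i(\xi,x')}\|_{L^p(B',dx')}$ by $\|\sum_\xi a_\xi e^{i(\xi,x'+sv)}\|_{L^p(B',dx';\,L^\infty([0,1],ds))}$, since the left side is the value at $s=0$; then apply the one-dimensional Bernstein inequality on the torus in the variable $s$ (a special case of Lemma \ref{Bernstein} with $d_1=d_2$ absent, or the univariate version) to get a factor $N^{1/p}$ and replace the inner $L^\infty([0,1])$ by $L^p([0,1])$; then Fubini/Minkowski identifies $\|\cdot\|_{L^p(B',dx';L^p([0,1],ds))}$ with $\|\sum_\xi a_\xi e^{i(\xi,x)}\|_{L^p(B,dx)}$; finally apply Conjecture \ref{conjeigentori} on the $r$-dimensional bounded region $B$, obtaining $N^{\frac{r-2}{2}-\frac{r}{p}}\|a_\xi\|_{l^2}$. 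Multiplying by the $N^{1/p}$ from Bernstein gives the claimed $N^{\frac{r-2}{2}-\frac{r-1}{p}}\|a_\xi\|_{l^2}$, valid for $p>\frac{2r}{r-2}$, with the $N^\varepsilon$ loss inherited verbatim when $r=3,4$.

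There is essentially no new obstacle here — the proof is a transcription of the argument already given for Corollary \ref{restrictiontohyperplaneStrichartz}, with the Strichartz input \eqref{conj1}/Theorem \ref{BourgainDemeter} replaced by the elliptic input \eqref{eigentoriconj1}, and with no time variable present. The only points requiring a word of care are: (a) confirming that after the shift $x'\mapsto x'+sv$ the sum is still a trigonometric polynomial periodic in $s$, so that the torus Bernstein inequality applies — this is precisely why $v$ is chosen with $(v,\xi)\in\Z$; and (b) checking that the exponent arithmetic $\big(\tfrac{r-2}{2}-\tfrac{r}{p}\big)+\tfrac1p=\tfrac{r-2}{2}-\tfrac{r-1}{p}$ matches the stated bound, which it does. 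I expect the write-up to be three or four lines long and to raise no genuine difficulty; if anything, the subtlest issue is bookkeeping the $\varepsilon$-loss range $r=3,4$, which simply propagates through unchanged.

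\begin{proof}
By integrality of the inner product $(\cdot,\cdot)$, we can pick a vector $v\in\R^r$ such that $(v,\xi)\in\Z$ for any $\xi\in\Z^r$ and that $v$ is transversal to $B'$ in $\R^r$. Let $I'=\{sv:\ s\in[0,1]\}$ and consider $B=B'\times I'$, a bounded region in $\R^r$. Then
\begin{align*}
\left\|\sum_{\xi\in\Z^r,\ |\xi|= N}a_\xi e^{i(\xi,x')}\right\|_{L^p(B', dx')}
\leq \left\|\sum_{\xi\in\Z^r,\ |\xi|= N}a_\xi e^{i(\xi,x'+sv)}\right\|_{L^p((B', dx'),\ L^\infty([0,1], ds))},
\end{align*}
since the left side is the value of the right-hand integrand at $s=0$. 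Because $(v,\xi)\in\Z$, the function $s\mapsto \sum_\xi a_\xi e^{i(\xi,x'+sv)}$ is a trigonometric polynomial on the torus $[0,1]$ with frequencies of size $\lesssim N$, so Bernstein's inequality (Lemma \ref{Bernstein}) in the variable $s$ gives
\begin{align*}
\left\|\sum_{\xi\in\Z^r,\ |\xi|= N}a_\xi e^{i(\xi,x')}\right\|_{L^p(B', dx')}
\lesssim N^{\frac{1}{p}}\left\|\sum_{\xi\in\Z^r,\ |\xi|= N}a_\xi e^{i(\xi,x'+sv)}\right\|_{L^p((B', dx'),\ L^p([0,1], ds))}.
\end{align*}
By Fubini's theorem the right-hand norm equals $\left\|\sum_{\xi\in\Z^r,\ |\xi|=N}a_\xi e^{i(\xi,x)}\right\|_{L^p(B,\ dx)}$, and an application of Conjecture \ref{conjeigentori} on the bounded region $B\subset\R^r$ bounds this by $\lesssim N^{\frac{r-2}{2}-\frac{r}{p}}\|a_\xi\|_{l^2(\Z^r)}$ for any $p>\frac{2r}{r-2}$, with an $N^\varepsilon$ loss if $r=3,4$. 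Combining, we obtain
\begin{align*}
\left\|\sum_{\xi\in\Z^r,\ |\xi|= N}a_\xi e^{i(\xi,x')}\right\|_{L^p(B', dx')}\lesssim N^{\frac{1}{p}+\frac{r-2}{2}-\frac{r}{p}}\|a_\xi\|_{l^2(\Z^r)}=N^{\frac{r-2}{2}-\frac{r-1}{p}}\|a_\xi\|_{l^2(\Z^r)}
\end{align*}
for any $p>\frac{2r}{r-2}$, with an $N^\varepsilon$ loss if $r=3,4$.
\end{proof}
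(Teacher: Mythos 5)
Your proof is correct and is exactly the argument the paper intends: the paper omits the details precisely because the proof is the same thickening-plus-Bernstein device as in Corollary \ref{restrictiontohyperplaneStrichartz}, with the Strichartz input replaced by \eqref{eigentoriconj1}. Your exponent bookkeeping $\frac{r-2}{2}-\frac{r}{p}+\frac{1}{p}=\frac{r-2}{2}-\frac{r-1}{p}$ and the propagation of the $\varepsilon$-loss for $r=3,4$ are both as in the paper.
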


\begin{rem}\label{r=2remark}
The following $r=2$ version of the above estimates indeed holds. We have for $s=0,1,2$
$$\left\|\sum_{\xi\in\Z^2,\ |\xi|= N}a_\xi e^{i(\xi, x_s)}\right\|_{L^\infty(B_s, dx_s)}\lesssim_\varepsilon  N^{\varepsilon}\|a_\xi\|_{l^2(\Z^r)},$$ 
by an application of the counting estimate 
$\#\{\mu\in\Z^2:\ |\mu|=N\}\lesssim_\varepsilon N^{\varepsilon}$ (see Lemma 8 in \cite{BP16}). 
\end{rem}

We are ready to provide the following evidence for Conjecture \ref{conjeigensym}. 

\begin{thm}\label{ClassEigen}
For rank $r=2$, Conjecture \ref{conjeigensym} is a theorem. For $r\geq 3$, 
Conjecture \ref{conjeigentori} implies Conjecture \ref{conjeigensym}.  
\end{thm}
\begin{proof}
The proof is similar to that of Theorem \ref{Class} and we present it in detail for the sake of completeness. 
Class eigenfunctions $f$ of eigenvalue $-N^2+|\rho|^2$ can be expressed as 
\begin{align*}
f=\sum_{\mu\in\Lambda^+,\ |\mu|=N}a_\mu\chi_\mu.
\end{align*}
Using Weyl's integration formula \eqref{Weylint}, inequality \eqref{eigenboundloss} reads 
\begin{align*}
\left\|\sum_{\mu\in\Lambda^+,\ |\mu|=N}a_\mu\chi_\mu|\delta|^{\frac{2}{p}}\right\|_{L^p(A)}\lesssim_\varepsilon N^{\frac{d-2}{2}-\frac{d}{p}+\varepsilon}\|a_\mu\|_{l^2(\Lambda^+)}. 
\end{align*}
Recalling the decomposition $A=\bigcup_{J\subset I,|I|=r} P_{I,J}$, the above estimate reduces to those replacing $A$ by each $P_{I,J}$. 

\underline{Case 1. $J=\emptyset$.} Write
$$\left|\sum_{\mu\in\Lambda^+,|\mu|=N}a_\mu\chi_\mu|\delta|^{\frac{2}{p}}\right|=
\left|\frac{1}{|\delta_I|^{1-\frac{2}{p}}|\delta_{I,\emptyset}|^{1-\frac{2}{p}}}\sum_{s\in W}\det s \sum_{\mu\in s\Lambda^+,|\mu|=N}a_\mu e^{\mu}\right|.$$
Using \eqref{deltaI}, we estimate for 
$$\frac{1}{p}=\frac{1}{u}+\frac{1}{v}$$
that 
\begin{align*}
\left\|\sum_{\mu\in\Lambda^+,|\mu|=N}a_\mu\chi_\mu|\delta|^{\frac{2}{p}}\right\|_{L^p(P_{I,\emptyset})}
&\lesssim \sum_{s\in W}\left\|\sum_{\mu\in s\Lambda^+,|\mu|=N}a_\mu e^{\mu}\right\|_{L^u(P_{I,\emptyset})}\left\|\frac{1}{|\delta_{I,\emptyset}|^{1-\frac{2}{p}}}\right\|_{L^v(P_{I,\emptyset})}\\
&\lesssim_\varepsilon N^{\frac{r-2}{2}-\frac{r}{u}+\frac{d-r}{2}\cdot\left(1-\frac{2}{p}\right)-\frac{r}{v}+\varepsilon}\|a_\mu\|_{l^2(\Lambda^+)}= N^{\frac{d-2}{2}-\frac{d}{p}+\varepsilon}\|a_\mu\|_{l^2(\Lambda^+)},
\end{align*}
where we also used the conjectured estimate \eqref{eigentoriconj1} (and Remark \ref{r=2remark}) and Proposition \ref{keyprop}, provided the following necessary conditions hold
\begin{align}\label{conditions1}
u>\frac{2r}{r-2}\ (u=\infty \t{ if }r=2),\ \left(1-\frac{2}{p}\right)/\left(\frac{1}{p}-\frac{1}{u}\right)>\frac{2r}{d-r}, \ u\geq p\geq 2.
\end{align}
An inspection shows any $p>\frac{2d}{d-2}$ is admissible. 

 \underline{Case 2. $|J|\geq 1$.} 
Using \eqref{char}, we write 
$$\left|\sum_{\mu\in\Lambda^+,|\mu|=N}a_\mu\chi_\mu|\delta|^{\frac{2}{p}}\right|=\frac{|\delta^J|^{\frac{2}{p}}}{|W_J|\cdot |\delta_I|^{1-\frac{2}{p}}|\delta_{I,J}|^{1-\frac{2}{p}}}\left|\sum_{s\in W}\det s 
\sum_{\mu\in s\Lambda^+,|\mu|=N}a_\mu e^{\mu(H_J^\perp)}\chi^J_{\mu_J}(H_J)\right|.$$
For $\frac{1}{p}=\frac{1}{u}+\frac{1}{v}$, we estimate using \eqref{deltaI} and \eqref{deltaJ} that 
\begin{align*}
&\left\|\sum_{\mu\in\Lambda^+,|\mu|=N}a_\mu\chi_\mu|\delta|^{\frac{2}{p}}\right\|_{L^p(P_{I,J})}\\
&\lesssim N^{-|\Sigma_J^+|\frac{2}{p}} \sum_{s\in W}\left\|\sum_{\mu\in s\Lambda^+,|\mu|=N}a_\mu e^{\mu(H_J^\perp)}\chi^J_{\mu_J}(H_J)\right\|_{L^u(P_{I,J})}\left\|\frac{1}{|\delta_{I,J}|^{1-\frac{2}{p}}}\right\|_{L^v(P_{I,J})}\\
&\lesssim N^{-|\Sigma_J^+|\frac{2}{p}} \sum_{s\in W}\left\|\left\|\sum_{\mu\in s\Lambda^+,|\mu|=N}a_\mu e^{\mu(Y)}\chi^J_{\mu_J}(X)\right\|_{L^u(\mathfrak{e}^\perp_{I,J}(X), \ dY)}\right\|_{L^u(\mathfrak{e}_J, \ dX)} \left\|\frac{1}{|\delta_{I,J}|^{1-\frac{2}{p}}}\right\|_{L^v(P_{I,J})}\\
&\lesssim_\varepsilon N^{-|\Sigma_J^+|\frac{2}{p}+\frac{r-2}{2}-\frac{r-|J|}{u}-\frac{|J|}{u}+|\Sigma_J^+|+\left(|\Sigma^+|-|\Sigma_J^+|\right)(1-\frac{2}{p})-\frac{r}{v}+\varepsilon}\|a_\mu\|_{l^2(\Lambda^+)}
= N^{\frac{d-2}{2}-\frac{d}{p}+\varepsilon}\|a_\mu\|_{l^2(\Lambda^+)}.
\end{align*}
Here we have used Lemma \ref{restrictiontohyperplaneeigen}, Lemma \ref{charbound}, Proposition \ref{keyprop}, and the estimate 
$\|1\|_{L^u(\mathfrak{e}_J, \ dX)}\lesssim N^{-\frac{|J|}{u}}$. In applying these estimates, we assumed the following conditions to hold 
$$u>\frac{2r}{r-2}\ (u=\infty\t{ if }r=2),\ \left(1-\frac{2}{p}\right)/\left(\frac{1}{p}-\frac{1}{u}\right)>\frac{2r}{d-r}, \ u\geq p\geq 2.$$
These are the same conditions as in Case 1 and any $p>\frac{2d}{d-2}$ is admissible. 

\end{proof}



\end{document}